\documentclass[11pt]{aart}

\usepackage[letterpaper, hmargin=1in, top=1in, bottom=1.2in, footskip=0.6in]{geometry}

\usepackage{titlesec}
\titleformat{\section}[block]{\filcenter\normalfont\bfseries\large}{\thesection.}{.5em}{}\titlespacing*{\section}{0pt}{2\baselineskip}{1\baselineskip}
\titleformat{\subsection}[runin]{\normalfont\bfseries}{\thesubsection.}{.4em}{}[.]\titlespacing{\subsection}{0pt}{2ex plus .1ex minus .2ex}{.8em}
\titleformat{\subsubsection}[runin]{\normalfont\itshape}{\thesubsubsection.}{.3em}{}[.]\titlespacing{\subsubsection}{0pt}{1ex plus .1ex minus .2ex}{.5em}
\titleformat{\paragraph}[runin]{\normalfont\itshape}{\theparagraph.}{.3em}{}[.]\titlespacing{\paragraph}{0pt}{1ex plus .1ex minus .2ex}{.5em}

\usepackage[T1]{fontenc}
\usepackage[utf8]{inputenc}

\usepackage{mlmodern}

\usepackage[labelfont=bf,font=small,labelsep=period]{caption}
\usepackage{microtype}

\let\originalleft\left
\let\originalright\right
\renewcommand{\left}{\mathopen{}\mathclose\bgroup\originalleft}
\renewcommand{\right}{\aftergroup\egroup\originalright}

\usepackage{amsmath}
\usepackage{amssymb}
\usepackage{amsfonts}
\usepackage{latexsym}
\usepackage{amsthm}
\usepackage{amsxtra}
\usepackage{amscd}
\usepackage{bbm}
\usepackage{mathrsfs}
\usepackage{bm}
\usepackage{mathtools}

\usepackage{graphicx, color}

\definecolor{darkred}{rgb}{0.9,0,0.3}
\definecolor{darkblue}{rgb}{0,0.3,0.9}

\definecolor{vdarkred}{rgb}{0.6,0,0.2}
\definecolor{vdarkblue}{rgb}{0,0.2,0.6}
\usepackage[pdftex, colorlinks, linkcolor=vdarkblue,citecolor=vdarkred,pagebackref]{hyperref}
\RequirePackage[capitalize,nameinlink,noabbrev]{cleveref}

\usepackage{booktabs}
\usepackage[nottoc,notlof,notlot]{tocbibind}
\usepackage{cite} %Enabling `[1-4]` - style citing
\numberwithin{equation}{section}
\numberwithin{figure}{section}

\usepackage{enumitem}
\makeatletter
\let\it@m\item% Store \item inside \it@m
\RenewDocumentCommand{\item}{ o }{%
  \IfValueTF{#1}
    {\it@m[#1]\phantomsection\protected@edef\@currentlabel{#1}}
    {\it@m}
}
\makeatother

\theoremstyle{plain} %plain, definition, remark
\newtheorem{theorem}{Theorem}[section]
\newtheorem*{theorem*}{Theorem}
\newtheorem{lemma}[theorem]{Lemma}
\newtheorem*{lemma*}{Lemma}

\newtheorem*{corollary*}{Corollary}
\newtheorem{proposition}[theorem]{Proposition}
\newtheorem*{proposition*}{Proposition}

\newtheorem*{conjecture*}{Conjecture}

\theoremstyle{definition} %plain, definition, remark
\newtheorem{definition}[theorem]{Definition}
\newtheorem*{definition*}{Definition}

\newtheorem*{example*}{Example}
\newtheorem{remark}[theorem]{Remark}
\newtheorem*{remark*}{Remark}

\newtheorem*{assumption*}{Assumption}

\renewcommand{\b}[1]{\boldsymbol{\mathrm{#1}}} %bold
\renewcommand{\r}{\mathrm} %\newcommand{\rr}{\mathrm} %upright
\newcommand{\bb}{\mathbb} %blackboard bold
\renewcommand{\cal}{\mathcal}

\newcommand{\ol}[1]{\overline{#1} \!\,} %overline
\newcommand{\wh}{\widehat}

\newcommand{\op}{\operatorname}

\renewcommand{\P}{\mathbb{P}}
\newcommand{\E}{\mathbb{E}}
\newcommand{\R}{\mathbb{R}}
\newcommand{\C}{\mathbb{C}}
\newcommand{\N}{\mathbb{N}}

\newcommand{\HH}{{\mathbb H}}

\newcommand{\ee}{\r e}
\newcommand{\ii}{\r i}
\newcommand{\dd}{\r d}

\newcommand{\col}{\vcentcolon}%{\mathrel{\vcenter{\baselineskip0.75ex \lineskiplimit0pt \hbox{.}\hbox{.}}}}
\newcommand*{\deq}{\mathrel{\vcenter{\baselineskip0.65ex \lineskiplimit0pt \hbox{.}\hbox{.}}}=}

\newcommand{\eqdist}{\overset{\r d}{=}}
\renewcommand{\leq}{\leqslant}
\renewcommand{\geq}{\geqslant}
\renewcommand{\epsilon}{\varepsilon}
\newcommand{\ind}[1]{\b 1_{#1}}

\newcommand{\pb}[1]{\bigl(#1\bigr)}
\newcommand{\pB}[1]{\Bigl(#1\Bigr)}
\newcommand{\pbb}[1]{\biggl(#1\biggr)}
\newcommand{\pBB}[1]{\Biggl(#1\Biggr)}

\newcommand{\qb}[1]{\bigl[#1\bigr]}
\newcommand{\qB}[1]{\Bigl[#1\Bigr]}
\newcommand{\qbb}[1]{\biggl[#1\biggr]}
\newcommand{\qBB}[1]{\Biggl[#1\Biggr]}

\newcommand{\h}[1]{\{#1\}}

\newcommand{\hBB}[1]{\Biggl\{#1\Biggr\}}

\newcommand{\abs}[1]{\lvert #1 \rvert}
\newcommand{\absb}[1]{\bigl\lvert #1 \bigr\rvert}

\newcommand{\absbb}[1]{\biggl\lvert #1 \biggr\rvert}

\newcommand{\norm}[1]{\lVert #1 \rVert}

\newcommand{\ang}[1]{\langle #1 \rangle}

\newcommand{\scalar}[2]{\langle#1 \mspace{2mu}, #2\rangle}
\newcommand{\scalarb}[2]{\bigl\langle#1 \mspace{2mu}, #2\bigr\rangle}

\DeclareMathOperator{\tr}{Tr}

\DeclareMathOperator{\re}{Re}
\DeclareMathOperator{\im}{Im}

\DeclareMathOperator{\diam}{diam}
\DeclareMathOperator{\spec}{spec}

\newcommand{\vertiii}[1]{\norm{#1}}

\newcommand{\Smat}{S}

\newcommand{\proba}[1]{\P\left( #1 \right)}

\newcommand{\Mart}{\mathcal{M}}

\newcommand{\Cf}{\mathfrak{C}}

\newcommand{\tG}{\widetilde{G}}
\newcommand{\Vab}{\mathcal{V}_{ab}}
\newcommand{\Ecal}{\mathcal{E}}

\newcommand{\Lcal}{\mathcal{L}}

\newcommand{\Rcal}{\mathcal{R}}

\allowdisplaybreaks
  
\title{Bernoulli flow for Erd\H{o}s-Rényi graphs}
 \author{Joscha Henheik \and Antti Knowles}
\begin{document}
	\maketitle

\begin{abstract}
We study the eigenvalues and eigenvectors of the adjacency matrix $A$ of the Erd\H{o}s-Rényi graph $\bb G(N,p)$ in the regime $Np \gg (\log N)^2$. We establish optimal isotropic delocalization for the bulk eigenvectors $\bm u$, meaning that $\scalar{\bm v}{\bm u}^2 \leq \frac{C \log N}{N}$ with very high probability for any deterministic normalized $\bm v$. In addition, we prove local spectral universality in the bulk by showing that the local spectral statistics coincide with those of the GOE. The main tool of our proof is a local law for the resolvent of $A$, down to optimal spectral scales and with optimal error bounds. Its proof relies on a new approach to local laws that we call the Bernoulli flow. It is a characteristic flow method in which the usual Brownian process is replaced by a Bernoulli process, where each edge of the graph forms an independent Markov process that jumps at unit rate from closed to open. The spectral parameter flows according to a suitably constructed matrix-valued Bernoulli characteristic flow.
\end{abstract}

\setcounter{tocdepth}{1}
\tableofcontents

\section{Introduction}

A disordered quantum system, described by a large Hermitian random matrix $H$, is typically either in a \emph{delocalized}, or \emph{conducting}, phase or a \emph{localized}, or \emph{insulating}, phase. In the former, the eigenvectors are delocalized and nearby eigenvalues exhibit level repulsion, corresponding to random matrix local spectral statistics. In the latter, the eigenvectors are localized and nearby eigenvalues are approximately independent, corresponding to Poisson local spectral statistics.

There are several ways of quantifying the extent of delocalization or localization of an eigenvector of $H \in \R^{N \times N}$. They rely on the choice of an orthonormal basis $\bm e_1, \dots, \bm e_N$ of $\R^N$, such as the standard basis. A normalized eigenvector $\bm w$ gives rise to a probability measure $x \mapsto \scalar{\bm e_x}{\bm u}^2$ on $[N] \deq \{1, \dots, N\}$. Informally, $\bm u$ is \emph{delocalized} if its mass is approximately uniformly distributed throughout $[N]$, and \emph{localized} if its mass is essentially concentrated on a small number of sites. A simple and commonly used measure of the extent of localization or delocalization is the squared $\ell^\infty$ norm
\begin{equation} \label{def_q}
q(\bm u) \deq \max_{x \in [N]} \scalar{\bm e_x}{\bm u}^2\,.
\end{equation}
Clearly, $\frac1N \leq q(\bm u) \leq 1$, whereby $q(\bm u) = \frac1N$ corresponds to complete delocalization and $q(\bm u) = 1$ to complete localization.

The archetypal random matrix in the delocalized phase is the Gaussian Orthogonal Ensemble (GOE)\footnote{The GOE is by definition an $N \times N$ real symmetric matrix whose upper-triangular entries are independent mean-zero Gaussian random variables, with variances $2/N$ on the diagonal and $1/N$ elsewhere.}. By invariance of GOE under conjugation with orthogonal matrices, any eigenvector $\bm u$ of GOE is uniformly distributed on the unit sphere.
Hence, for any fixed $C > 0$ we have
\begin{equation*}
\P \pbb{\langle \bm v, \bm u \rangle^2 \geq \frac{C \log N}{N}} \sim \frac{N^{-C/2}}{\sqrt{C \log N}}
\end{equation*}
as $N \to \infty$. By a union bound, we conclude that
\begin{equation} \label{optimal_deloc}
q(\bm u) \leq \frac{C \log N}{N}
\end{equation}
with probability at least $1 - N^{-D}$ provided that $D = \frac{C}{2} - 1$. Since \eqref{optimal_deloc} is (up to the precise value of $C$) the strongest bound on $q(\bm u)$ that can hold for any random matrix $H$, it is referred to as \emph{optimal delocalization}. When the entries of $H$ are not Gaussian, the rotational invariance of $H$ is lost and delocalization becomes nontrivial. In fact, as explained in Section \ref{sec:background} below, for sufficiently sparse entry distributions, some eigenvectors are localized.

In this paper, we investigate the delocalized phase of sparse random matrices. Sparse random matrices naturally arise as adjacency matrices of random graphs, which can be interpreted as Hamiltonians of free quantum particles hopping along the edges of the graph. We consider the Erd\H{o}s-Rényi graph $\bb G \equiv \bb G(N, p)$ on $N$ vertices, where each edge of the complete graph is kept independently with probability $p$. For $p \asymp 1$, the graph $\bb G$ is dense and corresponds to a Wigner matrix\footnote{A Wigner matrix is by definition an $N \times N$ real symmetric (or complex Hermitian) matrix whose upper-triangular entries are independent mean-zero random variables with the same law, which does not depend on $N$.}, where the particle can typically hop from any site to any other site. For $p \ll 1$, the graph $\bb G$ is sparse and the hopping is constrained to a small number of edges.

Denoting by $A$ the adjacency matrix of $\bb G(N,p)$, we set $H = \theta A$, where $\theta > 0$ is a scaling factor. Under the assumption $Np \gg (\log N)^2$, we establish both hallmarks of the delocalized phase in the bulk spectrum:
\begin{description}
\item[1.\ Optimal isotropic delocalization:]
Optimal delocalization \eqref{optimal_deloc} in the bulk with respect to any deterministic basis.
\item[2.\ Bulk spectral universality:]
The local spectral statistics in the bulk coincide with those of the GOE, i.e.\ they are asymptotically given by the $\op{Sine}_1$ process.
\end{description}

Our techniques also apply to non-Hermitian sparse random matrices, corresponding to adjacency matrices of directed Erd\H{o}s-Rényi graphs, for which we prove optimal isotropic delocalization in the bulk. In a companion paper \cite{HKGNM}, we extend our results to the $\mathbb{G}(N,M)$ model, where an (undirected) graph is chosen uniformly at random among graphs with $M \gg N (\log N)^2$ edges.

We now give a brief overview of the our proof, referring to Section \ref{subsec:overview} below for a more detailed account. Our main technical achievement is a local law for the matrix $H$ down to optimal spectral scales, with optimal error bounds. Optimal isotropic delocalization is an immediate consequence. Moreover, we obtain local spectral universality by combining our local law with a very recent result by Bourgade and Huang \cite{BourHua2026}, which uniquely characterizes the $\mathrm{Sine}_1$ point process via loop equations on the spectral scale of the mean eigenvalue spacing.

Local laws form a cornerstone of random matrix theory, and several methods have been developed to establish them, including Schur complement approaches, Gaussian integration by parts and cumulant expansions, as well as Brownian characteristic flows. However, when using them in the sparse regime $p \ll 1$, it seems difficult to obtain optimal bounds.

Instead, we develop a new approach to local laws: the \emph{Bernoulli flow}. The basic idea is to replace, in Pastur's method of characteristics \cite{pastur1972spectrum}, the Brownian process with a \emph{Bernoulli process}. In it, each edge of the graph forms an independent Markov process that jumps at unit rate from closed to open\footnote{The idea of constructing the Erd\H{o}s-Rényi graph dynamically, in discrete time, goes all the way back to the work of Erd\H{o}s and Rényi \cite{erd6s1960evolution}; see also \cite{roberts2018exceptional}. The eigenvalue process of a related symmetric random walk, which leaves $\bb G(N,p)$ for $p = 1/2$ invariant, was studied in the physics literature in \cite{joyner2015spectral}.}. The spectral parameter flows according to a suitably constructed matrix-valued \emph{Bernoulli characteristic flow}.

The key advantage of replacing the Brownian flow with the Bernoulli flow is that the flow does not generate a Gaussian component, instead flowing directly to the desired target distribution. Hence, no comparison step is required, thus avoiding the key hurdle in  establishing local laws for sparse random matrices using dynamical methods. In that sense, unlike many previously used methods such as Brownian characteristic flows and cumulant expansions, our approach is not perturbative around a Gaussian law. Moreover, although in this paper we illustrate the Bernoulli flow in the simple homogeneous setting of $\bb G(N,p)$, it is very flexible and can be applied to rather general sparse random matrix models, with variance profiles and arbitrary expectations.

Replacing the Brownian flow with the Bernoulli flow results in fundamental changes to the argument. To begin with, the Itô formula is no longer available\footnote{Or, rather, its usual extension to the Bernoulli process is not useful.}, and we have to replace it with an analysis of the Dynkin martingale associated with the Bernoulli process. As a consequence, the structure and algebra of our proof differs substantially from Brownian characteristic flow arguments. Another important new component of our proof is an analysis of the jump sizes of various observables under the Bernoulli process, required to control the quadratic variation of the Dynkin martingale.

To prove local spectral universality, our verification of the loop equations from \cite{BourHua2026} uses a standard cumulant expansion that is controlled using our local law. In order to reach the scale $Np \gg (\log N)^2$ the estimates have to be performed with some care, which requires a new interpolation argument explained in Section \ref{sec:GOE_overview} below.

The lower bound $Np \gg (\log N)^2$ is important for both of our main results, but for different reasons. We believe that it is a natural barrier for the methods that we use, which are based on very high probability estimates. See Remark \ref{rem:p_range} below for more details. In fact, although delocalization is known to hold down to $Np \gg \log N$, it is not clear that optimal delocalization must hold down to the same threshold.

\subsection*{Notations}
Every quantity that is not explicitly called \emph{fixed} or a \emph{constant} is a sequence depending on $N$. We use the customary notation $O(\cdot)$ in the limit $N \to \infty$. For nonnegative $X,Y$, if $X = O(Y)$ then we also write $X \lesssim Y$. Moreover, we write $X \asymp Y$ to mean $X \lesssim Y$ and $Y \lesssim X$. We say that an event $\Omega$ holds with \emph{very high probability} if for any $D > 0$ there exists $C > 0$ such that $\P(\Omega) \geq 1 - N^{-D}$ for all $N \geq C$.

We use the shorthand $\sum_x \equiv \sum_{x \in [N]}$. We denote by $\bm e_x$ the standard basis vector in $\C^N$ with entries $(\bm e_x)_y = \delta_{xy}$. We denote by $\bm 1$ the identity matrix in $\C^{N \times N}$. For two vectors $\bm v = (v_x)_{x \in [N]}$ and $\bm w = (w_x)_{x \in [N]}$ in $\C^N$ we use the standard scalar product $\scalar{\bm v}{\bm w} \deq \sum_x \ol v_x w_x$. For a matrix $M \in \C^{N \times N}$, we abbreviate $\im M \deq \frac{1}{2 \ii}(M - M^*)$, $\ang{M} \deq \frac{1}{N} \tr (M)$, and $M_{\bm v \bm w} \deq \scalar{\bm v}{M \bm w}$ for $\bm v, \bm w \in \C^N$.

\subsection{Results}
Let $A \equiv A(p)$ be the adjacency matrix of the Erd\H{o}s-Rényi graph on $N$ vertices with edge probability $p \equiv p(N) \in [0,1/2]$. That is, $A =A^*$, $A_{xx}=0$ for all $x \in [N]$, and  $( A_{xy} \col x < y)$ are independent $\op{Bernoulli}(p)$ random variables. We define the normalized adjacency matrix
\begin{equation}
\label{eq:ERmodel}
H \deq \theta A \,, \qquad \theta \deq \frac{1}{\sqrt{N p (1 - p)}}\,.
\end{equation}
 
 Our first main result is optimal delocalization for the bulk eigenvectors of $H$. To state it, we define the anisotropic norm of $\bm v \in \R^N$ through 
\begin{equation} \label{eq:normdef}
\vertiii{\bm v}_{\bm e} \deq \Vert (\mathbf{1} - \Pi_{\bm e}) \bm v \Vert + (Np)^{-1/2} \, \Vert \Pi_{\bm e} \bm v \Vert\,,
\end{equation}
where $\Pi_{\bm e} \deq \bm e \bm e^*$ denotes the orthogonal projection onto $\bm e \deq N^{-1/2}(1, \dots , 1)^* \in \R^N$. (Note that $\norm{\bm v}_{\bm e} \leq 2 \norm{\bm v}$ for $Np \geq 1$.)

\begin{theorem}[Optimal delocalization] \label{thm:deloc}
 For any fixed $\kappa > 0$ and $D > 0$ there exists a constant $C \equiv C(D, \kappa) > 0$ such that the following holds. Suppose that $C (\log N)^{2} \leq Np \leq N^{1-\kappa}$. Then, for any deterministic $\bm v \in \R^N$ and any normalized eigenvector $\bm u$ of $H$ with eigenvalue $\lambda$ satisfying $\abs{\lambda} \leq 2 - \kappa$, we have
	\begin{equation} \label{deloc_main}
\langle \bm v, \bm u \rangle^2 \leq \frac{C \log N}{N} \vertiii{\bm v}_{\bm e}^2
	\end{equation}
	with probability at least $1 - N^{-D}$. 
\end{theorem}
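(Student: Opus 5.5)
The plan is to derive Theorem \ref{thm:deloc} from an isotropic local law for the resolvent $G(z) \deq (H - z)^{-1}$. The law is needed at spectral scale $\eta \asymp \frac{\log N}{N}$ and with optimal error. Concretely, write $z = E + \ii \eta$ with $\abs{E} \leq 2 - \kappa/2$, and let $m(z)$ denote the Stieltjes transform of the semicircle law. The expectation of $H$ is $\theta p (N\bm e \bm e^* - \bm 1)$, a rank-one perturbation of size $\sqrt{Np}$ in direction $\bm e$. Accordingly, the deterministic approximation $M(z)$ of $G(z)$ acts as $m(z)$ on $\bm e^\perp$, and in direction $\bm e$ it is of order $(Np)^{-1/2}$ in the bulk, since $\abs{\scalar{\bm e}{G \bm e}} \lesssim \frac{1}{\sqrt{Np}}$ there. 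The isotropic bound I aim for is
\begin{equation*}
\im \scalar{\bm v}{G(z) \bm v} \leq C \norm{\bm v}_{\bm e}^2
\end{equation*}
with very high probability, uniformly in such $z$ with $\eta = \frac{C' \log N}{N}$, for each deterministic $\bm v$. The anisotropic norm \eqref{eq:normdef} enters exactly because the $\Pi_{\bm e}$ component contributes only at order $(Np)^{-1}$.

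Given this bound, the deduction is the standard spectral-decomposition argument. Choose $z = \lambda + \ii \eta$. Expanding $G$ in eigenvectors and keeping only the term of $\bm u$ gives
\begin{equation*}
\frac{\scalar{\bm v}{\bm u}^2}{\eta} \leq \im \scalar{\bm v}{G(\lambda + \ii \eta)\bm v} \leq C\norm{\bm v}_{\bm e}^2 ,
\end{equation*}
so that $\scalar{\bm v}{\bm u}^2 \leq C \eta \norm{\bm v}_{\bm e}^2 = \frac{C \log N}{N}\norm{\bm v}_{\bm e}^2$. Since $\lambda$ is random, I first establish the local law simultaneously on a grid of $E$ with spacing $N^{-3}$ by a union bound, which is affordable because the law holds with very high probability. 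I then pass to all $E$ using Lipschitz continuity of $G$ in $z$ (constant at most $\eta^{-2}$). Finally, the bulk condition $\abs{\lambda} \leq 2-\kappa$ is converted into $\lambda$ lying in the spectral domain of the local law, using rigidity or the norm bound on $H$ restricted to $\bm e^\perp$.

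The main obstacle is the local law itself at the optimal scale $\eta \sim \log N / N$ with only an $O(1)$ error, rather than an $N^{\epsilon}$ loss, in the sparse regime $Np \gg (\log N)^2$. I would prove it along the lines announced in the introduction, via the Bernoulli flow. Run the edge process from the empty graph, so that at time $t$ each edge is open with probability $1 - \ee^{-t}$, up to the time at which this probability equals $p$. Couple this with a matrix-valued characteristic flow for $z_t$ chosen so that the drift of $\scalar{\bm v}{(G_t(z_t) - M_t(z_t))\bm v}$ in the Dynkin formula cancels to leading order. The error is then controlled by the Dynkin martingale, and its quadratic variation is bounded through jump sizes. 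A single edge flip changes $G$ by at most $\theta \abs{G_{\bm v x}}\abs{G_{y \bm v}}$, and these factors are controlled by Ward identities together with a bootstrap on $\im G$. A Freedman-type inequality then yields Gaussian-tail errors of size $\sqrt{\log N}$ times the natural scale. Ensuring that these errors sum to $O(1)$ rather than $O(\log N)$ along the flow, which is where $Np \gg (\log N)^2$ is needed, is the delicate step. I expect this step to consume most of the work.
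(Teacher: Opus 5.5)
Your argument is the paper's proof: apply the isotropic local law (Theorem~\ref{thm:lolaw}(ii), which holds uniformly over $z \in \mathbb{D}$) to $\im G$ at $z = \lambda + \ii \eta$ with $\eta = C N^{-1} \log N$, combine it with $(\im M)_{\bm v \bm v} \lesssim \vertiii{\bm v}_{\bm e}^2$ from Lemma~\ref{lem:Mprop}, bound $\scalar{\bm v}{\bm u}^2/\eta \leq (\im G)_{\bm v \bm v}$, and take a union bound (no rigidity is needed, since $\abs{\lambda} \leq 2 - \kappa$ already places $\lambda + \ii \eta$ in $\mathbb{D}$). The local law you defer is where all the work lies, and your sketch undersells it: the paper first runs the entrywise observables (including the direction $\bm e$) together with $\ang{G}$ and the second-order observables $\ang{G^2}$, $(G^2)_{\bm a \bm b}$, since bounding the propagator factor $\ang{G_s^2}$ by $\ang{\im G_s}/\eta_s$ would cost a factor $\eta_s/\eta_t$; it also freezes the edges whose flip would cause jumps of order $\theta$, and the threshold $Np \gg (\log N)^2$ comes mainly from needing $a/K \gtrsim \log N$ in the subexponential regime of the jump-martingale inequality, not from Gaussian tails.
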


Our second main result is bulk universality, i.e.\ GOE local spectral statistics, for $H$. To state it, we choose an energy $E \in (-2,2)$ and define the rescaled microscopic eigenvalue process of $H$ around $E$ as\footnote{Following \cite{BourHua2026}, we rescale the eigenvalue process so that the local mean eigenvalue spacing is $\pi$.}
\begin{equation} \label{muE}
\mu^E \deq \sum_{\lambda \in \spec(H)} \delta_{N \pi \rho_E (\lambda - E)}\,,
\end{equation}
where
\begin{equation} \label{eq:rhodef}
\rho_E  \deq \frac{\sqrt{4-E^2}}{2\pi } 
\end{equation}
is the density of the semicircle law. In the sum on the right-hand side of \eqref{muE}, eigenvalues are counted with multiplicity.

\begin{theorem}[GOE local spectral statistics in the bulk] \label{thm:GOE}
Fix $E \in (-2,2)$ and $\kappa > 0$. Suppose that $(\log N)^{2+\kappa} \leq Np \leq N^{1-\kappa}$. Then, as $N \to \infty$, $\mu^E$ converges in distribution, with respect to the vague topology, to the $\mathrm{Sine}_1$ point process.
\end{theorem}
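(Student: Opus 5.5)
The plan is to verify the hypotheses of the characterization theorem of Bourgade and Huang \cite{BourHua2026}, which identifies $\mathrm{Sine}_1$ as the unique limit of point processes that satisfy approximate loop equations on the scale of the mean eigenvalue spacing. The first step is to reduce the problem to statements about resolvents. Let $G(z) = (H-z)^{-1}$ with $z = E + w/(N\pi\rho_E)$, where $w$ ranges over compact subsets of the upper half-plane, so that $\im z \asymp 1/N$. The loop equations of \cite{BourHua2026} are then identities between expectations of products of $\langle G(z_j)\rangle$ and their derivatives, each with an error that must vanish as $N\to\infty$. Before anything else I will remove the Perron--Frobenius direction. The outlier eigenvalue near $\sqrt{Np}$ and the mean part $\E H = \theta p(\bm 1\bm 1^* - \mathbf 1)$ have rank one, so by interlacing they shift local statistics at $E$ by at most one point. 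The scale $\theta p\asymp \sqrt{p/N}$ of the diagonal correction is negligible. It therefore suffices to treat the centred matrix $H - \E H$, or equivalently to work with $G$ and keep track of $\bm e$ through the anisotropic norm.

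The main input is the optimal local law that underlies Theorem \ref{thm:deloc}. It gives, with very high probability and uniformly down to $\im z \geq N^{-1+\epsilon}$, entrywise and isotropic control $\abs{G_{\bm v\bm w} - m\scalar{\bm v}{\bm w}} \prec$ optimal error, with $m$ the Stieltjes transform of the semicircle law. It also gives an averaged bound on $\langle G\rangle - m$. To reach the scale $\im z\asymp 1/N$, I will use the standard monotonicity of $\eta\mapsto \eta\im G_{xx}(E+\ii\eta)$ together with delocalization. This yields deterministic-order bounds $\abs{G_{xy}}\lesssim \log N$-type bounds on $N\im G$ products at the microscopic scale, with very high probability.

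Next comes the derivation of the loop equations by cumulant expansion. I will start from $\E[\langle (H-\E H) G\rangle F] = \E[\langle \bm 1 + zG\rangle F]$, with $F$ a product of the other resolvent observables, and expand $\E[h_{xy} f(h)]$ in cumulants of the centred Bernoulli entries. The $\ell$-th cumulant is of order $\theta^\ell p \asymp N^{-1}(Np)^{-(\ell-2)/2}$. The second-order term gives the GOE loop equation, namely Schwinger--Dyson with the $\tfrac{2}{\beta}$, $\beta=1$ derivative term. Higher cumulants produce terms of relative size $(Np)^{-(\ell-2)/2}$ times resolvent entries. At the microscopic scale these are not individually small because of the $1/(N\im z)$ factors, so I must show that they cancel up to error $o(1)$. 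Truncating the expansion at order $\ell\asymp \log N/\log(Np)$ is affordable only because $Np\ge(\log N)^{2+\kappa}$.

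I expect the third-cumulant and higher terms to be the main obstacle. In these terms the naive bound loses powers of $\log N$ that exceed the $(Np)^{-1/2}$ gain when $Np$ is close to $(\log N)^2$. My first attempt at this step is the interpolation indicated in Section \ref{sec:GOE_overview}. I would interpolate the spectral parameter between the microscopic scale and a mesoscopic scale $\eta_0 = N^{-1}(\log N)^{C}$. At the mesoscopic scale the local law controls the off-diagonal sums $\sum_{x,y} G_{xy}^k$ optimally. At the microscopic scale I would then transfer the bounds using the resolvent identity and the Ward identity $\sum_y\abs{G_{xy}}^2 = \im G_{xx}/\im z$, so that each higher-cumulant term gains a factor $(Np)^{-1/2}(\log N)$. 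The assumption $Np\ge(\log N)^{2+\kappa}$ makes the product of these gains $o(1)$. Once the loop equations hold with $o(1)$ errors uniformly on compacts, tightness of $\mu^E$ follows from the local law, and \cite{BourHua2026} identifies every subsequential limit as $\mathrm{Sine}_1$.
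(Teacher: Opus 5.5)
Your architecture (the loop equations of \cite{BourHua2026} at $\eta\asymp 1/N$, a cumulant expansion, and the local law plus monotonicity) is the paper's. However, the one step that decides the threshold $(\log N)^{2+\kappa}$ is not supplied, and what you say about it points the wrong way.

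The dangerous term is the third-order remainder $\frac{1}{N^2\sqrt{Np}}\sum_{x,y}\E[G_{xy}G_{xx}G_{yy}\,\mathcal G_k]$ at $\eta\asymp 1/N$. Cauchy--Schwarz and the Ward identity dispose of the off-diagonal factor, leaving $\ang{\im G}^{1/2}$. The two \emph{diagonal} factors remain, and at the microscopic scale they have no $O(1)$ bound. Lemma~\ref{lem:monotone} only gives $\abs{G_{xx}}\lesssim\log N$, hence an error $(\log N)^2/\sqrt{Np}$, which needs $Np\gg(\log N)^4$. Your transfer from the mesoscopic scale targets off-diagonal sums, which are not the issue, and the gain $(\log N)/\sqrt{Np}$ you assert is never derived.

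Two ideas are missing.
\begin{itemize}
\item[(i)] \cite[Theorem~1.8]{BourHua2026} only requires errors of the form $o(1)(1+\E[\mathcal R_N^{k+1}])$, not absolute $o(1)$ errors. So factors of $\ang{\im G(z)}$ at the microscopic scale are affordable, and the point is to convert as many resolvent factors as possible into them.
\item[(ii)] The diagonal entries are converted by comparison with $z+\ii\eta_0$, where $\eta_0\asymp\log N/N$. The paper writes $\abs{G_{xx}(z)}\lesssim 1+\int_0^{\eta_0}\frac{\im G_{xx}(z+\ii t)}{\eta+t}\,\dd t$. It then uses that $\eta\mapsto\eta\im G$ is increasing and $\eta\mapsto\eta^{-1}\im G$ is decreasing to get $\frac1N\sum_x\abs{G_{xx}}^2\lesssim 1+\log N(\log\log N)^2\,\im\ang{G(z)}$. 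This gives an error $\frac{\log N(\log\log N)^2}{\sqrt{Np}}(1+\E[R_N^{k+1}])$.
\end{itemize}

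Your resolvent-identity idea can also do (ii). Cauchy--Schwarz and Ward at both scales give $\abs{G_{xx}(z)-G_{xx}(z+\ii\eta_0)}\le(\eta_0/\eta)^{1/2}\,(\im G_{xx}(z)\,\im G_{xx}(z+\ii\eta_0))^{1/2}$, hence $\frac1N\sum_x\abs{G_{xx}}^2\lesssim 1+\log N\,\ang{\im G(z)}$. But this works only with $\eta_0\asymp\log N/N$. With your $\eta_0=N^{-1}(\log N)^C$ and $C>1$, it costs $(\log N)^{C}/\sqrt{Np}$ overall. In either version the outcome is a bound in terms of $\ang{\im G(z)}$, not $O(1)$, so it closes only through the relative error form in (i).

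Relatedly, no high-order cumulant expansion or cancellation is needed. Since $\abs{h_{xy}}\le\theta$, the first-order expansion with third-order remainder (Lemma~\ref{lem:cumexp}) suffices. The remainder terms are individually small once estimated as above. Your claim that they are not individually small and must cancel is a misdiagnosis, and you offer no cancellation mechanism.

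Two smaller issues. First, your reduction to the centred matrix $H_0$ by interlacing is invalid. A rank-one perturbation changes the counting function by at most one, but this does not preserve vague limits of the rescaled process, because every point may move by a fixed fraction of the local spacing. For $H_0+c\,\bm e\bm e^*$ with $c\asymp\sqrt{Np}\to\infty$, the bulk eigenvalues sit near the zeros of $\lambda\mapsto\scalar{\bm e}{(H_0-\lambda)^{-1}\bm e}$, not near the eigenvalues of $H_0$. You must take your second option, as the paper does: keep $\theta p\Smat$ in the expansion. This produces the term $p\theta\,\E[G_{\bm e\bm e}\mathcal G_k]=O(N^{-1/2}(\log N)^5)(1+\E[R_N^{k+1}])$, controlled via the bound on $G_{\bm e\bm e}$ in Lemma~\ref{lem:monotone}, together with the correction \eqref{eq:diagonal} for $h_{xx}=0$.

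Second, the local law must be used down to $\eta\ge C\log N/N$, as in Theorem~\ref{thm:lolaw}. With only $\eta\ge N^{-1+\epsilon}$, monotonicity gives $N^{\epsilon}$ rather than $\log N$ bounds at $\eta\asymp 1/N$. That is fatal at $Np=(\log N)^{2+\kappa}$.
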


The $\mathrm{Sine}_1$ process has been extensively studied in the literature \cite{Meh, forrester2010log}; it is a universal Pfaffian point process that arises as the local limit of the GOE eigenvalue process, where \eqref{muE} is defined in terms of a GOE matrix. Explicitly, Theorem \ref{thm:GOE} states that for any continuous and compactly supported function $f$, the random variable $\int f\, \dd \mu^E$ converges in distribution to a corresponding random variable where the point process $\mu^E$ is replaced with the $\mathrm{Sine}_1$ process.

We conclude this subsection with some remarks on Theorems \ref{thm:deloc} and \ref{thm:GOE}.

\begin{remark}[Optimality of the delocalization bounds] \label{rem:deloc_optimal}
As explained before \eqref{optimal_deloc}, our delocalization bound in Theorem~\ref{thm:deloc} is optimal up to the precise dependence of $C$ on $D$. In the direction of $\bm e$, we obtain the improved delocalization bound
\begin{equation*}
\langle \bm e, \bm u \rangle^2 \leq \frac{1}{Np} \frac{C \log N}{N}\,.
\end{equation*}
Such an improvement by a factor $\frac{1}{Np}$ in the $\bm e$-direction was first observed in \cite[Corollary 1.5]{he2026extremal} in the regime $Np \geq N^c$. It stems from the near-alignement of the top eigenvector $\bm u_1$ with $\bm e$, constraining all other eigenvectors to lie in the (deterministic) orthogonal complement of $\bm e$. In \cite[Theorem 6.2]{EKYY1}, it was shown\footnote{This result is proved for $Np \gg (\log N)^6$, but we expect it and its proof to carry over to the sparser regime of the current paper.} that the top eigenvector $\bm u_1$ of $H$ satisfies $\scalar{\bm e}{\bm u_1}^2 = 1 - \frac{1 + o(1)}{Np}$, which implies that the squared projection of any other eigenvector in the direction of $\bm e$ is suppressed by the factor $\frac{1}{Np}$.  In our estimate \eqref{deloc_main}, this phenomenon is captured by the non-isotropic norm \eqref{eq:normdef}. 
\end{remark}

\begin{remark}[Directed Erd\H{o}s-Rényi graphs] \label{rem:digraphs}
Our method used to prove Theorem \ref{thm:deloc} also applies to directed Erd\H{o}s-Rényi graphs.  Define
	\begin{equation*}
\widehat{H} \deq \theta \widehat{A}\,,
	\end{equation*}
where $\widehat{A}$ has i.i.d.~off-diagonal matrix elements. That is, $(\widehat{A}_{xy} : x \neq y)$ is a family of i.i.d.\ $\op{Bernoulli}(p)$ random variables, and $\widehat{A}_{xx} = 0$ for all $x$. It is well known that the asymptotic spectrum of $\wh A$ is the unit disk in $\C$ (the circular law).  Under the conditions of Theorem \ref{thm:deloc}, for any normalized left-eigenvector $\bm l$ and right-eigenvector $\bm r$ of $\wh H$ with eigenvalue $\lambda$ satisfying $\abs{\lambda} \leq 1 - \kappa$, we obtain
\begin{equation} \label{eq:directed_deloc}
\abs{\langle \bm v, \bm l \rangle}^2 + \abs{\langle \bm v, \bm r \rangle}^2 \leq \frac{C \log N}{N} \vertiii{\bm v}_{\bm e}^2
\end{equation}
	with probability at least $1 - N^{-D}$.  We refer to Remark \ref{rem:digraphs_proof} below for a proof.
\end{remark}

\begin{remark}[Assumptions on $p$] \label{rem:p_range}
The upper bound $Np \leq N^{1-\kappa}$ is made for convenience, and it can be relaxed without fundamental changes to our argument. We impose this bound to avoid extraneous complications, bearing in mind that the main focus of this paper is the sparse regime. While the optimal delocalization results from Theorem \ref{thm:deloc} and Remark \ref{rem:digraphs} are new as soon as $Np \ll 1$, the bulk universality result from Theorem \ref{thm:GOE} is new only in the very sparse regime $(\log N)^{2 + \kappa} \leq Np \leq N^{o(1)}$, since the case $Np \geq N^{o(1)}$ was previously treated in \cite{huang2015bulk}.

The fundamental assumptions on $p$ in Theorems \ref{thm:deloc} and \ref{thm:GOE} are the lower bounds. The lower bound on $p$ in Theorem \ref{thm:deloc} is an immediate consequence of the same lower bound in the local law, Theorem \ref{thm:lolaw}, of which Theorem \ref{thm:deloc} is an immediate consequence. Interestingly, although the two lower bounds, $C (\log N)^2$ and $(\log N)^{2 + \kappa}$ respectively, are almost identical, and although the proof of Theorem \ref{thm:GOE} relies on Theorem \ref{thm:lolaw}, the origins of these restrictions are very different.

The assumption $Np \geq C (\log N)^2$ in Theorems \ref{thm:deloc} and \ref{thm:lolaw} is required in many crucial steps of our proof, the most central of which is in the estimate of the Dynkin martingale in terms of its quadratic variation, where this condition is required to ensure that the jumps of the martingale are sufficiently small. Unlike in the classical Burkholder-Davis-Gundy inequality for Brownian motion, which always yields subgaussian tails, for Markov jump processes the tails can be subgaussian or subexponential, depending on the jump sizes and how far one goes into the tail. We refer to Remark \ref{rem:lowerbound_p} below for more details.

The assumption $(\log N)^{2 + \kappa}$ for Theorem \ref{thm:GOE} is unrelated to the analogous assumption in Theorem~\ref{thm:lolaw}. In fact, even if Theorem \ref{thm:lolaw} held under the weaker assumption $pN \geq C \log N$, it would not allow us to improve the lower bound on $p$ in Theorem \ref{thm:GOE}. Essentially, the latter arises from the third-order remainder term of a cumulant expansion used to verify the GOE loop equations, which balances a term $\theta \asymp \frac{1}{\sqrt{Np}}$ with a resolvent entry $G_{ab}$. Since the whole argument takes place at the microscopic spectral scale $\eta \asymp \frac{1}{N}$ and our local law controls the entries of $G$ with very high probability by $O(1)$ at the optimal scale $\eta \asymp \frac{\log N}{N}$, we have to resort to a monotonicity argument in $G$ to estimate $G_{ab}$ by $\log N$ at the microscopic scale, which leads to the error bound $\frac{\log N}{\sqrt{Np}}$. As explained in Section \ref{sec:GOE_overview} below, the actual argument is considerably more subtle and even obtaining the lower bound $(\log N)^{2 + \kappa}$ requires some care.
\end{remark}

\begin{remark}[The spectral edge] \label{rmk:edge}
We expect our optimal delocalization result in Theorem \ref{thm:deloc} to extend to the edge as well, i.e.~\eqref{deloc_main} to hold for all but the top eigenvector $\bm u_1$; cf.~Remark \ref{rem:deloc_optimal}. Moreover, we expect edge universality to hold as well, up to a random shift of the edge locations $\pm2$; see \cite{EKYY2, lee2018local, HuangLandonYau, he2021fluctuations, JLeeedge, huang2026edge} for results in the bounded-diameter regime $Np \ge N^\epsilon$ (see \eqref{diam_G} below). The missing ingredient for proving such results is a nontrivial extension of our optimal local law in Theorem \ref{thm:lolaw} to the edge, in particular also controlling concentration of the Green function around a suitable explicit random approximation, taking into account the randomly shifted edge locations. See Section \ref{subsec:edge} below for more details. 
\end{remark}

\subsection{Background} \label{sec:background}

The literature on delocalization and bulk universality in random matrix theory is vast, and we shall not attempt a complete account of past results. Instead, we review some results that are most relevant to our work on sparse random matrices. We focus on matrices with independent or approximately independent entries, such as Wigner matrices, and more specifically on sparse random matrices. Since the independence structure is tied to the standard basis of $\R^N$, there is an important distinction in establishing delocalization with respect to the standard basis, referred to as \emph{entrywise delocalization}, and delocalization with respect to any orthonormal basis, referred to as \emph{isotropic delocalization} (recall that \eqref{def_q} depends on the choice of basis).

For \emph{Wigner matrices}, i.e.\ the dense case, delocalization in its strongest form -- isotropic with optimal bounds -- has been established under rather general assumptions on the entries. Historically, \emph{entrywise delocalization} of the form $q(\bm u) \leq \frac{(\log N)^C}{N}$ for some constant $C > 0$ was first achieved in the seminal works \cite{ESY1, ESY2, ESY3}, subsequently extended to the edge and more general entry distributions in \cite{TV1, TV2, EYY1, EYY2, EYY3, Agg16}. \emph{Optimal entrywise delocalization} was first established in \cite{vu2015random, o2016eigenvectors}, with near-optimal probability bounds and Gumbel fluctuations of $q(\bm u)$ obtained in \cite{osman2026}. The first result on \emph{isotropic delocalization} was \cite{knowles2013isotropic}, subsequently generalized in \cite{BEKYY, HKR}. \emph{Optimal isotropic delocalization} was finally proved in \cite{benigni2022optimal}, which even obtained the optimal constant $C$ in \eqref{optimal_deloc}. In addition, delocalization has been  studied for (the left- and right-eigenvectors of) non-Hermitian matrices as well, where all $N^2$ entries of the matrix are independent. \emph{Entrywise delocalization} for non-Hermitian matrices was first established in \cite{rudelson2015delocalization}, subsequently generalized in \cite{campbell2025spectral, alt2021spectral, alt2021inhomogeneous}. \emph{Optimal isotropic delocalization} for non-Hermitian matrices was recently established in \cite{cipolloni2025optimal}.

For \emph{sparse matrices} like the adjacency matrix of $\bb G(N,p)$, delocalization breaks down if the matrix is sufficiently sparse. Indeed, the degrees of $\bb G$ concentrate around their expectation with high probability if and only if $Np \gg \log N$. If $Np \lesssim \log N$, the degrees can fluctuate wildly, and in particular $\bb G$ can have isolated vertices, which leads trivially to localized eigenvectors. In fact, as explained below, a more subtle localization mechanism gives rise to localized eigenvectors even in the giant component of $\bb G$. The question of whether eigenvector delocalization holds for $\bb G(N,p)$\footnote{This question was originally asked in \cite[Section 3.3]{dekel2011eigenvectors}.} has seen some attention in recent years. The first delocalization result was \cite{tran2013sparse}, subsequently improved in \cite{dumitriu2019sparse}, where entrywise delocalization of the form $q(\bm u) \leq \frac{C \log N}{Np}$ was proved for $Np \geq C \log N$. Strong entrywise delocalization of the form $q(\bm u) \leq \frac{(\log N)^C}{N}$ was proved in \cite{EKYY1} down to $Np \geq C (\log N)^6$. Strong entrywise delocalization down to the critical regime $Np \geq C \log N$ was first achieved \cite{HeKnowlesMarcozzi2018}, with a bound of the form $q(\bm u) \leq \frac{N^\epsilon}{N}$, which was subsequently generalized in \cite{tikhomirov2022local, he2026sparse}. Isotropic delocalization was established only very recently in \cite{he2026extremal}, in the regime $Np \geq N^\epsilon$ with bound $q(\bm u) \leq \frac{N^\epsilon}{N}$. The critical and subcritical regimes $Np \lesssim \log N$ were analysed in detail in a series of papers \cite{ADK20, ADK_delocalized, ADK19, ADK21, alt2024localized}, which uncovered a phase diagram describing the coexistence of delocalized and localized eigenvectors as soon as $Np \lesssim \log N$. In particular, in the regime $Np \lesssim \log N$, localized eigenvectors for the giant component appear through two separate mechanisms emerging from the lack of concentration of the vertex degrees: vectors supported on bounded tuning fork subgraphs, and exponentially decaying vectors concentrated around vertices of high degree. In \cite{ADK20, ADK_delocalized}, the optimal region of delocalization in terms of the energy and the critical density $\frac{Np}{\log N}$ was determined.

Bulk universality for Wigner matrices was established in the landmark series of papers \cite{ESY1, ESY2, ESY3, ESY4, ESY5, EPRSY, EPRSTY, ESYY, EYY1, EYY2, EYY3} as well as in \cite{TV1, TV2}, and later gradually extended to dense mean field ensembles of greater generality: for Wigner matrices with diagonal \cite{lee2016bulk} and non-diagonal deformations \cite{knowles2017anisotropic, HKR}, Wigner-type ensembles with not necessarily identically distributed but still independent entries \cite{univWigtype}, and random matrices with weakly correlated entries \cite{ajanki2019stability, slowcorr}.  For sparse matrices, bulk universality was proved for $Np \gg N^{2/3}$ in \cite{EKYY1, EKYY2} and extended to $Np \geq N^\epsilon$ in \cite{huang2015bulk}. We also mention bulk universality results for $d$-regular graphs \cite{bauerschmidt2017bulk, BourHua2026} and random band matrices \cite{bourgade2017universality, bourgade2020random, dubova2026quantum, yau2025delocalization, RBM, dubova2025delocalization2, dubova2025delocalization3}. Finally, bulk universality has been established for invariant $\beta$-ensembles (see e.g.\ \cite{bekerman2015transport, bourgade2014universality, deift1999uniform, pastur2008bulk, shcherbina2014change, valko2009continuum}).

We now outline how our work improves on the previous works summarized above.

\begin{enumerate}[label=(\roman*)]
\item
Optimal delocalization in the bulk for sparse random matrices down to $Np \gg (\log N)^2$. Previously, optimal delocalization was known in the dense regime $p \asymp 1$ \cite{benigni2022optimal, cipolloni2025optimal}.
\item \label{itm:new2}
Isotropic delocalization in the bulk down to $Np \gg (\log N)^2$. Previously, isotropic delocalization was known down to $Np \geq N^\epsilon$ \cite{he2026extremal}. In addition, we improve the bound $N^\epsilon$ from \cite{he2026extremal} to the optimal scale $C \log N$.
\item \label{itm:new3}
Bulk universality (GOE local spectral statistics) down to $Np \gg (\log N)^2$. Previously, bulk universality was known down to $Np \geq N^\epsilon$ \cite{huang2015bulk}.
\end{enumerate}

We remark that going from the regime $Np \geq N^\epsilon$ down to $Np \gg (\log N)^2$ in \ref{itm:new2} and \ref{itm:new3} presents a serious hurdle. From a conceptual point of view, this is apparent in the behaviour of the diameter\footnote{Following \cite{ChungLu_diameter}, we define the diameter of $\bb G$ to be the maximal distance between pairs of vertices in the largest connected component of $\bb G$.} of $\bb G$, which is with high probability
\begin{equation} \label{diam_G}
\diam(\bb G) = \frac{\log N}{\log (Np)} (1 + o(1)) + O(1)
\end{equation}
whenever $Np \gg 1$; see \cite[Chapter 10]{Bol01}, \cite{ChungLu_diameter}, and the references therein. From \eqref{diam_G} we deduce that if $Np \geq N^\epsilon$ then $\diam(\bb G) = O(1)$. This is therefore the bounded-diameter regime. In it, the diameter has the same qualitative behaviour as in the dense case $p \asymp 1$. In contrast, for $Np = (\log N)^\kappa$ we have $\diam(\bb G) \sim \frac{\log N}{\kappa \log \log N}$.
In particular, our results apply to graphs $\bb G$ whose diameter is half of that of the maximal diameter $\frac{\log N}{\log \log N} $ under which delocalization can hold in the bulk. From a technical point of view, the condition $Np \geq N^\epsilon$ plays a fundamental role in many previous works, in particular in high-moment cumulant expansions, which become notoriously unwieldy if this condition is relaxed.

\subsection{Overview of the proof} \label{subsec:overview}

The key tool behind our proof is an \emph{optimal local law} for $H$, formulated in Theorem \ref{thm:lolaw} below. It provides control of the resolvent $G(z) \deq (H - z)^{-1}$ down to spectral scales $\im z \gtrsim \frac{\log N}{N}$. Our method is inspired by the method of characteristics originally due to Pastur \cite{pastur1972spectrum}, more recently revived by Lee and Schnelli \cite{lee2015edge} and von Soosten and Warzel \cite{VSW2019}, followed by many other developments\footnote{The method of characteristics has been successfully applied in the study of Dyson Brownian motion \cite{huang2019rigidity, adhikari2020dyson, adhikari2023local, aggarwal2024edge} as well as various matrix models  \cite{bourgade2021extreme,
		landon2022almost, landon2024single}. It has also proved useful in recent progress on random band matrices  \cite{dubova2026quantum, yau2025delocalization, yang2025delocalizationedge, dubova2025delocalization2, dubova2025delocalization3, RBM}
		and related random block models \cite{yang2025delocalizationRBSO, truong2025localization, fan2025localization}.}.   The method of characteristics combines a \emph{Brownian process} for the matrix $H(t)$ and a \emph{characteristic flow} for the spectral parameter $z_t$, given by
\begin{equation} \label{Brownflow}
\dd H(t) = \frac{\dd B(t)}{\sqrt{N}}\,, \qquad \partial_t z_t = - \ang{M_t(z_t)}\,,
\end{equation}
where $B(t)$ is standard Brownian motion on the space of real symmetric matrices\footnote{By convention, the diffusion constant is $2$ for the diagonal entries and $1$ and for the off-diagonal entries, so that $B(t)$ is a GOE matrix for any $t > 0$.} and $M_t(z)$ is a suitably chosen deterministic matrix that is supposed to approximate $G_t(z) \deq (H(t) - z)^{-1}$. From Itô's formula we get
\begin{equation} \label{brownian_ito}
\dd \ang{G_t(z)} = \ang{G_t(z)} \ang{G_t(z)^2} \, \dd t + \frac{1}{N} \ang{G_t(z)^3} \, \dd t- \frac{1}{\sqrt{N}} \ang{G_t(z)^2 \, \dd B_t}\,.
\end{equation}
Since
\begin{equation*}
\dd \ang{G(z_t)} = - \ang{M_t(z_t)} \ang{G(z_t)^2} \, \dd t\,,
\end{equation*}
and $M_t(z_t)$ is constructed to be constant in time\footnote{This condition arises by requiring the resulting combined evolution equation to have the simple form \eqref{dG_ito}. As we shall see in \eqref{M_z_intro1} below, for the Bernoulli flow, $M_t(z_t)$ is not constant in time.}, we therefore conclude that
\begin{equation} \label{dG_ito}
\dd \ang{G_t(z_t) - M_t(z_t)} = (\ang{G_t(z_t)} - \ang{M_t(z_t)}) \ang{G_t(z_t)^2} \, \dd t + \frac{1}{N} \ang{G_t(z_t)^3} \, \dd t- \frac{1}{\sqrt{N}} \ang{G_t(z_t)^2 \, \dd B_t}\,.
\end{equation}
The key observation of the method of characteristics is that the right-hand side is typically small: the first term exploits the cancellation between $G_t$ and $M_t$ (which has to be established self-consistently along the flow using a Duhamel argument together with Grönwall's lemma; see Section \ref{subsec:edge} below for more details), the second term is typically negligible (and in fact is absent if $B(t)$ is a complex Hermitian Brownian motion), and the third term is a martingale that can be estimated using standard martingale concentration estimates. In this way, one concludes that $G_T(z_T) \approx G_0(z_0)$, where $T$ is chosen so that $H_T$ is the random matrix one wishes to study.

The fundamental reason why this method is so useful to investigate local spectral properties of random matrices is that under the characteristic flow, $z_t$ flows towards the real axis (because of the positive imaginary part of $\ang{M_t(z_t)}$ in the bulk of the spectrum). This means that one can start the flow from $\im z_0 \asymp 1$ and reach small spectral scales $\im z_T$, thus obtaining fine spectral data starting from rough spectral data, at the expense of adding a Gaussian component. 

 This also highlights the major drawback of the method of characteristics: because $H_T \eqdist H_0 + \sqrt{T}V$, where $V$ is an independent copy of a GOE matrix, it inevitably yields a Gaussian component in $H_T$, no matter the distribution of $H_0$. Unless the matrix ensemble under consideration is Gaussian divisible, this Gaussian component has to be removed by a perturbative comparison argument. Such arguments are typically performed at fixed spectral parameter $z$, by high-order cumulant expansions relying on the closeness of the moments of the distributions to be compared. The closer, in the sense of moments, the target distribution $H_T$ is to a Gaussian random matrix, the easier the removal of the Gaussian component.
This comparison argument does not have to be done in a single step: in \cite{CES24} the authors develop a careful splitting of the comparison argument into multiple steps, coined the zigzag strategy\footnote{The general version of the zigzag strategy was introduced in~\cite{cipolloni2023eigenstate, OTOC}, the latter also coining the term. Since then, it has been extensively applied to increasingly general random matrix models \cite{cipolloni2023universality, cipolloni2024eigenvector, WigTypeETH, cuspuniv, corrETH, smallrand, RBM}.}, which balances the size of the perturbation step (the zag step) with the spectral resolution $\im z_t$ at a given instant $t$ in the characteristic flow (the zig step).

If the target random matrix $H_T$ is too far from the space of Gaussian divisible matrices, the comparison argument cannot be performed, and hence Pastur's method of characteristics is not applicable. The adjacency matrix of a sufficiently sparse Erd\H{o}s-Rényi graph is a good example of such a matrix, since the moments of its entries are much larger than those of Wigner matrices.

The key idea of our proof is to adapt Pastur's original insight by replacing the Brownian process with a different stochastic process, so that no comparison step is required. Since our target entry distribution is Bernoulli, we choose a stochastic process whose distribution at time $t > 0$ is precisely Bernoulli. We note, however, that this idea can also be applied to different target distributions. In particular, it is also applicable to other ensembles of random matrices and random graphs \cite{HKupcoming}.  We call the resulting object the \emph{Bernoulli flow}. It consists of two ingredients.
\begin{enumerate}[label=(\roman*)]
\item
The \emph{Bernoulli process} $(X(t))_{0 \leq t \leq T}$, a Markov jump process on the space of adjacency matrices.
\item
The \emph{Bernoulli characteristic flow} $(z_t)_{0 \leq t \leq T}$, a deterministic matrix-valued flow for the spectral parameter.
\end{enumerate}
Under the Bernoulli process, each entry $X_{xy}(t)$ with $x < y$ starts from zero and jumps to one with unit rate, independently of the others. Hence, at time $t > 0$ the matrix $X(t)$ is equal in distribution to the adjacency matrix of an Erd\H{o}s-Rényi graph with edge probability $p_t \deq 1 - \ee^{-t}$. Explicitly, the generator of the Bernoulli process is
\begin{equation*}
\Lcal\phi (X) \deq \sum_{x< y}\big( \phi(X^{xy, 1}) - \phi(X) \big)\,,
\end{equation*}
where $X^{xy,1}$ denotes the matrix obtained from $X$ by setting the $xy$-entry to $1$.

 We refer to Figures \ref{fig:eigenvalues} and \ref{fig:eigenvectors} for an illustration of the Bernoulli process. From Figure \ref{fig:eigenvectors} it is apparent that, no matter the terminal time $T$, the Bernoulli process has to pass through a complex landscape of localized and partially localized eigenvectors. This region has been partially analysed in \cite{ADK20, ADK_delocalized, ADK19, ADK21, alt2024localized}. The reason we are nevertheless able to analyse the flow through this region and obtain optimal control at the terminal time is that, as the flow passes through the partially localized region, the spectral resolution $\im z_t$ is sufficiently large, which leads to a spectral averaging that washes out singularities arising from localized eigenvectors. The situation for the Brownian flow is very different, since all eigenvectors are uniformly distributed on the unit sphere for all positive times.

\begin{figure}[!ht]
\begin{center}
\includegraphics[width=0.485\textwidth]{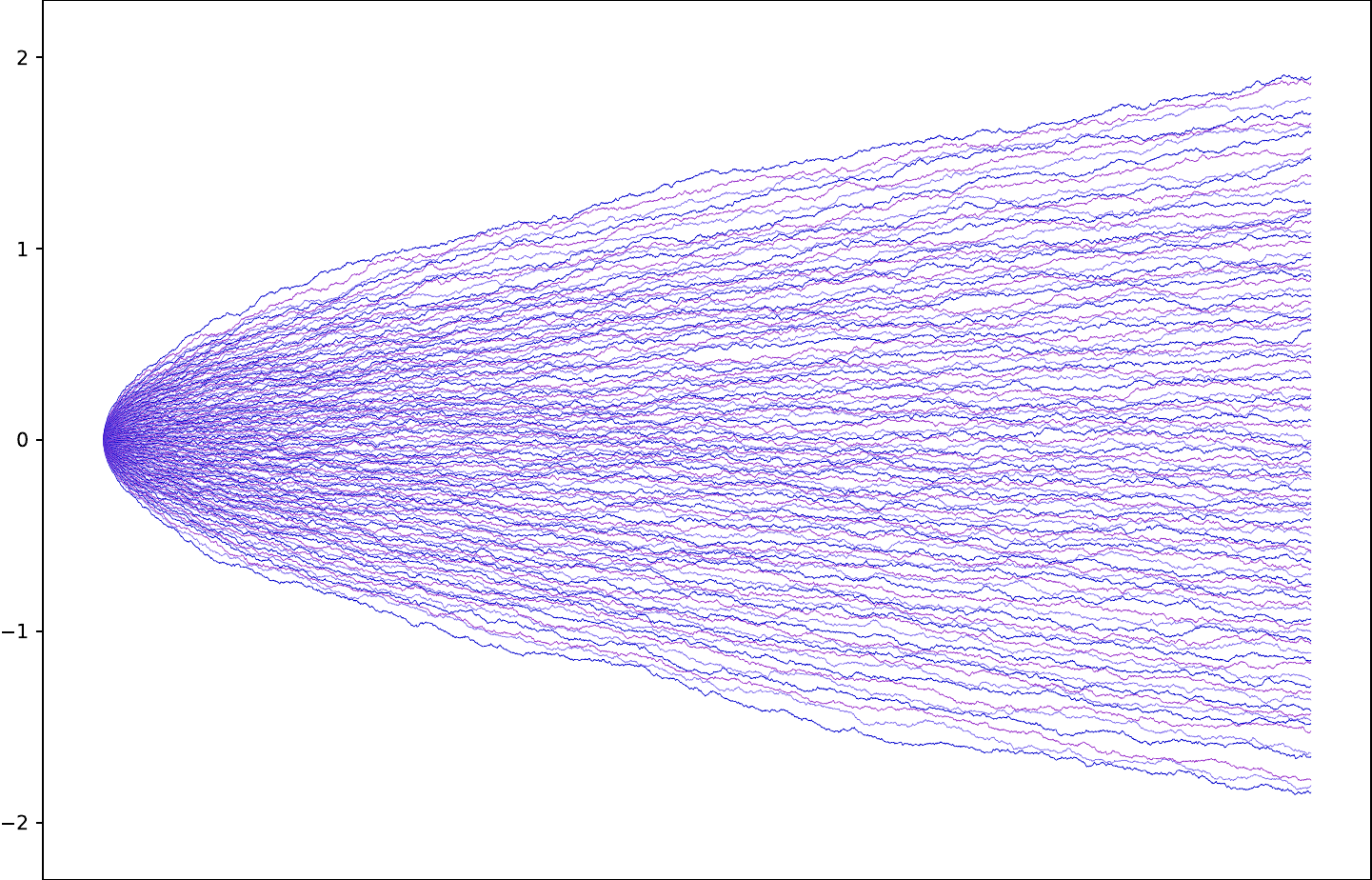}
\hspace{0.5em}
\includegraphics[width=0.485\textwidth]{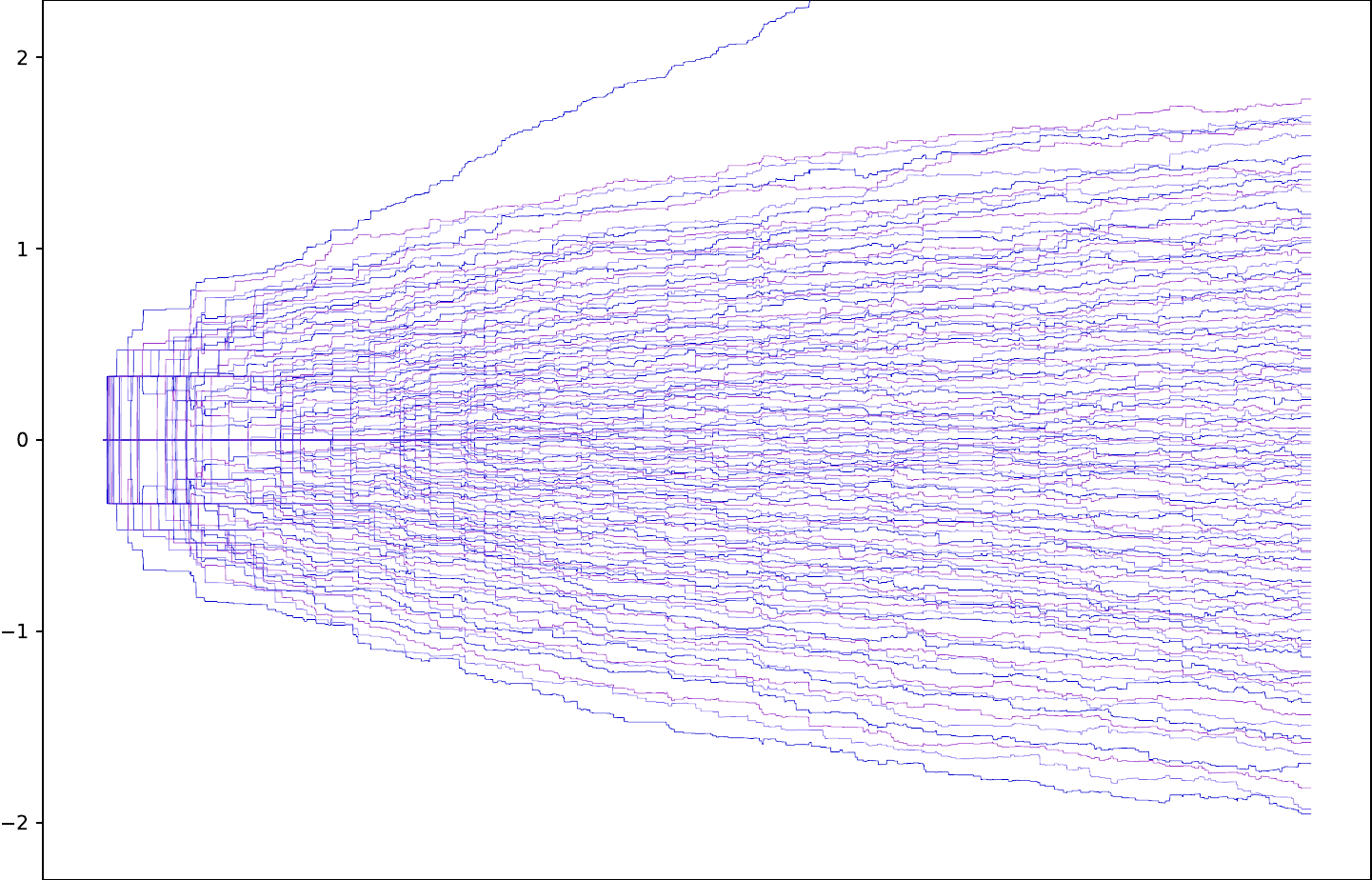}
\end{center}
\caption{An illustration of the eigenvalue process of $H(t)$ for the Brownian process (left) and the Bernoulli process (right). Here, $N = 100$ and the matrix is normalized so that at the final time $T$ the asymptotic spectrum is the interval $[-2,2]$. We choose $H_0 = 0$ in each case. For the Bernoulli flow, we choose $T$ so that $p_T = 1 - \ee^{-T} = \frac{(\log N)^2}{2N}$. The three colours used for the lines have no meaning; they serve as a visual distinction of neighbouring paths. The top eigenvalue for the Bernoulli process is the Perron-Frobenius eigenvalue arising from the nonzero expectation of the matrix.
\label{fig:eigenvalues}}
\end{figure}

\begin{figure}[!ht]
\begin{center}
%% Creator: Inkscape 1.4.2 (ebf0e940, 2025-05-08), www.inkscape.org
%% PDF/EPS/PS + LaTeX output extension by Johan Engelen, 2010
%% Accompanies image file 'eigenvectors.pdf' (pdf, eps, ps)
%%
%% To include the image in your LaTeX document, write
%%   \input{<filename>.pdf_tex}
%%  instead of
%%   \includegraphics{<filename>.pdf}
%% To scale the image, write
%%   \def\svgwidth{<desired width>}
%%   \input{<filename>.pdf_tex}
%%  instead of
%%   \includegraphics[width=<desired width>]{<filename>.pdf}
%%
%% Images with a different path to the parent latex file can
%% be accessed with the `import' package (which may need to be
%% installed) using
%%   \usepackage{import}
%% in the preamble, and then including the image with
%%   \import{<path to file>}{<filename>.pdf_tex}
%% Alternatively, one can specify
%%   \graphicspath{{<path to file>/}}
%% 
%% For more information, please see info/svg-inkscape on CTAN:
%%   http://tug.ctan.org/tex-archive/info/svg-inkscape
%%
\begingroup%
  \makeatletter%
  \providecommand\color[2][]{%
    \errmessage{(Inkscape) Color is used for the text in Inkscape, but the package 'color.sty' is not loaded}%
    \renewcommand\color[2][]{}%
  }%
  \providecommand\transparent[1]{%
    \errmessage{(Inkscape) Transparency is used (non-zero) for the text in Inkscape, but the package 'transparent.sty' is not loaded}%
    \renewcommand\transparent[1]{}%
  }%
  \providecommand\rotatebox[2]{#2}%
  \newcommand*\fsize{\dimexpr\f@size pt\relax}%
  \newcommand*\lineheight[1]{\fontsize{\fsize}{#1\fsize}\selectfont}%
  \ifx\svgwidth\undefined%
    \setlength{\unitlength}{463.43170526bp}%
    \ifx\svgscale\undefined%
      \relax%
    \else%
      \setlength{\unitlength}{\unitlength * \real{\svgscale}}%
    \fi%
  \else%
    \setlength{\unitlength}{\svgwidth}%
  \fi%
  \global\let\svgwidth\undefined%
  \global\let\svgscale\undefined%
  \makeatother%
  \begin{picture}(1,0.54096354)%
    \lineheight{1}%
    \setlength\tabcolsep{0pt}%
    \put(0,0){\includegraphics[width=\unitlength,page=1]{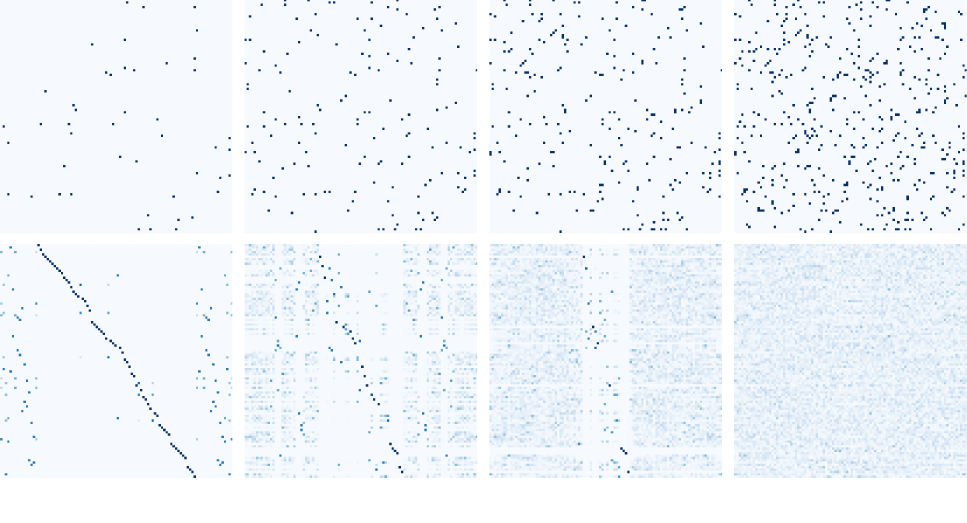}}%
    \put(0.12017097,0.00389839){\color[rgb]{0,0,0}\makebox(0,0)[t]{\lineheight{1.29999995}\smash{\begin{tabular}[t]{c}$t = 0.1 \, T$\end{tabular}}}}%
    \put(0.37334815,0.00389839){\color[rgb]{0,0,0}\makebox(0,0)[t]{\lineheight{1.29999995}\smash{\begin{tabular}[t]{c}$t = 0.3 \, T$\end{tabular}}}}%
    \put(0.62652546,0.00389839){\color[rgb]{0,0,0}\makebox(0,0)[t]{\lineheight{1.29999995}\smash{\begin{tabular}[t]{c}$t = 0.5 \, T$\end{tabular}}}}%
    \put(0.87970268,0.00389839){\color[rgb]{0,0,0}\makebox(0,0)[t]{\lineheight{1.29999995}\smash{\begin{tabular}[t]{c}$t = T$\end{tabular}}}}%
  \end{picture}%
\endgroup%

\end{center}
\caption{
A simulation of the Bernoulli process (top) and its matrix of eigenvectors (bottom), at the four indicated times, where $T = \frac{\log N}{N}$ and $N = 100$. The eigenvectors form the columns of the bottom matrix, ordered according to the values of the associated eigenvalues. Each matrix is plotted as a colour map. Note the prevalence of localized eigenvectors for small times, which are progressively replaced with delocalized eigenvectors. In contrast, for the Brownian process, at any positive time $t > 0$ the matrix of eigenvectors is uniformly distributed on the orthogonal group.
\label{fig:eigenvectors}}
\end{figure}

Replacing the Brownian flow with the Bernoulli flow means that the Itô formula \eqref{brownian_ito} is not available\footnote{Although the Itô formula has a standard extension to jump processes, it boils down to the tautological expression of the process as a telescopic sum over its jumps, which, unlike \eqref{brownian_ito}, is not useful for our purposes.}. We therefore need to change the setup of the argument. Our starting point is that, for any time-dependent observable $\phi_t$, the process
\begin{equation} \label{Dynkin}
\cal M_t \deq \phi_t(X(t)) - \phi_0(X(0)) - \int_0^t \dd s \, \pb{\cal L \phi_s(X(s)) + \partial_s \phi_s(X(s))}
\end{equation}
is a martingale. We apply this observation to the observables
\begin{subequations}
\label{observables_phi}
\begin{align}
\phi_t(X) &= \scalarb{\bm v}{\pb{(\theta X - z_t)^{-1} - M_t(z_t)} \bm w}
\\ \label{observables_phi2}
\phi_t(X) &= \ang{(\theta X - z_t)^{-1} - M_t(z_t)}\,.
\end{align}
\end{subequations}
The goal of the argument is to show that for $T \asymp p$ we have
\begin{equation} \label{X_T_approx_X_0}
\phi_T(X(T)) \approx \phi_0(X(0))  = 0\,,
\end{equation}
where the left-hand side characterizes the resolvent of the adjacency matrix $X(T)$ at small spectral scales $z_T$, while the right-hand side is a trivial function of the empty graph $X(0)$,  which vanishes by construction of $M_0$. We do this by showing that the martingale $\cal M_t$ is approximately zero for all times, and the integral on the right-hand side of \eqref{Dynkin} is small.

More precisely, the proof of \eqref{X_T_approx_X_0} consists of three main ingredients.
\begin{enumerate}
\item
Estimate the \emph{generator terms}
\begin{equation*}
\cal L \phi_s(X(s)) + \partial_s \phi_s(X(s))\,,
\end{equation*}
by exploiting a cancellation between the two terms, which in particular dictates the choice of the Bernoulli characteristic flow $z_t$. This cancellation is analogous to the cancellation in \eqref{dG_ito}.
\item
Estimate the \emph{jump sizes}
\begin{equation*}
\abs{\cal M_t - \cal M_{t -}}\,.
\end{equation*}
\item
Estimate the \emph{predictable quadratic variation}\footnote{We refer to Appendix \ref{app:CDC} for a few basic facts about Markov jump processes.}
\begin{equation*}
\ang{\cal M}_t = \int_0^t \dd s \, \sum_y \cal L(X(s), y) \, \abs{\phi_s(y) - \phi_s(X(s))}^2\,.
\end{equation*}
\end{enumerate}

We now sketch the proof of Step 1 and explain how the Bernoulli characteristic flow arises. For concreteness, we choose $\phi_t(X)$ as in \eqref{observables_phi2}. With the notation $G_t \equiv G_t(z_t) \deq (\theta X(t) - z_t)^{-1}$, we use a resolvent expansion to find
\begin{equation} \label{LG_outline}
\cal L \ang{G_t} = - \frac{\theta}{N} \sum_{x \neq y} (1 - X_{xy}(t)) (G^2_t)_{xy} + \frac{\theta^2}{N} \sum_{x \neq y} (1 - X_{xy}(t)) (G^2_t)_{xx} (G_t)_{yy} + \cdots\,,
\end{equation}
where $\dots$ denotes error terms that we ignore here. Here we used that $X_{xy}(t) \in \{0,1\}$. For the first term of \eqref{LG_outline}, we abbreviate $S_{xy} \deq \ind{x \neq y}$ and use the splitting $\theta S_{xy} - \theta X_{xy}(t) = (\theta S_{xy} - z_{t,xy}) - (\theta X_{xy}(t) - z_{t,xy})$. For the second term of \eqref{LG_outline}, we approximate $1 - X_{xy}(t)$ with its expectation, $1 - p_t$, using the averaging property of the sum over $x,y$. (Note, however, that since $(G^2_t)_{xx} (G_t)_{yy}$ is not independent of the family $X_{xy}(t)$, this averaging property is nontrivial and requires care.) This yields
\begin{equation}
\cal L \ang{G_t} = - \ang{G_t (\theta S - z_t) G_t} + \ang{G_t} + \frac{1 - p_t}{(1 - p)p} \ang{G_t^2} \ang{G_t} + \cdots\,,
\end{equation}
where in the second term we also neglected the contribution of the diagonal terms $x = y$. Hence, the generator terms are given by
\begin{multline*}
\cal L \phi_t(X(t)) + \partial_t \phi_t(X(t))
\\
=
- \ang{G_t (\theta S - z_t) G_t} + \ang{G_t} + \frac{1 - p_t}{(1 - p)p} \ang{G_t^2} \ang{G_t}
+ \ang{G_t [\partial_t z_t] G_t}
- \partial_t  \ang{M_t(z_t)}
+ \cdots\,.
\end{multline*}
In order to close the argument, we have to express the right-hand side in terms of some propagator term times the original observable $\phi_t(X(t)) = \ang{G_t(z_t) - M_t(z_t)}$ itself. This leads to the conditions of the \emph{Bernoulli characteristic flow}
\begin{subequations} \label{M_z_intro}
\begin{align} \label{M_z_intro1}
\partial_t M_t(z_t) &= M_t(z_t)
\\
\partial_t z_t &= \theta S - z_t - \frac{1 - p_t}{(1 - p)p} \ang{M_t(z_t)}\,,
\end{align}
\end{subequations}
under which we have
\begin{equation} \label{L_cancel_intro}
\cal L \phi_t + \partial_t \phi_t = \pbb{1 + \frac{1 - p_t}{(1 - p)p} \ang{G_t^2}} \phi_t + \cdots\,.
\end{equation}
The conditions \eqref{M_z_intro}, together with the terminal value $z_T = z \mathbf{1}$, uniquely characterize $M_t$ and $z_t$ as the solution of a time-dependent matrix Dyson equation, \eqref{eq:timeMDE} below.

Steps 2 and 3 are also proven by resolvent expansion, which we do not go into in this section. They are combined to show $\cal M_t \approx 0$, using a standard martingale concentration estimate, which is a Burkholder-Davis-Gundy-type inequality for jump processes from \eqref{BDG} below. It states that
\begin{equation} \label{BDG_intro}
\proba{\exists t \geq 0 :  |\Mart_t| \geq a, \langle \Mart \rangle_t \leq b^2} \leq 4 \exp{\left[-\frac{a^2}{2(aK + b^2)}\right]}\,,
\end{equation}
where $K$ is an upper bound for the jump sizes of $\cal M_t$. The estimate \eqref{BDG_intro} is in general sharp. We note the transition from subgaussian tails for $aK \lesssim B^2$ to subexponential tails for $aK \gtrsim b^2$, as in for instance Bennett's inequality. In particular, for continuous martingales ($K = 0$), such as Brownian motion, the tails are always subgaussian. In order to obtain estimates with very high probability, we require the exponent on the right-hand side of \eqref{BDG_intro} to be at least $D \log N$. This places a restriction on $a$, $b$, and $K$, which is one of the main reasons why our local law requires the lower bound $Np \gtrsim (\log N)^2$. This restriction arises in the regime of subexponential tails. See Remark \ref{rem:lowerbound_p} below for a more details discussion.

Once $\cal M_t \approx 0$ is known, we can plug \eqref{L_cancel_intro} into \eqref{Dynkin} to obtain
\begin{equation} \label{phi_t_sc_intro}
\phi_t = \int_0^t \dd s \, \pbb{1 + \frac{1 - p_s}{(1 - p)p} \ang{G_s^2}} \phi_s + \cal E(t)\,,
\end{equation}
where $\cal E$ is a collective error term. This can be solved using a (stochastic) Grönwall inequality to show that $\phi_T$ is small, as desired.

It turns out that the full isotropic local law with optimal error bounds requires a priori bounds from an entrywise local law. Thus, we first prove a combined entrywise and average local law. One central error term arising from the resolvent expansion (see \eqref{eq:lambda0MM} below) cannot be controlled with sufficient accuracy in this framework, however. Instead, it has to be handled using isotropic resummation, using that $\absb{\sum_x G_{ax}}$ is typically much smaller than $\sum_{x} \abs{G_{ax}}$. Because of this, our entrywise local law tracks resolvent entries in the standard basis as well as the special vector $\bm e$, which allows us to bootstrap the isotropic resummation in \eqref{eq:lambda0MM}.

As is already apparent from the stochastic propagator in \eqref{phi_t_sc_intro}, in order to control first-order observables such as \eqref{observables_phi2}, we need control on second-order observables depending on $G^2$. In fact, the (customary) trivial bound $|\langle G^2 \rangle | \le |\langle \im G \rangle|/\eta$ is not affordable as a bound on the propagator, due to an additional $p$-dependent error term in the local laws (see Section \ref{subsec:edge} below for a detailed discussion). Hence, we have to perform the entire flow argument not just for the first-order observables \eqref{observables_phi} but for their second-order versions, where $(\theta X - z_t)^{-1}$ is replaced with $(\theta X - z_t)^{-2}$. Together, the first- and second-order observables form a system whose flow equations are closed up to error terms that can be estimated in terms of these observables\footnote{The first paper to use second-order observables in the proof of a single resolvent local law using the method of characteristics is \cite{cipolloni2025maximum}.}. We note that, a posteriori, second-order observables can be easily controlled in terms of first-order quantities using Cauchy's integral formula $G^2(z) = \frac{1}{2 \pi \ii} \oint \frac{G(w)}{(w - z)^2} \, \dd w$, but because our flow argument operators at a fixed time-dependent spectral parameter $z_t$, such non-local combinations of $G$ are not available.

In fact, the strategy outlined above actually does not quite work, because the essentially optimal bound on the jump sizes in Step 2 is too large for \eqref{BDG_intro} to be effective. The reason is that, in proving the isotropic (or entrywise) local law using the observable $\scalar{\bm v}{(G - M) \bm w}$, most jumps are small but a few are large. This largeness arises from the appearance of too many diagonal entries of $G$ in the resolvent expansion. Hence, we in fact have to modify the Bernoulli flow, depending on the choice of the vectors $\bm v$ and $\bm w$, whereby a suitably chosen set of edges, whose switching would yield a too large jump, is \emph{frozen} in the dynamics. Throughout the flow, we then switch them on \emph{statically} as appropriate, using a resolvent expansion. This set of edges has to be sufficiently small to ensure that we can still control the error arising from switching them on. For the entrywise case, where we control $G_{ab} - M_{ab}$, we simply freeze the single edge $ab$. For general $\bm v, \bm w$, the set is constructed at the beginning of Section \ref{sec:isolaw} below.

\subsection*{Acknowledgments} We are grateful to Amirali Hannani and Hong-Quan Tran for numerous insightful discussions. We would like to thank Paul Bourgade and Jiaoyang Huang for sharing their preprint \cite{BourHua2026} with us and for helpful explanations of their result. We acknowledge funding from the European Research Council (ERC) and the Swiss State Secretariat for Education, Research and Innovation (SERI) through the consolidator grant ProbQuant, as well as funding from the Swiss National Science Foundation through the NCCR SwissMAP grant.

\section{Local law and proof of Theorem \ref{thm:deloc}}
The proof of Theorem \ref{thm:deloc} is based on a local law -- a concentration estimate for the resolvent of the adjacency matrix,
\begin{equation}
G(z) \deq (H- z)^{-1} \,, \quad z \in \C \setminus \R \,. 
\end{equation}
The deterministic approximation of $G(z)$ is given by the unique solution $M(z)$ of the \emph{Matrix Dyson Equation} (MDE) \cite{erdos2019matrix, ajanki2019stability}
\begin{equation} \label{eq:MDE}
- \frac{1}{M(z)} = z - p \theta \Smat + \langle M(z) \rangle \quad \text{with} \quad \im M(z) \, \im z > 0 \,, 
\end{equation}
where $\Smat$ denotes the adjacency matrix of the complete graph, which has entries
\begin{equation*}
\Smat_{xy} \deq 1 - \delta_{xy}\,.
\end{equation*}
The solution $M(z)$ of the MDE \eqref{eq:MDE} can be thought of being close to the identity times the Stieltjes transform of the semicircle law, up to a small shift and a low-rank perturbation; see Remark~\ref{rem:simplified_sc} below. However, this approximation is in general not affordable within the sharp bounds of Theorem~\ref{thm:lolaw} below\footnote{In Remark \ref{rem:simplified_sc} below, we give a simplified version of our local law that is expressed in terms of the semicircle law.}. The solution $M(z)$ satisfies the following estimates, which are proved in Appendix \ref{app:technical}. 
\begin{lemma}[Estimates on $M$] \label{lem:Mprop}
For $|z| \lesssim 1$ and $\bm v, \bm w \in \R^N$, we have
\begin{equation} \label{eq:Mprop}
|M_{\bm v \bm w}(z)| \asymp |\langle \bm v, (\mathbf{1} - \Pi_{\bm e})\bm w \rangle| + \frac{|\langle \bm v, \Pi_{\bm e} \bm w \rangle|}{\sqrt{Np}} \,. 
\end{equation}
Moreover,
\begin{equation} \label{eq:ImMprop}
|(\im M(z))_{\bm v \bm w}| \lesssim  |\langle \bm v, (\mathbf{1} - \Pi_{\bm e})\bm w \rangle| + \frac{|\langle \bm v, \Pi_{\bm e} \bm w \rangle|}{Np} \lesssim \vertiii{\bm v}_{\bm e} \, \vertiii{\bm w}_{\bm e} \,. 
\end{equation}
\end{lemma}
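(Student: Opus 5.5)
The plan is to solve the MDE \eqref{eq:MDE} explicitly, using that $\Smat$ is diagonal in the decomposition $\R^N = \operatorname{span}(\bm e) \oplus \bm e^\perp$. Since $\Smat = N \Pi_{\bm e} - \mathbf 1$, the right-hand side of \eqref{eq:MDE} is a linear combination of $\Pi_{\bm e}$ and $\mathbf 1 - \Pi_{\bm e}$. By uniqueness, so is $M$:
\begin{equation*}
M(z) = m_1(z)\,(\mathbf 1 - \Pi_{\bm e}) + m_2(z)\, \Pi_{\bm e}\,, \qquad a \deq \ang{M} = \tfrac{N-1}{N} m_1 + \tfrac1N m_2\,.
\end{equation*}
The MDE then reduces to the scalar system
\begin{equation*}
-\frac{1}{m_1} = z + p\theta + a\,, \qquad -\frac{1}{m_2} = z - p\theta(N-1) + a\,,
\end{equation*}
with $\im m_i\, \im z > 0$. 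The key size facts are $p\theta = \sqrt{p/(N(1-p))} \leq N^{-1/2}$ and $p\theta (N-1) \asymp \sqrt{Np} \gg 1$.

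Next I would show $|m_1| \asymp 1$ and $|m_2| \asymp (Np)^{-1/2}$ for $|z| \lesssim 1$. For $m_2$, use $\im(z+a)$ having the sign of $\im z$ together with $|z + a| \lesssim 1$, which needs $|a| \lesssim 1$. Then $|z - p\theta(N-1) + a| \asymp \sqrt{Np}$, so $|m_2| \asymp (Np)^{-1/2}$. For $m_1$, substitute $a = m_1 + (m_2 - m_1)/N$ to get the perturbed semicircle equation
\begin{equation*}
m_1^2 + (z + p\theta + \delta)\, m_1 + 1 = 0\,, \qquad \delta \deq (m_2 - m_1)/N = O(N^{-1})\,,
\end{equation*}
with $\im m_1\,\im z > 0$. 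Taking imaginary parts of $-1/m_1 = z + p\theta + a$ gives $|m_1| \leq 1$, and the equation itself gives $|m_1|^{-1} \leq |z| + |m_1| + O(N^{-1/2}) \lesssim 1$. These bounds can be made self-consistent via a simple continuity/fixed-point argument in $z$ starting from large $|z|$, or directly from existence and uniqueness of the MDE solution \cite{erdos2019matrix}. Hence $|m_1| \asymp 1$ uniformly in $|z| \lesssim 1$, including near the edges. This uniform bound is what I expect to be the main point requiring care, mainly in justifying the a priori bound $|a| \lesssim 1$ used above.

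Given the representation, the estimates follow. We have
\begin{equation*}
M_{\bm v \bm w} = m_1 \scalar{\bm v}{(\mathbf 1 - \Pi_{\bm e}) \bm w} + m_2 \scalar{\bm v}{\Pi_{\bm e}\bm w}\,,
\end{equation*}
so the upper bound in \eqref{eq:Mprop} is immediate. The matching lower bound holds componentwise, i.e.\ whenever one of the two terms is absent or the two terms cannot cancel, e.g.\ for $\bm v = \bm w$, where both coefficients carry the same sign structure. For \eqref{eq:ImMprop} write
\begin{equation*}
\im m_2 = |m_2|^2 \im(z + a)\,.
\end{equation*}
Since $\im a \leq |a| \lesssim 1$, this gives $|\im m_2| \lesssim (Np)^{-1}$, while $|\im m_1| \leq 1$. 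This yields the first inequality in \eqref{eq:ImMprop}. The second follows from Cauchy--Schwarz, $|\scalar{\bm v}{(\mathbf 1 - \Pi_{\bm e})\bm w}| \leq \norm{(\mathbf 1-\Pi_{\bm e})\bm v}\norm{(\mathbf 1-\Pi_{\bm e})\bm w}$ and $\frac{1}{Np}|\scalar{\bm v}{\Pi_{\bm e}\bm w}| \leq \frac{\norm{\Pi_{\bm e}\bm v}}{\sqrt{Np}} \frac{\norm{\Pi_{\bm e}\bm w}}{\sqrt{Np}}$, which together are bounded by $\vertiii{\bm v}_{\bm e}\vertiii{\bm w}_{\bm e}$ by the definition \eqref{eq:normdef}.
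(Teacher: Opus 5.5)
Your proposal is correct and has the same skeleton as the paper's proof. The paper also inverts the MDE along $\Pi_{\bm e}$ and $\mathbf 1-\Pi_{\bm e}$ (this is \eqref{eq:Mspecdec}), and it bounds $(\im M)_{\bm e\bm e}=\im m_2$ exactly as you do.

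The genuine difference is how $\abs{m_1}\asymp 1$ is obtained. The paper treats the trace equation as an $O(1/N)$-perturbation of the semicircle equation at $z+p\theta$. A stability/continuity argument then gives $\abs{\ang{M}-m_{\rm sc}(z+p\theta)}\lesssim 1/N$, i.e.\ \eqref{eq:Mmsc}, and the paper concludes using $\abs{m_{\rm sc}}\asymp1$. You instead bound $m_1,m_2$ directly from the scalar equations. Your route is more elementary and suffices for all upper bounds in the lemma. The paper's route gives the finer \eqref{eq:Mmsc}, which it reuses elsewhere (the bulk bound $\im m_T(z_T)\geq c_\kappa$, Lemma \ref{lem:M2bound}, Remark \ref{rem:simplified_sc}).

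The a priori bound $\abs{a}\lesssim1$ that you flag needs no continuity argument, and it does not follow from existence and uniqueness alone. Since $M$ is normal, the imaginary part of the MDE reads $\im M=(\eta+\im a)\abs{M}^2$. Taking the normalized trace gives $\frac{N-1}{N}\abs{m_1}^2+\frac1N\abs{m_2}^2=\ang{\abs{M}^2}=\frac{\im a}{\eta+\im a}<1$; this is the case $t=T$ of \eqref{eq:trivbound}. Hence $\abs{a}\leq\ang{\abs{M}^2}^{1/2}<1$, $\abs{m_1}^2<\frac{N}{N-1}$ and $\abs{m_2}^2<N$. The rest of your argument then closes as written.

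Your caution about the lower bound in \eqref{eq:Mprop} is warranted, but your example $\bm v=\bm w$ only works in the bulk. Take $\re z\in(2,3)$ and $\eta\downarrow0$. Then $m_1\approx m_{\rm sc}(z)$ is nearly real and negative, while $m_2>0$ is of order $(Np)^{-1/2}$. So $M_{\bm v\bm v}=m_1\norm{(\mathbf 1-\Pi_{\bm e})\bm v}^2+m_2\norm{\Pi_{\bm e}\bm v}^2$ nearly cancels for a suitable ratio $\norm{\Pi_{\bm e}\bm v}^2/\norm{(\mathbf 1-\Pi_{\bm e})\bm v}^2\asymp\sqrt{Np}$. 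The right-hand side of \eqref{eq:Mprop}, however, is $\asymp\norm{(\mathbf 1-\Pi_{\bm e})\bm v}^2$. Thus the two-sided bound cannot hold for all real $\bm v,\bm w$ and all $\abs{z}\lesssim1$.

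In the bulk it does hold for all real $\bm v,\bm w$. There $\im m_1\gtrsim1$, which requires \eqref{eq:Mmsc} and not merely $\abs{m_1}\asymp1$. Meanwhile $\arg m_2=O((Np)^{-1/2})$. So the lines $\R m_1$ and $\R m_2$ meet at an angle bounded away from zero, and $\abs{m_1\alpha+m_2\beta}\gtrsim\abs{m_1\alpha}+\abs{m_2\beta}$ for all real $\alpha,\beta$. The paper's proof passes over this point, but only the upper bounds are used later.
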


The local law holds on the spectral domain
  \begin{equation} \label{eq:domain}
\mathbb{D} \equiv \mathbb{D}(\kappa, C) \deq [-2 + \kappa, 2 - \kappa] \times \ii [C N^{-1} \log N, 1] \subset \HH\,, 
  \end{equation}
  where $\HH$ denotes the complex upper half-plane and $\kappa, C > 0$ are constants. We always use the notation
  \begin{equation*}
\eta \equiv \eta(z) \deq \im z
  \end{equation*}
  for the imaginary part of $z$. The following result is the main technical achievement of this paper.
  
\begin{theorem}[Local law] \label{thm:lolaw}
The following holds under the assumptions of Theorem \ref{thm:deloc}.
\begin{subequations}
\begin{itemize}
\item[(i)]  \emph{The average local law:}
	\begin{equation} \label{eq:avLL}
		\P \pBB{ \bigcap_{z \in \mathbb{D} } \hBB{ \left| \big\langle G(z) - M(z)\big\rangle \right| \leq C \frac{1}{\sqrt{N \eta}}\pBB{\sqrt{\frac{ \log N}{N \eta}} + \sqrt{\frac{\log N}{N p}}  }} } \geq 1 - N^{-D} \,. 
	\end{equation}
	\item[(ii)] \emph{The isotropic local law}: For all deterministic $\bm v, \bm w \in \R^N$
	\begin{equation} \label{eq:isoLL}
		\P \pBB{  \bigcap_{z \in \mathbb{D} } \hBB{ \left| \big( G(z) - M(z)\big)_{\bm v \bm w} \right| \leq C \,  \vertiii{\bm v}_{\bm e} \,  \vertiii{\bm w}_{\bm e} \,  \pBB{\sqrt{\frac{\log N}{N \eta}} + \sqrt{\frac{\log N}{Np} } }}} \geq  1 - N^{-D}\,.
	\end{equation}
\end{itemize}
	\end{subequations}
\end{theorem}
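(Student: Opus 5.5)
We will prove Theorem \ref{thm:lolaw} with the Bernoulli flow of Section \ref{subsec:overview}. Let $X(t)$ be the Bernoulli process started from the empty graph. Then $X(T)\eqdist A$ for $T\deq -\log(1-p)\asymp p$. Fix a target $z\in\mathbb D$. We define the matrix-valued characteristic $(z_t,M_t(z_t))_{t\in[0,T]}$ as the solution of the time-dependent MDE encoded by \eqref{M_z_intro}, with terminal value $z_T=z\mathbf 1$, so that $M_T(z_T)=M(z)$ solves \eqref{eq:MDE}. The first step is deterministic. We check that $M_0(z_0)=-z_0^{-1}$ is the resolvent of the empty graph, so $\phi_0=0$. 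We also show that $\im z_t$ decreases in $t$ at a rate comparable to $\im\ang{M_t}$. This gives $\eta_t\deq\im z_t\asymp \eta+(T-t)/p$ in the bulk. Finally, we show that $M_t$ satisfies the analogue of Lemma \ref{lem:Mprop} uniformly in $t$, and we record the stability bounds $\int_s^t(1+\frac{1-p_r}{(1-p)p}\ang{M_r^2})\,\dd r\le\log(\eta_s/\eta_t)+O(1)$. These make the propagator in \eqref{phi_t_sc_intro} harmless.

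Next we run the stochastic argument on a coupled system of observables. The first-order observables are $\ang{G_t-M_t}$ and $(G_t-M_t)_{ab}$, including $a$ or $b$ equal to $\bm e$. The second-order observables replace $G_t$ by $G_t^2$, with deterministic approximations obtained by differentiating the MDE. For each observable we use the Dynkin martingale \eqref{Dynkin} and prove three things. First, the generator terms reduce to \eqref{L_cancel_intro}, that is, propagator times observable plus errors. The errors come from higher-order resolvent expansion terms of size $\theta^3\sim(Np)^{-3/2}$ per edge, and from replacing $1-X_{xy}$ by $1-p_t$; the latter requires an isotropic resummation $\sum_xG_{ax}$ through the $\bm e$-entries. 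Second, the jump sizes are $K\lesssim\theta\,\max|G_{ax}G_{yb}|$. For entrywise observables we freeze the edge $ab$ and reinsert it statically by one resolvent expansion. Third, the quadratic variation satisfies $\ang{\cal M}_t\lesssim\int\theta^2\sum_{xy}|G_{ax}|^2|G_{yb}|^2\lesssim\int \frac{(\im G)_{aa}(\im G)_{bb}}{Np\,\eta_s^2}\dots$. Using Ward identities and the bootstrap bounds, this is $\lesssim \frac{1}{N\eta_t}$ for the averaged observable, with the appropriate analogue for the entrywise ones. Inserting these into \eqref{BDG_intro} with $a^2\asymp b^2\log N$ and $aK\lesssim b^2$ gives very high probability bounds. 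The second condition, $aK\lesssim b^2$, is exactly where $Np\ge C(\log N)^2$ is used. We then close with the stochastic Grönwall inequality on \eqref{phi_t_sc_intro}, using stopping times defined by the bootstrap bounds. For continuity in $t$ we use a grid of times and the Lipschitz continuity of $G$ in $z$. This yields \eqref{eq:avLL} along each characteristic, with the two error terms coming from the martingale and from the $\theta$-order generator error respectively. A union bound over an $N^{-10}$-net of $\mathbb D$ then extends the estimate to all $z\in\mathbb D$.

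The isotropic law \eqref{eq:isoLL} follows by repeating the argument for $\scalar{\bm v}{(G_t-M_t)\bm w}$, using the entrywise law as a priori input. Here the frozen set consists of the edges $xy$ for which $|v_x|$ or $|w_y|$ is large, say $|v_x|^2\ge(\log N)/(Np)$. This set is small in the sense that $\sum_{\text{frozen}}$ has controlled cardinality, so its static insertion costs at most the stated error. The anisotropic norm $\vertiii{\cdot}_{\bm e}$ enters through Lemma \ref{lem:Mprop} once $\bm v$ is split into its $\Pi_{\bm e}$ and $(1-\Pi_{\bm e})$ parts. I expect the main obstacle to be the jump-size and quadratic-variation control for the isotropic and entrywise observables: diagonal entries $G_{xx}$ and large vector components produce rare large jumps that push \eqref{BDG_intro} into the subexponential regime. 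The delicate choices are how to construct the frozen set and how to bootstrap the $\bm e$-entries; these must keep the losses at $\log N$ rather than $N^\epsilon$.
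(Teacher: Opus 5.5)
Your architecture matches the paper's: Bernoulli process from the empty graph, matrix-valued characteristic, coupled first- and second-order average and entrywise observables including $\bm e$, Dynkin martingales with the jump-process BDG inequality, the frozen edge $ab$, and stochastic Grönwall under a stopping time. Two steps, however, fail as you state them.

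\emph{The propagator.} Your bound $\int_s^t\pb{1+\frac{1-p_r}{(1-p)p}\abs{\ang{M_r^2}}}\dd r\le\log(\eta_s/\eta_t)+O(1)$ is the Ward-type estimate from $\abs{\ang{M_r^2}}\le\ang{\abs{M_r}^2}\le\im m_r/\eta_r$. Exponentiated, it gives a propagator of size $\eta_s/\eta_t$, which is not harmless. The sparsity error $\sqrt{\log N/(Np)}$ accumulates along the whole flow with no power of $\eta$, so Duhamel would return an error of order $\frac{\eta_0}{\eta_t}\sqrt{\log N/(Np)}$. This is exactly the edge obstruction of Section~\ref{subsec:edge}.

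In the bulk the propagator must be $O(1)$. Its deterministic part is $\ang{M_r'}$, not $\ang{M_r^2}$, and $\abs{\ang{M_r'}}\le C_\kappa$ uniformly in $r$ (Lemma~\ref{lem:M2bound}, which rests on a lower bound for $\absb{1-\frac{p_r(1-p_r)}{p(1-p)}\ang{M_r^2}}$). Hence $\int_0^T\dd r/p\lesssim1$. The random propagator contains $\ang{G_r^2}$. What keeps it bounded is the second-order average law $\abs{\ang{G_r^2-M_r'}}\le\Cf\Lambda^{(2)}_{\rm av}(r)$, together with $\int_0^t\Lambda^{(2)}_{\rm av}(r)\,\dd r/p\lesssim\sqrt{\log N/(N\eta_t)}$. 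If you instead use $\abs{\ang{G^2}}\le\ang{\im G}/\eta$, you are back at $\eta_s/\eta_t$.

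Relatedly, the averaged martingale needs $\ang{\Mart^{{\rm av},1}}_t\lesssim\frac{1}{(N\eta_t)^2}+\frac{1}{(N\eta_t)(Np)}$, coming from $\frac{\theta^2}{N^2}\sum_{x,y}\abs{(G^{xy,1}G)_{xy}}^2$ and $\ang{\abs{G}^4}\le\ang{\im G}/\eta^3$. Your $\frac{1}{N\eta_t}$ is the entrywise size and would only give $\abs{\ang{G-M}}\lesssim\sqrt{\log N/(N\eta)}$. Both terms of \eqref{eq:avLL} come out of this quadratic variation.

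\emph{The isotropic frozen set.} Freezing a set $F$ of pairs leaves an uncompensated term $\theta\sum_{(x,y)\in F}\widetilde G_{\bm v x}\widetilde G_{y\bm w}$ per unit time in the generator, because the flow $z_t$ is built for the full mean $\theta p_t\Smat$. Moreover, every open frozen edge must be reinserted at time $T$. Hence $F$ must be tiny.

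Your $p$-dependent threshold $\abs{v_x}^2\ge\log N/(Np)$ can select $k\asymp Np/\log N$ vertices.
\begin{itemize}
\item Read literally (``or''), it freezes every edge at these vertices. That isolates them, so $(\widetilde G-M)_{xx}$ is of order one.
\item Read as pairs with both components large, it still allows about $k^2$ frozen pairs. For $\bm v=\bm w=k^{-1/2}\mathbf 1_S$ with $\abs{S}=k$, the uncompensated term integrates to order $\theta pk=p\sqrt{Np}/\log N$. For $\eta\asymp1$ this exceeds $\Lambda_{\rm iso}$ once $Np\ge N^{1/2+\epsilon}$.
\end{itemize}

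The jump requirement $K\lesssim\Lambda_{\rm iso}/\log N$, with $\Lambda_{\rm iso}\ge\sqrt{\log N/(Np)}$, only asks that $\abs{v_xw_y}\lesssim(\log N)^{-1/2-\epsilon}$ on unfrozen pairs. That calls for a $p$-independent threshold.

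The paper takes $\mathcal I_{\bm v}=\{x:\abs{(\Pi_{\bm e}^\perp\bm v)_x}\ge(\log N)^{-5/16}\norm{\Pi_{\bm e}^\perp\bm v}\}$ and $\mathcal I=\mathcal I_{\bm v}\cup\mathcal I_{\bm w}$, so $\abs{\mathcal I}\le2(\log N)^{5/8}$, and freezes $\mathcal I\times\mathcal I$. It reinserts these entries not by summing $\abs{\mathcal I}^2$ errors of size $(Np)^{-1/2}$, whose total may be large. Instead it interpolates $u\mapsto G(u)$ between $\widetilde G$ and $G$ and applies Grönwall in $u$, using $\sum_{x,y\in\mathcal I}X_{xy}(T)\preceq1$.
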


\begin{remark}
To the best of our knowledge, the optimal factor $\frac{\sqrt{\log N}}{N \eta}$ in the average law \eqref{eq:avLL} has previously been obtained only for $\beta$-ensembles \cite{bourgade2022optimal}, in particular in no independent Hermitian ensemble other than GOE, let alone sparse matrices. For Wigner matrices (and non-Hermitian i.i.d.\ matrices), the bound $\frac{(\log N)^{1/2 + \epsilon}}{N \eta}$ was obtained in \cite{cipolloni2025maximum}. See also Appendix \ref{sec:GOE_ll}, where we give a simple Brownian characteristic flow argument, inspired by our main proof, that yields the optimal factor $\frac{\sqrt{\log N}}{N \eta}$ for GOE.
\end{remark}

For off-diagonal resolvent entries, we have to following improved estimate.

\begin{remark}[Off-diagonal local law] \label{rmk:offdiag}
In the entrywise case $\bm v = \bm e_a$ and $\bm w = \bm e_b$, the following improved version of the local law \eqref{eq:isoLL} holds: 
		\begin{equation} \label{eq:offidag}
		\P \pBB{ \bigcap_{a,b \in [N]}\bigcap_{z \in \mathbb{D}} \hBB{ \left| \big( G(z) - M(z)\big)_{ab} \right| \leq C \,   \pBB{\sqrt{\frac{\log N}{N \eta}} + \sqrt{\frac{1 + \delta_{ab}\log N}{Np} } }}} \geq  1 - N^{-D} \,. 
	\end{equation}
 We refer to Section \ref{subsec:offdiag} below for the proof.
\end{remark}

We believe all of the estimates in \eqref{eq:avLL}, \eqref{eq:isoLL}, and \eqref{eq:offidag} to be optimal up to the constant $C$; see Section \ref{sec:optimal} below.

Next, we introduce a notion of stochastic domination\footnote{Our notion $\preceq$ is related to the commonly used notion $\prec$ introduced in \cite{EKY2}, with the difference that the bound $N^\epsilon$ is replaced by a constant $C$.}, denoted by $\preceq$, that will prove useful throughout the rest of this paper.

\begin{definition}
Let $X \equiv X_N(u)$ and $Y \equiv Y_N(u)$ be nonnegative random variables indexed by $N$ and possibly some parameter $u$. We write
\begin{equation*}
X \preceq Y
\end{equation*}
to mean that for any $D > 0$ there exists $C > 0$ such that
\begin{equation*}
\sup_u \P\pb{X_N(u) > C Y_N(u)} \leq N^{-D}\,.
\end{equation*}
\end{definition}

We can simplify Theorem \ref{thm:lolaw} and state it in terms of the traditional semicircle density \eqref{eq:rhodef}.

\begin{remark}[Simplified local law] \label{rem:simplified_sc}
Recall \eqref{eq:rhodef} and denote by 
\begin{equation} \label{def_m_sc}
m_{\mathrm{sc}}(z) \deq \int_{-2}^2 \dd s \, \frac{\rho_s}{s - z}
\end{equation}
the Stieltjes transform of the semicircular density.
Using elementary properties of the solution $M$ to the MDE \eqref{eq:MDE} (see, in particular, \eqref{eq:Mmsc}--\eqref{eq:Mspecdec} in Appendix \ref{app:technical}), under the assumptions of Theorem \ref{thm:lolaw}, we deduce the following simplified local laws for all $z \in \mathbb{D}$.
\begin{itemize}
\item[(i)] The average local law: 
\begin{equation*}
\big| \langle G(z) - m_{\rm sc}(z + p \theta)\rangle \big| \preceq \frac{1}{\sqrt{N \eta}} \left( \sqrt{\frac{\log N}{N \eta}} + \sqrt{\frac{\log N}{Np}}\right) \,. 
\end{equation*}
\item[(ii)] The entrywise local law: for $a,b \in [N]$,
\begin{equation*}
\big| G_{ab}(z) - \delta_{ab} m_{\rm sc}(z)\big| \preceq  \sqrt{\frac{\log N}{N \eta}} + \sqrt{ \frac{1 + \delta_{ab}\log N}{N p}} \,. 
\end{equation*}
\item[(iii)] The isotropic local law (without $\bm e$-improvement): for $\Vert \bm v \Vert , \Vert \bm w \Vert \leq 1$,
\begin{equation*}
\big| G_{\bm v \bm w}(z) - \langle \bm v, \bm w \rangle m_{\rm sc}(z)\big| \preceq  \sqrt{\frac{\log N}{N \eta}} + \sqrt{\frac{\log N}{Np}}\,. 
\end{equation*}
\item[(iv)] The $\bm e$-local law:
\begin{equation*}
\left|G_{\bm e \bm e}(z) - \sqrt{\frac{1-p}{Np}} - \frac{1-p}{Np} (z + m_{\rm sc}(z)) \right| \preceq \frac{1}{Np}\left(\sqrt{\frac{\log N}{N \eta}} + \sqrt{ \frac{\log N}{N p}}\right) \,. 
\end{equation*}
\end{itemize}
\end{remark}

We now give the proof of Theorem \ref{thm:deloc} based on the local law. 

\begin{proof}[Proof of Theorem \ref{thm:deloc}]
Deducing delocalization from an entrywise or isotropic local law is standard (see, e.g., \cite[Theorem 2.10]{BenyachKnowles2017}). For the reader's convenience, we recall the simple argument here. First, note that the local law \eqref{eq:isoLL} obviously also holds when replacing $G$ and $M$ by $\im G$ and $\im M$, respectively. Denote by $\lambda_1, \dots, \lambda_N$ the eigenvalues of $H$, with associated normalized eigenvectors $\bm u_i$. Take an eigenvalue $\lambda_i \in [-2+\kappa, 2-\kappa]$ and spectral parameter $z_i = \lambda_i + \ii \eta  \in \mathbb{D}$ with $\eta = CN^{-1} \log N$. Then, for $\bm v \in \R^N$, we deduce from Lemma \ref{lem:Mprop} and Theorem \ref{thm:lolaw} that 
\begin{equation}
	\begin{split}
\frac{\langle \bm v, \bm u_i \rangle^2}{\eta} &\leq \sum_j \frac{\langle \bm v, \bm u_j \rangle^2 \, \eta}{(\lambda_i - \lambda_j)^2 + \eta^2} = (\im G(z_i))_{\bm v \bm v} \\
&\leq (\im M(z_i))_{\bm v \bm v} + 2 C^{1/2}\vertiii{\bm v}_{\bm e}^2  \leq C_\kappa \vertiii{\bm v}_{\bm e}^2
	\end{split}
\end{equation}
with probability $\geq 1 - N^{-D}$,
for a constant $C_\kappa$ depending only on $\kappa$. 
We deduce that
\begin{equation}
\proba{\langle \bm v, \bm u_i \rangle^2 \leq C_\kappa C \, \frac{ \log N}{N} \vertiii{\bm v}_{\bm e}^2} \geq 1 - N^{-D} \,. 
\end{equation}
The claim now follows from a union bound. 
\end{proof}

\begin{remark}[Proof of Remark \ref{rem:digraphs}] \label{rem:digraphs_proof}
	We now briefly explain how our proof of Theorem \ref{thm:deloc} has to be adapted in order to conclude the proof of the claimed optimal delocalization bound for the left- and right-eigenvectors of the Erd\H{o}s-Rényi digraph in Remark \ref{rem:digraphs}. We refer to  \cite[Corollary~2.6]{AEK2018} as well as \cite{cipolloni2025optimal, alt2021spectral, alt2021inhomogeneous, CEHS24, campbell2025spectral} for similar arguments and will hence be rather brief. 
	
	As usual, the argument proceeds by considering the \emph{Hermitization}
	\begin{equation*}
\widetilde{H}^w := \begin{pmatrix}
0 & \widehat{H} - w \\
\widehat{H}^* - \overline{w} & 0
\end{pmatrix} \in \C^{2N \times 2N} \qquad \text{with} \qquad w \in \C 
	\end{equation*}
	of the non-Hermitian adjacency matrix $\widehat{H}$. By following the proof of our local law in Sections \ref{sec:Bernoulli}--\ref{sec:isolaw} with  only minor straightforward adjustments, we can show the following: Uniformly in $|w| \le 1 - \kappa$ and $\eta \in [C N^{-1} \log N, 1]$, the resolvent 
	\begin{equation*}
G^w(\eta) := \big(\widetilde{H}^w - \ii \eta\big)^{-1}
	\end{equation*}
	concentrates around a deterministic matrix $M^w(z) \in \C^{2N \times 2N}$ given as the solution to 
	\begin{equation*}
- \frac{1}{M^w(\eta)} = \ii \eta - p \theta \widetilde{\Smat} + \begin{pmatrix}
0 & w \mathbf{1} \\ \overline{w} \mathbf{1} & 0 
\end{pmatrix} + \langle M^w(\eta) \rangle \quad \text{where} \quad \widetilde{\Smat} := \begin{pmatrix}
0 & \Smat \\ \Smat & 0
\end{pmatrix} \in \C^{2N \times 2N} \,, 
	\end{equation*}
	exactly as in Theorem \ref{thm:lolaw}. The only difference is that in the definition of the anisotropic norm \eqref{eq:normdef} the orthogonal projection $\Pi_{\bm e}$ is replaced by
$
\pb{
\begin{smallmatrix}
\Pi_{\bm e} & 0
\\
0 & \Pi_{\bm e}
\end{smallmatrix}
}
$	
(a rank-two orthogonal projection). 
	
	Hence, completely analogously to the Hermitian setting, we obtain optimal delocalization for the eigenvectors of the Hermitization $\widetilde{H}^w$. To conclude the same statement fo the true left and right non-Hermitian eigenvectors of $\widehat{H}$, we observe that $0 \in \spec(\widetilde{H}^\lambda)$ if and only if $\lambda \in \spec(\widehat{H})$. Therefore, by uniformity of the local law in $|w| \le 1- \kappa$, we infer \eqref{eq:directed_deloc} (see \cite[Proof of Theorem~2.2]{cipolloni2025optimal} for more details). 
\end{remark}

\subsection{On the optimality of the local law} \label{sec:optimal}
We conclude this section with a discussion on the optimality of our estimates from Theorem \ref{thm:lolaw} and Remark \ref{rmk:offdiag}. Indeed, up to the precise dependence of the constant $C$ on $D$, we believe the estimates in the average local law from \eqref{eq:avLL}, the isotropic law (with $\bm e$-improvement) from \eqref{eq:isoLL}, and the off-diagonal local law from \eqref{eq:offidag} to be optimal. In the dense regime\footnote{As explained in Remark \ref{rem:p_range}, the regime $p \asymp 1$ is strictly speaking not allowed by our assumptions, but we believe that our method could extend to such values of $p$ with suitable adjustments.}, where $p \asymp 1$, \eqref{eq:avLL} and \eqref{eq:isoLL} reduce to
\begin{equation*}
\left| \big\langle G(z) - M(z)\big\rangle \right| \lesssim \frac{\sqrt{\log N}}{N \eta}\,, \qquad
\left| \big( G(z) - M(z)\big)_{\bm v \bm w} \right| \lesssim \vertiii{\bm v}_{\bm e} \,  \vertiii{\bm w}_{\bm e} \sqrt{\frac{\log N}{N\eta}}
\end{equation*}
with very high probability. Leaving the $\bm e$-improvement in the factor $\vertiii{\bm v}_{\bm e} \,  \vertiii{\bm w}_{\bm e}$ aside, these bounds are optimal because $N \eta \big\langle G(z) - M(z)\big\rangle$ and $\sqrt{N \eta} G(z) - M(z)\big)_{\bm v \bm v}$ are known to be approximately Gaussian random variables of order one. Note the factor $\sqrt{\log N}$ instead of $\log N$ in the bound for $\ang{G(z)}$ (compare e.g.\ to \cite[Theorem 3.1]{cipolloni2025optimal}); in  Appendix \ref{sec:GOE_ll} below, we explain how a simplified version of our argument for the Brownian flow immediately yields an average local law for GOE with the optimal $\sqrt{\log N}$ factor, even down to $\eta \asymp \sqrt{\log N}/N$.

The effect of sparsity, when $p \ll 1$, is manifested by the factor $\sqrt{\frac{\log N}{N p}}$, which is natural since the local law and delocalization are known \cite{ADK20} to break down on the critical scale $Np \asymp \log N$. Moreover, we claim that $\sqrt{\frac{\log N}{N p}}$ is in general the optimal power of this factor in \eqref{eq:isoLL}. Indeed, for the diagonal entry $G_{aa}(z)$, this error arises from the fluctuation of the vertex degrees. This follows from \cite[Theorem 4.2]{ADK20}, where it is proved that for any $\eta \geq N^{-1 + c}$ we have $\abs{G_{aa}(z) - m_{\alpha_a}(z)} \lesssim \pb{\frac{\log N}{(Np)^2}}^{1/3}$, where
\begin{equation*}
m_{\alpha}(z) \deq -\frac{1}{z + \alpha m_{\mathrm{sc}}(z)}
\end{equation*}
and $\alpha_a \deq \frac{1}{Np} \sum_{x\in [N]} A_{ax}$ is the normalized degree. By normal approximation, the law of $\alpha_a$ is approximately $1 + (Np)^{-1/2} \cal N(0,1)$, which implies that $m_{\alpha_a}(z)$, and hence $G_{aa}(z)$, concentrates with very high probability only on the scale $\sqrt{\frac{\log N}{N p}}$ around $m_{\mathrm{sc}}(z)$, as claimed. 

For off-diagonal entries of $G(z)$, on the other hand, the stronger estimate \eqref{eq:offidag} is optimal. Indeed, for $a \neq b$, we use a consequence of the Schur complement formula (see e.g.\ \cite[Eq.\ (5.9)]{BenyachKnowles2017}),
\begin{equation*}
G_{ab} = - G_{aa} G_{bb}^{(a)} \pbb{H_{ab} - \sum_{x,y \notin \{a,b\}} H_{ax} G_{xy}^{(ab)} H_{yb}}\,,
\end{equation*}
where upper indices indicate removed vertices. The first term in parentheses is independent of the second, and it can be either $0$ or $\theta \asymp \sqrt{\frac{1}{Np}}$, with probability $1 - p$ and $p$, respectively. This shows the optimality of the factor $\sqrt{\frac{1}{Np}}$ for $a \neq b$ in \eqref{eq:offidag}.

The average local law has an error that is improved by the usual factor $(N \eta)^{-1/2}$ as compared to the isotropic law. As explained above, in the dense regime, this estimate is optimal up to the constant $C$. In the sparse regime $p \ll 1$, it is also known to be optimal \cite{ADK20, lee2018local}. However, as first observed in \cite{lee2018local}, as soon as $p \ll N^{-1/3}$, the accuracy average local law can be improved provided one replaces $\ang{M(z)}$ with a more accurate deterministic approximation, e.g.\ $\E\ang{G(z)}$, which differs from $M(z)$ by order $\frac{1}{Np}$. In fact, in \cite{he2020bulk} it was proved in the regime $\eta \geq N^{-1 + c}$ and $Np \geq N^c$ that $\frac{1}{\zeta} (\ang{G(z)} - \E \ang{G(z)})$ with $\zeta \deq \frac{1}{N\eta} + \frac{1}{\sqrt{N} \sqrt{Np}}$ is approximately Gaussian of order one.

Finally, the $\bm e$-improvement for $G_{\bm e \bm e}$ yields an extra factor $\frac{1}{Np}$ in the error bound of the isotropic law. This is because the top eigenvector $\bm u_1$ is almost parallel to $\bm e$, while the others are almost orthogonal to it; see Remark \ref{rem:deloc_optimal}. A precise form of the local law for $G_{\bm e \bm e}$ is given in Remark \ref{rem:simplified_sc}.

As for the assumptions on $p$, they are the same as in Theorem \ref{thm:deloc}, and we refer to Remark \ref{rem:p_range} above and Remark \ref{rem:lowerbound_p} below.

\subsection{Extending the local law to the edge} \label{subsec:edge}
In this section we discuss the nontrivial obstructions arising when attempting to extend our local law towards the spectral edge. To illustrate the issues, we start discussing the \emph{Brownian flow} described in Section \ref{subsec:overview} and ignore the second term on the right-hand side of \eqref{dG_ito}. Abbreviating $\phi_t := \langle G_t(z_t) - M_t(z_t) \rangle$, the structure of \eqref{dG_ito} is given by (see also Appendix \ref{sec:GOE_ll})
\begin{equation} \label{eq:phiODE}
\dd \phi_t = \mathcal{E}_1(t) \phi_t \, \dd t + \dd \mathcal{E}_2(t) \,. 
\end{equation}
Hence, in order to prove smallness of $\phi_T$, based on smallness of $\phi_0$ (propagating the local law to smaller spectral scales), one has to control the \emph{generator term} $\mathcal{E}_1(t)$ and the (stochastic) \emph{error term} $\dd \mathcal{E}_2(t)$. In fact, (formally) the solution to \eqref{eq:phiODE} is given by
\begin{equation} \label{eq:solving}
\phi_t = \mathcal{P}_{0,t} \phi_0 + \int_{0}^t  \mathcal{P}_{s,t} \, \dd \mathcal{E}_2(s)
\end{equation}
with $\mathcal{P}_{s,t}$ being the \emph{propagator} given by
\begin{equation}
\mathcal{P}_{s,t} := \exp\left( \int_{s}^t \mathcal{E}_1(r) \dd r \right) \,. 
\end{equation}
As one proves self-consistently (see again Appendix \ref{sec:GOE_ll} below), the main contribution to the propagator is obtained by the deterministic approximation to $\mathcal{E}_1(r)$, given by $\langle M'_r(z_r) \rangle := \big\langle \tfrac{\dd }{\dd w} M_r(w) \big\vert_{w = z_r}\big\rangle$, the pointwise $z$-derivative of $\langle M \rangle$, the Stieltjes transform of the density of states. In the spectral bulk $\langle M \rangle$ is smooth, i.e. $|\langle M'_r(z_r) \rangle| \sim 1$ and thus the propagator is harmless, $\mathcal{P}_{s,t} \sim 1$. At the spectral edge, however, $\langle M'_r(z_r) \rangle$ blows up (due to non-smoothness of the density of states) and behaves as\footnote{We note that $\eta_r^{-1} \langle \im M_r(z_r) \rangle$ is always a valid upper bound on the generator term. This follows by estimating $\langle G^2 \rangle \le \eta^{-1} \langle \im G\rangle $ by a Cauchy-Schwarz inequality and a Ward identity, and then self-consistently using a single-resolvent law. The more refined control involving $\langle M'\rangle$ requires proving two-resolvent concentration, $\langle G^2 \rangle \approx \langle M'\rangle$, as well (see also the discussion in Section \ref{subsec:LLproof} below). } 
\begin{equation*}
|\langle M'_r(z_r) \rangle| \sim \frac{\langle \im M_r(z_r) \rangle}{\eta_r}  \quad \text{with} \quad \eta_r := \im z_r \,. 
\end{equation*}
Therefore, using $\tfrac{\dd }{\dd t} (\log \eta_t) = - \tfrac{\langle \im M_r(z_r) \rangle}{\eta_r}$ as a consequence of \eqref{Brownflow}, we obtain 
\begin{equation} \label{propbound}
\int_{s}^t \frac{\langle \im M_r(z_r) \rangle}{\eta_r} \dd r = - \log\left(\frac{\eta_t}{\eta_s}\right) \qquad \text{and thus} \qquad \mathcal{P}_{s,t} \lesssim \frac{\eta_s}{\eta_t} \,. 
\end{equation}
In particular, at the edge, the propagator effectively replaces a full $\eta$-power at a larger spectral scale $\eta_s$ by a full $\eta$-power at a smaller spectral scale $\eta_t$. Therefore, in order to self-consistently derive a local law at the spectral edge, the error term $\dd \mathcal{E}_2(t)$ and the initial condition $\phi_0$ have to feature a full $\eta$-power and thus be compatible with the sharp propagator bound derived in \eqref{propbound}. 

The above discussion carries over to the Bernoulli flow setting after suitable adjustments. The crucial difference to the Brownian flow, however, is that the \emph{average local law does not include a full $\eta$-power}\footnote{We remark that, since a $\sqrt{\eta}$ is present in \eqref{eq:avLL}, we could extend the domain of validity of our local in such a way that $\mathcal{P}_{s,t} \lesssim \sqrt{\eta_s/\eta_t}$ is ensured. This would amount to replacing $\kappa \asymp 1$ in \eqref{eq:domain} by a suitable quantity $\ll 1$.}; see \eqref{eq:avLL}. We stress that this is not a consequence of suboptimal estimates; indeed, as discussed in Section~\ref{sec:optimal}, we believe our bounds to be optimal. As indicated in Remark \ref{rmk:edge} above, a possible way to overcome this issue is to prove concentration of $G$ around a suitably explicit random approximation insteaf of the deterministic $M$, entirely absorbing the sparsity-dependent error terms in the local law; see again Section \ref{sec:optimal}.

\section{Bernoulli flow: Proof of Theorem \ref{thm:lolaw}} \label{sec:Bernoulli}

\subsection{Definition of the Bernoulli flow} \label{subsec:Berflowdef}

Denote by $\Gamma$ the space of adjacency matrices, i.e.\ functions $X \col \{(x,y) \in [N]^2 \col x < y\} \to \{0,1\}$. It can be naturally identified with symmetric matrices in $\{0,1\}^{N \times N}$ with zero diagonal.

The \emph{Bernoulli flow} consists of two ingredients.
\begin{enumerate}[label=(\roman*)]
\item
The \emph{Bernoulli process} $(X(t))_{0 \leq t \leq T}$, a Markov jump process on the space $\Gamma$.
\item
The \emph{Bernoulli characteristic flow} $(z_t)_{0 \leq t \leq T}$, a deterministic matrix-valued flow for the spectral parameter $z \in \HH \mathbf{1}+ \R \Smat$.
\end{enumerate}

These ingredients are combined in the \emph{time-dependent resolvent}
\begin{equation} \label{eq:Gtzt}
G_t(z_t) \deq \frac{1}{\theta X(t) - z_t}\,, \qquad t \in [0,T]\,,
\end{equation}
which we propagate from time $t=0$ to the target time $T$, at which $X(T) \stackrel{\dd}{=} A(p)$ and $\im z_T \asymp (\log N) / N$.

To define the Bernoulli process, for $X \in \Gamma$ and $x < y$ we introduce the notation $X^{xy,1}$ to denote the matrix obtained from $X$ by replacing the $xy$-entry by $1$, which has entries
\begin{equation*}
(X^{xy,1})_{ab} \deq
\begin{cases}
X_{ab} & \text{if } \{a,b\} \neq \{x,y\}
\\
1 & \text{if } \{a,b\} = \{x,y\}\,.
\end{cases}
\end{equation*}
The Bernoulli process $X(t)$ is the right-continuous Markov jump process with initial condition $X(0) = 0 \in \Gamma$ and generator
\begin{equation} \label{eq:generator}
\Lcal\phi (X) \deq \sum_{x< y}\big( \phi(X^{xy, 1}) - \phi(X) \big)\,, \qquad \phi \col \Gamma \to \C\,.
\end{equation}
That is, each entry of $X(t)$ starts from $0$ and jumps to $1$ at unit rate, independently of the others. Consequently,
\begin{equation} \label{eq:pt}
X_{xy}(t) \stackrel{\dd}{=} \op{Bernoulli}(p_t) \quad \text{with} \quad p_t \deq 1 - \ee^{-t} \,.
\end{equation} 
Hence, $X(t)$ is the adjacency matrix of an Erd\H{o}s-Rényi graph with edge probability $p_t$:
\begin{equation} \label{eq:ERasflow}
X(t) \stackrel{\dd}{=} A(p_t) \,. 
\end{equation}
Given the target edge probability $p \equiv p(N)$, we choose the \emph{terminal time} $T \equiv T(N)$  of the Markov process by requiring that $p = p_T = 1- \ee^{-T}$, i.e.
\begin{equation} \label{eq:targettime}
T = \log \left(\frac{1}{1 - p}\right) = p + O(p^2) \quad \text{for} \quad p \to 1\,. 
\end{equation}

Next, we define the Bernoulli characteristic flow and derive its basic properties.
For a target spectral parameter $z \in \HH$ we consider the terminal value problem
\begin{equation} \label{eq:Bernflow}
\partial_t z_t = - z_t + \theta \Smat - \frac{1 - p_t}{1 - p} \frac{m_t(z_t)}{p} \,, \qquad z_T = z \mathbf{1}\,, \quad t \in [0,T]
\end{equation}
on $\HH \mathbf{1}+ \R \Smat$, which we call the \emph{Bernoulli characteristic flow}. Here, we introduced the shorthand notation
\begin{equation*}
m_t(z_t) \deq \langle M_t(z_t) \rangle \in \mathbb{H}\,,
\end{equation*}
where $M_t(z_t)$ is the unique solution with positive imaginary part to the time-dependent MDE: 
\begin{equation} \label{eq:timeMDE}
- \frac{1}{M_t(z_t)} = z_t - \frac{p_t}{\sqrt{N p(1 - p)}} \Smat + \frac{p_t (1 - p_t)}{p (1-p)} \langle M_t(z_t) \rangle \,. 
\end{equation}
In the following lemma, we collect some elementary properties of \eqref{eq:Bernflow}--\eqref{eq:timeMDE}, which are proved in Appendix \ref{app:technical}.
\begin{lemma}[Properties of the Bernoulli characteristic flow] \label{lem:BerProp}
\leavevmode
\begin{itemize}
\item[(a)] The terminal value problem \eqref{eq:Bernflow} has a unique solution $[0,T] \ni t \mapsto z_t \in \HH \mathbf{1} + \R \Smat$. 
\item[(b)] The solution to \eqref{eq:timeMDE} satisfies $M_t(z_t) = \ee^t M_0(z_0)$ for all $t \in [0,T]$. 
\item[(c)] The imaginary part $\eta_t$ of the solution $z_t$ to \eqref{eq:Bernflow} is a multiple of the identity, i.e.\ with a slight abuse of notation, $\eta_t = \eta_t \mathbf{1}$. Moreover, $t \mapsto \eta_t$ is explicitly given by 
\begin{equation} \label{eq:etaexplicit}
\eta_t = \eta_0 \ee^{-t} - \frac{ \im m_0(z_0) }{p (1 - p)} (1 - \ee^{-t})
\end{equation}
and is hence a strictly monotonically decreasing function.  
\item[(d)] For any $s,t \in [0,T]$ with $s \leq t$ we have the integration rules
\begin{align}
\int_{s}^{t} \frac{1 - p_t}{1 - p} \frac{\im m_r(z_r) }{p\,  \eta_r} \, \dd r &\leq \log \left(\frac{\eta_s}{\eta_t}\right) \quad \text{and} \label{eq:logint} \\ 
 \int_{s}^{t} \frac{1 - p_t}{1 - p} \frac{\im  m_r(z_r)}{p\,  \eta_r^\alpha} \, \dd r &\leq \frac{1}{\alpha - 1} \left(\frac{1}{\eta_t^{\alpha - 1}} - \frac{1}{\eta_s^{\alpha - 1}}\right) \quad \text{for} \quad \alpha >1 \label{eq:intrule}\,. 
\end{align}
\end{itemize}
\end{lemma}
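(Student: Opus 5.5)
The whole lemma should follow from one explicit construction: I will build $z_t$ backwards from the terminal data and then read off each property directly. Throughout I use that $\Smat = N\Pi_{\bm e} - \mathbf 1$. Consequently every matrix involved lies in the commutative algebra spanned by $\mathbf 1$ and $\Pi_{\bm e}$, and $\Smat$ is real.

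\emph{Construction (parts (a), (b)).} At $t=T$ we have $p_T = p$ and $\frac{p_T(1-p_T)}{p(1-p)} = 1$, and $p_T/\sqrt{Np(1-p)} = p\theta$. Hence \eqref{eq:timeMDE} at time $T$ with $z_T = z\mathbf 1$ is exactly \eqref{eq:MDE}, so $M_T(z_T) = M(z)$. I set $c_t \deq \frac{p_t(1-p_t)}{p(1-p)}$, define $M_t \deq \ee^{t-T} M(z)$ and $m_t \deq \ang{M_t}$, and define
\begin{equation*}
z_t \deq -\frac{1}{M_t} + p_t\theta\Smat - c_t m_t\,.
\end{equation*}
I then verify that this $z_t$ solves \eqref{eq:Bernflow}. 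Since $-1/M_t = \ee^{T-t}(-1/M)$, we get $\partial_t(-1/M_t) = 1/M_t$. Using $\partial_t p_t = 1-p_t$, the ODE reduces to the scalar identity $\partial_t(c_t m_t) = \frac{(1-p_t)^2}{p(1-p)} m_t$. This identity holds because $c_t m_t = \frac{p_t \ee^{-T}}{p(1-p)}\, m$ with $m = \ang{M(z)}$, using $(1-p_t)\ee^{t}=1$. Next, $\im M_t = \ee^{t-T}\im M > 0$, so by uniqueness of the positive-imaginary solution of the MDE, $M_t$ is indeed the solution of \eqref{eq:timeMDE} at the spectral parameter $z_t$. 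This gives (b), since $M_t(z_t) = \ee^{t}\, \ee^{-T}M(z) = \ee^t M_0(z_0)$.

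\emph{Uniqueness.} The map $z \mapsto \ang{M_t(z)}$ is analytic on $\HH\mathbf 1+\R\Smat$, since it is the unique solution of a stable MDE. Hence the vector field in \eqref{eq:Bernflow} is locally Lipschitz there. Standard ODE uniqueness, run backwards from $z_T$, then shows that any solution staying in $\HH\mathbf 1 + \R\Smat$ coincides with the constructed one.

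\emph{Part (c).} I take imaginary parts in \eqref{eq:MDE}: since $\Smat$ is real, $\im(-1/M) = (\eta + \im m)\mathbf 1$. Therefore
\begin{equation*}
\im z_t = \ee^{T-t}(\eta+\im m)\mathbf 1 - c_t\, \ee^{t-T}\im m\,\mathbf 1\,,
\end{equation*}
which is a multiple of the identity. Evaluating at $t=0$, where $c_0 = 0$, gives $\eta_0 = \ee^T(\eta+\im m)$. Substituting this together with $m_0 = \ee^{-T}m$ and $c_t \ee^{t} = \frac{p_t}{p(1-p)}$ yields \eqref{eq:etaexplicit}. Differentiating gives $\partial_t\eta_t = -\ee^{-t}\bigl(\eta_0 + \frac{\im m_0}{p(1-p)}\bigr) < 0$, so $\eta_t$ is strictly decreasing. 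In particular $\eta_t \ge \eta_T = \eta > 0$, which confirms a posteriori that $z_t \in \HH\mathbf 1 + \R\Smat$ and closes the existence argument in (a).

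\emph{Part (d).} I read the factor $\frac{1-p_t}{1-p}$ in the integrands as $\frac{1-p_r}{1-p}$. The imaginary part of \eqref{eq:Bernflow} gives
\begin{equation*}
-\partial_r\eta_r = \eta_r + \frac{1-p_r}{1-p}\frac{\im m_r}{p} \;\ge\; \frac{1-p_r}{1-p}\frac{\im m_r}{p}\,,
\end{equation*}
because $\eta_r > 0$. Hence each integrand is bounded by $-\partial_r\eta_r/\eta_r^{\alpha}$. Integrating from $s$ to $t$ gives \eqref{eq:logint} for $\alpha=1$ and \eqref{eq:intrule} for $\alpha>1$.

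The only step requiring care is the bookkeeping identity for $\partial_t(c_t m_t)$ together with the appeal to MDE uniqueness. Everything else is direct computation.
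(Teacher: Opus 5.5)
Your argument is correct, but it runs in the opposite direction from the paper's. The paper takes the solution $z_t$ as given (it calls (a) obvious). It differentiates the time-dependent MDE \eqref{eq:timeMDE} along the flow to get $\partial_t M_t = M_t + \sigma_t M_t^2(\partial_t m_t - m_t)$, then takes the trace to obtain $(1-\sigma_t\langle M_t^2\rangle)(\partial_t m_t - m_t)=0$. It rules out vanishing of the first factor via the bound $\sigma_t\langle |M_t|^2\rangle<1$, read off from the imaginary part of the MDE. Part (c) then comes from solving the scalar ODE for $\eta_t$ with $\im m_t = \ee^t \im m_0$.

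You instead posit $M_t = \ee^{t-T}M(z)$, define $z_t$ by inverting the MDE, and verify the flow equation through the scalar identity $\partial_t(c_t m_t) = \frac{(1-p_t)^2}{p(1-p)}m_t$. Then (b) follows from uniqueness of the positive-imaginary-part MDE solution alone, and (c) is read off from the closed form. Your route gives an explicit formula for $z_t$ and existence on all of $[0,T]$ for free, including $\eta_t\geq\eta>0$, which the paper leaves implicit. For (b) it needs no differentiability of $t\mapsto M_t(z_t)$.

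The stability input does not disappear, however. Your uniqueness step rests on local Lipschitz continuity of $z\mapsto\langle M_t(z)\rangle$ on $\HH\mathbf 1+\R\Smat$, which you justify only by calling the MDE ``stable''. What actually makes this true is the paper's bound $\sigma_t\langle|M_t|^2\rangle<1$: it gives $|1-\sigma_t\langle M_t^2\rangle|>0$, so the implicit function theorem applies.

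The paper's route, in turn, shows why the flow is chosen as it is: $\partial_t M_t = M_t$ holds for any solution. Its differentiation scheme and stability bound are also reused in Lemmas \ref{lem:Mt'} and \ref{lem:M2bound}.

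In (d), your reading of $\frac{1-p_t}{1-p}$ as $\frac{1-p_r}{1-p}$ matches the paper's derivation. The literal statement follows anyway, since $1-p_t\leq 1-p_r$ for $r\leq t$.
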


In our proof, we always restrict ourselves to the bulk, meaning that the real part of $z_T = z$ lies in $[-2 + \kappa, 2 - \kappa]$ (recall \eqref{eq:domain}). Using Lemma \ref{lem:BerProp} (b) and \eqref{eq:Mmsc} below, this implies that, for any $r \in [0,T]$,
\begin{equation*}
\im m_r(z_r) \asymp \im m_T(z_T) = \im \ang{M(z)} = \im m_{\rm sc}(z + p \theta) + O\pbb{\frac{1}{N}} \geq c_\kappa\,,
\end{equation*}
where the last step follows by elementary properties of $m_{\rm sc}$. Thus we get the simplified versions of \eqref{eq:logint}--\eqref{eq:intrule}, reading
\begin{equation} \label{eq:intrulepractice}
\int_{s}^{t}  \frac{1}{p\,  \eta_r} \, \dd r \lesssim \log N \qquad \text{and} \qquad  \int_{s}^{t} \frac{1}{p\,  \eta_r^\alpha} \, \dd r \lesssim \frac{1}{\eta_t^{\alpha - 1}} \qquad \text{for} \quad \alpha > 1 \,. 
\end{equation}
Additionally, we will often use that, by \eqref{eq:targettime}, we have that $T \lesssim p$, and hence, trivially, $\int_{s}^{t} \dd r \, p^{-1} \lesssim 1$.

\subsection{Proof of Theorem \ref{thm:lolaw}} \label{subsec:LLproof}
The goal of this section is to give a proof of Theorem \ref{thm:lolaw}~(i) and the \emph{entrywise} part Theorem~\ref{thm:lolaw}~(ii), i.e.~for standard basis vectors $\bm v = \bm e_a$, $\bm w = \bm e_b$. To do so, we follow the evolution of four (families of) observables along the Bernoulli process in tandem with the Bernoulli characteristic flow. Before we start, for $a,b \in [N]$, we define the set of vectors
\begin{equation} \label{eq:Vab}
\mathcal{V}_{ab} \deq \{\bm e_a, \bm e_b, \bm e\} \qquad \text{where} \qquad \bm e \deq N^{-1/2}(1,\dots,1)^* \in \R^N \,. 
\end{equation}
The reason for including $\bm e$ in the set $\Vab$ of vectors is that one place in the argument we need to employ \emph{isotropic resummation}, by which $\bm e$ naturally emerges (see \eqref{eq:lambda0MM} below). 

The observables are given by $\phi^{{\rm av},1}_t$ and  $\phi^{{\rm av},2}_t$ in the average case, and by $\phi^{{\rm ent},1}_{\bm a \bm b, t}$ and $\phi^{{\rm ent},2}_{\bm a \bm b, t}$, for $\bm a, \bm b \in \mathcal{V}_{ab} $ with $a,b \in [N]$, in the entrywise case. More precisely, the \emph{average observables} $\phi^{{\rm av},1}_t$ and  $\phi^{{\rm av},2}_t$ are defined as 
\begin{equation} \label{eq:avobs}
\phi^{{\rm av},1}_{t}(X(t)) \deq \langle G_t(z_t) - M_t(z_t) \rangle \quad \text{and} \quad \phi^{{\rm av},2}_{t}(X(t)) \deq \langle G^2_t(z_t) - M'_t(z_t) \rangle
\end{equation}
where we denoted 
\begin{equation} \label{eq:Mprime}
M_t'(z_t) \deq \frac{\dd }{\dd w} M_t(w) \bigg|_{w = z_t} \,. 
\end{equation}
For $a,b \in [N]$ and $\bm a, \bm b \in \Vab$, the \emph{entrywise observables} $\phi^{{\rm ent}, 1}_{\bm a \bm b, t}$ and $\phi^{{\rm ent}, 2}_{\bm a \bm b, t}$ are defined as
\begin{equation} \label{eq:entobs}
\phi^{{\rm ent}, 1}_{\bm a \bm b, t}(X(t)) \deq \big(G^{ab}_t(z_t) - M_t(z_t)\big)_{\bm a \bm b} \quad \text{and} \quad \phi^{{\rm ent},2}_{\bm a \bm b, t}( X(t)) \deq \left(\big(G^{ab}_t(z_t)\big)^2 - M'_t(z_t)\right)_{\bm a \bm b} \,, 
\end{equation}
where $G^{ab}_t(z_t) = \big(\theta X^{ab}(t) - z_t\big)^{-1}$ is the resolvent of the random matrix $X^{ab}(t)$, which has its $(a,b)$ and $(b,a)$ entry set to zero. Note that in the diagonal case, $a=b$, the resolvent $G^{aa}$ agrees with $G$, since $X^{aa} \equiv 0$. Moreover, the Bernoulli process $X(t)$ and the Bernoulli characteristic flow $z_t$ from Section \ref{subsec:Berflowdef} satisfy $G_0(z_0) = M_0(z_0) = - 1 / z_0$, from which we deduce that
\begin{equation} \label{eq:initialcond}
\phi^{{\rm av}, i}_{0}(X(0)) = 0 \qquad \text{and} \qquad \phi^{{\rm ent}, i}_{\bm a \bm b,0}(X(0)) = 0 \quad \text{for} \quad a,b \in [N]\,, \ \bm a, \bm b \in \Vab \quad \text{and} \quad  i \in [2] \,. 
\end{equation}

We stress that control on the two-resolvent observables $\phi^{{\rm av},2}_{t}$ and $\phi^{{\rm ent}, 2}_{\bm a \bm b, t}$ can \emph{a posteriori} be derived from bounds on the corresponding single-resolvent observables using Cauchy's integral formula 
\begin{equation} \label{eq:contour}
G^2(z) = \frac{1}{2 \pi \ii} \oint_{\gamma} \frac{G(w)}{(z-w)^2} \dd w
\end{equation}
for a suitable contour $\gamma \subset \C$ encircling $z$, e.g., $\gamma = \{w : |w-z| = |\im z|/2\}$, and an analogous formula for the deterministic approximation. Within the proof, however, such an argument involving an integral representation of $G^2$ would be rendered circular, as it necessarily involves other spectral parameters, thus losing locality of the argument. For this reason, we have to treat $\phi^{{\rm av},2}_{t}$ and $\phi^{{\rm ent}, 2}_{\bm a \bm b, t}$ as genuinely new observables along the proof. 

In the remainder of the paper, we will often use the shorthand notations
\begin{equation} \label{eq:shorthands}
\hspace{-3mm}G_t \deq G_t(z_t)\,, \quad G_t^{ab} \deq G_t^{ab}(z_t)\,, \quad M_t \deq M_t(z_t)\,, \quad \phi_t^{{\rm av}, i} \deq  \phi_t^{{\rm av}, i}(X(t))\,, \quad  \phi_{\bm a \bm b, t}^{{\rm ent}, i} \deq  \phi_{\bm a \bm b, t}^{{\rm ent}, i}(X(t)) 
\end{equation}
for $a,b \in [N]$ and $i \in [2]$. 
That is, in particular, we omit the spectral parameter as an argument of a resolvent, and drop the argument $X(t)$ of the observables defined above. 
\subsubsection{Stopping time} \label{subsubsec:stoptime}
We introduce a stopping time $\tau$, defined as the minimal time at which $|\phi_t^{{\rm av}, i}|$ or $|\phi_{\bm a \bm b, t}^{{\rm ent}, i}|$ for $i \in [2]$ is larger than a suitably chosen threshold. By constructing four martingales associated with the observables \eqref{eq:avobs}--\eqref{eq:entobs}, we then show that our observables are actually much \emph{smaller} than the threshold level of the stopping time, from which we conclude that $\tau = T$. Given an $N$-independent constant $\Cf> 0$, to be chosen in \eqref{eq:Cchoice} below, we define the stopping time
\begin{equation} \label{eq:stoptime}
\tau = \tau(\Cf) \deq \inf \left\{  t \in [0,T] \, : \, \max_{i \in [2]} \frac{\big| \phi^{{{\rm av}}, i}_{t} \big|}{ \Lambda_{{\rm av}}^{(i)}(t)} + 
\max_{i \in [2]}\max_{a,b \in [N]} \max_{\bm a, \bm b \in \Vab}\,\frac{ \big| \phi_{\bm a \bm b, t}^{{{\rm ent}}, i} \big| }{ \Lambda_{{\rm ent}}^{(i)}(t)} \geq \Cf 
\right\}\,,
\end{equation}
where we introduced the shorthand notations
\begin{alignat}{3}
\Lambda_{{\rm av}}^{(1)}(t) &\deq \frac{{ (\log N)^{1/2}}}{N \eta_t} + \sqrt{\frac{\log N}{(N \eta_t) (N p)}} \,, \qquad &&\Lambda_{{\rm av}}^{(2)}(t) &&\deq \sqrt{\frac{\log N}{N \eta_t^3}} \label{eq:Lambdaav}\\
\Lambda_{{\rm ent}}^{(1)}(t) &\deq \sqrt{\frac{ \log N}{N \eta_t}} + \sqrt{\frac{\log N}{N p}} \,, \qquad  \qquad  &&\Lambda_{{\rm ent}}^{(2)}(t) &&\deq \frac{1}{\eta_t}\left(\sqrt{\frac{ \log N}{N \eta_t}} + \sqrt{\frac{\log N}{N p}} \right)\label{eq:Lambdaent}
\end{alignat}
for the average and entrywise time-dependent control parameters, respectively. 

Away from the jumps of $X(t)$, all quantities in the definition of $\tau$ are continuous. Combined with the fact that the first jump occurs almost surely at a positive time, we conclude that $\tau > 0$ almost surely.

\subsubsection{Martingale construction} \label{subsubsec:Mart}
We now construct Dynkin martingales associated with the average and entrywise observables. For $i \in [2]$, these are given by
\begin{equation} \label{eq:Mav}
\Mart^{{\rm av}, i}_t \deq \phi^{{\rm av}, 	i}_t\big(X(t)\big)  - \phi^{{\rm av}, 	i}_0\big(X(0)\big) - \int_{0}^{t} \dd s \ \Big( \big(\Lcal \phi^{{\rm av}, 	i}_s\big)\big(X(s)\big) + \partial_s \phi^{{\rm av}, 	i}_s\big(X(s)\big)   \Big) 
\end{equation}
and 
\begin{equation} \label{eq:Ment}
	\Mart^{{\rm ent}, i}_{\bm a \bm b,t} \deq \phi^{{\rm ent}, i}_{\bm a \bm b,t}\big(X(t)\big)  - \phi^{{\rm ent}, i}_{\bm a \bm b,0}\big(X(0)\big) - \int_{0}^{t} \dd s \ \Big( \big(\Lcal \phi^{{\rm ent}, i}_{\bm a\bm b,s}\big)\big(X(s)\big) + \partial_s \phi^{{\rm ent}, i}_{\bm a\bm b,s}\big(X(s)\big)   \Big) 
\end{equation}
for any $a,b \in [N]$ and $\bm a, \bm b \in \Vab$. In Appendix \ref{app:CDC}, we prove the following result (see Lemmas \ref{lem:dynkin_mart} and \ref{lem:quadr_var}). 
\begin{lemma}[Associated martingales and predictable quadratic variations] \label{lem:martingales}
For $i \in [2]$, let $\Mart_t =  \Mart^{{\rm av}, i}_t$ or $\Mart_t = \Mart^{{\rm ent}, i}_{\bm a \bm b,t}$, for any $a,b \in [N]$ and $\bm a, \bm b \in \Vab$, and let $\phi_t= \phi^{{\rm av}, i}_t $ or  $ \phi_t = \phi^{{\rm ent}, i}_{\bm a \bm b,t}$, respectively. Then $\big(\Mart_{t} \big)_{t \geq 0}$ is a mean-zero càdlàg martingale, whose predictable quadratic variation is given by
\begin{equation} \label{eq:QVformula}
	\begin{split}
\langle \Mart\rangle_t &= \int_0^t \dd s \ \Big( \big(\Lcal \abs{\phi_s}^2 \big)\big(X(s)\big) - 2 \re \phi_s\big(X(s)\big) \big(\Lcal \phi_s\big)\big(X(s)\big)\Big)  \\
&= \int_0^t \dd s \ \sum_{x < y} \, \absb{\phi_s\big(X^{xy,1}(s)\big) - \phi_s\big(X(s)\big)}^2 \,.
	\end{split}
\end{equation}
(Recall the notation $X^{xy,1}$ introduced below \eqref{eq:generator}.)
\end{lemma}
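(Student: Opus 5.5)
The plan is to use the general theory of Markov jump processes on a finite state space, applied to time-dependent observables. The state space $\Gamma$ is finite, with $2^{N(N-1)/2}$ elements, and the jump rates are bounded: the total rate out of any state is at most $N(N-1)/2$. For a function $\Phi(t,X)$ that is $C^1$ in $t$ for each fixed $X$, the classical Dynkin formula states that
\begin{equation*}
\Phi(t,X(t)) - \Phi(0,X(0)) - \int_0^t (\partial_s + \Lcal)\Phi(s,X(s))\,\dd s
\end{equation*}
is a martingale.

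\textbf{Step 1: regularity of the observables.} First we check that each $\phi_t$ has this regularity, i.e.\ that $t\mapsto \phi_t(X)$ is $C^1$ on $[0,T]$ for every fixed $X\in\Gamma$.

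This follows from Lemma \ref{lem:BerProp}. By part (a), $z_t$ is a $C^1$ solution of the ODE \eqref{eq:Bernflow}, and by part (c), $\eta_t \geq \eta_T>0$. Since $\theta X$ is real symmetric and $\im z_t = \eta_t \mathbf{1}$, we have $\norm{(\theta X - z_t)^{-1}}\leq \eta_t^{-1}\le \eta_T^{-1}$. Hence the maps $t\mapsto (\theta X - z_t)^{-1}$ and $t\mapsto (\theta X - z_t)^{-2}$ are smooth and bounded, and the same holds for $G^{ab}$. Finally, $M_t = \ee^t M_0(z_0)$ by Lemma \ref{lem:BerProp}(b), and $M_t'$ is smooth by the implicit function theorem applied to \eqref{eq:timeMDE}.

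Consequently $\phi$ and $\partial_t\phi$ are uniformly bounded on $[0,T]\times\Gamma$.

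\textbf{Step 2: the martingale property.} I will prove it directly via the forward equation. Let $P_{s,t}$ denote the semigroup, so that $P_{s,t}f(X)=\E[f(X(t))\mid X(s)=X]$ with $\partial_t P_{s,t} f= P_{s,t}\Lcal f$; on a finite state space this is a matrix exponential. Then
\begin{equation*}
\frac{\dd}{\dd t}\E[\phi_t(X(t))\mid\mathcal F_s] = \E[(\partial_t+\Lcal)\phi_t(X(t))\mid \mathcal F_s].
\end{equation*}
Integrating this from $s$ to $t$ gives $\E[\Mart_t-\Mart_s\mid\mathcal F_s]=0$. Integrability is automatic from the boundedness in Step 1. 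The càdlàg property is inherited from $X(t)$, since the integral term is continuous in $t$. The mean is zero because $\Mart_0=0$.

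\textbf{Step 3: the predictable quadratic variation.} Apply Step 2 to the observable $\abs{\phi_t}^2$, which is again $C^1$ in $t$. This shows that
\begin{equation*}
\abs{\phi_t(X(t))}^2 - \int_0^t (\partial_s+\Lcal)\abs{\phi_s}^2\,\dd s
\end{equation*}
is a martingale.

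Next, use $\Mart_t = \phi_t - \phi_0 - \int_0^t(\partial_s+\Lcal)\phi_s\,\dd s$ and expand $\abs{\Mart_t}^2$. Integrating by parts in time, i.e.\ applying the product rule to $\phi_t\cdot\int_0^t(\ldots)$, the cross terms reduce to a martingale plus $-2\re\int_0^t \ol{\phi_s}\,(\partial_s+\Lcal)\phi_s\,\dd s$. The $\partial_s$ contributions then cancel, because $\partial_s\abs{\phi_s}^2 = 2\re \ol{\phi_s}\,\partial_s\phi_s$. What remains is that $\abs{\Mart_t}^2-\int_0^t(\Lcal\abs{\phi_s}^2-2\re\ol{\phi_s}\Lcal\phi_s)\,\dd s$ is a martingale. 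This integral term is continuous, hence predictable, so it equals $\ang{\Mart}_t$. (The statement writes $\phi_s$ in the cross term; for complex $\phi$ one should read it as $\ol{\phi_s}$.)

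The second line of \eqref{eq:QVformula} is then a pointwise algebraic identity. By \eqref{eq:generator},
\begin{equation*}
\Lcal\abs{\phi}^2 - 2\re\ol\phi\,\Lcal\phi = \sum_{x<y}\Bigl(\abs{\phi(X^{xy,1})}^2-\abs{\phi}^2-2\re\ol\phi\,(\phi(X^{xy,1})-\phi)\Bigr) = \sum_{x<y}\abs{\phi(X^{xy,1})-\phi(X)}^2 .
\end{equation*}

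\textbf{Main obstacle.} The only delicate point is the bookkeeping in Step 3, namely handling the cross terms between $\phi_t$ and the drift integral rigorously. A cleaner alternative, which I would use if the expansion becomes messy, is to work on the time-space chain $(t,X(t))$. Its generator is $\partial_t+\Lcal$, and the carré du champ $\Gamma(f)=(\partial_t+\Lcal)\abs f^2-2\re \ol f(\partial_t+\Lcal)f$ kills the $\partial_t$ part. The standard identity $\ang{\Mart^f}=\int\Gamma(f)$ for bounded $f$ then gives the claim directly.
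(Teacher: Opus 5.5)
Your proposal is correct and follows essentially the same route as the paper's Appendix~\ref{app:CDC} (Lemmas~\ref{lem:dynkin_mart} and~\ref{lem:quadr_var}). That route has three parts: the Dynkin martingale from the Markov property and the forward equation; identification of $\langle \Mart\rangle_t$ as the continuous compensator of $|\Mart_t|^2$; and the pointwise identity $\Lcal|\phi|^2 - 2\re \ol{\phi}\,\Lcal\phi = \sum_{x<y}|\phi(X^{xy,1})-\phi(X)|^2$. The paper writes this out only for time-independent $\phi$, and it differentiates $\E|\Mart_t|^2$ rather than integrating by parts pathwise. Your Step~1 and the cancellation of the $\partial_s$ terms in Step~3 therefore supply the time-dependent extension that the paper uses without comment. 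You are also right that the cross term should read $2\re\ol{\phi_s}\,\Lcal\phi_s$. The conjugate is missing both in the paper's formula and in its appendix computation, and this matters because the observables here are complex-valued.
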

\subsubsection{Stochastic Grönwall estimates and proof of Theorem \ref{thm:lolaw}}
After having constructed the stopping time in Section~\ref{subsubsec:stoptime} and the martingales associated with the average and entrywise observables in Section~\ref{subsubsec:Mart}, in the present section, we give the proofs of the average and entrywise local laws in Theorem~\ref{thm:lolaw} by showing that $\tau =T$. The key to achieving this is the following \emph{stochastic Grönwall estimates}. 
\begin{proposition}[Stochastic Grönwall estimates] \label{prop:Gron}
Adopt the notations and conventions from above, and assume that 
\begin{equation}
Np \geq (1+\mathfrak{C})^2 (\log N)^2 \quad \text{and} \quad N \eta_T \geq (1+\mathfrak{C})^2 \log N\,.
\end{equation}
Then there exists a constant $C_\kappa$, depending only on $\kappa$ from \eqref{eq:domain}, such that for any $c \geq 10 \Cf^{-1/2}$,
\begin{subequations} \label{eq:avGron}
\begin{equation} \label{eq:avGron1}
\proba{\exists t \in [0,T]: \big| \phi^{{\rm av},1}_{t\wedge \tau} \big| \geq C_\kappa\int_{0}^{t \wedge \tau}  \dd s \, \frac{1}{p}\left(1 + \Cf \Lambda_{{\rm av}}^{(2)}(s) \right) \, \big| \phi^{{\rm av},1}_s \big| +  (1+10c \Cf) \Lambda_{{\rm av}}^{(1)}(t \wedge \tau)} \leq  N^{-c\Cf/C_\kappa}
\end{equation}
and
\begin{equation} \label{eq:avGron2}
		\proba{\exists t \in [0,T] : 	\big| \phi^{{\rm av},2}_{t\wedge \tau} \big| \geq C_\kappa \int_{0}^{t \wedge \tau} \dd s \, \frac{1}{p} \left(1 + \Cf \Lambda_{{\rm av}}^{(2)}(s) \right) \, \big| \phi^{{\rm av},2}_s \big| +   (1+10 c \Cf)\Lambda_{{\rm av}}^{(2)}(t \wedge \tau)} \leq  N^{-c \Cf/C_\kappa}
\end{equation}
\end{subequations}
for the average observables, and 
\begin{subequations} \label{eq:entGron}
\begin{multline} \label{eq:entGron1}
\proba{	\exists t \in [0,T], \ a,b \in [N], \ \bm a, \bm b \in \Vab : \big| \phi^{{\rm ent}, 1}_{\bm a \bm b, t\wedge \tau} \big| \geq C_\kappa \int_{0}^{t \wedge \tau} \dd s \, 	\big| \phi^{{\rm ent}, 1}_{\bm a \bm b, s} \big| \, + (1+ 10 c \Cf)\Lambda_{{\rm ent}}^{(1)}(t \wedge \tau)} \\
  \leq  N^{- c \Cf/C_\kappa+10}
\end{multline}
and
\begin{multline} \label{eq:entGron2}
\proba{	\exists t \in [0,T], \ a,b \in [N], \ \bm a, \bm b \in \Vab : \big| \phi^{{\rm ent}, 2}_{\bm a \bm b, t\wedge \tau} \big| \geq C_\kappa \int_{0}^{t \wedge \tau} \dd s \, 	\big| \phi^{{\rm ent}, 2}_{\bm a \bm b, s} \big| \, +  (1+10 c \Cf)\Lambda_{{\rm ent}}^{(2)}(t \wedge \tau)} \\
\leq N^{- c \Cf/C_\kappa+10}
\end{multline}
\end{subequations}
for the entrywise observables. 
\end{proposition}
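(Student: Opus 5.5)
The approach is to unpack the Dynkin martingale identities \eqref{eq:Mav}--\eqref{eq:Ment}, stopped at $\tau$. Together with \eqref{eq:initialcond} they give
\begin{equation*}
\phi_{t\wedge\tau} = \int_0^{t\wedge\tau} \dd s\, \pb{\Lcal\phi_s + \partial_s\phi_s} + \Mart_{t\wedge\tau}\,.
\end{equation*}
Each of the four claimed inequalities then follows from two separate estimates. The first is a \emph{deterministic} bound on the generator term, valid for $s<\tau$. The second is a \emph{probabilistic} bound on the stopped martingale, obtained from the Burkholder-Davis-Gundy-type inequality \eqref{BDG_intro}.

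\textbf{Step 1: generator terms.} I will expand $\Lcal\phi_s$ by the resolvent expansion
\begin{equation*}
G^{xy,1}-G = -\theta G\Delta^{xy}G + \theta^2 G\Delta^{xy}G\Delta^{xy}G - \cdots,
\end{equation*}
summed over $x<y$ with weight $(1-X_{xy})$. The flow \eqref{eq:Bernflow}--\eqref{eq:timeMDE} and Lemma \ref{lem:BerProp}(b) are designed so that the leading terms cancel against $\partial_s\phi_s$, as in \eqref{L_cancel_intro}. What remains is a propagator term plus errors:
\begin{itemize}
\item For $\phi^{\rm av,1}$ the propagator term is $\pb{1+\frac{1-p_s}{(1-p)p}\ang{G_s^2}}\phi^{\rm av,1}_s$. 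On $\{s<\tau\}$ we have $|\ang{G_s^2}|\le|\ang{M'_s}|+\Cf\Lambda^{(2)}_{\rm av}(s)$ and $|\ang{M_s'}|\lesssim1$ in the bulk. This produces exactly the coefficient $\frac{C_\kappa}{p}(1+\Cf\Lambda^{(2)}_{\rm av})$.
\item The errors come from the third-order expansion terms and from replacing $1-X_{xy}$ by $1-p_s$. Using the a priori bounds $|\phi|\le\Cf\Lambda$ for $s<\tau$, I aim to bound them by integrands whose integral, via \eqref{eq:intrulepractice}, is at most a small multiple of $\Lambda^{(i)}(t)$.
\item For the entrywise observables the propagator is of order $1$ rather than $1/p$. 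The reason is that the leading fluctuation of $G^{ab}$ has no $\ang{G^2}$-type self-consistent term, so only the $\ang{G}$-independent piece survives.
\item The delicate error is the term \eqref{eq:lambda0MM}. It is handled by isotropic resummation through $\bm e\in\Vab$, so that $\sum_xG_{ax}=\sqrt N G_{a\bm e}$ is controlled by $\Lambda_{\rm ent}$ with the $\bm e$-improvement of Lemma \ref{lem:Mprop}.
\end{itemize}

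\textbf{Step 2: jumps and quadratic variation.} By Lemma \ref{lem:martingales}, $\langle\Mart\rangle_t=\int_0^t\sum_{x<y}|\phi_s(X^{xy,1})-\phi_s(X)|^2\,\dd s$. Each jump is $\theta(G\Delta^{xy}G)$ plus higher-order corrections. For the average observable, a jump is of size $\frac{\theta}{N}|(G^2)_{xy}|\lesssim\frac{\theta}{N\eta}$. Summing squares with the Ward identity gives
\begin{equation*}
\langle\Mart\rangle\lesssim\int\frac{\theta^2}{N^2\eta_s^3}\,\dd s\sim\frac{1}{N^2\eta_t^2}.
\end{equation*}
This is $b^2\asymp(\Lambda^{(1)}_{\rm av})^2/\log N$, up to the sparsity contribution.

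The entrywise jumps involve $\theta G_{ax}G_{yb}$. Freezing the edge $ab$ in $G^{ab}$ removes the single edge whose jump would involve two diagonal entries, so $K\lesssim\theta\Lambda$-type bounds hold. Then \eqref{BDG_intro} with $a=c\Cf\Lambda^{(i)}$ gives tail $\exp(-c\Cf\log N/C_\kappa)$ provided $aK\lesssim b^2\log N$. This is precisely where $Np\ge(1+\Cf)^2(\log N)^2$ and $N\eta_T\ge(1+\Cf)^2\log N$ enter. Because $\Lambda$ is time dependent, I will apply \eqref{BDG_intro} on dyadic time blocks in $\eta_t$; there are $O(\log N)$ of them. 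A union bound over $a,b,\bm a,\bm b$ costs $N^{10}$, which accounts for the exponent in the entrywise statements.

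\textbf{Main obstacle.} I expect the hardest part to be Step 1: controlling the fluctuation of $\sum_{x,y}(X_{xy}-p_s)(\cdots)$. The summand is correlated with the $X_{xy}$, so a naive bound would lose a factor $\sqrt{Np}$. My first attempt would be to write $X_{xy}\theta(\cdots)$ back in terms of $H G=1+zG$-identities, so that these terms recombine into the $\theta\Smat-z_t$ part of the flow. Any remaining piece would be bounded by the entrywise a priori estimates, which are affordable since $\sqrt{\log N/Np}$ appears in $\Lambda$.
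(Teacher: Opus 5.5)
Your overall architecture is the paper's. You use the Dynkin identity stopped at $\tau$ with $\phi_0=0$, a bound on the generator integral, and quadratic-variation and jump bounds fed into the jump BDG inequality \eqref{BDG}, where freezing the edge $ab$ gives \eqref{eq:Mtrickbound}. You finish with a union bound over $a,b,\bm a,\bm b$, and you correctly single out the isotropic resummation through $\bm e$ in \eqref{eq:lambda0MM}. Your dyadic blocks in $\eta_t$ are a fine way to handle the time-dependent thresholds; the paper only says it rescales by the natural sizes.

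Two statements in your sketch are inaccurate. First, with $a=c\Cf\Lambda$ and $b^2\asymp\Lambda^2/\log N$, \eqref{BDG} requires $K\lesssim\Lambda/\log N$, i.e.\ $a/K\gtrsim c\Cf\log N$ (the subexponential regime). It does not require $aK\lesssim b^2\log N$, which loses the $\log N$. With $K\asymp\Cf\Lambda/\sqrt{Np}$, this is exactly where $\sqrt{Np}\gtrsim\Cf\log N$ enters. Second, in the entrywise case the second-order term \emph{does} produce a self-consistent piece, $\frac{1-p_s}{p(1-p)}(\tG_s^2)_{\bm a\bm b}\langle\tG_s-M_s\rangle$. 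It involves the \emph{average} observable and is bounded as an error by $\frac{\Cf}{p\eta_s}\Lambda^{(1)}_{\rm av}(s)$ via the stopping time. That is why the entrywise Grönwall coefficient is $O(1)$.

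The genuine gap is the step you flag as the main obstacle, and your proposed fix does not work. The term in question is $\frac{\theta^2}{N}\sum_{x\neq y}(1-X_{xy})(G_s^2)_{xx}(G_s)_{yy}$, or entrywise $\theta^2\sum_{x\neq y}(1-X_{xy})\tG_{\bm a x}\tG_{x\bm b}\tG_{yy}$. Here $X_{xy}$ multiplies \emph{diagonal} resolvent entries, so there is no product $XG$ to rewrite via $HG=1+zG$. Moreover, the $\theta\Smat-z_s$ part of the flow is already used up by the first-order terms.

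Bounding what remains by the a priori entrywise bounds alone is also not affordable. Take the piece $(G_s^2-M_s')_{xx}(X_{xy}-p_s)M_{yy}$ and bound it using only $|(G_s^2-M_s')_{xx}|\le\Cf\Lambda^{(2)}_{\rm ent}(s)$ and $\sum_y|X_{xy}-p_s|\preceq Np_s+\log N$. Its time integral then exceeds $\Lambda^{(1)}_{\rm av}(t)$ by a factor of order $p\sqrt{N\eta_t}$; for example at $t\asymp T/2$, where $\eta_t\asymp1$, this is large whenever $p\gg N^{-1/2}$.

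The missing idea is to split
$X_{xy}=p_s+(X_{xy}-p_s)$, $G=M+(G-M)$ and $G^2=M'+(G^2-M')$. One then uses that $M_{yy}=\langle M\rangle$ and $(M')_{xx}=\langle M'\rangle$ do not depend on the index, by vertex transitivity of $\Smat$. After this split, in each cross term the Bernoulli fluctuation appears only through a raw row sum $\sum_y(X_{xy}-p_s)$, or the total $\sum_{x,y}(X_{xy}-p_s)$, multiplied by a factor bounded in absolute value. Bennett's inequality \eqref{eq:Bennettpractice} then gives, uniformly in $x$ and $s$,
$\frac{1}{Np}\bigl|\sum_y(X_{xy}-p_s)\bigr|\preceq\frac{p_s}{p}\bigl(\sqrt{\log N/(Np_s)}+\log N/(Np_s)\bigr)\lesssim\sqrt{\log N/(Np)}$.
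No decoupling of $X$ from $G$ is needed. The doubly fluctuating term carries two a priori small factors and only needs $\sum_y|X_{xy}-p_s|\preceq Np_s+\log N$.

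All resulting errors are of size $\Cf^2\bigl(\Lambda^{(1)}_{\rm ent}(s)\bigr)^2/\eta_s$ or smaller (cf.\ \eqref{eq:E1est}--\eqref{eq:E5est}). They integrate via \eqref{eq:intrulepractice} to $o(\Lambda^{(1)}_{\rm av})$ because $p$ is small. In particular, your Step 1 bound is not deterministic: it holds only with very high probability, and this must be included in the final probability estimate.
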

The proof of Proposition \ref{prop:Gron} is given in Section \ref{sec:Gronproof} below. 

\begin{proof}[Proof of the average and entrywise local laws in Theorem \ref{thm:lolaw}~(i)+(ii)]
	We apply Grönwall's inequality to each of the four statements in \eqref{eq:avGron}--\eqref{eq:entGron}, involving \eqref{eq:intrulepractice}. This implies that there exists a constant $C_\kappa$, depending only on $\kappa$, such that for any $c$ as in Proposition \ref{prop:Gron}, we have, for $i \in [2]$, 
	\begin{equation*}
		\begin{split}
\proba{\exists t \in [0,T] : \big|\phi^{{\rm av}, i}_{t \wedge \tau}\big| \geq \exp(C_\kappa)(1+10c \Cf) \Lambda_{{\rm av}}^{(i)}(t \wedge \tau)} &\leq N^{-c \Cf/C_\kappa} \,, \\
\proba{	\exists t \in [0,T], \ a,b \in [N], \ \bm a, \bm b \in \Vab : \big| \phi^{{\rm ent}, i}_{\bm a \bm b, t\wedge \tau} \big| \geq \exp(C_\kappa)  (1+ 10 c \Cf)\Lambda_{{\rm ent}}^{(i)}(t \wedge \tau)} &\leq  N^{- c \Cf/C_\kappa+10} \,.
		\end{split}
	\end{equation*}
	Hence, by choosing 
	\begin{equation} \label{eq:Cchoice}
 \Cf \geq 4 \exp(C_\kappa)  \quad \text{and} \quad c = \exp(-C_\kappa)/40
	\end{equation}
	we find that, for $i \in [2]$,
		\begin{equation*}
		\begin{split}
			\proba{\exists t \in [0,T] : \big|\phi^{{\rm av}, i}_{t \wedge \tau}\big| \geq \frac{\Cf }{2} \Lambda_{{\rm av}}^{(i)}(t \wedge \tau)} &\leq N^{-\Cf/C_\kappa'} \,, \\
			\proba{	\exists t \in [0,T], \ a,b \in [N], \ \bm a, \bm b \in \Vab : \big| \phi^{{\rm ent}, i}_{\bm a \bm b, t\wedge \tau} \big| \geq \frac{\Cf}{2}\Lambda_{{\rm ent}}^{(i)}(t \wedge \tau)} &\leq  N^{- \Cf/C_\kappa'+10} \,,
		\end{split}
	\end{equation*}
	where we abbreviated $C_\kappa' = 40 C_\kappa \exp(C_\kappa)$. Therefore, by definition of the stopping time \eqref{eq:stoptime}, we conclude
	\begin{equation}
\proba{\tau(\Cf) <T} \leq N^{- \Cf/C_\kappa'+11}\,,
	\end{equation}
	i.e.~we have proven the local laws to hold for fixed spectral parameters. 
To deduce the uniform statements in \eqref{eq:avLL}--\eqref{eq:isoLL}, we apply a simple grid argument. Additionally, since the statement in \eqref{eq:isoLL} is formulated for the actual resolvent $G$ and not the modified $G^{ab}$ that has the $(a,b)$ and $(b,a)$ entries set to zero, we need to apply a simple resolvent exansion formulated in Lemma~\ref{lem:resolventexpand} to obtain the desired statement for $G$. This is formalized in Lemma~\ref{lem:entforG}~(i) below. 

This concludes the proof of Theorem \ref{thm:lolaw}~(i) and the purely entrywise part of Theorem \ref{thm:lolaw}~(ii); the extension to general vectors is discussed in Section \ref{sec:isolaw} below. 
\end{proof}

\section{Proof of Proposition \ref{prop:Gron}} \label{sec:Gronproof}
The goal of this section is to give the proof of Proposition \ref{prop:Gron}. The argument is divided in three main lemmas, formulated below, that control the predictable quadratic variation of the martingale terms \eqref{eq:Mav}--\eqref{eq:Ment} (Lemma \ref{lem:martest}), their respective jump sizes (Lemma \ref{lem:jump}), and finally the generator terms in \eqref{eq:Mav}--\eqref{eq:Ment} (Lemma \ref{lem:generate}). In the final lemma, we crucially use a cancellation among the two terms in 
\begin{equation}
\Lcal \phi_s + \partial_s \phi_s
\end{equation}
for all considered observables $\phi_s$, which arises from our choice of the Bernoulli characteristic flow \eqref{eq:Bernflow}. 

We now formulate the above mentioned three lemmas precisely. Their proofs are given in Section~\ref{sec:lemproofs}. 
\begin{lemma}[Quadratic variation of martingale terms] \label{lem:martest}
Using the assumptions and notations from above, there exists a constant $C_\kappa > 0$ such that for $i \in [2]$, and uniformly in $t \in [0,T]$ and $a,b \in [N]$ and $\bm a, \bm b \in \Vab$, 
\begin{equation}
\langle \Mart^{{\rm av}, i}\rangle_{t \wedge \tau} \leq \frac{C_\kappa}{\log N} \big(\Lambda_{{\rm av}}^{(i)}(t \wedge \tau)\big)^2 \quad \text{and} \quad \langle\Mart^{{\rm ent},i}_{\bm a \bm b}\rangle_{t \wedge \tau} \leq \frac{C_\kappa}{\log N} \big(\Lambda_{{\rm ent}}^{(i)}(t \wedge \tau)\big)^2 \,. 
\end{equation}
\end{lemma}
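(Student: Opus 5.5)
We start from the second formula in Lemma \ref{lem:martingales},
\[
\langle \Mart\rangle_{t\wedge\tau} = \int_0^{t\wedge\tau}\dd s\,\sum_{x<y}\absb{\phi_s(X^{xy,1}(s))-\phi_s(X(s))}^2 .
\]
Only pairs with $X_{xy}(s)=0$ contribute. For the entrywise observables we also drop the frozen edge $\{a,b\}$, since switching it leaves $G^{ab}$ unchanged. For each remaining pair we compute the jump exactly with a rank-two resolvent identity. Write $\Delta = \theta(\bm e_x\bm e_y^*+\bm e_y\bm e_x^*)$. Then
\[
G' = G - G\Delta G' = G - G\Delta G + G\Delta G\Delta G',
\]
where the $2\times2$ block $(1+\Delta G)^{-1}$ on $\{x,y\}$ involves $\theta G_{xx},\theta G_{xy},\theta G_{yy}$. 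For $s<\tau$, the entrywise control in the definition \eqref{eq:stoptime} of $\tau$ gives $|G_{xx}|,|G_{xy}|\lesssim 1$, using $\Lambda^{(1)}_{\rm ent}\lesssim 1$ because $Np,N\eta\gg\log N$. Since $\theta\ll 1$, this block is invertible with norm $O(1)$. The jump is therefore
\[
\phi_s(X^{xy,1})-\phi_s(X) = -\theta\bigl(G_{\bm a x}G_{y\bm b}+G_{\bm a y}G_{x\bm b}\bigr)+O\Bigl(\theta^2\sum_{u,v\in\{x,y\}}|G_{\bm a u}||G_{v\bm b}|\Bigr).
\]
For the average observables we read this with $\ang{\cdot}$, so the first-order term is $-\frac{2\theta}{N}(G^2)_{yx}$. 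For the second-order observables the analogous expansion of $G'^2$ gives terms like $\theta (G^2)_{\bm a x}G_{y\bm b}$ and $\theta G_{\bm a x}(G^2)_{y\bm b}$, and, in the average case, $\frac{\theta}{N}(G^3)_{xy}$. The deterministic parts $M_s$, $M'_s$ do not jump.

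Next we sum over $x,y$ using Ward identities, $\sum_x|G_{\bm a x}|^2=(\im G)_{\bm a\bm a}/\eta_s$, and $\sum_x|(G^2)_{\bm ax}|^2\le \eta_s^{-3}(\im G)_{\bm a\bm a}$. For $\bm a\in\Vab$ the quantity $(\im G)_{\bm a\bm a}$ is $O(1)$ before $\tau$. For $\bm a=\bm e$ this uses $\phi^{{\rm ent},1}_{\bm e\bm e}$ and Lemma \ref{lem:Mprop}, which even yields the improvement $\lesssim 1/Np + \Lambda^{(1)}_{\rm ent}$. In the entrywise first-order case this gives
\[
\sum_{x,y}\theta^2|G_{\bm ax}|^2|G_{y\bm b}|^2\lesssim \frac{1}{Np\,\eta_s^2}.
\]
In the average case,
\[
\frac{\theta^2}{N^2}\sum_{xy}|(G^2)_{xy}|^2=\frac{\theta^2}{N}\ang{|G^2|^2}\lesssim\frac{1}{N^2p\,\eta_s^3},
\]
using $\ang{\im G}\lesssim 1$ from $\phi^{{\rm av},1}$. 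The $\theta^2$ corrections contribute $\theta^4\sum_{xy}|G_{\bm ax}|^2|G_{y\bm b}|^2$ and similar terms, which are smaller.

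We then integrate in time with \eqref{eq:intrulepractice}, i.e.\ $\int_0^t\frac{\dd s}{p\,\eta_s^\alpha}\lesssim\eta_t^{1-\alpha}$ for $\alpha>1$:
\begin{itemize}
\item $\int\frac{\dd s}{Np\eta_s^2}\lesssim\frac{1}{N\eta_t}$, which is $\frac{1}{\log N}(\Lambda_{\rm ent}^{(1)})^2$.
\item $\int\frac{\dd s}{N^2p\eta_s^3}\lesssim\frac{1}{(N\eta_t)^2}$, which is at most $\frac{1}{\log N}(\Lambda^{(1)}_{\rm av})^2$.
\item The second-order analogues carry two extra powers of $\eta_s^{-1}$ and give $\frac{1}{N\eta_t^3}$ and $\frac{1}{N^2\eta_t^5}\le\frac{1}{N\eta_t^3}$, matching $(\Lambda^{(2)})^2/\log N$.
\end{itemize}
Two points need care. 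The deterministic time integral \eqref{eq:intrulepractice} applies to the flow $\eta_s$ of Lemma \ref{lem:BerProp}. The bounds hold only before $\tau$, which is why the statement is for $t\wedge\tau$. Note also that the $\sqrt{\log N/Np}$ part of $\Lambda$ is not needed here, since the quadratic variation is dominated by the $\eta$ terms.

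The main obstacle is the average observables. Bounding $\ang{|G^2|^2}$ by Ward identity alone loses: it gives $\eta^{-3}\ang{\im G}$, whereas the second-order average case needs $\ang{|G^3|^2}\lesssim\eta^{-5}$. This is still exactly what is required, but only if every factor of $\ang{\im G}$ and $(\im G)_{\bm a\bm a}$ is controlled through the stopping time rather than a priori. A second concern is the entrywise case with $\bm a=\bm e$ and $x,y$ generic: there $G_{\bm e x}$ is not covered by $\Vab$ of the pair $(x,y)$. We avoid entries $G_{\bm e x}$ entirely by using only Ward identities in the summed index, so only $(\im G)_{\bm e\bm e}$ is needed. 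We expect the bookkeeping of the $(1+\Delta G)^{-1}$ expansion for $G'^2$ to be the lengthiest part, but it is routine.
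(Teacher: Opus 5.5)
Your framework is the paper's: the jump-sum formula for $\langle\Mart\rangle$, expanding each jump, Ward identities, the rules \eqref{eq:intrulepractice}, and working up to $\tau$. Using the exact $2\times2$ block inverse instead of a finite expansion is fine. The gap is your claim that the $\theta^2$ corrections are smaller.

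In your remainder $O\bigl(\theta^2\sum_{u,v\in\{x,y\}}|G_{\bm a u}||G_{v\bm b}|\bigr)$, consider the constellations with $u=v$, e.g.\ $\theta^2 G_{\bm a x}G_{yy}G_{x\bm b}$. They carry a diagonal middle factor. After squaring, the sum over $y$ is free and no Ward identity is available, so they contribute $\theta^4 N\sum_x|G_{\bm a x}|^2|G_{x\bm b}|^2\asymp\frac{1}{Np^2}\sum_x|G_{\bm a x}|^2|G_{x\bm b}|^2$. For $\bm a=\bm b=\bm e_a$ the term $x=a$ alone is of order $\frac{1}{Np^2}$. At $t=T/2$, where $\eta_t\asymp1$, its time integral is therefore of order $\frac{1}{Np}\gg\frac{1}{N\eta_t}$. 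So the quadratic variation is \emph{not} dominated by the $\eta$-terms. Your remark that the $\sqrt{\log N/(Np)}$ part of $\Lambda$ is not needed is false: this degree-fluctuation term is exactly what that part absorbs. (Analogous diagonal terms produce the $\frac{1}{(N\eta)(Np)}$ piece of $(\Lambda^{(1)}_{{\rm av}})^2$ and the $\frac{1}{Np\,\eta^2}$ piece of $(\Lambda^{(2)}_{{\rm ent}})^2$. For $\Mart^{{\rm av},1}$, $\Mart^{{\rm av},2}$ and $\Mart^{{\rm ent},2}$, crude Ward and norm bounds on them still suffice, because the resulting time integrals have $\alpha>1$.)

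For $\Mart^{{\rm ent},1}$ the crude bound does not close. Using $|G_{x\bm b}|\lesssim1$ and $\sum_x|G_{\bm a x}|^2\le(\im G)_{\bm a\bm a}/\eta_s$ gives $\frac{1}{Np^2\eta_s}$. The $\alpha=1$ rule then only yields $\frac{1}{Np}\log(\eta_0/\eta_t)$, which exceeds $\frac{1}{N\eta_t}+\frac{1}{Np}$ by a factor of order $\log N$, e.g.\ at $\eta_t\asymp N^{-1/2}$ with $Np\asymp(\log N)^2$.

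The missing idea is the splitting in \eqref{eq:R2Dest}: write $G_{\bm a x}=(M_s)_{\bm a x}+(G-M_s)_{\bm a x}$.
\begin{itemize}
\item The deterministic part uses $\sum_x|(M_s)_{\bm a x}|^2\lesssim1$ (Lemma \ref{lem:Mprop}), with no factor $\eta_s^{-1}$. Since $T\lesssim p$, it gives $\int_0^t\frac{\dd s}{Np^2}\lesssim\frac{1}{Np}$.
\item The fluctuation part uses $|(G-M_s)_{\bm a x}|\le\Cf\Lambda^{(1)}_{{\rm ent}}(s)$ and gives $\frac{\Cf^2}{Np}\bigl(\frac{\log N}{N\eta_t}+\frac{(\log N)^2}{Np}\bigr)$. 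This fits the target only because $Np\ge\Cf^2(\log N)^2$, an assumption your argument never uses.
\end{itemize}
This step needs the entries $(G-M)_{\bm e x}$, so you should not try to avoid them. They are in fact controlled by the stopping time: take $a=b=x$, so that $\bm e_x,\bm e\in\mathcal V_{xx}$, and use Lemma \ref{lem:entforG}.

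A small slip: $\int\frac{\dd s}{N^2p\,\eta_s^5}\lesssim\frac{1}{N^2\eta_t^4}$, not $\frac{1}{N^2\eta_t^5}$. The latter is not bounded by $\frac{1}{N\eta_t^3}$ when $\eta_t\ll N^{-1/2}$.
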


\begin{lemma}[Bound on jump sizes] \label{lem:jump}
Using the assumptions and notations from above, there exists a constant $C_\kappa > 0$ such that for $i \in [2]$, and uniformly in $t \in [0,\tau]$ and $a,b \in [N]$ and $\bm a, \bm b \in \Vab$, 
\begin{equation} \label{jump_sizes}
\left|\Mart^{{\rm av}, i}_{t} - \Mart^{{\rm av}, i}_{t-}\right| \leq \frac{C_\kappa}{\sqrt{Np \log N}} \Cf\Lambda_{{\rm av}}^{(i)}(t)\quad \text{and} \quad \left|\Mart^{{\rm ent},i}_{\bm a \bm b, t} - \Mart^{{\rm ent},i}_{\bm a \bm b, t-}\right| \leq \frac{C_\kappa}{\sqrt{Np}} \Cf\Lambda_{{\rm ent}}^{(i)}(t) \,. 
\end{equation}
\end{lemma}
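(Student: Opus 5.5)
The jumps of $\Mart$ coincide with those of $\phi_t(X(t))$: the integral in \eqref{eq:Mav}--\eqref{eq:Ment} is continuous in $t$, and $z_t$ is deterministic and continuous. At a jump time $t$ exactly one edge $\{x,y\}$ switches from $0$ to $1$. So for $\phi = \phi^{{\rm av},i}$ or $\phi^{{\rm ent},i}_{\bm a\bm b}$ we must bound $\abs{\phi_t(X^{xy,1}) - \phi_t(X)}$ with $X = X(t-)$ and $X_{xy}=0$. The $M_t$ part cancels. The plan is a finite rank-two resolvent expansion in the perturbation $\Delta \deq \theta(\bm e_x\bm e_y^* + \bm e_y \bm e_x^*)$:
\[
G' = G - G\Delta G + G\Delta G \Delta G - \dots ,
\]
where $G = G_{t-}$ and $G' = (\theta X^{xy,1} - z_t)^{-1}$. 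Better, use the exact Woodbury form, $G' = G - G U (1 + \theta\, W)^{-1} U^* G$. Here $U = (\bm e_x, \bm e_y)$ and $W$ is the $2\times2$ matrix built from $G_{xx},G_{xy},G_{yy}$. Since $t \le \tau$, the entrywise control $\abs{G_{uv} - M_{uv}} \le \Cf \Lambda^{(1)}_{\rm ent}$ holds. Together with $Np \ge (1+\Cf)^2(\log N)^2$ and Lemma \ref{lem:Mprop}, this gives $\abs{G_{uv}} \lesssim 1$ and $\theta\abs{G_{uv}} \ll 1$. Hence $(1+\theta W)^{-1}$ is bounded and every jump carries at least one factor $\theta \asymp (Np)^{-1/2}$.

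\emph{Entrywise case.} For $i=1$ the jump is
\[
\theta\,(G_{\bm a x}G_{y\bm b} + G_{\bm a y}G_{x\bm b}) + O(\theta^2 \cdot \ldots).
\]
If $\{x,y\} = \{a,b\}$ the edge is frozen in $G^{ab}$ and there is no jump. Otherwise, for $\bm a \in \{\bm e_a,\bm e_b,\bm e\}$, at least one factor in each product is off-diagonal, or is of type $G_{\bm e x}$. Off-diagonal entries satisfy $\abs{G_{uv}} \le \abs{M_{uv}} + \Cf\Lambda^{(1)}_{\rm ent} \lesssim \Cf \Lambda^{(1)}_{\rm ent}$, because $M_{uv} = O(1/N)$ for $u\neq v$ by \eqref{eq:Mprop}. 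The other factor is $O(1)$. This yields $\theta\, \Cf \Lambda^{(1)}_{\rm ent}$, as claimed. The diagonal-diagonal products $G_{ax}G_{xb}$ with $x=a$ are exactly the reason for freezing the edge $ab$, and the frozen definition removes them. Terms with $\bm e$ use $\abs{G_{\bm e x}} \lesssim \abs{M_{\bm e x}} + \Cf\Lambda^{(1)}_{\rm ent}$ with $M_{\bm e x} = O(N^{-1/2}(Np)^{-1/2})$.

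For $i=2$ we expand $G'^2 = (G')^2$ similarly. Every term has one factor $(G^2)_{\bm a u}$ or $(G^2)_{u \bm b}$, or a product $(G^2)_{uv}G_{\cdots}$. We bound these using $\Lambda^{(2)}_{\rm ent}$ in the same manner; each off-diagonal $G^2$ entry contributes $\Cf \Lambda^{(2)}_{\rm ent}$, and a trivial $\eta^{-1}$ is never needed for diagonal $G^2$ entries except when paired with an off-diagonal $G$ factor. One checks that $\Lambda^{(1)}_{\rm ent}/\eta \le \Lambda^{(2)}_{\rm ent}$ covers the mixed products.

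\emph{Average case.} Here I would use $\ang{G' - G} = -\frac1N \tr\pb{(1+\theta W)^{-1}\theta\, U^* G^2 U}$. This gives $\abs{\ang{G'-G}} \lesssim \frac{\theta}{N}\max_{u,v\in\{x,y\}}\abs{(G^2)_{uv}}$. Now $\abs{(G^2)_{uv}} \le \abs{M'_{uv}} + \Cf\Lambda^{(2)}_{\rm ent} \lesssim \eta^{-1}$ in the bulk. Comparing $\frac{\theta}{N\eta}$ with $\Cf\Lambda^{(1)}_{\rm av}/\sqrt{\log N}$: since $\Lambda^{(1)}_{\rm av} \ge \sqrt{\log N}/(N\eta)$, we get $\frac{\theta}{N\eta} \le \frac{1}{\sqrt{Np\log N}}\Lambda^{(1)}_{\rm av}$. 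For $i=2$ the jump is $\frac{\theta}{N}\abs{(G^3)_{uv}}$-type terms plus $\frac{\theta^2}{N}(G^2)(G^2)$ products. These are bounded by $\frac{\theta}{N\eta^2}$ via Ward-type estimates $\abs{(G^3)_{uu}} \le \eta^{-2}\im G_{uu}$. This is $\le \frac{1}{\sqrt{Np\log N}}\Lambda^{(2)}_{\rm av}$ provided $N\eta \ge \log N$, which follows from $N\eta_T \ge (1+\Cf)^2\log N$ and the monotonicity of $\eta_t$ (Lemma \ref{lem:BerProp}(c)).

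The main obstacle is bookkeeping in the $i=2$ entrywise case with $\bm a = \bm e$. There, terms such as $(G^2)_{\bm e x}G_{y\bm b}$ need the $\bm e$-component bounds to be consistent with the norm $\vertiii{\cdot}_{\bm e}$. One must check that no product of two "large" factors (diagonal $G$ or $G^2$) survives without an accompanying off-diagonal factor. I would handle it by enumerating the index coincidences of $\{x,y\}$ with $\{a,b\}$, and by using the freezing of edge $ab$ to kill the one dangerous configuration.
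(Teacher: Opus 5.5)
Your proposal is correct and follows essentially the paper's argument: the martingale jump equals the jump of $\phi$ under a single edge switch, a resolvent expansion in that edge produces a factor $\theta$ times products such as $\tG_{\bm a x}\tG_{y\bm b}$, and the freezing of the edge $ab$ forces one factor in each product to be off-diagonal or $\bm e$-type. The paper encodes this last point as $\max^{ab}_{x\neq y}|(M_s)_{\bm a x}||(M_s)_{y\bm b}|\lesssim (Np)^{-1/2}$, and, like you, it uses Cauchy--Schwarz and the Ward identity for the average jumps. The differences are cosmetic. You use the exact rank-two Woodbury formula rather than a truncated expansion; note that the $2\times 2$ matrix to invert is $J+\theta U^*GU$ with $J$ the swap matrix, not $1+\theta W$. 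You should also invoke Lemma \ref{lem:entforG} to transfer the stopping-time bounds to entries $\tG_{xy}$, $\tG_{\bm e x}$ at arbitrary indices and to the unfrozen $G$.
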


\begin{lemma}[Generator terms] \label{lem:generate}
Using the assumptions and notations from above, there exists a constant $C_\kappa > 0$ such that, uniformly for $t \in [0,T]$, 
\begin{align}
\left|\int_{0}^{t\wedge \tau} \dd s \ \big( \Lcal \phi^{{\rm av}, 	1}_s + \partial_s \phi^{{\rm av}, 	1}_s  \big) \right| &\leq C_\kappa \int_{0}^{t \wedge \tau} \dd s \, \frac{1}{p}\left(1 + \Cf \Lambda_{{\rm av}}^{(2)}(s) \right) \, \big| \phi^{{\rm av},1}_s \big| + \Lambda_{{\rm av}}^{(1)}(t \wedge \tau)\\
\left|\int_{0}^{t\wedge \tau} \dd s \ \big( \Lcal \phi^{{\rm av}, 	2}_s + \partial_s \phi^{{\rm av}, 	2}_s  \big) \right| &\leq C_\kappa \int_{0}^{t \wedge \tau} \dd s \, \frac{1}{p}\left(1 + \Cf \Lambda_{{\rm av}}^{(2)}(s) \right) \, \big| \phi^{{\rm av},2}_s \big| + \Lambda_{{\rm av}}^{(2)}(t \wedge \tau)
\end{align}
for the average generator terms, and, for $i \in [2]$, uniformly in $t \in [0,T]$, $a,b \in [N]$ and $\bm a, \bm b \in \Vab$, 
\begin{equation}
	\left|\int_{0}^{t\wedge \tau} \dd s \ \big( \Lcal \phi^{{\rm ent}, 	i}_{\bm a \bm b, s} + \partial_s \phi^{{\rm ent}, 	i}_{\bm a \bm b, s}  \big) \right| \leq C_\kappa \int_{0}^{t \wedge \tau} \dd s \,  \big| \phi^{{\rm ent},i}_{\bm a \bm b, s} \big| + \Lambda_{{\rm ent}}^{(i)}(t \wedge \tau)
\end{equation}
all with very high probability. 
\end{lemma}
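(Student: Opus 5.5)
The plan is to compute $\Lcal \phi_s + \partial_s \phi_s$ explicitly by a resolvent expansion, organize the result as a propagator term times the observable plus error terms, and bound the time integral of the errors using the stopping-time bounds and \eqref{eq:intrulepractice}. Fix $s \le \tau$ and write $\Delta^{xy} \deq (1 - X_{xy}(s))(E_{xy} + E_{yx})$, so that $\theta X^{xy,1}(s) = \theta X(s) + \theta \Delta^{xy}$. Expanding $(\theta X + \theta\Delta^{xy} - z_s)^{-1}$ to third order gives
\begin{equation*}
G_s^{(xy)} - G_s = -\theta G_s \Delta^{xy} G_s + \theta^2 G_s \Delta^{xy} G_s \Delta^{xy} G_s - \theta^3 G_s^{(xy)} (\Delta^{xy} G_s)^3 \,.
\end{equation*}
For $s \le \tau$, the definition \eqref{eq:stoptime} together with Lemma~\ref{lem:Mprop} bounds all entries $(G_s)_{\bm a \bm b}$, $\bm a, \bm b \in \Vab$, by $O(1)$ and the entries of $G_s^2$ by $O(\eta_s^{-1})$. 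In the entrywise case the frozen edge $ab$ is excluded from the generator, which is the reason for working with $G^{ab}$. The remainder is handled by a Ward identity and $\theta^3 N \asymp (Np)^{-1/2} p^{-1}$; after integrating in $s$ against $p^{-1}$ with \eqref{eq:intrulepractice}, it contributes at most $\Lambda^{(i)}(t\wedge\tau)/\sqrt{\log N}$, using $Np \ge (\log N)^2$.

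\emph{First-order term and cancellation with the flow.} Using $(1 - X_{xy}) = S_{xy} - X_{xy}$, the first-order term equals $-\theta\langle G_s(S - X)G_s\rangle$, respectively its $\bm a\bm b$-entry. Splitting $\theta S - \theta X = (\theta S - z_s) - (\theta X - z_s)$ turns it into $-\langle G_s(\theta S - z_s)G_s\rangle + \langle G_s\rangle$. The derivative $\partial_s\phi_s$ contributes $\langle G_s(\partial_s z_s)G_s\rangle - \partial_s \langle M_s\rangle$. By \eqref{eq:Bernflow} and Lemma~\ref{lem:BerProp}(b), i.e.\ $\partial_s M_s = M_s$, these combine, together with the second-order term below, into
\begin{equation*}
\phi^{{\rm av},1}_s + \frac{1-p_s}{(1-p)p}\langle G_s^2\rangle\, \phi^{{\rm av},1}_s + (\text{errors}) \,.
\end{equation*}
I would bound $|\langle G_s^2\rangle| \le |\langle M_s'\rangle| + \Cf\Lambda^{(2)}_{\rm av}(s) \lesssim 1 + \Cf \Lambda_{\rm av}^{(2)}(s)$, using the bulk bound on $M'$. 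This is exactly the propagator in the claimed inequality, and it is the reason the second-order observables must be controlled simultaneously. The second-order observables $\phi^{(\cdot),2}$ are treated identically, with the third resolvent powers reduced via $G^2$-bounds. For the entrywise observables, the diagonal factors produce $(G_s)_{\bm a x}(G_s)_{x\bm b}$ times a diagonal average, which closes against $\phi^{\rm ent}$ with an $O(1)$ coefficient. This explains why no $p^{-1}\Lambda^{(2)}$ appears in the entrywise propagator: the $p^{-1}$ is compensated by the extra $1/N$ of the sum.

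\emph{Second-order term: the main obstacle.} The second-order term is $\theta^2\sum_{x\ne y}(1 - X_{xy})\bigl[(G_s)_{\cdot x}(G_s)_{yy}(G_s)_{x\cdot} + (G_s)_{\cdot x}(G_s)_{yx}(G_s)_{y\cdot}\bigr]$ (symmetrized). The off-diagonal part is small by Ward identities. For the diagonal part, I write $1 - X_{xy} = (1 - p_s) - (X_{xy} - p_s)$. The first piece gives the Dyson term $\frac{1-p_s}{(1-p)p}\langle G_s\rangle \langle G_s^2\rangle$ up to $O(N^{-1})$ diagonal corrections. The genuinely hard part is the fluctuation
\begin{equation*}
\theta^2\sum_{x,y}(X_{xy}(s) - p_s)(G_s)_{yy}(G_s^2)_{xx} \,,
\end{equation*}
since $X_{xy}$ and $G_s$ are strongly correlated and a naive bound loses a factor $\sqrt{Np}$.

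My first attempt is to replace $(G_s)_{yy}$ by $(M_s)_{yy} + \phi^{\rm ent}$. The $M$-part then becomes $\sum_x (G_s^2)_{xx}\bigl(\sum_y (X_{xy} - p_s)\bigr)$, and the inner sum can be rewritten through $\theta X - z_s = G_s^{-1}$ as $\sqrt N\,(G_s^{-1}\bm e)_x$ minus deterministic terms. Isotropic resummation then expresses this contribution via entries $(G_s)_{\bm e x}$ and $(G_s^2)_{\bm e x}$. These are controlled precisely because $\bm e \in \Vab$, and through the $\bm e$-improvement of Lemma~\ref{lem:Mprop} they gain a factor $(Np)^{-1/2}$. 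The remaining $\phi^{\rm ent}$-part is bounded directly by $\Cf\Lambda^{(1)}_{\rm ent}$ times $\theta^2 N\eta_s^{-1}\Lambda$. Integrating in $s$ with \eqref{eq:intrulepractice} should then give a contribution $\lesssim \Lambda^{(i)}(t\wedge\tau)$.

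If this pointwise estimate turns out to be insufficient in the average case, the fallback is to treat the fluctuation $\int_0^t \sum_{x,y}(X_{xy} - p_s) f_{xy}(X(s))\,\dd s$ itself by a second Dynkin martingale argument with the BDG bound \eqref{BDG_intro}. This yields an estimate with very high probability, which is consistent with the lemma being stated only with very high probability.
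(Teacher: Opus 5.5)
Your overall architecture matches the paper: the resolvent expansion, the first-order cancellation against the Bernoulli characteristic flow, and the average propagator $1+\frac{1-p_s}{(1-p)p}\ang{G_s^2}$ bounded via $\Lambda^{(2)}_{\rm av}$. However, both of the delicate estimates are mishandled, so the argument does not close.

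\emph{Second-order fluctuation.} You rewrite $\sum_y (X_{xy}(s)-p_s)$ through $\sqrt N (G_s^{-1}\bm e)_x$. This produces $\scalar{\bm e}{D\,G_s^{-1}\bm e}$ with the diagonal weight $D = \mathrm{diag}((G_s^2)_{xx})$. That is not a resolvent entry, and $G_s^{-1} = \theta X(s) - z_s$ just returns the expression you started from. So no bound in terms of $(G_s)_{\bm e x}$ or $(G_s^2)_{\bm e x}$ emerges, and the martingale ``fallback'' is not an argument. The missing idea is that the correlation between $X$ and $G$ is harmless once you use graph-level concentration. Since $X(s) \eqdist A(p_s)$, Bennett's inequality (Lemma \ref{lem:Bennett}, in the form \eqref{eq:Bennettpractice}) gives, with very high probability and uniformly in $x$, $\frac{1}{Np}\absb{\sum_y (X_{xy}(s)-p_s)} \preceq \sqrt{\log N/(Np)}$ and $\frac{1}{Np}\sum_y X_{xy}(s) \preceq 1$, together with the analogous bound for the double sum over $\asymp N^2$ edges. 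One then splits \emph{both} $(G_s)_{yy} = \ang{M_s} + (G_s-M_s)_{yy}$ and $(G_s^2)_{xx} = \ang{M_s'} + (G_s^2-M_s')_{xx}$. The constant diagonals of $M_s$ and $M_s'$ come out of the sums, and the fluctuating factors are bounded in absolute value by the stopping time. This yields \eqref{eq:E1}--\eqref{eq:E4}, each at most of order $\Cf^2(\Lambda^{(1)}_{\rm ent}(s))^2/\eta_s$ or $\sqrt p\,\Lambda^{(1)}_{\rm av}$. Crucially, none carries a factor $p^{-1}$, so the time integral is $\ll \Lambda^{(1)}_{\rm av}$ since $p \le (\log N)^{-2}$.

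The cancellation you were looking for is just the total edge-count fluctuation in the $\ang{M_s'}\ang{M_s}$ part (cf.\ \eqref{eq:E4est}). If you leave $(G_s^2)_{xx}$ unsplit, the row-sum bound overshoots $\Lambda^{(1)}_{\rm av}(t)$ by a factor of order $p\sqrt N$ when $\eta_t \asymp 1$. Your bound ``$\Cf\Lambda^{(1)}_{\rm ent}$ times $\theta^2 N\eta_s^{-1}\Lambda$'' carries a spurious factor $\theta^2 N \asymp p^{-1}$, because it does not use $\sum_y \absb{X_{xy}(s)-p_s} \preceq Np$. Relatedly, in the entrywise case the second-order diagonal term is $\frac{1-p_s}{p(1-p)}(\tG_s^2)_{\bm a\bm b}\ang{\tG_s - M_s}$ plus Bennett errors. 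It is not a multiple of $\phi^{\rm ent}$. It is an error term controlled by the \emph{average} law in the stopping time (see \eqref{eq:E1estent}), and the extra factor $(N\eta_s)^{-1/2}$ of that law is what compensates the $p^{-1}$.

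\emph{Third-order entrywise remainder.} Your claim that this remainder is disposed of by Ward identities is false for the entrywise observables, and this is exactly where $\bm e \in \Vab$ is needed. Take the constellation $\theta^3 \sum_{x \neq y}(1-X_{xy}(s))\,\tG_{\bm a x}\tG_{yy}\tG_{xx}\tG^{xy,1}_{y\bm b}$. Cauchy--Schwarz and Ward give at best your $\theta^3 N\eta_s^{-1} \asymp (Np)^{-1/2}(p\eta_s)^{-1}$. Its time integral is of order $(Np)^{-1/2}\log(\eta_0/\eta_t)$, since the flow equation shows \eqref{eq:logint} is sharp up to constants. Taking $\eta_t \asymp p$, one has $\Lambda^{(1)}_{\rm ent}(t) \asymp \sqrt{\log N/(Np)}$ while $\log(\eta_0/\eta_t) \gtrsim \kappa\log N$. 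The bound therefore overshoots by $\sqrt{\log N}$, and no lower bound on $Np$ repairs this because both sides scale like $(Np)^{-1/2}$. So the claimed $\Lambda^{(i)}(t\wedge\tau)/\sqrt{\log N}$ does not follow.

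The paper instead splits $\tG_{xx}\tG_{yy}$ around $M_{xx}M_{yy}$:
\begin{itemize}
\item For the $M\times M$ part, $M_{xx}=M_{yy}=\ang{M_s}$ permits the isotropic resummation $\sum_x \tG_{\bm a x} = \sqrt N\,\tG_{\bm a \bm e}$. This is $O(1)$ by the stopping time precisely because $\bm e \in \Vab$, and it removes the leading factor $\eta_s^{-1}$ (see \eqref{eq:lambda0MM}).
\item The mixed and doubly fluctuating parts carry factors $\Lambda^{(1)}_{\rm ent}$ and are integrated using $\int_0^t \frac{\dd s}{p\,\eta_s^{1/2}} \lesssim 1$.
\item $\tG^{xy,1}_{y\bm b}$ is first expanded via \eqref{eq:complicatedresexp}, so that the resummation applies to the leading term.
\item The $X_{xy}$-part of $1-X_{xy}$ is again handled by Bennett.
\end{itemize}
In short, you placed the isotropic resummation in the wrong term. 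The second-order fluctuation needs only Bennett, while the third-order entrywise remainder genuinely needs $\bm e$.
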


Armed with Lemmas \ref{lem:martest}, \ref{lem:jump}, and \ref{lem:generate}, we can readily conclude the proof of Proposition \ref{prop:Gron}.

\begin{proof}[Proof of Proposition \ref{prop:Gron}]
To prove Proposition \ref{prop:Gron}, we combine Lemmas \ref{lem:martest} and \ref{lem:jump} with a jump process version of the BDG inequality, formulated in Proposition \ref{prop:DBG} below. 
More precisely, to apply Proposition \ref{prop:DBG} in its time-homogeneous formulation, we rescale the martingales by their natural time-dependent sizes. Hence, there exists a constant $C_\kappa > 0$, depending only on $\kappa$, such that, for any $c \geq 10 \mathfrak{C}^{-1/2}$, and $i \in [2]$, 
\begin{align}
&\proba{\exists t \in [0,T]: \left|\Mart^{{\rm av}, i}_{t\wedge \tau}\right| \geq 10 c \,  \Cf\Lambda_{{\rm av}}^{(i)}(t\wedge \tau)} \leq  N^{-c \Cf/C_\kappa} \label{eq:Mavbound} \\
&\proba{ \exists t \in [0,T], \ a,b \in [N], \ \bm a, \bm b \in \Vab : 	\left|\Mart^{{\rm ent},i}_{\bm a \bm b, t\wedge \tau}\right| \geq 10 c \,  \Cf\Lambda_{{\rm ent}}^{(i)}(t\wedge \tau)}\leq N^{-c \Cf/C_\kappa+10} \label{eq:Mentbound}
\end{align}
where for the second (entrywise) bound we additionally employed a union bound over $a,b \in [N]$ and $\bm a, \bm b \in \Vab$. Combining \eqref{eq:Mavbound}--\eqref{eq:Mentbound} with Lemma \ref{lem:generate} and \eqref{eq:Mav}--\eqref{eq:Ment}, additionally using \eqref{eq:initialcond}, we conclude the proof of Proposition \ref{prop:Gron}. 
\end{proof}

\begin{remark}[Origin of the lower bound on $p$] \label{rem:lowerbound_p}
We comment on the importance of the lower bound $Np \geq C (\log N)^2$ in our proof. It is used in several crucial steps of our argument. Perhaps the most prominent one is in the estimate of Green function entries in terms of the parameter $\Lambda_{{\rm ent}}^{(1)}$ from \eqref{eq:Lambdaav}. We use the Burkholder-Davis-Gundy-type inequality from Proposition \ref{prop:DBG} below to estimate the size of Dynkin martingale $\Mart^{{\rm ent}, i}_{\bm a \bm b,t}$ from \eqref{eq:Ment} in terms of its quadratic variation. Since our entire argument is predicated on very high probability bounds, we require the right-hand side of \eqref{BDG} to be of oder $\ee^{-D \log N}$, which in particular requires $a/K \geq 4 D \log N$. Our upper bound on the jump size of $\Mart^{{\rm ent}, i}_{\bm a \bm b,t}$ is $K = C (Np)^{-1/2} \Lambda_{{\rm ent}}^{(1)}$ (see \eqref{jump_sizes}), which essentially arises by resolvent expansion and is sharp. Since our goal in \eqref{BDG} is the bound $a = \Lambda_{{\rm ent}}^{(1)}$, we therefore immediately arrive at the condition $a/K = \frac{1}{C}\sqrt{Np}$, and hence $Np \geq (4CD \log N)^2$.
\end{remark}

\section{Resolvent expansions: Proofs of Lemmas \ref{lem:martest}, \ref{lem:jump}, and \ref{lem:generate}} \label{sec:lemproofs}

The goal of this section is to give the proofs of Lemmas \ref{lem:martest}, \ref{lem:jump}, and \ref{lem:generate} in Sections \ref{subsec:martestproof}, \ref{subsec:jumpproof}, and~\ref{subsec:generateproof}, respectively. The \emph{key tool} throughout the entire section are simple \emph{resolvent expansions}, formalized in Lemma \ref{lem:resolventexpand}, and additionally using the Cauchy-Schwarz inequality in summations and the \emph{Ward identity}
\begin{equation} \label{Ward}
G(z)G^*(z) = \frac{\im G(z)}{\im z}\,,
\end{equation}
which follows from the resolvent identity.
Further, recall that our observables in \eqref{eq:avobs} and \eqref{eq:entobs} are designed to control $G$ in the average case and the modified $G^{ab}$ in the entrywise case. Hence, on top of the above mentioned tools, we will use that we have average laws for $G^{ab}$ and isotropic laws for $G$ as well. This is formalized in the following lemma. 

\begin{lemma}[Going from $G$ to $G^{ab}$ and from $G^{ab}$ to $G$] \label{lem:entforG}
	Let $t  \in [0, \tau]$, i.e.~suppose that the local law bounds collected in \eqref{eq:stoptime} hold. Then, at the expense of replacing $\Cf$ by $2 \Cf$, we have the following:  
	\begin{itemize}
		\item[(i)] For any $a, b \in [N]$ and $\bm a, \bm b \in \Vab$, the entrywise laws for $(G_s^{ab})_{\bm a \bm b}$ and $\big((G_s^{ab})^2\big)_{\bm a \bm b}$ also hold for $(G_s)_{\bm a \bm b}$ and $(G_s^2)_{\bm a \bm b}$.  
		\item[(ii)] For any $a, b , a', b' \in [N]$ and $\bm a, \bm b \in \Vab$, the entrywise laws for $(G_s^{ab})_{\bm a \bm b}$ and $\big((G_s^{ab})^2\big)_{\bm a \bm b}$ also hold for $(G_s^{a'b'})_{\bm a \bm b}$ and $\big((G_s^{a'b'})^2\big)_{\bm a \bm b}$. 
\item[(iii)] For any $a, b \in [N]$, the average laws for $\langle G_s \rangle$ and $\langle G_s^2 \rangle$ also hold for $\langle G_s^{ab}\rangle$ and $\langle (G_s^{ab})^2 \rangle$. 
	\end{itemize}
\end{lemma}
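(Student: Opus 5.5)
The plan is to relate $G_s$ and $G^{ab}_s$ by a rank-two resolvent expansion: $X(s)$ and $X^{ab}(s)$ differ only in the $(a,b)$ and $(b,a)$ entries, by at most one. Write $\Delta \deq \theta X_{ab}(s)(\bm e_a\bm e_b^* + \bm e_b\bm e_a^*)$, so that $\theta X(s) = \theta X^{ab}(s) + \Delta$ and $\norm{\Delta}\le \theta$. The resolvent identity (the content of Lemma~\ref{lem:resolventexpand}) gives
\begin{equation*}
G_s = G^{ab}_s - G^{ab}_s \Delta G^{ab}_s + G^{ab}_s \Delta G^{ab}_s \Delta G_s\,,
\end{equation*}
and symmetrically with $G_s$ and $G^{ab}_s$ exchanged. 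For the squared resolvents, $G^2 - (G^{ab})^2 = (G-G^{ab})G + G^{ab}(G-G^{ab})$. All entries $(G^{ab}\Delta G^{ab})_{\bm x\bm y}$ reduce to products $\theta (G^{ab})_{\bm x a}(G^{ab})_{b\bm y}$ and the same with $a,b$ exchanged. By the stopping time, these factors are $O(1)$ for $\bm x,\bm y\in\{\bm e_a,\bm e_b,\bm e\}$, since $\Lambda^{(1)}_{\rm ent}\ll 1$ and $|M_{\bm x\bm y}|\lesssim 1$ by Lemma~\ref{lem:Mprop}.

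\textbf{Part (i).} For $\bm a,\bm b\in\Vab$, the first-order correction is $O(\theta)$. For the full series, the quantity $\theta\max|(G^{ab})_{xy}|$ over $x,y\in\{a,b\}$ is small, because $\theta\lesssim (Np)^{-1/2}\ll1$. Hence the expansion can be resummed by inverting a $2\times2$ matrix $1+\Delta_{\{a,b\}} G^{ab}_{\{a,b\}}$, which is invertible with $O(1)$ inverse. This yields $|(G-G^{ab})_{\bm a\bm b}|\lesssim \theta\lesssim (Np)^{-1/2}\le\Lambda^{(1)}_{\rm ent}$, which is absorbed by doubling $\Cf$. For $\bm e$-components, the factor $G^{ab}_{\bm e a}$ is already $O(1)$ times $\|\bm e\|_{\bm e}$-type bounds, so it is handled the same way.

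For the second-order entries I use Ward-type bounds: $|(G^2)_{\bm x a}|\le \Lambda^{(2)}_{\rm ent}+|M'_{\bm x a}|$, which is at most $C/\eta_s$ times $\Lambda^{(1)}_{\rm ent}$ plus $O(1)$. The corrections are then $\theta$ times first- and second-order entries, i.e.\ $\lesssim \theta(1+\Lambda^{(2)}_{\rm ent})$. Since $|M'|\lesssim 1$ in the bulk and $\theta\le \eta_s^{-1}\sqrt{\log N/(Np)}$, this is $\lesssim\Lambda^{(2)}_{\rm ent}$.

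\textbf{Part (ii).} Go from $G^{ab}$ to $G$ by (i), then from $G$ to $G^{a'b'}$ by the reverse expansion. The new resolvent entries appearing are $G_{\bm x a'}$ and $G_{b'\bm y}$, with $\bm x\in\{\bm e_a,\bm e_b,\bm e\}$. These are controlled by the entrywise law applied with the pair $(a,a')$ etc., which is available because the stopping time covers all pairs in $[N]^2$ and all of $\mathcal V_{\cdot\cdot}$. Part (i) then converts those laws to $G$, and all corrections are again $O(\theta)$ relative.

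\textbf{Part (iii).} Use $\langle G-G^{ab}\rangle = -\frac1N\tr(G^{ab}\Delta G)$, which equals $-\frac{\theta X_{ab}}{N}\big((GG^{ab})_{ba}+(GG^{ab})_{ab}\big)$. By Cauchy--Schwarz and the Ward identity \eqref{Ward}, $|(GG^{ab})_{ab}|\le \big((\im G)_{aa}(\im G^{ab})_{bb}\big)^{1/2}/\eta\lesssim 1/\eta$. Therefore the difference is $\lesssim \theta/(N\eta)$, which is $\ll\Lambda^{(1)}_{\rm av}$. For $G^2$ one gets $\lesssim\theta/(N\eta^2)\le\Lambda^{(2)}_{\rm av}$, since $\theta\lesssim1$ and $N\eta\gtrsim\log N$.

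The main obstacle is part (ii) with $\bm e$-components: one must make sure that the $\bm e$-improvement (the factor $(Np)^{-1/2}$) is not lost in the products $\theta G_{\bm e a'}G_{b'\bm e}$. I expect this to work because the two $\bm e$-entries each carry $(Np)^{-1/2}$ up to errors of size $\Lambda^{(1)}_{\rm ent}$, and their product times $\theta$ is still below $\Lambda^{(1)}_{\rm ent}$ measured in the $\vertiii{\cdot}_{\bm e}$-weighted normalization. I will check this bookkeeping carefully.
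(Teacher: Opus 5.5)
Your proposal is correct and follows the paper's proof. You use a rank-two resolvent expansion between $G_s$ and $G_s^{ab}$, control the unknown $G_s$-entries self-consistently, and obtain corrections of order $\theta$ (resp.\ $\theta/\eta_s$ for the squared resolvents), which are absorbed by doubling $\Cf$; your $2\times 2$ resummation is a cleaner version of the paper's ``iteration'', and the Cauchy--Schwarz/Ward bound $\theta/(N\eta_s)$ for the traces is the paper's bound as well.

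The ``main obstacle'' you flag does not arise. The stopping time \eqref{eq:stoptime} normalizes every component in $\Vab$, including $\bm e$, by $\Lambda^{(1)}_{\rm ent}$ with no $\bm e$-improvement, so your $O(\theta)$ correction already suffices. This is fortunate, because the $\vertiii{\cdot}_{\bm e}$-weighted bound you hoped to check would not follow from what is known before $\tau$. There $G_{\bm e a'}$ is only controlled at size $\Lambda^{(1)}_{\rm ent}$, which gives $\theta(\Lambda^{(1)}_{\rm ent})^2$ rather than $(Np)^{-1}\Lambda^{(1)}_{\rm ent}$.
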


The proof of Lemma \ref{lem:entforG} is based on simple resolvent expansions and we provide a brief sketch in Appendix~\ref{app:technical}. 

Throughout the whole section, we will use the notation $\lesssim $ to absorb constants $C_\kappa > 0$, depending only on the bulk parameter $\kappa$.  
\subsection{Proof of Lemma \ref{lem:martest}} \label{subsec:martestproof}
In this section we give the proof of Lemma \ref{lem:martest}, estimating the four predictable quadratic variations separately. 

\subsubsection{Estimates for \texorpdfstring{$\Mart^{{\rm ent},1}_{\bm a \bm b, t\wedge \tau}$}{$\Mart^{{\rm ent},1}_{a b, t\wedge \tau}$}} \label{subsubsec:Martent1estimates}
We start by considering the predictable quadratic variation of $\Mart^{{\rm ent},1}_{\bm a \bm b, t\wedge \tau}$. As $(a,b)$ remains fixed throughout this section, we write $\tG_t \equiv G^{ab}_t(z_t)$ for short (recall the notation introduced below \eqref{eq:entobs}) and will occasionally even omit the time dependence, i.e.~write $\tG \equiv \tG_t$. 

To control $\langle \Mart^{{\rm ent},1}_{\bm a \bm b}\rangle_{t \wedge \tau}$, we perform a standard \emph{resolvent expansion}, cast in the following lemma. Its proof is a straightforward computation and so omitted. 
\begin{lemma}[Resolvent expansion] \label{lem:resolventexpand}
For any $x,y \in [N]$ with $x \neq y$, we have
\begin{equation}
	\tG^{xy,1}  = \sum_{\ell=0}^m \big(\theta (X_{xy}-1)\big)^\ell \big(\tG \Delta_{xy}\big)^\ell \tG + \big(\theta (X_{xy}-1)\big)^{m+1} \big(\tG \Delta_{xy}\big)^m \tG^{xy,1}
\end{equation}
valid for any fixed $m \in \N_0$, where we introduced the shorthand notation
\begin{equation}
	\Delta_{xy} = \Delta_{yx}= \Smat^{(xy)} + \Smat^{(yx)} \quad \text{with} \quad \big(\Smat^{(xy)}\big)_{ij} = \delta_{xi} \delta_{yj} (1 - \delta_{xy})
\end{equation}
i.e.~$\Smat^{(xy)} \in \R^{N \times N}$ is the matrix with all zeros except at the $(x,y)$ entry (unless $x=y$). 

The statement holds verbatim when replacing $\tG \to G$. 
\end{lemma}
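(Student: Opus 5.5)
The plan is to derive the expansion from the second resolvent identity and then iterate it $m$ times; the only real input is the observation that switching on the edge $xy$ is a rank-two perturbation.

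\emph{Step 1: the perturbation.} Fix $x \neq y$ and write $Y$ for the matrix whose resolvent is $\tG$: either $Y = X^{ab}$, or $Y = X$ in the version with $\tG \to G$. Since the entries are $\{0,1\}$-valued, setting the $xy$ entry to $1$ changes $Y$ only in the entries $(x,y)$ and $(y,x)$, each by $1 - Y_{xy}$. Hence
\begin{equation*}
Y^{xy,1} - Y = (1 - Y_{xy})\,\Delta_{xy}\,,
\end{equation*}
where $\Delta_{xy} = \Smat^{(xy)} + \Smat^{(yx)}$ as in the statement. Consequently
\begin{equation*}
\theta Y^{xy,1} - z_t = (\theta Y - z_t) - \theta (Y_{xy} - 1)\,\Delta_{xy}\,.
\end{equation*}
In the statement $X_{xy}$ stands for $Y_{xy}$, the corresponding entry of the matrix whose resolvent is being expanded.

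\emph{Step 2: one step of the expansion.} Both $\theta Y - z_t$ and $\theta Y^{xy,1} - z_t$ are invertible, since $z_t$ has imaginary part $\eta_t \mathbf 1$ with $\eta_t > 0$ by Lemma \ref{lem:BerProp}. For invertible $A, B$ we have $B^{-1} = A^{-1} + A^{-1}(A - B)B^{-1}$. Applying this with $A = \theta Y - z_t$ and $B = \theta Y^{xy,1} - z_t$, so that $A - B = \theta(Y_{xy} - 1)\Delta_{xy}$, gives
\begin{equation*}
\tG^{xy,1} = \tG + \theta (Y_{xy} - 1)\, \tG \Delta_{xy} \tG^{xy,1}\,.
\end{equation*}
This is exactly the case $m = 0$ of the claim.

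\emph{Step 3: induction on $m$.} Assume the formula holds for $m$. Substitute the identity from Step 2 for the trailing factor $\tG^{xy,1}$ in the remainder term $\big(\theta(Y_{xy}-1)\big)^{m+1}(\tG\Delta_{xy})^m \tG^{xy,1}$. This produces two pieces:
\begin{itemize}
\item the $(m+1)$-st summand $\big(\theta(Y_{xy}-1)\big)^{m+1}(\tG\Delta_{xy})^{m+1}\tG$;
\item a new remainder $\big(\theta(Y_{xy}-1)\big)^{m+2}(\tG\Delta_{xy})^{m+1}\tG^{xy,1}$.
\end{itemize}
Together these give the formula for $m+1$. The same argument applies verbatim with $\tG \to G$, since nowhere did we use anything specific to $X^{ab}$ beyond Step 1.

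\emph{Expected obstacle.} I do not expect a genuine analytic difficulty; the point needing care is bookkeeping in the frozen case $G^{ab}$. There $\tG^{xy,1}$ must be read as the resolvent of $(X^{xy,1})^{ab}$, i.e.\ the $ab$ entry remains zeroed. For $\{x,y\} \neq \{a,b\}$, Step 1 holds with $Y_{xy} = X_{xy}$. For $\{x,y\} = \{a,b\}$ the two resolvents coincide, so the increment is trivial, and the expansion is only ever applied to unfrozen edges. I would state this convention explicitly so that the factor $(X_{xy}-1)$ is correctly interpreted.
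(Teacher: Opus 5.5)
Your method (the second resolvent identity $B^{-1}=A^{-1}+A^{-1}(A-B)B^{-1}$ with $A-B=\theta(Y_{xy}-1)\Delta_{xy}$, iterated by induction) is the right one, and it is the ``straightforward computation'' the paper omits. Your convention for the frozen edge $\{x,y\}=\{a,b\}$ is also sensible. However, Steps~2 and~3 are off by one relative to what you claim, because the statement itself contains a typo.

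As written, the remainder is $\bigl(\theta(X_{xy}-1)\bigr)^{m+1}(\tG\Delta_{xy})^{m}\tG^{xy,1}$. So for $m=0$ the claim reads $\tG^{xy,1}=\tG+\theta(X_{xy}-1)\tG^{xy,1}$, which is false whenever $X_{xy}=0$: it would give $\tG^{xy,1}=\tG/(1+\theta)$.

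Your Step~2 identity $\tG^{xy,1}=\tG+\theta(Y_{xy}-1)\tG\Delta_{xy}\tG^{xy,1}$ is correct. But it is not ``exactly the case $m=0$ of the claim''.

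The same slip appears in Step~3. Substituting Step~2 into the stated remainder $\bigl(\theta(Y_{xy}-1)\bigr)^{m+1}(\tG\Delta_{xy})^{m}\tG^{xy,1}$ yields $\bigl(\theta(Y_{xy}-1)\bigr)^{m+1}(\tG\Delta_{xy})^{m}\tG$. It does not yield the $(m+1)$-st summand $\bigl(\theta(Y_{xy}-1)\bigr)^{m+1}(\tG\Delta_{xy})^{m+1}\tG$ that you wrote. Your first piece is only right if the inductive hypothesis has remainder $(\tG\Delta_{xy})^{m+1}\tG^{xy,1}$. In that case the new remainder is $\bigl(\theta(Y_{xy}-1)\bigr)^{m+2}(\tG\Delta_{xy})^{m+2}\tG^{xy,1}$, not $(\tG\Delta_{xy})^{m+1}\tG^{xy,1}$.

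What you should prove is the corrected identity
\[
\tG^{xy,1}=\sum_{\ell=0}^m\bigl(\theta(X_{xy}-1)\bigr)^\ell(\tG\Delta_{xy})^\ell\tG+\bigl(\theta(X_{xy}-1)\bigr)^{m+1}(\tG\Delta_{xy})^{m+1}\tG^{xy,1}.
\]
This is the version the paper actually uses. Taking $m=2$ produces the remainder $\Rcal^{(3)}_{\bm a\bm b}$ with three factors $\Delta_{xy}$. Taking $m=0$ produces the factors $\tG\Delta_{xy}\tG^{xy,1}$ in the bound on $\langle\Mart^{{\rm ent},2}_{\bm a\bm b}\rangle$. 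With the exponent $m+1$ in the remainder, your Step~2 is exactly the base case, and your induction step goes through as you describe, for $\tG$ and for $G$ alike.
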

Armed with Lemma \ref{lem:resolventexpand}, we can readily estimate the predictable quadratic variation of $\Mart^{{\rm ent},1}_{\bm a \bm b, t\wedge \tau}$. In fact, expanding to third order (i.e.~taking $m=2$ in Lemma \ref{lem:resolventexpand}), we find
\begin{equation} \label{eq:Martent1}
\langle \Mart^{{\rm ent},1}_{\bm a \bm b}\rangle_{t \wedge \tau} \lesssim  \int_{0}^{t \wedge \tau} \dd s \, \sum_{x ,y} \big| \big(\tG^{xy,1}_{s}\big)_{\bm a \bm b} - \big(\tG_{s}\big)_{\bm a \bm b} \big|^2 \leq \Rcal^{(1)}_{\bm a \bm b}(t \wedge \tau) +  \Rcal^{(2)}_{\bm a \bm b}(t \wedge \tau) + \Rcal^{(3)}_{\bm a \bm b}(t \wedge \tau)
\end{equation}
where we denoted
\begin{align}
\Rcal^{(1)}_{\bm a \bm b}(t \wedge \tau) &\deq \theta^2 \int_{0}^{t \wedge \tau} \dd s \,   \sum_{x ,y}  \big| \big(\tG_s \Delta_{xy} \tG_s\big)_{\bm a \bm b} \big|^2\label{eq:R1}\,, \\
\Rcal^{(2)}_{\bm a \bm b}(t \wedge \tau) &\deq \theta^4 \int_{0}^{t \wedge \tau} \dd s \,  \sum_{x ,y} \big| \big(\tG_s \Delta_{xy} \tG_s \Delta_{xy} \tG_s\big)_{\bm a \bm b}\big|^2\label{eq:R2}\,, \\
\Rcal^{(3)}_{\bm a \bm b}(t \wedge \tau)&\deq\theta^6 \int_{0}^{t \wedge \tau} \dd s \, \sum_{x ,y} \big| \big(\tG_s \Delta_{xy} \tG_s \Delta_{xy} \tG_s \Delta_{xy} \tG^{xy,1}_s\big)_{\bm a \bm b}\big|^2 \,. \label{eq:R3}
\end{align}

These three terms \eqref{eq:R1}, \eqref{eq:R2}, and \eqref{eq:R3} shall now be estimated separately. We start with \eqref{eq:R1}, for which we have
\begin{equation} \label{eq:R1est}
	\begin{split}
\Rcal^{(1)}_{\bm a \bm b}(t \wedge \tau) &\lesssim \frac{1}{Np }\int_{0}^{t \wedge \tau} \dd s \,   \sum_{x ,y} \big|\big(\tG_s\big)_{\bm a x} \big(\tG_s\big)_{y\bm b}\big|^2 \lesssim  \frac{1}{Np }\int_{0}^{t \wedge \tau} \dd s \,  \frac{(\im \tG_s)_{\bm a \bm a} (\im \tG_s)_{\bm b \bm b}}{\eta_s^2} \\
&\lesssim \frac{1}{Np }\int_{0}^{t \wedge \tau} \dd s \frac{1}{\eta_s^2} \lesssim \frac{1}{N \eta_{t \wedge \tau}} \lesssim \frac{1}{\log N} \big(\Lambda_{{\rm ent}}^{(1)}(t \wedge \tau)\big)^2
	\end{split}
\end{equation}
uniformly in $a,b \in [N]$ and $\bm a, \bm b \in \mathcal{V}_{ab}$.
Here, in the second step, we employed the Ward identity (see \eqref{Ward})
\begin{equation*}
\sum_{x} |G(z)_{\bm ax}|^2 = \frac{(\im G(z))_{\bm a \bm a}}{\im z}\,.
\end{equation*}
To go to the second line, we then used the definition of the stopping time, which allows us to bound
\begin{equation} \label{eq:Gest}
|(\im G_s)_{\bm a \bm a}| \leq |(\im M_s)_{\bm a \bm a}| + \Cf \Lambda_{{\rm ent}}^{(1)}(s) \lesssim 1\quad \text{for all} \quad a \in [N] \quad \text{and} \quad \bm a \in \mathcal{V}_{aa} \,,
\end{equation}
i.e.~for all standard basis vectors and also for $ \bm a = \bm e = N^{-1/2} (1, \dots, 1)^*$. 
In the penultimate step in \eqref{eq:R1est}, we then employed \eqref{eq:intrulepractice} for $\alpha = 2$. The ultimate bound simply follows by the definition \eqref{eq:Lambdaent} of $\Lambda_{{\rm ent}}$.

Next, we turn to bounding $\Rcal^{(2)}_{\bm a \bm b}(t \wedge \tau)$ from \eqref{eq:R2}. We split this term in two parts, diagonal and off-diagonal, 
$$ 	\Rcal^{(2)}_{\bm a \bm b}(t \wedge \tau) \lesssim   		\Rcal^{(2, \rm D)}_{\bm a \bm b}(t \wedge \tau) + 	\Rcal^{(2, \rm OD)}_{\bm a \bm b}(t \wedge \tau)$$ 
defined as
\begin{equation} \label{eq:R2DOD}
	\begin{split}
\Rcal^{(2, \rm D)}_{\bm a \bm b}(t \wedge \tau)  &= \theta^4 \int_{0}^{t \wedge \tau} \dd s \,  \sum_{x ,y} \big| \big(\tG_s\big)_{\bm a x}  \big(\tG_s\big)_{yy}  \big(\tG_s\big)_{x\bm b}\big|^2 \quad \text{and} \\
\Rcal^{(2, \rm OD)}_{\bm a \bm b}(t \wedge \tau)  &= \theta^4 \int_{0}^{t \wedge \tau} \dd s \,  \sum_{x ,y} \big| \big(\tG_s\big)_{\bm a x}  \big(\tG_s\big)_{yx}  \big(\tG_s\big)_{y\bm b}\big|^2
	\end{split}
\end{equation}
respectively. We start by estimating $\Rcal^{(2, \rm D)}_{\bm a \bm b}(t \wedge \tau)$, which can be bounded as
\begin{equation} \label{eq:R2Dest}
	\begin{split}
\Rcal^{(2, \rm D)}_{\bm a \bm b}(t \wedge \tau) &\lesssim \frac{1}{(Np)^2} \int_{0}^{t \wedge \tau} \hspace{-2mm}\dd s \, \sum_{x ,y} \left(\big| (M_s)_{\bm ax} \big(\tG_s\big)_{yy}  \big(\tG_s\big)_{x\bm b}\big|^2 +  \big( \Cf \Lambda_{{\rm ent}}^{(1)}(s)\big)^2 \big| \big(\tG_s\big)_{yy}  \big(\tG_s\big)_{x\bm b}\big|^2\right) \\
&\lesssim  \frac{1}{Np^2} \int_{0}^{t \wedge \tau} \hspace{-2mm}\dd s \, \sum_{x } \left(\big|(M_s)_{\bm a x}\big|^2 + \big( \Cf \Lambda_{{\rm ent}}^{(1)}(s)\big)^2 \big|\big(\tG_s\big)_{x\bm b}\big|^2\right) \\
& \lesssim \frac{1}{Np} + \frac{\Cf^2}{N p^2} \int_0^{t \wedge \tau} \dd s \, \frac{\big(\Lambda_{{\rm ent}}^{(1)}(s)\big)^2}{\eta_s} \lesssim \frac{1}{\log N} \big(\Lambda_{{\rm ent}}^{(1)}(t \wedge \tau)\big)^2 \,. 
	\end{split}
\end{equation}
Here, in the first step, we used that, for $s \in [0, t \wedge \tau]$,
\begin{equation} \label{eq:G-Mest}
\big|\big(\tG_s\big)_{\bm ax} - (M_s)_{\bm ax}\big| \leq \Cf \Lambda_{{\rm ent}}^{(1)}(s)\,. 
\end{equation}
To go to the next line, we analogously bounded $\big(\tG_s\big)_{yy}$ and also $\big(\tG_s\big)_{x\bm b}$ in the first term. In the penultimate step, we then carried out the remaining $x$-summations, used the Ward identity in the second term and applied \eqref{eq:Gest}. Finally, in the last step, we employed the integral rules from \eqref{eq:intrulepractice} to get that 
\begin{equation}
\int_0^{t \wedge \tau} \dd s \, \frac{\big(\Lambda_{{\rm ent}}(s)\big)^2}{p \eta_s} \lesssim \frac{\log N}{N \eta_{t \wedge \tau}} + \frac{(\log N)^2}{Np}  \,, 
\end{equation}
and additionally used that $ Np \geq \Cf^2 (\log N)^2$. Next, we control $\Rcal^{(2, \rm OD)}_{\bm a \bm b}(t \wedge \tau)$, which, similarly to \eqref{eq:R2Dest}, can be bounded as
\begin{equation} \label{eq:R2ODest}
	\begin{split}
		\Rcal^{(2, \rm OD)}_{\bm a \bm b}(t \wedge \tau) \lesssim \frac{1}{(Np)^2} \int_{0}^{t \wedge \tau} \hspace{-2mm}\dd s \, \sum_{x ,y} \big| \big(\tG_s\big)_{\bm a x}  \big(\tG_s\big)_{y\bm b}\big|^2  \lesssim\frac{1}{(Np)^2} \int_{0}^{t \wedge \tau} \hspace{-2mm}\dd s \, \frac{1}{\eta_s^2}   \lesssim \frac{1}{\log N} \big(\Lambda_{{\rm ent}}^{(1)}(t \wedge \tau)\big)^2 \,. 
	\end{split}
\end{equation}

Finally, we turn to estimating $\Rcal^{(3)}_{\bm a \bm b}(t \wedge \tau)$ from \eqref{eq:R3}. 
We focus on one exemplary $x,y$ index constellation in \eqref{eq:R3}, arising from writing out $\Delta_{xy}$ defined in Lemma \ref{lem:resolventexpand}. For this term we have
\begin{align}
&\mspace{-20mu}\frac{1}{(Np)^3}\int_{0}^{t \wedge \tau} \dd s \, \sum_{x ,y} \big| \big(\tG_s\big)_{\bm ax}  \big(\tG_s\big)_{yy} \big(\tG_s \big)_{xx}  \big(\tG^{xy,1}_s\big)_{y\bm b}\big|^2 
\notag \\
& \lesssim \frac{1}{(Np)^3}\int_{0}^{t \wedge \tau} \dd s \, \big(\im \tG_s\big)_{\bm a \bm a} \left(\frac{\big(\im \tG_s\big)_{\bm b \bm b}}{\eta_s^2} + \frac{1}{p \eta_s}\right) 
\notag
\\
& \lesssim  \frac{1}{(Np)^3}\int_{0}^{t \wedge \tau} \dd s \left(\frac{1 }{\eta_s^2} + \frac{1}{p \eta_s}\right) 
\notag \\
&\lesssim \frac{1}{(Np)^2} \left(\frac{1}{N \eta_{t \wedge \tau}} + \frac{\log N}{Np}\right)
\notag \\ \label{eq:R3example}
&\lesssim \frac{1}{\log N} \big(\Lambda_{{\rm ent}}^{(1)}(t \wedge \tau)\big)^2 \,. 
\end{align}
Here, in the first step, we employed yet another resolvent expansion to write
\begin{equation} \label{eq:resexpaux}
\tG_{y\bm b}^{xy,1} = \tG_{y\bm b} - \theta (1 - X_{xy}) \big(\tG_{yx}  \tG_{y\bm b}^{xy,1} + \tG_{yy} \tG_{x\bm b}^{xy,1}\big) \,,
\end{equation}
which implies that
\begin{equation} \label{eq:resexp}
\big| \tG_{y\bm b}^{xy,1} \big|  \lesssim \big| \tG_{y\bm b}\big| + \theta 
\end{equation}
by application of \eqref{eq:G-Mest} and Lemma \ref{lem:entforG}, and two (resp.~one) Ward identities for the two terms corresponding to \eqref{eq:resexp}. To go to the second line, we employed \eqref{eq:Gest}  and finally used \eqref{eq:intrulepractice} together with \eqref{eq:Lambdaent} in the last two steps.  
All other index constellations can be handled similarly and we hence find that 
\begin{equation} \label{eq:R3est}
\Rcal^{(3)}_{\bm a \bm b}(t \wedge \tau) \lesssim  \frac{1}{\log N} \big(\Lambda_{{\rm ent}}^{(1)}(t \wedge \tau)\big)^2 \,. 
\end{equation}

Collecting the above estimates \eqref{eq:Martent1}, \eqref{eq:R1est}, \eqref{eq:R2Dest}, \eqref{eq:R2ODest}, and \eqref{eq:R3est}, we conclude that 
\begin{equation}
\langle \Mart^{{\rm ent},1}_{\bm a \bm b}\rangle_{t \wedge \tau} \lesssim \frac{1}{\log N} \big(\Lambda_{{\rm ent}}^{(1)}(t \wedge \tau)\big)^2 \,. 
\end{equation}

\subsubsection{Estimates for \texorpdfstring{$\Mart^{{\rm ent},2}_{\bm a \bm b, t\wedge \tau}$}{$\Mart^{{\rm ent},2}_{a b, t\wedge \tau}$}}
In this section, we estimate the predictable quadratic variation of $\Mart^{{\rm ent},2}_{\bm a \bm b, t\wedge \tau}$, using the same abbreviating notations as in the previous section. By a first order resolvent expansion (i.e.~applying Lemma \ref{lem:resolventexpand} with $m=0$), we find
\begin{equation} \label{eq:Martent2}
	\begin{split}
	\langle \Mart^{{\rm ent},2}_{\bm a \bm b}\rangle_{t \wedge \tau} &\lesssim  \int_{0}^{t \wedge \tau}  \dd s \, \sum_{x ,y} \big| \big((\tG^{xy,1}_{s})^2\big)_{\bm a \bm b} - \big(\tG_{s}^2\big)_{\bm a \bm b} \big|^2 \\
	&\lesssim \theta^2 \int_{0}^{t \wedge \tau} \dd s \,   \sum_{x ,y}  \left(\big| \big(\tG_s \Delta_{xy} \tG_s^{xy,1} \tG_s\big)_{\bm a \bm b} \big|^2 + \big| \big(\tG_s \tG_s \Delta_{xy} \tG_s^{xy,1}\big)_{\bm a \bm b} \big|^2\right) \\
	& \quad + \theta^4 \int_{0}^{t \wedge \tau} \dd s \,   \sum_{x ,y} \big| \big(\tG_s \Delta_{xy} \tG_s^{xy,1} \tG_s \Delta_{xy} \tG_{s}^{xy,1}\big)_{\bm a \bm b} \big|^2 \,. 
	\end{split}
\end{equation}
To control the first term on the second line of \eqref{eq:Martent2}, we use that, analogously to \eqref{eq:resexpaux}--\eqref{eq:resexp}, 
\begin{equation} \label{eq:resexp2}
\big| \big(\tG^{xy,1}_s \tG_s\big)_{y\bm b}\big| \lesssim \big| \big(\tG_s \tG_s\big)_{y\bm b}\big| + \theta \eta_s^{-1} \,. 
\end{equation}
Hence, using \eqref{eq:resexp2} and \eqref{eq:resexp} for the first and second term in the second line of \eqref{eq:Martent2}, and involving \eqref{eq:intrulepractice}, we deduce
\begin{equation} \label{eq:seclineMartent2}
	\begin{split}
\text{sec.~line of \eqref{eq:Martent2}} &\lesssim \frac{1}{Np} \int_{0}^{t \wedge \tau} \dd s \,   \sum_{x ,y}\left( \big| (\tG_s)_{\bm a x}  (\tG_s^2)_{y\bm b}\big|^2 + \frac{1}{Np} \big| (\tG_s^2)_{y\bm b}\big|^2  + \frac{1}{Np \eta_s^2 }\big| (\tG_s)_{\bm a x} \big|^2 \right) \\
&\lesssim \frac{1}{Np} \int_{0}^{t \wedge \tau} \dd s \left(\frac{1}{\eta_s^4} +  \frac{1}{p \eta_s^3}\right) \lesssim \frac{1}{N \eta_{t \wedge \tau}^3} + \frac{1}{N p \eta_{t \wedge \tau}^2} \lesssim \frac{1}{\log N} \big( \Lambda_{{\rm ent}}^{(2)}(t \wedge \tau)\big)^2\,,
	\end{split}
\end{equation}
where we additionally used that $(|\tG_s|^4)_{\bm b \bm b} \lesssim \eta_s^{-3}$ as follows by simple norm bounds, the Ward identity, and the definition of the stopping time, similarly to \eqref{eq:Gest}. 

In estimating the third line of \eqref{eq:Martent2}, we focus on one particular index constellation arising from writing out the $\Delta_{xy}$'s, similarly to \eqref{eq:R3example}. For this term we have
\begin{equation*}
	\begin{split}
\frac{1}{(Np)^2} \int_{0}^{t \wedge \tau} \dd s \, \big| (\tG_s)_{\bm ax} (\tG_s^{xy,1} \tG_s)_{yy} (\tG_s^{xy,1})_{x\bm b} \big|^2 &\lesssim \frac{1}{(Np)^2} \int_{0}^{t \wedge \tau} \dd s \,\frac{1}{\eta_s^2} \sum_{x,y} \big| (\tG_s)_{\bm a x} \big|^2 \\
&\lesssim \int_{0}^{t \wedge \tau} \dd s \,\frac{1}{N p^2\eta_s^3} \lesssim \frac{1}{Np \eta_{t \wedge \tau}^2} \lesssim \frac{\big(\Lambda_{{\rm ent}}^{(2)}(t \wedge \tau)\big)^2}{\log N}  \,. 
	\end{split}
\end{equation*}
By collecting the above estimates, we conclude that
\begin{equation*}
	\langle \Mart^{{\rm ent},2}_{\bm a \bm b}\rangle_{t \wedge \tau} \lesssim \frac{1}{\log N} \big( \Lambda_{{\rm ent}}^{(2)}(t \wedge \tau)\big)^2 \,.
\end{equation*}

\subsubsection{Estimates for $\Mart^{{\rm av},1}_{t\wedge \tau}$}
In this section, we turn to estimating the predictable quadratic variation of $\Mart^{{\rm av},1}_{t\wedge \tau}$. In order to control resolvent entries of $G_s$ (instead of $\tG_s$), we employ Lemma~\ref{lem:entforG} without further mention. Then, by application of Lemma \ref{lem:resolventexpand} (with $G$ instead of $\tG$) for $m=0$, we obtain
\begin{equation} \label{eq:Martav1}
	\begin{split}
		\langle \Mart^{{\rm av},1}\rangle_{t \wedge \tau} &\lesssim  \int_{0}^{t \wedge \tau}  \dd s \, \sum_{x ,y} \big| \langle G^{xy,1}_{s}\rangle - \langle G_{s}\rangle \big|^2 \lesssim \frac{\theta^2}{N^2} \int_{0}^{t \wedge \tau} \dd s \,   \sum_{x ,y}  \big| \big(G_s^{xy,1}G_s\big)_{xy} \big|^2  \\
		&\lesssim \frac{1}{N^3 p}\int_{0}^{t \wedge \tau} \dd s \,   \sum_{x ,y} \left(\big| (G^2_s)_{xy}\big|^2 + \frac{1}{Np\eta_s^2} \right) \lesssim \int_{0}^{t \wedge \tau} \dd s \, \left(\frac{1}{N^2 p \eta_s^3} + \frac{1}{N^2 p^2 \eta_s^2}\right) \\
		& \lesssim \frac{1}{(N \eta_{t \wedge \tau})^2} + \frac{1}{(N \eta_{t \wedge \tau}) (N p)} \lesssim \frac{1}{\log N} \big(\Lambda_{{\rm av}}^{(1)}(t\wedge \tau)\big)^2 \,. 
	\end{split}
\end{equation}
To go to the second line, we used that, analogously to \eqref{eq:resexp2}, 
\begin{equation} \label{eq:resexp22}
	\big| \big(G^{xy,1}_s G_s\big)_{xy}\big| \lesssim \big| \big(G_s^2\big)_{xy}\big| + \theta \eta_s^{-1} \,. 
\end{equation}
Next, we carried out the $x,y$-summations and bounded $\langle |G_s|^4\rangle \leq \langle \im G_s \rangle/\eta_s^3 \lesssim \eta_s^{-3}$ using norm bounds and the Ward identity. The penultimate step simply follows from \eqref{eq:intrulepractice}. 

\subsubsection{Estimates for $\Mart^{{\rm av},2}_{t\wedge \tau}$} Finally, we estimate the predictable quadratic variation of $\Mart^{{\rm av},2}_{t\wedge \tau}$. As in the previous section, in order to control resolvent entries $G_s$ (instead of $\tG_s$), we employ Lemma~\ref{lem:entforG} without further mention. Then, by application of Lemma \ref{lem:resolventexpand} (with $G$ instead of $\tG$) for $m=0$, we obtain
\begin{equation} \label{eq:Martav2}
	\begin{split}
		\langle \Mart^{{\rm av},2}\rangle_{t \wedge \tau} &\lesssim  \int_{0}^{t \wedge \tau}  \dd s \, \sum_{x ,y} \big| \langle (G^{xy,1}_{s})^2\rangle - \langle G_{s}^2\rangle \big|^2	\\ & \lesssim \frac{\theta^2}{N^2} \int_{0}^{t \wedge \tau} \dd s \,   \sum_{x ,y}  \left(\big| \big(G_s^{xy,1} G_s^2\big)_{xy} \big|^2 \right) \\
		& \quad + \frac{\theta^4}{N^2} \int_{0}^{t \wedge \tau} \dd s \,   \sum_{x ,y} \left(\big| \big( G_s^{xy,1} G_s\big)_{xy} \big|^4 + \big| \big( G_s^{xy,1} G_s\big)_{xx} \big|^2 \big| \big( G_s^{xy,1} G_s\big)_{yy} \big|^2\right)\,. 
	\end{split}
\end{equation}
To control the second line of \eqref{eq:Martav2}, we use that, analogously to \eqref{eq:resexp2} and \eqref{eq:resexp22}, 
\begin{equation*}
	\big| \big(G^{xy,1}_s G_s^2\big)_{xy}\big| \lesssim \big| \big(G_s^3\big)_{xy}\big| + \theta \eta_s^{-2} 
\end{equation*}
to deduce 
\begin{equation*}
\text{second line of \eqref{eq:Martav2}} \lesssim \frac{1}{N^3 p} \int_{0}^{t \wedge \tau}  \dd s \, \sum_{x ,y} \left(\big| \big(G_s^3\big)_{xy}\big|^2 + \frac{1}{Np \eta_s^4}\right) \lesssim \frac{1}{\log N} \big( \Lambda_{{\rm av}}^{(2)}(t \wedge \tau)\big)^2 \,, 
\end{equation*}
similarly to \eqref{eq:seclineMartent2}, additionally using that $\langle |G_s|^6\rangle \leq \langle \im G_s \rangle/\eta_s^5 \lesssim \eta_s^{-5}$. 

For the last line of \eqref{eq:Martav2}, we employ \eqref{eq:resexp22} and the analogous statement for the matrix entries $xx$ and $yy$, to get that 
\begin{equation*}
	\begin{split}
\text{third~line of \eqref{eq:Martav2}} &\lesssim \frac{1}{N^4 p^2} \int_{0}^{t \wedge \tau}  \dd s \, \sum_{x ,y} \left(\big|(G_s^2)_{xy}\big|^4 + \big|(G_s^2)_{xx}\big|^2 \, \big|(G_s^2)_{yy}\big|^2 + \frac{1}{(Np)^2 \eta_s^4}\right) \\
&\lesssim \int_{0}^{t \wedge \tau}  \dd s \, \left( \frac{1}{N^3 p^2 \eta_s^5} + \frac{1}{N^2 p^2 \eta_s^4} + \frac{1}{N^4 p^4 \eta_s^4}\right)\lesssim \frac{1}{\log N} \big( \Lambda_{{\rm av}}^{(2)}(t \wedge \tau)\big)^2 \,.
	\end{split}
\end{equation*}
Here, to go to the second line, we bounded $|(G_s^2)_{xy}| + |(G_s^2)_{xx}| \lesssim \eta_s^{-1}$ using the Cauchy-Schwarz inequality and the entrywise single resolvent bound \eqref{eq:Gest}. In the last step, we finally employed \eqref{eq:intrulepractice}. 

By collecting the above estimates, we conclude that
\begin{equation*}
	\langle \Mart^{{\rm av},2}\rangle_{t \wedge \tau} \lesssim \frac{1}{\log N} \big( \Lambda_{{\rm av}}^{(2)}(t \wedge \tau)\big)^2 \,.
\end{equation*}
We have hence controlled all four predictable quadratic variations and thus completed the proof of Lemma \ref{lem:martest}. \qed

\subsection{Proof of Lemma \ref{lem:jump}} \label{subsec:jumpproof}
The goal of this section is to give the proof of Lemma \ref{lem:jump}, i.e.~control the size of jumps of the martingales \eqref{eq:Mav}--\eqref{eq:Ment}. Note that, by continuity, the generator terms in \eqref{eq:Mav}--\eqref{eq:Ment} (the integrals) do \emph{not} contribute to the jump sizes and we thus have that, for each of the four martingales,
\begin{equation} \label{eq:jumpgen}
 \left| \Mart_s - \Mart_{s-} \right| \leq \max_{x \neq y} \left| \phi_s\big(X^{xy,1}(s)\big) - \phi_s\big(X(s)\big) \right| \quad \text{for every} \quad s \in [0, \tau] \,. 
\end{equation}

As in the previous section, we begin with the estimates for $\Mart_{\bm a \bm b,s}^{{\rm ent}, 1}$.  In this case, using \eqref{eq:jumpgen} and the shorthand  notation $\tG = G^{ab}$, we have
\begin{equation} \label{eq:ent1jump}
 \left| \Mart_{\bm a \bm b,s}^{{\rm ent}, 1} - \Mart_{\bm a \bm b,s-}^{{\rm ent}, 1} \right| \leq \max_{x \neq y}^{ab} \left| \big(\tG^{xy,1}_s\big)_{\bm a \bm b} - \big(\tG_s\big)_{\bm a \bm b}  \right|\,,
\end{equation}
where we introduced the shorthand notation $\max_{x \neq y}^{ab}$ to mean the maximum over all $x, y \in [N]$ such that $x \neq y$ and $\{x,y\} \neq \{a,b\}$ as sets. This last constraint is imposed by considering the resolvent $\tG$ of the random matrix that has its $(a,b)$ and $(b,a)$ entry set to zero. 

To control \eqref{eq:ent1jump}, we employ a resolvent expansion as in Lemma \ref{lem:resolventexpand} to order $m=1$ and Lemma \ref{lem:entforG} together with Lemma \ref{lem:Mprop}, to find that 
\begin{equation*}
	\begin{split}
\eqref{eq:ent1jump} &\lesssim \frac{1}{\sqrt{Np}} \max_{x \neq y}^{ab} \left|\big(\tG_s\big)_{\bm a x} \big(\tG_s\big)_{y \bm b}\right| + \frac{1}{Np} \\
&\lesssim \frac{1}{\sqrt{Np}} \max_{x \neq y}^{ab} \left[ \Big(|(M_s)_{\bm ax}| + \Cf \Lambda_{{\rm ent}}^{(1)}(s) \Big) \, \Big(|(M_s)_{y \bm b}| + \Cf \Lambda_{{\rm ent}}^{(1)}(s) \Big)\right] + \frac{1}{Np} \\
&\lesssim \frac{1}{\sqrt{Np}} \max_{x \neq y}^{ab}\Big[|(M_s)_{\bm ax}| |(M_s)_{y \bm b}| \Big] + \frac{1}{\sqrt{Np}}  \Cf \Lambda_{{\rm ent}}^{(1)}(s) \lesssim \frac{1}{\sqrt{Np}}  \Cf \Lambda_{{\rm ent}}^{(1)}(s) \,. 
	\end{split}
\end{equation*}
Here, in the last step, we used that, due to the constraints imposed by $\max_{x \neq y}^{ab}$ and Lemma \ref{lem:Mprop} (together with Lemma \ref{lem:BerProp}~(ii)), 
\begin{equation} \label{eq:Mtrickbound}
\max_{x \neq y}^{ab}\Big[|(M_s)_{\bm ax}| |(M_s)_{y \bm b}| \Big] \lesssim \frac{1}{\sqrt{Np}} \,. 
\end{equation}
We point out that controlling \eqref{eq:ent1jump} by $1/\sqrt{Np}$ only could have been achieved using the trivial bound where the right-hand side of \eqref{eq:Mtrickbound} is replaced by $1$. The improvement in \eqref{eq:Mtrickbound} is achieved due to the constraints imposed by $\max_{x \neq y}^{ab}$. This is the reason for considering $\tG = G^{ab}$ instead of $G$ in the entrywise law. Overall, we now have that
\begin{equation} \label{eq:jump1ent}
 \left| \Mart_{\bm a \bm b,s}^{{\rm ent}, 1} - \Mart_{\bm a \bm b,s-}^{{\rm ent}, 1} \right| \lesssim \frac{1}{\sqrt{Np}}  \Cf \Lambda_{{\rm ent}}^{(1)}(s) \quad \text{for every} \quad s \in [0, \tau] \,. 
\end{equation}

We continue by estimating $\Mart_{\bm a \bm b,s}^{{\rm ent}, 2}$.  Again, using \eqref{eq:jumpgen} and the shorthand  notation $\tG = G^{ab}$, we have 
\begin{equation} \label{eq:ent2jump}
	\left| \Mart_{\bm a \bm b,s}^{{\rm ent}, 2} - \Mart_{\bm a \bm b,s-}^{{\rm ent}, 2} \right| \leq \max_{x \neq y}^{ab} \left| \big((\tG^{xy,1}_s)^2\big)_{\bm a \bm b} - \big(\tG_s^2\big)_{\bm a \bm b}  \right|\,.
\end{equation}
In order to control \eqref{eq:ent2jump}, we will use the following lemma, whose proof is given in Appendix \ref{app:technical}. 
\begin{lemma}[Bound on $M'$] \label{lem:M2bound}
	For $z_T \in \mathbb{D}$, we have that, uniformly in $s \in [0,T]$, the derivative $M_s' = M_s'(z_s)$ from \eqref{eq:Mprime} satisfies
	\begin{equation}
		\big| (M_s')_{\bm v \bm w}\big| \leq C_\kappa \left(\big|\langle \bm v, (\mathbf{1} - \Pi_{\bm e}) \bm w \rangle\big| + \frac{\big| \langle \bm v, \Pi_{\bm e} \bm w \rangle \big|}{Np}\right)\,, \quad \text{for any} \quad \bm v, \bm w \in \R^N \,, 
	\end{equation}
	where $C_\kappa$ is a constant depending only on $\kappa$. 
\end{lemma}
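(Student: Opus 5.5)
We need to bound $M_s'(z_s)$. The plan is to compute $M_t(w)$ explicitly from the time-dependent MDE \eqref{eq:timeMDE} and differentiate in $w$. Write $S = N\Pi_{\bm e} - \mathbf{1}$. Note that $z_s \in \HH\mathbf{1} + \R S$, so $z_s = \zeta_s \mathbf{1} + \sigma_s S$ for scalars $\zeta_s\in\HH$, $\sigma_s\in\R$. The derivative in \eqref{eq:Mprime} is taken in the scalar direction $w\mapsto w+\epsilon\mathbf{1}$.

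\textbf{Step 1: explicit form of $M$.} Every matrix in the MDE is a polynomial in $\Pi_{\bm e}$, so $M_t(w)$ is too. Set $\beta_t \deq \frac{p_t}{\sqrt{Np(1-p)}}$, $\gamma_t \deq \frac{p_t(1-p_t)}{p(1-p)}$, and $m = \ang{M_t(w)}$. Writing $w = \zeta\mathbf{1}+\sigma S$, the MDE gives
\begin{equation*}
M_t(w) = -\frac{1}{\zeta - \sigma + \beta_t + \gamma_t m}\,(\mathbf{1}-\Pi_{\bm e}) \;-\; \frac{1}{\zeta - \sigma + \beta_t + \gamma_t m + N(\sigma-\beta_t)}\,\Pi_{\bm e}\,.
\end{equation*}
The scalar self-consistent equation for $m$ is
\begin{equation*}
m = -\frac{1-1/N}{\zeta'+\gamma_t m} + \frac{O(1/N)}{\cdots}\,,
\end{equation*}
where $\zeta' \deq \zeta-\sigma+\beta_t$ is a semicircle-type equation. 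Call the two eigenvalue coefficients $m_\perp$ and $m_{\bm e}$.

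\textbf{Step 2: differentiate.} Differentiating in $\zeta$ gives
\begin{equation*}
M' = m_\perp^2(1+\gamma_t m')(\mathbf{1}-\Pi_{\bm e}) + m_{\bm e}^2(1+\gamma_t m')\Pi_{\bm e}\,,
\end{equation*}
where $m' = \ang{M'}$ solves $m'\bigl(1-\gamma_t(1-1/N)m_\perp^2 - \gamma_t m_{\bm e}^2/N\bigr) = (1-1/N)m_\perp^2 + m_{\bm e}^2/N$. The claimed bound then reduces to three facts, uniform in $s\in[0,T]$:
\begin{itemize}
\item[(a)] $|m_\perp|\lesssim 1$;
\item[(b)] $|1+\gamma_s m'| \lesssim 1$;
\item[(c)] $|m_{\bm e}|^2 \lesssim (Np)^{-1}$.
\end{itemize}
With these, $(M_s')_{\bm v\bm w} = m_\perp^2(1+\gamma_s m')\scalar{\bm v}{(\mathbf{1}-\Pi_{\bm e})\bm w} + m_{\bm e}^2(1+\gamma_s m')\scalar{\bm v}{\Pi_{\bm e}\bm w}$, which is exactly the stated bound.

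\textbf{Step 3: time-uniform control via Lemma \ref{lem:BerProp}(b).} Since $M_s(z_s) = \ee^{s-T}M_T(z)$, both $m_\perp(s)$ and $m_{\bm e}(s)$ are $\ee^{s-T}$ times their terminal values. At time $T$ the MDE is \eqref{eq:MDE}, so Lemma \ref{lem:Mprop} gives $|m_\perp(T)|\asymp 1$ and $|m_{\bm e}(T)|\asymp (Np)^{-1/2}$. Because $T\lesssim p$, we have $\ee^{s-T}\asymp 1$, which yields (a) and (c).

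For (b), use $1+\gamma_s m' = 1/(1-\gamma_s m_\perp^2) + O(1/(Np))$. One needs $|1-\gamma_s m_\perp(s)^2|\gtrsim 1$ in the bulk. Write $\gamma_s m_\perp(s)^2 = \gamma_s \ee^{2(s-T)} m_\perp(T)^2$, and note $\gamma_s\ee^{2(s-T)} = \frac{(1-\ee^{-s})\ee^{s-2T}}{p(1-p)}\cdot(1-p)\ldots \le 1+O(p)$. This quantity is real and in $[0,1+O(p)]$, while at $s=T$ we have $m_\perp(T)\approx m_{\rm sc}(z+p\theta)$ with $|m_{\rm sc}|\le 1$ and $\im m_{\rm sc}\ge c_\kappa$. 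In the bulk, $m_{\rm sc}^2$ is then bounded away from the positive real axis near $1$, giving $|1-\lambda m_{\rm sc}^2|\ge c_\kappa$ for all $\lambda\in[0,1+O(p)]$. The $O(1/N)$ and $O(1/(Np))$ corrections are absorbed.

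\textbf{Main obstacle.} The delicate step is (b): showing the stability factor $1-\gamma_s m_\perp^2$ stays uniformly away from zero along the entire flow. This requires the precise identity for $\gamma_s\ee^{2(s-T)}$ and the bulk restriction $|\re z|\le 2-\kappa$, and one has to verify that the shift $p\theta$ and the $1/N$ corrections do not push $m_\perp(T)^2$ onto the positive real axis near $1$.
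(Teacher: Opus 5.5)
Your proposal is correct and follows essentially the paper's route: you write $M_s' = (1+\gamma_s m_s')M_s^2 = M_s^2/(1-\gamma_s\langle M_s^2\rangle)$ using the two-projection structure, transfer to the terminal time via $M_s = \ee^{s-T}M_T$ and Lemma \ref{lem:Mprop}, and bound the stability factor from below using that $\gamma_s\ee^{2(s-T)}$ is real. Two minor points. Your garbled expression for that factor is exactly $(\ee^s-1)/(\ee^T-1)\in[0,1]$. For the lower bound on $|1-\lambda m_{\rm sc}^2|$ you use $m_\perp(T)\approx m_{\rm sc}(z+p\theta)$ together with $|m_{\rm sc}|\le 1$ and $\im m_{\rm sc}\ge c_\kappa$, whereas the paper bounds the real part of $1-\lambda\langle M_T^2\rangle$ directly via the imaginary part of the MDE; the two are equivalent in substance.
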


Similarly to \eqref{eq:ent1jump}, we now use a resolvent expansion from Lemma \ref{lem:resolventexpand} to order $m=0$ and Lemma \ref{lem:entforG} together with Lemma \ref{lem:Mprop} to find that
\begin{equation*}
\begin{split}
\eqref{eq:ent2jump} &\lesssim \frac{1}{\sqrt{Np}} \max_{x \neq y}^{ab} \left[\left|\big(\tG_s\big)_{\bm a x} \big(\tG^{xy,1}_s\tG_s\big)_{y \bm b}\right| + \left|\big(\tG_s^2\big)_{\bm a x} \big(\tG^{xy,1}_s\big)_{y \bm b}\right| \right] + \frac{1}{Np} \frac{1}{\eta_s} \\
&\lesssim  \frac{1}{\sqrt{Np}} \max_{x \neq y}^{ab} \left|\big(M_s\big)_{\bm a x} \big(M_s'\big)_{y \bm b}\right|  + \frac{1}{\sqrt{Np}} \Cf \Lambda_{{\rm ent}}^{(2)}(s) \lesssim \frac{1}{\sqrt{Np}} \Cf \Lambda_{{\rm ent}}^{(2)}(s) \,. 
\end{split}
\end{equation*}
Here, in the last step we used that, analogously to \eqref{eq:Mtrickbound}, 
\begin{equation} \label{eq:Mtrickbound2}
\max_{x \neq y}^{ab} \left|\big(M_s\big)_{\bm a x} \big(M_s'\big)_{y \bm b}\right|  \lesssim \frac{1}{\sqrt{Np}}
\end{equation}
due to the constraints imposed by $\max_{x \neq y}^{ab} $ and with the aid of Lemma \ref{lem:M2bound}. Thus,
\begin{equation} \label{eq:jump2ent}
	\left| \Mart_{\bm a \bm b,s}^{{\rm ent}, 2} - \Mart_{\bm a \bm b,s-}^{{\rm ent}, 2} \right| \lesssim \frac{1}{\sqrt{Np}}  \Cf \Lambda_{{\rm ent}}^{(2)}(s) \quad \text{for every} \quad s \in [0, \tau] \,. 
\end{equation}

Next, we bound the jump size of $\Mart_s^{{\rm ent}, 1}$, for which we have that, by a simple resolvent expansion, i.e.~applying Lemma \ref{lem:resolventexpand} to order $m=0$, 
\begin{equation} \label{eq:av1jump}
	\begin{split}
	\left| \Mart_{s}^{{\rm av}, 1} - \Mart_{s-}^{{\rm av}, 1} \right|  &\leq \max_{x \neq y}\left| \langle G^{xy,1}_s\rangle - \langle G_s \rangle  \right| \\ &\lesssim \frac{1}{\sqrt{Np}}\frac{1}{N}\max_{x \neq y} \left| \big(G_s^{xy,1}G_s\big)_{xy} \right|  \lesssim \frac{1}{\sqrt{Np}} \frac{1}{N \eta_s} \lesssim \frac{1}{\sqrt{Np}} \frac{1}{\sqrt{\log N}} \Cf \Lambda_{{\rm av}}^{(1)}(s)\,,
	\end{split}
\end{equation}
where to go to the second line, we wrote out the average trace. Afterwards, in the penultimate step, we employed the Cauchy-Schwarz inequality together with the Ward identity and Lemma \ref{lem:entforG}. 

Finally, we bound the jump size of $\Mart_s^{{\rm ent}, 2}$, i.e.
\begin{equation}\label{eq:av2jump}
		\left| \Mart_{s}^{{\rm av}, 2} - \Mart_{s-}^{{\rm av}, 2} \right|  \leq \max_{x \neq y} \absb{ \langle (G^{xy,1}_s)^2\rangle - \langle G_s^2 \rangle} \,.
\end{equation}
Analogously to \eqref{eq:av1jump}, we have
\begin{equation*} 
	\begin{split}
	\eqref{eq:av2jump}	\lesssim \, &\frac{1}{\sqrt{Np}}\frac{1}{N}\max_{x \neq y} \left| \big(G_s^{xy,1}G_s^2\big)_{xy} \right| 
		\\
	&\qquad 	+ \frac{1}{Np} \frac{1}{N} \max_{x \neq y} \left[\left| \big(G_s^{xy,1} G_s\big)_{xx}\big(G_s^{xy,1} G_s\big)_{yy} \right| + \left| \big(G_s^{xy,1} G_s\big)_{xy}\big(G_s^{xy,1} G_s\big)_{xy} \right| \right] \\
		\lesssim \, &\frac{1}{\sqrt{Np}} \frac{1}{N \eta_s^2} \lesssim \frac{1}{\sqrt{Np}} \frac{1}{\sqrt{\log N}} \Cf \Lambda_{{\rm av}}^{(2)}(s)
	\end{split}
\end{equation*}
and hence
\begin{equation} \label{eq:jump2av}
		\left| \Mart_{s}^{{\rm av}, 2} - \Mart_{s-}^{{\rm av}, 2} \right| \lesssim \frac{1}{\sqrt{Np}} \frac{1}{\sqrt{\log N}} \Cf \Lambda_{{\rm av}}^{(2)}(s) \quad \text{for every} \quad s \in [0, \tau] \,. 
\end{equation}
Collecting the estimates \eqref{eq:jump1ent}, \eqref{eq:jump2ent}, \eqref{eq:av1jump}, and \eqref{eq:jump2av}, this concludes the proof of Lemma \ref{lem:jump}. \qed

\subsection{Proof of Lemma \ref{lem:generate}} \label{subsec:generateproof}
As in the previous subsections, we discuss the generator terms 
\begin{equation} \label{eq:generatorgeneral}
\int_{0}^{t \wedge \tau} \dd s\, \big(\Lcal \phi_s + \partial_s \phi_s\big)
\end{equation}
of the four different observables separately. We will start with the generator terms associated with $\phi_{ t}^{{\rm av}, 1}$, as it is the most illustrative one.

\subsubsection{Generator terms for $\phi_{ t}^{{\rm av}, 1}$}
For $\phi_{ t}^{{\rm av}, 1}$, the integrand in \eqref{eq:generatorgeneral} is given by
\begin{equation} \label{eq:avGen}
\sum_{x < y} \big(\big\langle G_s^{xy,1} \big\rangle  - \big\langle G_s \big\rangle\big) + \big\langle G_s \big\{\partial_s z_s\big\} G_s \big\rangle -  \langle M_s \rangle
\end{equation}
where we employed Lemma \ref{lem:BerProp}~(b) for the $M_s$-term. To control \eqref{eq:avGen} we employ a resolvent expansion for the first term in \eqref{eq:avGen}, just as the one formulated in Lemma \ref{lem:resolventexpand}, to order $m=2$, to find that
\begin{equation} \label{eq:avGenexpand}
	\begin{split}
\eqref{eq:avGen} =  & \, N^{-1}\sum_{x \neq y}  \theta( X_{xy}(s) - 1) (G_s^2)_{yx}  + N^{-1} \sum_{x,y} (G_s^2)_{yx}\big( \theta \Smat_{xy} - (z_s)_{xy}\big) - \langle M_s \rangle \\
& + N^{-1} \sum_{x \neq y} \theta^2 \big(X_{xy}(s) - 1\big)^2 (G_s^2)_{xx} (G_s)_{yy} - \langle G_s^2 \rangle \frac{1 - p_s}{1 -p}\frac{m_s}{p} \\
& + N^{-1} \sum_{x \neq y} \theta^2 \big(X_{xy}(s) - 1\big)^2 (G_s^2)_{xy} (G_s)_{xy} + \Rcal^{(3)}(s)\,,
	\end{split}
\end{equation}
where we inserted our choice of Bernoulli characteristic flow $\partial_s z_s$ from \eqref{eq:Bernflow} and denoted the third order resolvent expansion term by $\Rcal^{(3)}(s)$. 

The first line of \eqref{eq:avGenexpand} corresponds to the first order resolvent expansion terms, that, by involving the defining resolvent equation, $G_s = (\theta X(s) - z_s)^{-1}$, can easily be seen  to evaluate to $\langle G_s  - M_s\rangle$. The second line of \eqref{eq:avGenexpand} corresponds the main diagonal term of the second order resolvent expansion, which carries the main cancellation achieved by the Bernoulli characteristic flow \eqref{eq:Bernflow}.
The terms in the third line of \eqref{eq:avGenexpand} will be shown to be negligible. More precisely, our goal is to show that 
\begin{equation} \label{eq:Edef}
\eqref{eq:avGen} = \left(1 + \frac{ (1 - p_s)\langle G_s^2 \rangle}{p (1 - p)}\right) \langle G_s - M_s \rangle + \Ecal(s)
\end{equation}
with an error term $\Ecal(s) = \Ecal'(s) + \Ecal''(s)$ that is composed of errors from the second and third line of \eqref{eq:avGenexpand} and defined in \eqref{eq:errordef} and \eqref{eq:errordef2} below, respectively. We now discuss these last two lines of \eqref{eq:avGenexpand}. 

We start with the second line of \eqref{eq:avGenexpand}. Using that $m_s = \big(M_s\big)_{yy}$ for all $y \in [N]$ by vertex transitivity of the complete graph and $\big(X_{xy}(s) - 1\big)^2 = \big(1 - X_{xy}(s)\big)$ since $X_{xy}(s) = 0,1$, we have 
\begin{equation} \label{eq:1av2ndline}
	\begin{split}
\text{second line of \eqref{eq:avGenexpand}} &= \frac{\theta^2}{N} \sum_{x,y} (G_s^2)_{xx}\left[(1 - \delta_{xy}) \big(1 - X_{xy}(s)\big) (G_s)_{yy} - (1 - p_s) (M_s)_{yy}\right]  \\
&=  \frac{ (1-p_s)\langle G_s^2  \rangle}{p (1 - p)} \langle G_s - M_s \rangle+ \Ecal'(s)\,,
	\end{split}
\end{equation}
where the error term $\Ecal'(s)$ can be bounded by a sum of five contributions, 
\begin{equation}\label{eq:errordef}
|\Ecal'(s)| \lesssim \sum_{i=1}^5 \Ecal_i(s)\,,
\end{equation}
which are given by
\begin{align} 
\Ecal_1(s) &\deq \frac{\theta^2}{N} \left| \sum_{x,y} \big(G_s^2 - M'_s\big)_{xx} (X_{xy}(s) - p_s)\big(G_s - M_s\big)_{yy}\right| \label{eq:E1} \\
\Ecal_2(s) &\deq \frac{\theta^2}{N} \left| \sum_{x,y} \big(G_s^2 - M'_s\big)_{xx} (X_{xy}(s) - p_s)\big(M_s\big)_{yy}\right| \label{eq:E2} \\
\Ecal_3(s) &\deq  \frac{\theta^2}{N} \left| \sum_{x,y} \big(M'_s\big)_{xx} (X_{xy}(s)- p_s) \, \big(G_s - M_s\big)_{yy}\right| \label{eq:E3} \\
\Ecal_4(s) &\deq \frac{\theta^2}{N} \left| \sum_{x,y} \big(M'_s\big)_{xx} (X_{xy}(s)- p_s) \, \big(M_s\big)_{yy}\right| \label{eq:E4}  \\
\Ecal_5(s) &\deq  \frac{\theta^2}{N} \left| \sum_x (G_s^2)_{xx} (G_s)_{xx} \right|  \label{eq:E5}
\end{align}
and shall now be estimated separately. 

For the first error term \eqref{eq:E1}, we use Lemma \ref{lem:entforG} and the second relation in \eqref{eq:Bennettpractice}, which follows from Bennett's inequality (formulated in Lemma \ref{lem:Bennett} below -- note that \eqref{eq:Bennettpractice} holds \emph{uniformly} in Bernoulli parameters $p \in (0,1/2]$ and hence \emph{uniformly} in time due to the relation $p_s = 1- \ee^{-s}$ in our setting), and find
\begin{equation} \label{eq:E1est}
	\begin{split}
\Ecal_1(s) &\lesssim \frac{\Cf^2 \big(\Lambda_{{\rm ent}}^{(1)}(s)\big)^2}{\eta_s} \frac{1}{N^2 p}\sum_{x,y} |X_{xy}(s) - p_s| \\
&\preceq  \frac{p_s}{p} \left(1 + \frac{\log N}{N p_s}\right)\frac{\Cf^2\big(\Lambda_{{\rm ent}}^{(1)}(s)\big)^2}{\eta_s} \lesssim \frac{\Cf^2\big(\Lambda_{{\rm ent}}^{(1)}(s)\big)^2}{\eta_s} \,.
	\end{split} 
\end{equation}
For $\Ecal_2$ in \eqref{eq:E2}, we estimate
\begin{equation} \label{eq:E2est}
	\begin{split}
\Ecal_2(s) &\lesssim \frac{1}{N} \sum_x \big|(G_s^2 - M_s')_{xx} \big| \, \left[\frac{1}{Np}\bigg| \sum_y(X_{xy}(s) - p_s) \bigg|\right] \\
&\preceq  \frac{p_s}{p} \left(\sqrt{\frac{\log N}{N p_s}} + \frac{\log N}{Np_s}\right)\frac{\Cf \Lambda_{{\rm ent}}^{(1)}(s)}{\eta_s} \lesssim \frac{\Cf \big(\Lambda_{{\rm ent}}^{(1)}(s)\big)^2}{\eta_s}\,,
	\end{split}
\end{equation}
where we now used the first relation in \eqref{eq:Bennettpractice}, which is possible since $(M_s)_{yy} = \langle M_s \rangle$, Lemma \ref{lem:entforG}, \eqref{eq:Lambdaent}, and the definition of the stopping time \eqref{eq:stoptime}. For $\Ecal_3$ in \eqref{eq:E3}, we similarly find that 
\begin{equation} \label{eq:E3est}
	\Ecal_3(s) \preceq  \frac{p_s}{p} \left(\sqrt{\frac{\log N}{N p_s}} + \frac{\log N}{N p_s}\right)\Cf \Lambda^{(1)}_{{\rm ent}}(s) \lesssim \Cf \big(\Lambda_{{\rm ent}}^{(1)}(s)\big)^2\,,
\end{equation}
additionally using Lemma \ref{lem:M2bound} for the $M'$ term. 
For the error term $\Ecal_4$ in \eqref{eq:E4}, we use that $(M_s')_{xx} = \langle M_s'\rangle$ and $(M_s)_{yy} = \langle M_s\rangle$, to deduce
\begin{equation} \label{eq:E4est}
\Ecal_4(s) \lesssim \frac{1}{N^2 p}\bigg| \sum_{x,y} (X_{xy}(s) - p_s)\bigg| \preceq  \frac{p_s}{p}\left(\sqrt{\frac{\log N}{N^2 p_s}} + \frac{\log N}{N^2 p_s}\right) \lesssim \sqrt{p} \,  \Lambda_{{\rm av}}^{(1)}(s)
\end{equation}
by means of Bennett's inequality from Lemma \ref{lem:Bennett}, but now for $N(N-1)/2 \asymp N^2$ instead of $N$ independent Bernoulli random variables. Note that in \eqref{eq:E4est} we could have put any time $s$ as an argument of the $\Lambda_{{\rm av}}(s)$ control parameter, since we anyway involved only the time-independent parts of $\Lambda_{{\rm av}}^{(1)}$ from \eqref{eq:Lambdaav} in the ultimate estimate in \eqref{eq:E4est}. Finally, we estimate the error term $\Ecal_5$ in \eqref{eq:E5} by adding and subtracting $(M_s')_{xx}$ and $(M_s)_{xx}$ as
\begin{equation} \label{eq:E5est}
\begin{split}
\Ecal_5(s) \lesssim \frac{1}{Np} \left(1 + \Cf \Lambda_{{\rm av}}^{(1)}(s) + \Cf \Lambda_{{\rm av}}^{(2)}(s) + \frac{\Cf^2 \big(\Lambda_{{\rm ent}}^{(1)}(s)\big)^2}{\eta_s}\right) \,. 
\end{split}
\end{equation}

We now turn to controlling the third line of \eqref{eq:avGenexpand}, which is cast in the error term $\Ecal''$ defined as
\begin{equation} \label{eq:errordef2}
\Ecal''(s) \deq N^{-1} \sum_{x \neq y} \theta^2 \big(X_{xy}(s) - 1\big)^2 (G_s^2)_{xy} (G_s)_{xy} + \Rcal^{(3)}(s) \,. 
\end{equation}
The first term in \eqref{eq:errordef2} can be bounded as
\begin{equation} \label{eq:R2Diagav}
\begin{split}
\frac{1}{N^2 p} \sum_{x,y} \big|(G_s^2)_{xy} (G_s)_{xy}\big| &\lesssim \frac{1}{Np} \langle |G_s|^4\rangle^{1/2} \langle |G_s|^2 \rangle^{1/2} \\
&\leq \frac{1}{Np \eta_s^2} \langle \im G_s \rangle \lesssim \frac{1}{Np \eta_s^2} \left(1 + \Cf \Lambda_{{\rm av}}^{(1)}(s)\right) \lesssim \frac{1}{Np \eta_s^2} \,. 
\end{split}
\end{equation}
For the third order resolvent expansion term $\Rcal^{(3)}(s)$, similarly to \eqref{eq:R3example}, we focus on one exemplary index constellation arising from writing out the $\Delta_{xy}$ in the expansion. For this term, we have 
\begin{equation} \label{eq:R3avest}
	\begin{split}
&\frac{1}{N^{5/2} p^{3/2}} \left| \sum_{x,y} \big(G_s^{xy,1} G_s\big)_{xx} (G_s)_{yy} (G_s)_{xy} \right| \\
 \lesssim \; &\frac{1}{Np^{3/2}} \sqrt{\frac{1}{N} \sum_{x,y} |(G_s)_{xy}|^2} \Big(1  + \Cf \Lambda_{{\rm ent}}^{(2)}(s)\Big) \Big(1 + \Cf \Lambda_{{\rm ent}}^{(1)}(s)\Big)   \\
 \lesssim \; & \frac{1}{p \eta_s}\frac{1}{\sqrt{(N \eta_s) (Np)}} \left(1 + \Cf \Lambda_{{\rm av}}^{(1)}(s)\right) \lesssim \frac{1}{N p^{3/2}\eta_s^{3/2} } \,, 
	\end{split}
\end{equation}
as follows by writing the diagonal $G$-factors, schematically, as $M + (G-M)$, involving the definition of the stopping time \eqref{eq:stoptime}, additionally using Lemma \ref{lem:entforG}, and applying the Cauchy-Schwarz inequality together with the Ward identity. Other index constellations can be handled analogously.

Collecting all the above error estimates using \eqref{eq:intrulepractice} and the definitions \eqref{eq:Lambdaav}--\eqref{eq:Lambdaent}, we find that 
\begin{equation}
\int_{0}^{t \wedge \tau} \dd s \, |\Ecal(s)| \leq \Lambda_{{\rm av}}^{(1)}(t \wedge \tau)
\end{equation}
with very high probability.
Here, we additionally used that $p \leq 1/(\log N)^2$ by assumption, which allows to handle the error contributions \eqref{eq:E1est}, \eqref{eq:E2est}, \eqref{eq:E3est}, and \eqref{eq:E4est}.

\subsubsection{Generator terms for $\phi_{ t}^{{\rm av}, 2}$}
For $\phi_{ t}^{{\rm av}, 2}$, the integrand in \eqref{eq:generatorgeneral} is given by
\begin{equation} \label{eq:avGen2}
	\sum_{x < y} \big(\big\langle (G_s^{xy,1})^2 \big\rangle  - \big\langle G_s^2 \big\rangle\big) + 2\big\langle G_s^2 \big\{\partial_s z_s\big\} G_s \big\rangle -  2\langle M_s' \rangle - \frac{1 - p_s}{1 - p} \frac{\langle M_s'\rangle^2}{p}\,,
\end{equation}
where we employed the following lemma, whose proof is given in Appendix \ref{app:technical}, for the $M_s'$-term(s).
\begin{lemma}[Time evolution of $M'$] \label{lem:Mt'}
	We have
	\begin{equation}
		\partial_s  M_s' = \left(2  + \frac{1 - p_s}{1 - p} \frac{\langle M_s'\rangle }{p}\right) M_s'\,.
	\end{equation}
\end{lemma}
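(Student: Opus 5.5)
We will differentiate along the characteristic, using the chain rule
\begin{equation*}
\partial_s M_s' = \partial_s \Bigl(\tfrac{\dd}{\dd w} M_s(w)\Big|_{w=z_s}\Bigr) = (\partial_s M_s)'(z_s) + \tfrac{\dd}{\dd \epsilon}\Big|_{\epsilon=0} M_s'(z_s + \epsilon\, \partial_s z_s),
\end{equation*}
where the first term is the explicit time derivative of $M_s(w)$ at fixed $w$ and the second is the derivative along $z_s$. Everything will come from the time-dependent MDE \eqref{eq:timeMDE}. The cleanest route is to exploit Lemma~\ref{lem:BerProp}~(b), $M_s(z_s) = \ee^s M_0(z_0)$. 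Since $M_s$ is a function of the scalar $w\mathbf 1$ plus multiples of $\Smat$, all matrices involved commute (they are polynomials in $\Smat$), so we may differentiate \eqref{eq:timeMDE} in $w$ directly. Writing $\alpha_s \deq \frac{p_s(1-p_s)}{p(1-p)}$ and $m_s' \deq \ang{M_s'}$, differentiation of $-M_s(w)^{-1} = w - \frac{p_s}{\sqrt{Np(1-p)}}\Smat + \alpha_s \ang{M_s(w)}$ gives
\begin{equation*}
M_s' = M_s^2\,(1 + \alpha_s m_s'), \qquad\text{hence}\qquad m_s' = \frac{\ang{M_s^2}}{1-\alpha_s\ang{M_s^2}},\quad M_s' = \frac{M_s^2}{1 - \alpha_s \ang{M_s^2}},
\end{equation*}
all evaluated at $w = z_s$.

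Now use $M_s(z_s) = \ee^s M_0(z_0)$, so $M_s^2 = \ee^{2s} M_0^2$ and $\partial_s M_s^2 = 2M_s^2$. Then
\begin{equation*}
\partial_s M_s' = \frac{2M_s^2}{1-\alpha_s\ang{M_s^2}} + \frac{M_s^2\,\partial_s(\alpha_s \ang{M_s^2})}{(1-\alpha_s\ang{M_s^2})^2} = M_s'\Bigl(2 + \frac{\partial_s(\alpha_s\ang{M_s^2})}{1-\alpha_s\ang{M_s^2}}\Bigr).
\end{equation*}
It remains to verify the scalar identity $\partial_s(\alpha_s\ang{M_s^2}) = (1-\alpha_s\ang{M_s^2})\cdot\frac{1-p_s}{(1-p)p}m_s'$, i.e.\ that $\partial_s(\alpha_s\ang{M_s^2}) = \frac{1-p_s}{(1-p)p}\ang{M_s^2}$. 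Since $p_s = 1-\ee^{-s}$, $\partial_s p_s = 1-p_s$, so $\partial_s \alpha_s = \frac{(1-p_s)(1-2p_s)}{p(1-p)}$, and $\partial_s \ang{M_s^2} = 2\ang{M_s^2}$. Thus
\begin{equation*}
\partial_s(\alpha_s \ang{M_s^2}) = \frac{\ang{M_s^2}}{p(1-p)}\bigl[(1-p_s)(1-2p_s) + 2p_s(1-p_s)\bigr] = \frac{1-p_s}{p(1-p)}\ang{M_s^2},
\end{equation*}
as required. Substituting yields exactly $\partial_s M_s' = \bigl(2 + \frac{1-p_s}{1-p}\frac{\ang{M_s'}}{p}\bigr)M_s'$.

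The only delicate point is justifying the derivative formula for $M_s'$, namely that $1-\alpha_s\ang{M_s^2}\neq 0$ along the flow, so that the implicit differentiation is legitimate and $M_s(w)$ is differentiable in $w$ near $z_s$. I would obtain this from stability of the MDE: $\abs{\alpha_s\ang{M_s^2}} \le \alpha_s\ang{\abs{M_s}^2} < 1$, which follows by taking imaginary parts in \eqref{eq:timeMDE} (the standard identity $\im M = M^*(\eta + \alpha_s\ang{\im M})M$ gives $\alpha_s\ang{M^*M} = \alpha_s\ang{\im M}/(\eta+\alpha_s\ang{\im M}) < 1$ since $\eta_s > 0$ by Lemma~\ref{lem:BerProp}~(c)). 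Everything else is the bookkeeping above.
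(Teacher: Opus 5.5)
Your proof is correct and is essentially the paper's argument. Both start from the $w$-derivative of \eqref{eq:timeMDE}, $M_s' = (1+\alpha_s m_s')M_s^2 = M_s^2/(1-\alpha_s\ang{M_s^2})$ (the paper writes $\sigma_s$ for your $\alpha_s$), use $\partial_s M_s = M_s$ from Lemma~\ref{lem:BerProp}~(b) and $\partial_s\alpha_s = \frac{1-p_s}{p(1-p)} - 2\alpha_s$, and rely on $\alpha_s\ang{\abs{M_s}^2}<1$ for nondegeneracy. The only cosmetic difference is that the paper differentiates the product form and solves for $\partial_s m_s'$ by taking a trace, whereas you differentiate the explicit quotient directly; your opening chain-rule decomposition is never actually used.
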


 To control \eqref{eq:avGen2} we use a resolvent expansion for the first term in \eqref{eq:avGen2}, just as the one formulated in Lemma \ref{lem:resolventexpand}, to order $m=1$, to find that
\begin{equation} \label{eq:avGenexpand2}
	\begin{split}
		\eqref{eq:avGen2} =  & \; 2N^{-1}\sum_{x \neq y}  \theta( X_{xy}(s) - 1) (G_s^3)_{yx}  + 2N^{-1} \sum_{x,y} (G_s^3)_{yx}\big( \theta \Smat_{xy} - (z_s)_{xy}\big) - 2\langle M_s' \rangle \\
		& +N^{-1} \sum_{x \neq y} \theta^2 \big(X_{xy}(s) - 1\big)^2 (G_s^2)_{xx} (G_s^2)_{yy} - \frac{1 - p_s}{1 - p} \frac{\langle M_s'\rangle^2}{p}\\
		& + 2N^{-1} \sum_{x \neq y} \theta^2 \big(X_{xy}(s) - 1\big)^2 (G_s^3)_{xx} (G_s)_{yy} - 2\langle G_s^3 \rangle \frac{1 - p_s}{1 -p}\frac{\langle M_s\rangle}{p} \\
		& + N^{-1} \sum_{x \neq y} \theta^2 \big(X_{xy}(s) - 1\big)^2 \left[2 (G_s^3)_{xy} (G_s)_{xy} + (G_s^2)_{xy} (G_s^2)_{xy}\right] + \Rcal^{(\geq 3)}(s)\,,
	\end{split}
\end{equation}
where we inserted our choice of Bernoulli characteristic flow $\partial_s z_s$ from \eqref{eq:Bernflow} and collected all terms containing at least a factor $\theta^3$ from the resolvent expansion in $\Rcal^{(\geq 3)}(s)$. 

The first two lines of \eqref{eq:avGenexpand2} contribute the integrand in \eqref{eq:avGron2}, while the last three lines will be error terms. Indeed, to start, similarly to \eqref{eq:avGenexpand}, the first line of \eqref{eq:avGenexpand2} evaluates to 
\begin{equation}
\text{first line of \eqref{eq:avGenexpand2}} = 2\langle G_s^2 - M_s'\rangle \,. 
\end{equation}
The second line of \eqref{eq:avGenexpand2} can be written as 
\begin{equation} \label{eq:avGenexpand22ndline}
	\begin{split}
\text{second line of \eqref{eq:avGenexpand2}} &= \frac{\theta^2}{N}\sum_{x,y} \left[(1 - \delta_{xy}) \big(1 - X_{xy}(s)\big) (G_s^2)_{xx}(G_s^2)_{yy} - (1 - p_s) (M_s')_{xx} (M_s')_{yy}\right]  \\
&= \frac{1 - p_s }{p(1 - p)} \left(2 \langle M_s'\rangle + \langle G_s^2 - M_s'\rangle \right) \langle G_s^2 - M_s'\rangle + \Ecal^{\rm 2nd}(s)
	\end{split}
\end{equation}
for an appropriate error term $\Ecal^{\rm 2nd}(s)$ such that this holds. 

In fact, writing $\Ecal^{\rm 3rd}(s)$ and $\Ecal^{\rm 4th}(s)$ for the third and fourth line of \eqref{eq:avGenexpand2}, it remains to show the following lemma. 
\begin{lemma} \label{lem:av2errors}
Using the above notation, we have
\begin{equation}
\int_{0}^{t \wedge \tau} \dd s \, \left( |\Ecal^{\rm 2nd}(s)| + |\Ecal^{\rm 3rd}(s)| + |\Ecal^{\rm 4th}(s)|\right) \leq \Lambda_{{\rm av}}^{(2)}(t \wedge \tau)
\end{equation}
with very high probability.
\end{lemma}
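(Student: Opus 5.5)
We mirror the treatment of the first-order average observable in \eqref{eq:1av2ndline}--\eqref{eq:R3avest}, handling the three error terms separately and bounding each integrand. Every bound is then integrated in time using \eqref{eq:intrulepractice} together with the assumptions $Np \geq \Cf^2(\log N)^2$ and $p \leq (\log N)^{-2}$.

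\textbf{The term $\Ecal^{\rm 2nd}$.} First, in the second line of \eqref{eq:avGenexpand2}, we write $(G_s^2)_{xx} = (M_s')_{xx} + (G_s^2 - M_s')_{xx}$, and likewise for the $yy$ factor, where $(M_s')_{xx} = \langle M_s'\rangle$ by vertex transitivity. We also replace $1 - X_{xy}(s) = (1 - p_s) - (X_{xy}(s) - p_s)$. The terms with $1-p_s$ produce exactly $\frac{1-p_s}{p(1-p)}\big(2\langle M_s'\rangle + \langle G_s^2 - M_s'\rangle\big)\langle G_s^2 - M_s'\rangle$, up to the removal of the diagonal $x=y$. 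What remains of $\Ecal^{\rm 2nd}$ is then analogous to $\Ecal_1,\dots,\Ecal_5$ in \eqref{eq:E1}--\eqref{eq:E5}, with $G_s$ replaced by $G_s^2$ and $M_s$ replaced by $M_s'$:
\begin{itemize}
\item the fluctuation terms weighted by $X_{xy}(s) - p_s$ are controlled by Bennett's inequality \eqref{eq:Bennettpractice}, together with Lemma~\ref{lem:entforG} and the stopping-time bound $|(G_s^2 - M_s')_{xx}| \leq \Cf \Lambda_{\rm ent}^{(2)}(s)$;
\item the terms involving $M_s'$ use Lemma~\ref{lem:M2bound};
\item the diagonal term is $\frac{1}{N^2p}\sum_x (G^2_s)_{xx}^2 \lesssim \frac{1}{Np\eta_s^2}$.
\end{itemize}
For example, the analogue of $\Ecal_1$ gives $\Cf^2 (\Lambda_{\rm ent}^{(2)})^2 \lesssim \Cf^2\eta_s^{-2}(\Lambda_{\rm ent}^{(1)})^2$. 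Integrated via \eqref{eq:intrulepractice}, this yields $\frac{\log N}{N\eta^3}\cdot\frac{\Cf^2}{\log N}$ plus $\frac{\Cf^2\log N}{Np\,\eta}$, and both are $\ll \Lambda_{\rm av}^{(2)}$ after using $p\le(\log N)^{-2}$. Here we use $\int ds\, p^{-1}\eta_s^{-\alpha}$, noting that the integrand carries an extra factor $p^{-1}$ through $\theta^2 = (Np(1-p))^{-1}$.

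\textbf{The term $\Ecal^{\rm 3rd}$.} The third line is handled in the same way: we write $(G_s)_{yy} = \langle M_s\rangle + (G_s-M_s)_{yy}$ and $(G_s^3)_{xx}$ as its average plus fluctuations. No deterministic approximation of $G^3$ is tracked, however. The main piece $\frac{1-p_s}{p(1-p)}\langle G_s^3\rangle\langle G_s - M_s\rangle$ therefore cannot be absorbed into a propagator and must be estimated directly. Using $|\langle G_s^3\rangle| \leq \langle \im G_s\rangle/\eta_s^2 \lesssim \eta_s^{-2}$ and the stopping-time bound on $|\phi^{\rm av,1}_s|$, it is at most $p^{-1}\eta_s^{-2}\Cf\Lambda_{\rm av}^{(1)}(s)$. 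Integrating gives roughly $\Cf\big(\frac{\sqrt{\log N}}{N\eta^2} + \frac{1}{\eta^{3/2}}\sqrt{\frac{\log N}{N\cdot Np}}\big)$, which is smaller than $\Lambda_{\rm av}^{(2)} = \sqrt{\log N/(N\eta^3)}$ by factors $(N\eta)^{-1/2}$ and $(Np)^{-1/2}$, respectively.

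The fluctuation terms (Bennett again) and the $X_{xy}-p_s$ cross terms follow as for $\Ecal_1$--$\Ecal_4$. The needed bound $|(G_s^3)_{xx}| \leq (|G_s|^2)_{xx}\eta_s^{-1}\cdot$(const) $\lesssim \eta_s^{-2}$ comes from the Ward identity and \eqref{eq:Gest}.

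\textbf{The term $\Ecal^{\rm 4th}$.} For the off-diagonal products in the fourth line we use Cauchy--Schwarz and Ward as in \eqref{eq:R2Diagav}, for instance
\[
\frac{1}{N^2p}\sum_{x,y}\big|(G_s^3)_{xy}(G_s)_{xy}\big| \lesssim \frac{1}{Np}\langle|G_s|^6\rangle^{1/2}\langle|G_s|^2\rangle^{1/2} \lesssim \frac{1}{Np\eta_s^3}.
\]
After integration this gives $\frac{1}{N\eta^2} \ll \Lambda_{\rm av}^{(2)}$, since $N\eta \geq \log N$.

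For $\Rcal^{(\geq3)}$ we treat each index constellation as in \eqref{eq:R3avest}. We expand any $G^{xy,1}_s$ factor via \eqref{eq:resexpaux}--\eqref{eq:resexp2}, write diagonal entries as $M+(G-M)$, and sum the remaining off-diagonal entries with Cauchy--Schwarz and Ward, gaining $(N\eta_s)^{-1/2}$ per summed off-diagonal pair. A typical term is $\frac{1}{Np^{3/2}\eta_s^{5/2}}$, which integrates to at most $(Np)^{-1}p^{-1/2}\eta^{-3/2}$. Since $T\lesssim p$, and more carefully via $\int p^{-1}\eta^{-5/2}\lesssim \eta^{-3/2}$, this gives $\frac{1}{\sqrt p\, N\eta^{3/2}}$, which is $\ll\Lambda^{(2)}_{\rm av}$ because $\sqrt{N p}\geq \log N$.

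\textbf{Main obstacle.} The delicate step is the $\langle G_s^3\rangle\langle G_s-M_s\rangle$ term together with the Bennett-type fluctuation terms in $\Ecal^{\rm 2nd}$. In those terms the powers of $\eta_s^{-1}$ are highest, and the margin comes only from $p\le(\log N)^{-2}$ and $N\eta\ge\Cf^2\log N$. If the direct bound on $\langle G^3_s\rangle$ proves too lossy, the fallback is to bound $\langle G_s^3\rangle$ through $\langle G_s^2\rangle$ and $\phi^{\rm av,2}_s$ with one Ward identity, which saves a factor of $\eta_s^{-1}$ in the leading part.
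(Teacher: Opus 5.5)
Your overall route matches the paper's. You use the same five-term split of $\Ecal^{\rm 2nd}$ modelled on \eqref{eq:E1}--\eqref{eq:E5}, and naive bounds $|\langle G_s^3\rangle|, |(G_s^3)_{xx}| \lesssim \eta_s^{-2}$ in $\Ecal^{\rm 3rd}$, with $p^{-1}\langle G_s^3\rangle\langle G_s-M_s\rangle$ controlled by the stopping time. For $\Ecal^{\rm 4th}$ you use Cauchy--Schwarz plus Ward. Your estimates for $\Ecal^{\rm 3rd}$ and $\Ecal^{\rm 4th}$ are fine.

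However, the one computation you actually carry out for $\Ecal^{\rm 2nd}$ is wrong, and as written it does not give the lemma. You say the $\Ecal_1$-analogue carries an ``extra factor $p^{-1}$ through $\theta^2$'' and integrate with $\int p^{-1}\eta_s^{-\alpha}\,\dd s$. This gives $\frac{\Cf^2\log N}{Np\,\eta}$, which you claim is $\ll \Lambda^{(2)}_{\rm av}$. It is not. $\Lambda^{(2)}_{\rm av}(t)=\sqrt{\log N/(N\eta_t^3)}$ has no $(Np)^{-1/2}$-component, and the ratio of $\frac{\log N}{Np\,\eta}$ to it is $\sqrt{\eta\log N/(Np^2)}$. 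For $\eta\asymp 1$ this is large as soon as $Np\ll\sqrt{N\log N}$. That regime cannot be avoided: $\eta_0\asymp 1$, and the bound must hold for every $t\le T$. No assumption like $p\le(\log N)^{-2}$ can rescue a term without $N^{-1/2}$-smallness. Your other claimed term, $\frac{\Cf^2}{N\eta^3}$, also exceeds $\Lambda^{(2)}_{\rm av}$ once $\eta\ll (N\log N)^{-1/3}$.

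The missing point is that Bennett's inequality already consumes the $(Np)^{-1}$ in $\theta^2$. Exactly as in \eqref{eq:E1est}, $\frac{1}{N^2p}\sum_{x,y}|X_{xy}(s)-p_s|\preceq \frac{p_s}{p}+\frac{\log N}{Np}\lesssim 1$. So the $\Ecal_1$-analogue is $\preceq \Cf^2(\Lambda^{(2)}_{\rm ent}(s))^2$ with no leftover $p^{-1}$. The time integration then gains a full factor $p$ relative to \eqref{eq:intrulepractice}: $\int_0^t\eta_s^{-\alpha}\,\dd s\lesssim p\,\eta_t^{1-\alpha}$ for $\alpha>1$. This yields $\Cf^2\big(\frac{p\log N}{N\eta_t^2}+\frac{\log N}{N\eta_t}\big)$. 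Its two ratios to $\Lambda^{(2)}_{\rm av}(t)$ are $\Cf^2p\sqrt{\log N/(N\eta_t)}\le \Cf p$ and $\Cf^2\sqrt{\eta_t\log N/N}$, both $o(1)$.

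The same compensation $p_s/p$ in place of $p^{-1}$ controls the $\Ecal_2$- and $\Ecal_3$-analogues (integrand $\preceq \Cf\eta_s^{-1}(\Lambda^{(1)}_{\rm ent}(s))^2$) and the leading Bennett terms in $\Ecal^{\rm 3rd}$. By contrast, the terms that genuinely carry a bare $p^{-1}$ are affordable only because they come with an explicit factor $N^{-1}$. These are the $\langle G_s^3\rangle\langle G_s-M_s\rangle$, diagonal, off-diagonal and $\Rcal^{(\geq 3)}$ terms. With this correction your argument closes and coincides with the paper's.
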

\begin{proof}
We study each of the three error terms separately. The first one, $\Ecal^{\rm 2nd}(s)$, similarly to \eqref{eq:errordef}, by writing $G^2 = M' + (G^2 - M')$ and $X = p + (X - p)$, can be bounded by a sum of five contributions, 
\begin{equation}\label{eq:errordefav2}
	|\Ecal^{\rm 2nd}(s)| \lesssim \sum_{i=1}^5 \Ecal_i(s)\,,
\end{equation}
which are given by
\begin{align} 
	\Ecal_1(s) &\deq \frac{\theta^2}{N} \left| \sum_{x,y} \big(G_s^2 - M'_s\big)_{xx} (X_{xy}(s) - p_s)\big(G_s^2 - M_s'\big)_{yy}\right| \label{eq:E1av2}\,, \\
	\Ecal_2(s) &\deq \frac{\theta^2}{N} \left| \sum_{x,y} \big(G_s^2 - M'_s\big)_{xx} (X_{xy}(s) - p_s)\big(M_s'\big)_{yy}\right| \label{eq:E2av2}\,, \\
	\Ecal_3(s) &\deq  \frac{\theta^2}{N} \left| \sum_{x,y} \big(M'_s\big)_{xx} (X_{xy}(s)- p_s) \, \big(G_s^2 - M_s'\big)_{yy}\right| \label{eq:E3av2}\,, \\
	\Ecal_4(s) &\deq \frac{\theta^2}{N} \left| \sum_{x,y} \big(M'_s\big)_{xx} (X_{xy}(s)- p_s) \, \big(M_s'\big)_{yy}\right| \label{eq:E4av2}\,,  \\
	\Ecal_5(s) &\deq  \frac{\theta^2}{N} \left| \sum_x (G_s^2)_{xx} (G_s^2)_{xx} \right| \,.  \label{eq:E5av2}
\end{align}
All these five terms can be controlled completely analogously to the corresponding terms in \eqref{eq:E1}--\eqref{eq:E5}; cf.~their estimates in \eqref{eq:E1est}, \eqref{eq:E2est}, \eqref{eq:E3est}, \eqref{eq:E4est}, and \eqref{eq:E5est}, respectively. This yields
\begin{equation} \label{eq:E2nd}
	\int_{0}^{t \wedge \tau} \dd s \, |\Ecal^{\rm 2nd}(s)|  \leq \Lambda_{{\rm av}}^{(2)}(t \wedge \tau)/3
\end{equation}
with very high probability.

For the second one, $\Ecal^{\rm 3rd}(s)$, we perform a similar decomposition as for the second line of \eqref{eq:avGenexpand2} in \eqref{eq:avGenexpand22ndline} and find similar error terms as in \eqref{eq:E1av2}--\eqref{eq:E5av2} above, the main difference being that now the $G^3$-terms will only be estimated by their \emph{naive} size. That is, as we do not track a local law for $G^3$ throughout the argument, i.e.~instead of writing $G^3 = M'' + (G^3 - M'')$, we just bound $|\langle G^3\rangle| \lesssim \langle \im G \rangle/\eta^2 \lesssim \eta^{-2}$ and similarly for $(G^3)_{xx}$. We remark that this is the reason for the weaker average local law $G^2$ compared to the one for $G$; cf.~the two control parameters in \eqref{eq:Lambdaav}. However, \emph{a posteriori}, the $\langle G^2\rangle$ local law can be obtained from the $\langle G \rangle$ local law by contour integration as in \eqref{eq:contour}.

 In this way, we find
\begin{equation} \label{eq:E3rd}
\begin{split}
\big| \Ecal^{\rm 3rd}(s) \big| &\lesssim \frac{\theta^2}{N} \left|\sum_{x ,y} (G_s^3)_{xx} \left[ (1 - \delta_{xy}) \big(1 - X_{xy}(s)\big) (G_s)_{yy} - (1 - p_s) (M_s)_{yy}\right]\right| \\
&\lesssim \frac{|\langle G_s^3\rangle \langle G_s - M_s \rangle|}{p} + \frac{1}{N^2 p} \left|\sum_{x,y}  (G_s^3)_{xx} \big(X_{xy}(s) - p_s\big) (M_s)_{yy}\right|  \\
& \quad + \frac{1}{N^2 p} \left|\sum_{x,y}  (G_s^3)_{xx} \big(X_{xy}(s) - p_s\big) (G_s - M_s)_{yy}\right|+ \frac{1}{N^2 p} \left|\sum_x (G_s^3)_{xx} (G_s)_{xx}\right| \\
&\preceq \frac{\Cf \Lambda_{{\rm av}}^{(1)}(s)}{p \eta_s^2} + \frac{1}{\eta_s^2} \frac{p_s}{p} \left(\sqrt{\frac{\log N}{N p_s}} + \frac{\log N}{N p_s}\right) + \frac{1}{\eta_s^2} \frac{p_s}{p} \left(1 + \frac{\log N}{N p_s}\right) \Cf \Lambda_{{\rm ent}}^{(1)}(s) + \frac{1}{N p \eta_s^2} \,. 
\end{split}
\end{equation}
By integrating \eqref{eq:E3rd} in time, with the aid of \eqref{eq:intrulepractice}, and using $ p \leq 1/\log N$ together with $\eta_{t \wedge \tau} \leq 1$, we obtain
\begin{equation} \label{eq:E3rdfinal}
 	\int_{0}^{t \wedge \tau} \dd s \, |\Ecal^{\rm 3rd}(s)|  \leq \Lambda_{{\rm av}}^{(2)}(t \wedge \tau)/3
\end{equation}
with very high probability.

Finally, we turn to the last error term $\Ecal^{\rm 4th}(s)$. For the first term in the last line of \eqref{eq:avGenexpand2} we employ the Cauchy-Schwarz inequality to find it to be bounded by (a constant times)
\begin{multline*}
\frac{1}{N^2 p} \sum_{x,y} \big|(G_s^2)_{xy}\big|^2 + \frac{1}{N^2 p} \left(\sum_{x ,y}\big|(G_s)_{xy}\big|^2\right)^{1/2}\left(\sum_{x ,y}\big|(G_s^3)_{xy}\big|^2\right)^{1/2} \\
\lesssim  \frac{1}{N p} \left(\langle |G_s|^4\rangle + \langle |G_s|^2 \rangle^{1/2} \langle |G_s|^6\rangle^{1/2} \right) \lesssim \frac{1}{N p \eta_s^3} \left(1 + \Cf \Lambda_{{\rm av}}^{(1)}(s)\right)\lesssim \frac{1}{N p \eta_s^3} \,. 
\end{multline*}
The second term (i.e.~all higher order resolvent expansion terms) can be controlled similarly to \eqref{eq:R3avest} and we hence omit their detailed discussion. Overall, we find that
\begin{equation} \label{eq:E4thfinal}
	\int_{0}^{t \wedge \tau} \dd s \, |\Ecal^{\rm 4th}(s)|  \leq \Lambda_{{\rm av}}^{(2)}(t \wedge \tau)/3
\end{equation}
with very high probability.
Hence, combining \eqref{eq:E2nd}, \eqref{eq:E3rdfinal}, and \eqref{eq:E4thfinal}, we conclude the proof of Lemma \ref{lem:av2errors}. 
\end{proof}

\subsubsection{Generator terms for \texorpdfstring{$\phi_{ \bm a \bm b, t}^{{\rm ent}, 1}$}{$\phi_{a b, t}^{{\rm ent}, 1}$}} \label{subsubsec:genent1} We adopt the notation from Section \ref{subsubsec:Martent1estimates}, i.e.~write $\tG$ instead of $G^{ab}$, since $a,b$ remain fixed throughout the entire section. 

For $\phi_{\bm a \bm b, t}^{{\rm ent}, 1}$, the integrand in \eqref{eq:generatorgeneral} is given by
\begin{equation} \label{eq:entGen}
	\sum_{x < y} \big(\big(\tG_s^{xy,1} \big)_{\bm a \bm b}  - \big( \tG_s \big)_{\bm a \bm b}\big) + \big( \tG_s \big\{\partial_s z_s\big\} \tG_s \big)_{\bm a \bm b} -  (M_s)_{\bm a \bm b}
\end{equation}
where we employed Lemma \ref{lem:BerProp}~(b) for the $M_s$-term. As in the average case, to control \eqref{eq:entGen} we employ a resolvent expansion for the first term in \eqref{eq:entGen}, just as the one formulated in Lemma \ref{lem:resolventexpand}, to order $m=2$, to find that 
\begin{equation} \label{eq:entGenexpand}
	\begin{split}
		\eqref{eq:entGen} =  & \, \sum_{x \neq y}  \theta( X_{xy}(s) - 1) (\tG_s)_{\bm a x} (\tG_s)_{y\bm b} +  \sum_{x,y} (\tG_s)_{\bm a x}\big( \theta \Smat_{xy} - (z_s)_{xy}\big) (\tG_s)_{y\bm b}- (M_s)_{\bm a \bm b}  \\
		& + \sum_{x \neq y} \theta^2 \big(X_{xy}(s) - 1\big)^2 (\tG_s)_{\bm a x} (\tG_s)_{yy} (\tG_s)_{x\bm b} -  (\tG_s^2)_{\bm a \bm b}  \frac{1 - p_s}{1 -p}\frac{m_s}{p} \\
		& + \sum_{x \neq y} \theta^2 \big(X_{xy}(s) - 1\big)^2 (\tG_s)_{\bm a y} (\tG_s)_{xy} (\tG_s)_{x\bm b} + \Rcal^{(3)}(s)\,,
	\end{split}
\end{equation}
where we inserted our choice of Bernoulli characteristic flow $\partial_s z_s$ from \eqref{eq:Bernflow} and denoted the third order resolvent expansion term by $\Rcal^{(3)}(s)$. 

Just as in the previous sections, the first line of \eqref{eq:entGenexpand} evaluates to $\big(\tG_s - M_s\big)_{\bm a \bm b}$ and we turn to the second and third line of \eqref{eq:entGenexpand}, which shall be shown to be negligible. Indeed, similarly to \eqref{eq:1av2ndline}, we write 
\begin{equation} \label{eq:1ent2ndline}
	\begin{split}
		\text{second line of \eqref{eq:entGenexpand}} = \theta^2 \sum_{x,y} (\tG_s)_{\bm a x} (\tG_s)_{x\bm b}\left[(1 - \delta_{xy}) \big(1 - X_{xy}(s)\big) (\tG_s)_{yy} - (1 - p_s) (M_s)_{yy}\right] 
	\end{split}
\end{equation}
and bound \eqref{eq:1ent2ndline} by a sum of four contributions, 
\begin{equation}\label{eq:errordefent1}
	|\eqref{eq:1ent2ndline}| \lesssim \sum_{i=1}^4 \Ecal_i(s)\,,
\end{equation}
which are given by
\begin{align} 
	\Ecal_1(s) &\deq \frac{1}{Np} \left| \sum_{x,y} (\tG_s)_{\bm ax} (\tG_s)_{x\bm b} \big(\tG_s - M_s\big)_{yy}\right| \label{eq:E1ent1} \\
	\Ecal_2(s) &\deq \frac{1}{Np}\left| \sum_{x,y} (\tG_s)_{\bm a x} (\tG_s)_{x\bm b} (X_{xy}(s) - p_s)\big(M_s\big)_{yy}\right| \label{eq:E2ent1} \\
	\Ecal_3(s) &\deq  \frac{1}{Np} \left| \sum_{x,y} (\tG_s)_{\bm a x} (\tG_s)_{x\bm b} (X_{xy}(s)- p_s) \, \big(\tG_s - M_s\big)_{yy}\right| \label{eq:E3ent1} \\
	\Ecal_4(s) &\deq  \frac{1}{Np} \left| \sum_x (\tG_s)_{\bm a x} (\tG_s)_{x\bm b} (\tG_s)_{xx} \right| \,.  \label{eq:E4ent1}
\end{align}

We now control each of these four contributions now separately. By the Cauchy-Schwarz inequality together with the Ward identity and Lemma \ref{lem:entforG}, the first one can be bounded as
\begin{equation} \label{eq:E1estent}
\Ecal_1(s) \lesssim \frac{1}{p \eta_s}  \Cf \Lambda_{{\rm av}}^{(1)}(s) \,. 
\end{equation}
For the second contribution, using \eqref{eq:Bennettpractice}, we estimate 
\begin{equation} \label{eq:E2estent}
\Ecal_2(s) \lesssim \sum_x \big| (\tG_s)_{\bm a x} (\tG_s)_{x\bm b}\big| \frac{1}{Np} \left|\sum_y (X_{xy}(s) - p_s)\right| \preceq \frac{1}{\eta_s} \frac{p_s}{p} \left(\sqrt{\frac{\log N}{N p_s}} + \frac{\log N}{N p_s}\right) \lesssim \frac{1}{\eta_s} \Lambda_{{\rm ent}}^{(1)}(s) \,. 
\end{equation}
The third contribution, using again \eqref{eq:Bennettpractice}, can be bounded as
\begin{equation} \label{eq:E3estent}
	\begin{split}
\Ecal_3(s) &\lesssim \Cf \Lambda_{{\rm ent}}^{(1)}(s) \sum_x \big| (\tG_s)_{\bm a x} (\tG_s)_{x\bm b}\big| \frac{1}{Np}\sum_y \big|(X_{xy}(s) - p_s)\big| \\
&\preceq \Cf \frac{\Lambda_{{\rm ent}}^{(1)}(s)}{\eta_s}  \frac{p_s}{p} \left(1 + \frac{\log N}{N p_s}\right) \lesssim \frac{1}{\eta_s} \Cf \Lambda_{{\rm ent}}^{(1)}(s) \,. 
	\end{split}
\end{equation}
Finally, the fourth contribution admits the bound
\begin{equation} \label{eq:E4estent}
\Ecal_4(s) \lesssim \frac{1}{Np} \sum_x \big| (\tG_s)_{\bm a x} (\tG_s)_{x\bm b}\big|  \lesssim \frac{1}{Np \eta_s} \,. 
\end{equation}

Then, combining the bounds \eqref{eq:E1estent}, \eqref{eq:E2estent}, \eqref{eq:E3estent}, and \eqref{eq:E4estent}, we conclude that
\begin{equation}
\int_{0}^{t \wedge \tau} \dd s \, \sum_{i=1}^{4} \Ecal_i(s) \leq \Lambda_{{\rm ent}}^{(1)}(s)
\end{equation}
with very high probability,
where we additionally used \eqref{eq:intrulepractice} and the fact that, by assumption, $p \leq 1/(\log N)^2$. 

We now estimate the last line of \eqref{eq:entGenexpand}. By the Cauchy-Schwarz inequality and three Ward identities, the first term admits the bound
\begin{equation} \label{eq:2ndOD}
	\begin{split}
\hspace{-3mm}\frac{1}{Np}\sum_{x , y}  \big|  (\tG_s)_{\bm a y} (\tG_s)_{xy} (\tG_s)_{x\bm b} \big| &\lesssim \frac{1}{Np} \left(\sum_{x,y} \big| (\tG_s)_{xy}\big|^2\right)^{1/2} \left(\sum_x \big|(\tG_s)_{x\bm b}\big|^2\right)^{1/2}   \left(\sum_y \big|(\tG_s)_{y\bm a}\big|^2\right)^{1/2} \\
&\lesssim \frac{1}{N^{1/2} p} \langle |\tG_s|^2\rangle^{1/2} \big((|\tG_s|^2)_{\bm b \bm b}\big)^{1/2} \big((|\tG_s|^2)_{\bm a \bm a}\big)^{1/2} \lesssim \frac{1}{N^{1/2} p\eta_s^{3/2}} \,. 
	\end{split}
\end{equation}
For the third order resolvent expansion term $\Rcal^{(3)}(s)$, similarly to \eqref{eq:R3avest}, we focus on one particular index constellation arising from writing out the $\Delta_{xy}$'s in the expansion, namely
\begin{equation} \label{eq:3rdorder}
	\begin{split}
\frac{1}{(Np)^{3/2}} \left| \sum_{x\neq y} (1 - X_{xy}(s))(\tG_s)_{\bm ax} (\tG_s)_{yy} (\tG_s)_{xx} (\tG^{xy,1}_s)_{y\bm b}\right|\,.
	\end{split}
\end{equation}
All other terms without two diagonal terms $G_{yy} G_{xx}$ created can be handled in a much simpler way and will hence be omitted. Now, for \eqref{eq:3rdorder}, we employ a resolvent expansion, similarly to \eqref{eq:resexpaux}, of the form 
\begin{equation} \label{eq:complicatedresexp}
	\begin{split}
\tG_{y\bm b}^{xy,1} = \tG_{y\bm b} &- \theta (1 - X_{xy}) \big(\tG_{yx}  \tG_{y\bm b}^{xy,1} + \tG_{yy} \tG_{x\bm b}\big) \\
 &+ \theta^2 (1 - X_{xy}) \big( \tG_{yy}\tG_{xx} \tG^{xy,1}_{y \bm b} + \tG_{yy}\tG_{xy} \tG^{xy,1}_{x \bm b} + \tG_{yx}\tG_{yx} \tG^{xy,1}_{y \bm b} + \tG_{yx}\tG_{yy} \tG^{xy,1}_{x \bm b}\big) \,.
	\end{split}
\end{equation}
Moreover, we separate the two contributions arising from $1 - X_{xy}(s)$. For the term stemming from $1$,  we insert \eqref{eq:complicatedresexp} and discuss the terms order by order in $\theta$. 

For the leading term with $\theta^0$, we add and subtract the respective $M$-terms for the diagonal resolvent entries. First, for the $M \times M$-term, we have
\begin{equation}  \label{eq:lambda0MM}
	\begin{split}
		\frac{1}{(Np)^{3/2}} \left| \sum_{x\neq y} (\tG_s)_{\bm ax} (M_s)_{yy} (M_s)_{xx} (\tG_s)_{y\bm b}\right| &\lesssim \frac{1}{N^{1/2}p^{3/2}} \big|(\tG_s)_{\bm a \bm e} (\tG_s)_{\bm e \bm b}\big| + \frac{1}{(Np)^{3/2}} \big|(\tG_s^2)_{\bm a \bm b}\big| \\
		& \lesssim \frac{1}{N^{1/2}p^{3/2}} \left(1 + \Cf \Lambda_{{\rm ent}}^{(1)}(s)\right) \left(1 + \frac{1}{N \eta_s}\right)\,,
	\end{split}
\end{equation}
where we lifted the restriction in the summation and, using $(M_s)_{xx} = \langle M_s \rangle$, employed \emph{isotropic resummation} in the form $\sum_x A_{\bm a x} = N^{1/2} A_{\bm a \bm e}$. Additionally, we used the Cauchy-Schwarz inequality together with the Ward identity. Next, we consider an $M \times (G-M)$-type term, which we bound as
\begin{equation} \label{eq:Lambda0MG-M}
\begin{split}
		&\frac{1}{(Np)^{3/2}} \left| \sum_{x\neq y} (\tG_s)_{\bm ax} (\tG - M_s)_{yy} (M_s)_{xx} (\tG_s)_{y\bm b}\right| \\
		\lesssim &\; \frac{\Cf \Lambda_{{\rm ent}}^{(1)}(s)}{(N\eta_s)^{1/2}p^{3/2}} \left(\big|(\tG_s)_{\bm a \bm e} \big(\im \tG_s\big)_{\bm b \bm b}^{1/2}\big|  + \frac{1}{N  \eta_s^{1/2}} \big|(\im \tG_s)_{\bm a\bm a} (\im \tG_s)_{\bm b\bm b}\big|^{1/2}\right) \\
		 \lesssim & \; \Cf \Lambda_{{\rm ent}}^{(1)}(s) \left(\frac{1}{(N\eta_s)^{1/2}p^{3/2}} + \frac{1}{(Np)^{3/2} \eta_s}\right) \,. 
\end{split}
\end{equation}
 Finally, for the $(G-M)\times (G-M)$-type term, we have the bound
\begin{equation} \label{eq:lambda0G-M2}
\begin{split}
	\frac{1}{(Np)^{3/2}} \left| \sum_{x\neq y} (\tG_s)_{\bm ax} (\tG - M_s)_{yy} (\tG - M_s)_{xx} (\tG_s)_{y\bm b}\right| 
\lesssim \frac{\big(\Cf \Lambda_{{\rm ent}}^{(1)}(s)\big)^2}{N^{1/2}\eta_s p^{3/2}}  \,. 
\end{split}
\end{equation}

Next, we turn to terms arising from the resolvent expansion \eqref{eq:complicatedresexp} of order $\theta$, which, by means of the Cauchy-Schwarz inequality and the Ward identity, can be bounded as
\begin{equation} \label{eq:lambda1}
\begin{split}
\frac{1}{(Np)^{2}}  \sum_{x, y} \left(\left|(\tG_s)_{\bm ax}  (\tG_s)_{yx} \right| + \left|(\tG_s)_{\bm ax}  (\tG_s)_{x\bm b}\right|\right) \lesssim \frac{1}{N \eta_s p^2} \,. 
\end{split}
\end{equation}
Lastly, we control terms of order $\theta^2$ from \eqref{eq:complicatedresexp} as
\begin{equation} \label{eq:lambda2}
\begin{split}
\frac{1}{(Np)^{5/2}} \sum_{x,y}\left(\left|(\tG_s)_{\bm ax}  (\tG_s^{xy,1})_{y\bm b} \right| + \left|(\tG_s)_{\bm ax}  (\tG_s)_{xy}\right|+ \left|(\tG_s)_{\bm ax}  (\tG_s)_{yx}\right|\right) \lesssim \frac{1}{N^{3/2} \eta_s p^{5/2}}\,,
\end{split}
\end{equation}
where we used the Cauchy-Schwarz inequality, the Ward identity, and additionally, as a consequence of \eqref{eq:complicatedresexp}, that
\begin{equation} \label{eq:xyupexpand}
\big|\tG_{y \bm b}^{xy,1}\big| \lesssim \big|\tG_{y \bm b}\big| +\theta  \big|\tG_{x \bm b}\big| + \theta^2 \big|\tG_{xy}\big|  + \theta^2 \big|\tG_{yx}\big|\,. 
\end{equation}

Now, we turn to terms stemming from $X_{xy}$ in \eqref{eq:3rdorder}. We pull absolute values inside the summation and employ \eqref{eq:xyupexpand} (even disregarding the $\theta$ factors) to control $(\tG^{xy,1}_s)_{y \bm b}$. In this way, we find the $X_{xy}$-contribution to be bounded by
\begin{multline} \label{eq:Xxy}
\frac{1}{(Np)^{3/2}} \sum_{x,y} X_{xy}(s)\left(\left| (\tG_s)_{\bm ax} (\tG_s)_{y \bm b} \right|  + \left| (\tG_s)_{\bm ax} (\tG_s)_{x \bm b} \right| + \left| (\tG_s)_{\bm ax} (\tG_s)_{xy} \right| \right) \\
\lesssim \frac{1}{N^{1/2}\eta_s p} \sqrt{\frac{1}{N^2p} \sum_{x,y} X_{xy}(s)} + \frac{1}{(Np)^{1/2}} \sum_x \left| (\tG_s)_{\bm ax} (\tG_s)_{x \bm b} \right| \frac{1}{Np} \sum_{y} X_{xy}(s)  \preceq \frac{1}{N^{1/2}\eta_s p} \,. 
\end{multline}
To go to the second line, we employed the Cauchy-Schwarz inequality and the Ward identity for the first and third term. In the last step, we then used \eqref{eq:Bennettpractice} and, again, the Cauchy-Schwarz inequality and the Ward identity for the second term. 

Finally, combining \eqref{eq:2ndOD} with \eqref{eq:3rdorder} and \eqref{eq:lambda0MM}, \eqref{eq:Lambda0MG-M}, \eqref{eq:lambda0G-M2}, \eqref{eq:lambda1}, \eqref{eq:lambda2}, and \eqref{eq:Xxy}, we conclude that 
\begin{equation}
\int_{0}^{t \wedge \tau} \dd s \, [\text{last line of \eqref{eq:entGenexpand}} ] \leq \Lambda_{{\rm ent}}(t \wedge \tau)
\end{equation}
with very high probability,
by means of \eqref{eq:intrulepractice} and additionally using that $(\log N)^2 \lesssim Np \leq N (\log N)^{-2}$. Additionally, in order to integrate the first term on the right-hand side of \eqref{eq:Lambda0MG-M} in time, we use that, as a consequence of \eqref{eq:Bernflow}, 
\begin{equation*}
\int_{0}^{t} \frac{1}{\eta_s^{1/2}p} \, \dd s \lesssim -\int_{\eta_0}^{\eta_t} \frac{\dd \eta}{\eta^{1/2}} \lesssim \sqrt{\eta_0} - \sqrt{\eta_t} \lesssim 1\,,
\end{equation*}
where the last step follows from $\eta_0 \lesssim  1$ as a consequence of \eqref{eq:etaexplicit} together with $\eta_T \leq 1$ and $T \leq 1$.

This concludes the treatment of the generator terms for $\phi_{\bm a \bm b, t}^{\rm ent, 1}$. 

\subsubsection{Generator terms for \texorpdfstring{$\phi_{\bm a \bm b, t}^{{\rm ent}, 2}$}{$\phi_{a b, t}^{{\rm ent}, 2}$}}
For $\phi_{\bm a \bm b, t}^{{\rm ent}, 2}$, the integrand in \eqref{eq:generatorgeneral} is given by
\begin{equation} \label{eq:entGen2}
	\sum_{x < y} \big(\big((\tG_s^{xy,1})^2 \big)_{\bm a \bm b}  - \big( \tG_s^2 \big)_{\bm a \bm b}\big) + \big( \tG_s \big\{\partial_s z_s\big\} \tG_s^2 \big)_{\bm a \bm b} + \big( \tG_s^2 \big\{\partial_s z_s\big\} \tG_s \big)_{\bm a \bm b}-  2(M'_s)_{\bm a \bm b} - \frac{1 - p_s}{1 - p} \frac{\langle M_s'\rangle}{p} (M_s')_{\bm a \bm b}\,,
\end{equation}
where we employed Lemma \ref{lem:Mt'} for the $M_s$-term. As in the average case \eqref{eq:avGen2}, to control \eqref{eq:entGen2} we employ a resolvent expansion for the first term in \eqref{eq:entGen2}, just as the one formulated in Lemma \ref{lem:resolventexpand}, to order $m=1$, to find that 		\eqref{eq:entGen2} equals
\begin{equation} \label{eq:entGenexpand2}
	\begin{split}
 &  \sum_{x \neq y}  \theta( X_{xy}(s) - 1) \big((\tG_s^2)_{\bm a x} (\tG_s)_{y\bm b} + (\tG_s)_{\bm a x} (\tG_s^2)_{y\bm b} \big)  \\
		+ &  \sum_{x,y} \big((\tG_s^2)_{\bm a x}\big( \theta \Smat_{xy} - (z_s)_{xy}\big) (\tG_s)_{y\bm b} + (\tG_s)_{\bm a x}\big( \theta \Smat_{xy} - (z_s)_{xy}\big) (\tG_s^2)_{y\bm b}\big)- 2(M_s')_{\bm a \bm b}  \\
		 + & \sum_{x \neq y} \theta^2 \big(X_{xy}(s) - 1\big)^2 (\tG_s)_{\bm a x} (\tG_s^2)_{yy} (\tG_s)_{x\bm b} -  (\tG_s^2)_{\bm a \bm b}  \frac{1 - p_s}{1 -p}\frac{\langle M_s'\rangle}{p} \\
				 + & \sum_{x \neq y} \theta^2 \big(X_{xy}(s) - 1\big)^2 \big((\tG_s)_{\bm a x} (\tG_s)_{yy} (\tG_s^2)_{x\bm b} + (\tG_s^2)_{\bm a x} (\tG_s)_{yy} (\tG_s)_{x\bm b}\big) -  2(\tG_s^3)_{\bm a \bm b}  \frac{1 - p_s}{1 -p}\frac{\langle M_s\rangle}{p} \\
		+ &  \sum_{x \neq y} \theta^2 \big(X_{xy}(s) - 1\big)^2 \big( (\tG_s)_{\bm a x} (\tG_s)_{yx} (\tG_s^2)_{y\bm b} + (\tG_s^2)_{\bm a x} (\tG_s)_{yx} (\tG_s)_{y\bm b}  + (\tG_s)_{\bm a x} (\tG_s^2)_{yx} (\tG_s)_{y\bm b}\big) \\
		+& \,  \Rcal^{(\geq 3)}(s)\,,
	\end{split}
\end{equation}
where we inserted our choice of Bernoulli characteristic flow $\partial_s z_s$ from \eqref{eq:Bernflow} and collected all terms containing at least a factor $\theta^3$ from the resolvent expansion in $\Rcal^{(\geq 3)}(s)$.

As in the other three cases, the first two lines of \eqref{eq:entGenexpand2} simply equal $2(\tG_s^2 - M_s')_{\bm a \bm b}$, constituting the integrand in \eqref{eq:entGron2}. The last four lines of \eqref{eq:entGenexpand2} are in fact  error terms, i.e.
\begin{equation}
\int_{0}^{t \wedge \tau} \dd s \, [\text{last four lines of \eqref{eq:entGenexpand2}}] \leq \Lambda_{{\rm ent}}^{(2)}(t \wedge \tau)
\end{equation}
with very high probability.
This can be shown following very similar estimates as the ones presented for controlling the generator terms of $\phi^{\rm av, 1}_t$, $\phi^{\rm av, 2}_t$, and $\phi_{ \bm a \bm b, t}^{\rm ent, 1}$, and is hence omitted for brevity. 

This concludes the discussion of the generator terms for all four observables and hence the whole proof of Lemma \ref{lem:generate}. \qed

\section{General isotropic local law} \label{sec:isolaw}
In this section, we give the proof of the general isotropic law in Theorem \ref{thm:lolaw}~(ii) and explain the improvement for off-diagonal resolvent entries as formulated in Remark \ref{rmk:offdiag}.
\subsection{Proof of the general isotropic law} \label{subsec:isoproof}
Fix vectors $\widetilde{\bm v}, \widetilde{\bm w} \in \R^N$.
Analogously to the entrywise law, we introduce the set of vectors 
\begin{equation}
	\mathcal{V} = \mathcal{V}_{\widetilde{\bm v} \widetilde{\bm w}} = \big\{ \widetilde{\bm v}, \widetilde{\bm w}, \bm e\big\} \cup \big\{ \bm e_a : a \in [N]\big\}
\end{equation}
and define, for any $\bm v, \bm w \in \mathcal{V}$, the \emph{isotropic observables}
\begin{equation} \label{eq:tGiso}
	\phi_{\bm v \bm w, t}^{{\rm iso}}(X(t)) \deq \big(\tG_t(z_t) - M_t(z_t)\big)_{\bm v \bm w}\,,
\end{equation}
where $\tG_t(z_t) = \big(\theta \widetilde{X}(t) - z_t\big)^{-1}$ is the resolvent of the random matrix $\widetilde{X}(t)$ which has its entries within $\mathcal{I}$ set to zero, i.e.
\begin{equation} \label{eq:jumpsto0}
	\widetilde{X}_{xy}(t) = \widetilde{X}_{yx}(t) = 0 \quad \text{for all} \quad x,y \in \mathcal{I} \quad \text{and all times} \quad t \in [0,T] \,. 
\end{equation}
Here, the set of vertices $\mathcal{I}$ is defined as  with
\begin{equation} \label{eq:Idef}
\mathcal{I} \deq \mathcal{I}_{\bm v} \cup \mathcal{I}_{\bm w}\,, \qquad \mathcal{I}_{{\bm v}} \deq \left\{ x \in [N] : |\langle \Pi_{\bm e}^\perp{\bm v}, \bm e_x \rangle|  \geq (\log N)^{-5/16} \Vert \Pi_{\bm e}^\perp \bm v \Vert \right\} \subset [N]. 
\end{equation}
and analogously for $\mathcal{I}_{\bm w}$. Note that we always have that 
\begin{equation}
|\mathcal{I}| \leq 2 (\log N)^{5/8} \,. 
\end{equation}

\subsubsection{Stopping time, martingale, and Grönwall estimate: Proof of the isotropic law}

Armed with these preparatory ingredients, given an $N$-independent constant $\mathfrak{C}_{\rm iso}$, that has to be chosen in the proof of Proposition \ref{prop:Groniso} below, we define the stopping time 
\begin{equation}
\tau = \tau (\mathfrak{C}_{\rm iso})\deq \inf \left\{ t \in [0,T] : \max_{\bm v, \bm w \in \mathcal{V}}\frac{\big|\phi_{\bm v \bm w, t}^{\rm iso}\big|}{ \vertiii{\bm v}_{\bm e} \, \vertiii{\bm w}_{\bm e} \, \Lambda_{{\rm iso}}(t)} \geq \mathfrak{C}_{\rm iso} \right\}\,,
\end{equation}
where we introduced the shorthand notation
\begin{equation} \label{eq:Lambdaiso}
\Lambda_{{\rm iso}}(t) \deq \sqrt{\frac{\log N}{N \eta_t}} + \sqrt{\frac{\log N}{Np}} \,. 
\end{equation}
The naturally associated martingale to the isotropic observable is given by
\begin{equation}
\Mart_{\bm v \bm w, t}^{\rm iso} \deq \phi_{\bm v \bm w, t}^{{\rm iso}}(X(t))  - \phi_{\bm v \bm w, 0}^{{\rm iso}}(X(0))  - \int_{0}^{t} \dd s \, \Big( \big(\Lcal \phi_{\bm v \bm w, s}^{{\rm iso}} \big)\big(X(s)\big) + \partial_s \phi_{\bm v \bm w, s}^{{\rm iso}}\big(X(s)\big) \Big) \,. 
\end{equation}
We have the following isotropic stochastic Grönwall estimate lying the basis for the proof of the isotropic local law, completely analogously to Proposition \ref{prop:Gron}. 
\begin{proposition}[Isotropic stochastic Grönwall estimate] \label{prop:Groniso}
Adopt the notations and conventions from above, and assume that 
\begin{equation} \label{eq:isocond}
	Np \geq (1+\mathfrak{C}_{\rm iso})^2 (\log N)^2 \quad \text{and} \quad N \eta_T \geq (1+\mathfrak{C}_{\rm iso})^2 \log N \,. 
\end{equation}
Then there exists a constant $C_\kappa$, depending only on $\kappa$ from \eqref{eq:domain}, such that for any $c \geq 10 \mathfrak{C}_{\rm iso}^{-1/2}$,
\begin{equation*}
	\begin{split}
&\proba{	\exists t \in [0,T], \ \bm v, \bm w \in \mathcal{V} : \big| \phi^{{\rm iso}}_{\bm v \bm w, t\wedge \tau} \big| \geq C_\kappa \int_{0}^{t \wedge \tau} \dd s \, 	\big| \phi^{{\rm iso}}_{\bm v \bm w, s} \big| \, + (1 + 10 c \mathfrak{C}_{\rm iso}) \vertiii{\bm v}_{\bm e} \, \vertiii{\bm w}_{\bm e} \, \Lambda_{{\rm iso}}(t \wedge \tau)} \\
&\hspace{5.8cm}\leq N^{-c \mathfrak{C}_{\rm iso}/C_\kappa + 10}\,.
	\end{split}
\end{equation*}
\end{proposition}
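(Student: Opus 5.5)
The plan is to copy the proof of Proposition \ref{prop:Gron} for the entrywise observables, now for the isotropic observable $\phi^{\rm iso}_{\bm v\bm w,t}$ attached to the frozen resolvent $\tG$ with entries in $\mathcal I$ set to zero. The argument has three ingredients, matching Lemmas \ref{lem:martest}, \ref{lem:jump} and \ref{lem:generate}. First, bound the predictable quadratic variation: $\langle\Mart^{\rm iso}_{\bm v\bm w}\rangle_{t\wedge\tau}\lesssim (\log N)^{-1}\vertiii{\bm v}_{\bm e}^2\vertiii{\bm w}_{\bm e}^2\Lambda_{\rm iso}(t\wedge\tau)^2$. Second, bound the jumps: $|\Mart^{\rm iso}_t-\Mart^{\rm iso}_{t-}|\lesssim (Np)^{-1/2}\mathfrak C_{\rm iso}\vertiii{\bm v}_{\bm e}\vertiii{\bm w}_{\bm e}\Lambda_{\rm iso}(t)$ for $t\le\tau$. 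Third, show the generator term equals $\phi^{\rm iso}_{\bm v\bm w,s}$ (the first-order cancellation from the Bernoulli characteristic flow \eqref{eq:Bernflow}) plus an error whose time integral is at most $\vertiii{\bm v}_{\bm e}\vertiii{\bm w}_{\bm e}\Lambda_{\rm iso}(t\wedge\tau)$ with very high probability. Feeding the first two bounds into the BDG-type inequality (Proposition \ref{prop:DBG}), after rescaling by the time-dependent size, and taking a union bound over the $(N+3)^2$ pairs in $\mathcal V$ gives the claim, exactly as in the proof of Proposition \ref{prop:Gron}. In these estimates I may use the entrywise and average laws already proved (and, via Lemma \ref{lem:entforG}-type resolvent expansions over the $O((\log N)^{5/4})$ frozen edges, their versions for $\tG$). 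In particular $\tG_{xy}$, $\tG_{\bm e x}$ and $\langle\tG\rangle$ obey the entrywise and average laws, and $(\im\tG)_{\bm v\bm v}\lesssim \vertiii{\bm v}_{\bm e}^2$ by the stopping time together with \eqref{eq:ImMprop}.

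The quadratic variation and generator estimates should go through almost verbatim, replacing $\bm a,\bm b$ by $\bm v,\bm w$. Ward identities give $\sum_x|\tG_{\bm v x}|^2=(\im \tG)_{\bm v\bm v}/\eta\lesssim\vertiii{\bm v}_{\bm e}^2/\eta$. The diagonal entries $\tG_{xx},\tG_{yy}$ are controlled by the entrywise law rather than by the stopping time. The delicate term \eqref{eq:lambda0MM} is handled by the same isotropic resummation, now producing $\tG_{\bm v\bm e}\tG_{\bm e\bm w}$; its size is controlled through $\bm e\in\mathcal V$ and Lemma \ref{lem:Mprop}, which yields the $\vertiii{\cdot}_{\bm e}$ factors. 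The second-order observables are not needed here. The propagator coefficients are $O(1)$ since the second-order entrywise bounds are already available, which is why the Grönwall kernel is just $C_\kappa|\phi|$.

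I expect the jump bound to be the main obstacle, and it is where $\mathcal I$ enters. The jump is governed by $\theta|\tG_{\bm v x}\tG_{y\bm w}|$ over edges $\{x,y\}\not\subset\mathcal I$. Writing $\tG=M+(\tG-M)$, the fluctuation part is $\lesssim\theta\,\mathfrak C_{\rm iso}\vertiii{\bm v}_{\bm e}\vertiii{\bm w}_{\bm e}\Lambda_{\rm iso}$, which is fine. The deterministic part is $\theta|M_{\bm v x}M_{y\bm w}|$, with $|M_{\bm v x}|\lesssim|(\Pi_{\bm e}^\perp\bm v)_x|+(Np)^{-1/2}\vertiii{\bm v}_{\bm e}N^{-1/2}$. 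Since the edge is not frozen, at least one endpoint, say $x$, lies outside $\mathcal I_{\bm v}\cup\mathcal I_{\bm w}$. So one factor is at most $(\log N)^{-5/16}$ times the norm, while the other is bounded trivially by the norm.

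This gives a jump of size $\theta(\log N)^{-5/16}\vertiii{\bm v}_{\bm e}\vertiii{\bm w}_{\bm e}$, which must be compared with $\theta\Lambda_{\rm iso}\gtrsim\theta\sqrt{\log N/(Np)}$. The comparison is not immediate, so rather than using the jump bound crudely I will use the refined form of the BDG inequality. The exponent requires roughly $a/K\gtrsim\log N$ with $a=\Lambda_{\rm iso}$, and the point is that only $O(N|\mathcal I^c|)$ edges carry this larger jump while most jumps are much smaller. If this is not enough, I would fall back on splitting the martingale into large- and small-jump parts and treating the rare large jumps via their compensator. Choosing the exponent $5/16$ so that $|\mathcal I|\le 2(\log N)^{5/8}$ keeps the static switching errors affordable is the balancing act I expect to spend most time on. The second and third order terms in the jump expansion are bounded by $\theta^2\lesssim (Np)^{-1}$ times norms, which is harmless.
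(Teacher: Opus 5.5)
Your architecture is the paper's: quadratic variation, jump and generator bounds for $\phi^{\rm iso}_{\bm v\bm w}$, fed into Proposition~\ref{prop:DBG} and a union bound over $\mathcal V^2$. Your quadratic-variation and generator sketches are at the same level of detail as Lemmas~\ref{lem:martestiso} and~\ref{lem:generateiso}. The gap is the jump step, which you leave open.

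Abbreviate $n=\vertiii{\bm v}_{\bm e}\vertiii{\bm w}_{\bm e}$. Take $a\asymp c\,\mathfrak C_{\rm iso}\,n\Lambda_{\rm iso}$ and $b^2\asymp n^2\Lambda_{\rm iso}^2/\log N$. Then Proposition~\ref{prop:DBG} only needs $K\lesssim n\Lambda_{\rm iso}/\log N$; the bound $K\lesssim\theta\,\mathfrak C_{\rm iso}n\Lambda_{\rm iso}$ you aim for is stronger than necessary. Lemma~\ref{lem:jumpiso} is of the weaker type. Its term $(\log N)^{-9/8}\mathfrak C_{\rm iso}n\Lambda_{\rm iso}$ comes from $\theta|M_{\bm vx}M_{y\bm w}|\lesssim\theta(\log N)^{-5/8}n$ for $x,y\notin\mathcal I$, combined with $\Lambda_{\rm iso}\gtrsim\theta\sqrt{\log N}$.

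Your objection to that computation is correct as the paper is written. \eqref{eq:jumpsto0} freezes only $\mathcal I\times\mathcal I$, but the proof of Lemma~\ref{lem:jumpiso} maximizes only over $x,y\notin\mathcal I$. An edge with exactly one endpoint in $\mathcal I$ can jump by order $\theta(\log N)^{-5/16}n$. For example, take $\Pi_{\bm e}^\perp\bm v$ concentrated at one site $x$ and $\bm w$ spread evenly over slightly more than $(\log N)^{5/8}$ other sites; then $\mathcal I=\{x\}$ and nothing is frozen. This jump exceeds $n\Lambda_{\rm iso}/\log N\asymp\theta(\log N)^{-1/2}n$ by $(\log N)^{3/16}$ whenever $N\eta_t\gtrsim Np$.

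Your remedies do not repair this. Proposition~\ref{prop:DBG}, and even its Bennett form $\exp[-\frac{b^2}{K^2}h(\frac{aK}{b^2})]$ with $h$ as in Lemma~\ref{lem:Bennett}, sees only the maximal jump and the total quadratic variation. In that regime $a/K\asymp(\log N)^{13/16}$ and $aK/b^2\lesssim(\log N)^{3/16}$, so the exponent is at most of order $(\log N)^{13/16}\log\log N\ll D\log N$. Also, $O(N|\mathcal I^c|)$ edges is essentially all edges, so that count carries no rarity.

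The missing step is to locate the edges that really carry jumps above $n\Lambda_{\rm iso}/\log N$. Let $\mathcal I'$ be the set of sites where $\Pi_{\bm e}^\perp\bm v$ or $\Pi_{\bm e}^\perp\bm w$ has relative weight at least $(\log N)^{-1/2}$; then $|\mathcal I'|\le 2\log N$. The fluctuation and higher-order parts of any jump are $\lesssim(\theta\mathfrak C_{\rm iso}\Lambda_{\rm iso}+\theta^2)n\lesssim n\Lambda_{\rm iso}/\log N$ by \eqref{eq:isocond}. So an unfrozen edge can exceed that level only through $\theta(|M_{\bm vx}M_{y\bm w}|+|M_{\bm vy}M_{x\bm w}|)$, which requires the edge to join $\mathcal I$ to $\mathcal I'\setminus\mathcal I$.

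The simplest repair is to use the threshold $(\log N)^{-1/2}$ instead of $(\log N)^{-5/16}$ in \eqref{eq:Idef}, i.e.\ to freeze $\mathcal I'\times\mathcal I'$. Every unfrozen edge then has an endpoint outside $\mathcal I'$, so all jumps are $\lesssim n\Lambda_{\rm iso}/\log N$. Proposition~\ref{prop:DBG} then applies exactly as in the proof of Proposition~\ref{prop:Gron}. The larger frozen set is harmless. By \eqref{eq:IBennett} and monotonicity of $t\mapsto X(t)$, only $O(1)$ of its at most $4(\log N)^2$ entries are ever open, with very high probability. Hence the average law for $\tG$ in Lemma~\ref{lem:generateiso} and the final interpolation from $\tG$ to $G$ involve only $O(1)$ perturbations of size $\theta$.

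Alternatively, keep the exponent $5/16$ and split off the $O((\log N)^{13/8})$ edges joining $\mathcal I$ to $\mathcal I'$. Restricting the Dynkin martingale to a deterministic edge set still gives a martingale with the corresponding quadratic variation. Only $\preceq 1$ of these edges ever open, and their compensator is $O(p(\log N)^{13/8}\theta n)$; both are $\ll n\Lambda_{\rm iso}$. This is your large/small-jump split made precise. The essential point is that the large-jump set is polylogarithmic, not all edges touching $\mathcal I$.
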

The proof of Proposition \ref{prop:Groniso} is given in Section \ref{subsubsec:proofGroniso} below. 
\begin{proof}[Proof of the istropic local law in Theorem \ref{thm:lolaw}~(ii)]
Armed with Proposition \ref{prop:Groniso}, the proof of the isotropic local law \eqref{eq:isoLL} follows analogously to that of the average and isotropic laws in Section \ref{sec:Bernoulli}. The main difference is that, analogously in order to eventually go from $\tG$ in \eqref{eq:tGiso} to the usual $G$, one has to replace up to $|\mathcal{I}|^2 \leq 4 (\log N)^{5/4}$ many matrix entries of size (up to) $(Np)^{-1/2}$. However, since $|\mathcal{I}|^2/\sqrt{Np}$ might be large, we cannot simply sum up the errors. 

To overcome this difficulty, we consider, for fixed $z \in \mathbb{D}$, 
\begin{equation*}
G(u) = G(u; z)\deq \big(\theta \big[\widetilde{X}(T) + u(X(T) - \widetilde{X}(T))\big] - z\big)^{-1} \quad \text{for} \quad u \in [0,1]\,,
\end{equation*}
which smoothly interpolates between $\widetilde{G} = G(0)$ and $G = G(1)$. 

Then, by differentiating in $u$, we obtain
\begin{equation} \label{eq:derivident}
\partial_{u} G_{\bm v \bm w}(u) = - \theta \sum_{x,y \in \mathcal{I}} X_{xy}(T) G_{\bm v x}(u) G_{y \bm w}(u)  
\end{equation}
and hence
\begin{equation} \label{eq:partialest}
|\partial_{u} G_{\bm v \bm w}(u)| \preceq \frac{1}{\sqrt{Np}} \max_{x,y \in \mathcal{I}}  |G_{\bm v x}(u) G_{y \bm w}(u) | 
\end{equation}
uniformly in $\bm v, \bm w \in \mathcal{V}$ and $u \in [0,1]$, by application of \eqref{eq:IBennett}, yielding that 
\begin{equation} \label{eq:IXxy}
\sum_{x,y \in \mathcal{I}} X_{xy}(T) \preceq 1 \,. 
\end{equation}

We can then apply Grönwall's lemma to \eqref{eq:partialest} and find that\footnote{To be precise, we first apply Grönwall for the case that $\bm v, \bm w$ are both standard basis vectors. Afterwards, we treat the case that one of the two vectors is not a standard basis vector in $\mathcal{V}$, and finally the case where both $\bm v, \bm w$ are not standard basis vectors. The outcome can be summarized in the following bound \eqref{eq:Gubound}.}
\begin{equation} \label{eq:Gubound}
| G_{\bm v \bm w}(u)| \lesssim  |M_{\bm v \bm w}| + \vertiii{\bm v}_{\bm e} \, \vertiii{\bm w}_{\bm e}
\end{equation}
with very high probability,
uniformly in $u \in [0,1]$ and $\bm v, \bm w \in \mathcal{V}$. 
By integrating \eqref{eq:derivident} and using \eqref{eq:Gubound} together with \eqref{eq:IXxy}, we conclude
\begin{equation*}
\max_{\bm v, \bm w \in \mathcal{I}} \left| G_{\bm v \bm w} - \widetilde{G}_{\bm v \bm w} \right| \preceq \frac{1}{\sqrt{Np}} \vertiii{\bm v}_{\bm e} \, \vertiii{\bm w}_{\bm e}\,,
\end{equation*}
where we additionally used that $|M_{\bm v x} |\lesssim \vertiii{\bm v}_{\bm e}$ and $|M_{y \bm w} |\lesssim \vertiii{\bm w}_{\bm e}$, both uniformly in $x, y \in [N]$.  We have hence proven the isotropic local for $G$ as well. 
\end{proof}

\subsubsection{Resolvent expansions: Proof of Proposition \ref{prop:Groniso}} \label{subsubsec:proofGroniso}
As in Section \ref{sec:Gronproof}, the proof of Proposition~\ref{prop:Groniso} is based on three ingredients: An estimate on the quadratic variation of the martingale term, a bound on the jump sizes, and controlling the generator terms. 
\begin{lemma}[Quadratic variation of martingale terms] \label{lem:martestiso}
	Using the assumptions and notations from above, there exists a constant $C_\kappa > 0$ such that for $i \in [2]$, and uniformly in $t \in [0,T]$ and $\bm v, \bm w \in \mathcal{V}$, 
	\begin{equation}
	\langle\Mart^{{\rm iso}}_{\bm v \bm w}\rangle_{t \wedge \tau} \leq \frac{C_\kappa}{\log N} \big(\vertiii{\bm v}_{\bm e} \, \vertiii{\bm w}_{\bm e} \, \Lambda_{{\rm iso}}(t \wedge \tau)\big)^2 \,. 
	\end{equation}
\end{lemma}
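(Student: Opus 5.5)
We follow the scheme of Section \ref{subsubsec:Martent1estimates}, now with general $\bm v, \bm w \in \mathcal{V}$ and the frozen resolvent $\tG$ from \eqref{eq:tGiso}. By Lemma \ref{lem:martingales}, the dynamics only flip edges $\{x,y\}\not\subset\mathcal{I}$ (frozen edges never jump). Hence
\begin{equation*}
\langle \Mart^{\rm iso}_{\bm v \bm w}\rangle_{t\wedge\tau} = \int_0^{t\wedge\tau}\dd s\sum_{x<y}\absb{(\tG^{xy,1}_s-\tG_s)_{\bm v\bm w}}^2 .
\end{equation*}
We expand with Lemma \ref{lem:resolventexpand} to third order ($m=2$). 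This produces the analogues $\Rcal^{(1)}_{\bm v\bm w},\Rcal^{(2)}_{\bm v\bm w},\Rcal^{(3)}_{\bm v\bm w}$ of \eqref{eq:R1}--\eqref{eq:R3}. Each is bounded by $\frac{1}{\log N}(\vertiii{\bm v}_{\bm e}\vertiii{\bm w}_{\bm e}\Lambda_{\rm iso})^2$.

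\textbf{Step 1: the first-order term.} For $\Rcal^{(1)}$ we use the Ward identity, which gives
\begin{equation*}
\sum_{x,y}|\tG_{\bm v x}\tG_{y\bm w}|^2\le (\im\tG)_{\bm v\bm v}(\im\tG)_{\bm w\bm w}/\eta_s^2 .
\end{equation*}
On $[0,\tau]$ the stopping time and \eqref{eq:ImMprop} give $(\im \tG_s)_{\bm v\bm v}\lesssim \vertiii{\bm v}_{\bm e}^2$, since $\bm v\in\mathcal V$ is one of the controlled vectors. Integrating via \eqref{eq:intrulepractice} yields $\vertiii{\bm v}_{\bm e}^2\vertiii{\bm w}_{\bm e}^2/(N\eta_{t\wedge\tau})$, exactly as in \eqref{eq:R1est}.

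\textbf{Step 2: the off-diagonal second-order term.} Here we use the entrywise a priori bounds $|\tG_{xy}-M_{xy}|\lesssim\Lambda_{\rm ent}^{(1)}$. These are already proved (Section \ref{sec:Bernoulli}), and they transfer to $\tG$ by the interpolation argument / Lemma \ref{lem:entforG}, since only $|\mathcal I|^2$ entries differ. We also use two Ward identities, as in \eqref{eq:R2ODest}, which gives $(Np)^{-2}\int\eta_s^{-2}$.

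\textbf{Step 3: the diagonal second-order term (main obstacle).} This is $\theta^4\int\sum_{x,y}|\tG_{\bm v x}\tG_{yy}\tG_{x\bm w}|^2$. After bounding $|\tG_{yy}|\lesssim1$ and summing over $y$, we are left with $\frac{1}{Np^2}\int\sum_x|\tG_{\bm v x}|^2|\tG_{x\bm w}|^2$. In the entrywise case one factor was controlled by $|M_{\bm a x}|$, but for general $\bm v$ the crude bound $\max_x|\tG_{\bm v x}|^2\lesssim \vertiii{\bm v}_{\bm e}^2$ is too lossy only by logarithms. So we write $\tG_{\bm v x}=M_{\bm v x}+(\tG-M)_{\bm v x}$, with $(\tG-M)_{\bm v x}$ bounded by $\mathfrak C_{\rm iso}\vertiii{\bm v}_{\bm e}\Lambda_{\rm iso}$ via the stopping time (recall $\bm e_x\in\mathcal V$).

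The $M$ part is
\begin{equation*}
\sum_x|M_{\bm v x}|^2|\tG_{x\bm w}|^2\le \max_x|\tG_{x\bm w}|^2\sum_x|M_{\bm v x}|^2\lesssim \vertiii{\bm v}_{\bm e}^2\vertiii{\bm w}_{\bm e}^2 ,
\end{equation*}
by Lemma \ref{lem:Mprop}. This gives a contribution $\frac{1}{Np}\vertiii{\bm v}_{\bm e}^2\vertiii{\bm w}_{\bm e}^2$, which is fine. The fluctuation part contributes
\begin{equation*}
\frac{\mathfrak C_{\rm iso}^2}{Np^2}\int\Lambda_{\rm iso}^2\eta_s^{-1}\,\dd s\,\vertiii{\bm v}_{\bm e}^2\vertiii{\bm w}_{\bm e}^2 ,
\end{equation*}
which is $\lesssim \frac{1}{\log N}(\cdots)^2$ using $Np\ge \mathfrak C_{\rm iso}^2(\log N)^2$, as in \eqref{eq:R2Dest}.

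\textbf{Step 4: the third-order remainder.} The term $\Rcal^{(3)}$ carries $\tG^{xy,1}$. We handle it via the expansion \eqref{eq:resexpaux}, which gives $|\tG^{xy,1}_{y\bm w}|\lesssim|\tG_{y\bm w}|+\theta\vertiii{\bm w}_{\bm e}$ (using the previous bounds). The remaining index constellations are then treated as in \eqref{eq:R3example}, with the extra $\theta^2$ giving ample room.

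The freezing set $\mathcal I$ plays no essential role for the quadratic variation; it matters for the jump bound. Its only effect here is that the sums exclude frozen pairs, which only helps.
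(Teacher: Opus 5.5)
Your proposal is correct and is essentially the paper's argument. The paper only says that the entrywise estimates \eqref{eq:Martent1}--\eqref{eq:R3est} carry over line by line, with the $\vertiii{\cdot}_{\bm e}$ bookkeeping supplied by Lemma \ref{lem:Mprop}. You have written out exactly those steps: Ward identities for $\Rcal^{(1)}$ and for the off-diagonal part of $\Rcal^{(2)}$; the $M+(\tG-M)$ split with $\sum_x |M_{\bm v x}|^2=(MM^*)_{\bm v\bm v}\lesssim \vertiii{\bm v}_{\bm e}^2$ for the diagonal part; and the auxiliary expansion of $\tG^{xy,1}$ in $\Rcal^{(3)}$.

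One small simplification: in Step 2 you need not import the entrywise law from Section~\ref{sec:Bernoulli} and transfer it to $\tG$. Since $\bm e_x\in\mathcal V$, every entry $\tG_{xy}$ is already controlled by the isotropic stopping time, which is what you use in Step 3 anyway.
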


\begin{lemma}[Bound on jump sizes] \label{lem:jumpiso}
	Using the assumptions and notations from above, there exists a constant $C_\kappa > 0$ such that, uniformly in $t \in [0,\tau]$ and $\bm v, \bm w \in \mathcal{V}$,
	\begin{equation}
 \left|\Mart^{{\rm iso}}_{\bm v \bm w, t} - \Mart^{{\rm iso}}_{\bm v \bm w, t-}\right| \leq \left(\frac{C_\kappa}{\sqrt{Np}} + \frac{C_\kappa}{(\log N)^{9/8}}\right) \Cf_{\rm iso}\, \vertiii{\bm v}_{\bm e} \, \vertiii{\bm w}_{\bm e}\, \Lambda_{{\rm iso}}(t) \,. 
	\end{equation}
\end{lemma}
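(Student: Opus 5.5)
The jump of $\Mart^{\rm iso}_{\bm v\bm w}$ at time $t$ comes only from switching on a single edge $\{x,y\}$, since the generator integral is continuous in time. Moreover, by \eqref{eq:jumpsto0}, edges with both endpoints in $\mathcal I$ never jump. As in \eqref{eq:jumpgen}, it therefore suffices to bound
\begin{equation*}
\max_{x\neq y,\ \{x,y\}\not\subset\mathcal I}\big|(\tG^{xy,1}_t)_{\bm v\bm w}-(\tG_t)_{\bm v\bm w}\big| .
\end{equation*}
I would apply Lemma \ref{lem:resolventexpand} with $m=1$, exactly as for \eqref{eq:ent1jump}. The first-order term is $\theta(\tG_t)_{\bm v x}(\tG_t)_{y\bm w}$ plus the term with $x,y$ swapped. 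The second-order term is $\theta^2(\tG\Delta_{xy}\tG\Delta_{xy}\tG^{xy,1})_{\bm v\bm w}$; I would bound it by $\lesssim \frac{1}{Np}\vertiii{\bm v}_{\bm e}\vertiii{\bm w}_{\bm e}$. This uses the stopping time bounds on $\tG_{\bm v x}$, $\tG_{xx}$, $\tG_{xy}$ (standard basis vectors lie in $\mathcal V$), and the analogue of \eqref{eq:resexp} for $\tG^{xy,1}_{y\bm w}$. Since $\Lambda_{\rm iso}\geq\sqrt{\log N/(Np)}$, this is $\lesssim (Np)^{-1/2}\Lambda_{\rm iso}\vertiii{\bm v}_{\bm e}\vertiii{\bm w}_{\bm e}$.

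For the first-order term I would write $\tG_{\bm v x}=M_{\bm v x}+(\tG-M)_{\bm v x}$, and similarly for $\tG_{y\bm w}$. By the stopping time, $|(\tG-M)_{\bm v x}|\le \Cf_{\rm iso}\vertiii{\bm v}_{\bm e}\Lambda_{\rm iso}$, using $\vertiii{\bm e_x}_{\bm e}\lesssim 1$. Lemma \ref{lem:Mprop} gives $|M_{\bm v x}|\lesssim\vertiii{\bm v}_{\bm e}$. Every term containing at least one $(\tG-M)$ factor is then $\lesssim \theta\,\Cf_{\rm iso}\vertiii{\bm v}_{\bm e}\vertiii{\bm w}_{\bm e}\Lambda_{\rm iso}$, which is the $C_\kappa/\sqrt{Np}$ part of the claim. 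What remains is the deterministic product $\theta|M_{\bm v x}M_{y\bm w}|$, and this is where the freezing is used.

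By Lemma \ref{lem:Mprop} (with $M_s$ a multiple of $M$ by Lemma \ref{lem:BerProp}(b)), $|M_{\bm v x}|\lesssim |(\Pi_{\bm e}^\perp\bm v)_x|+(Np)^{-1/2}N^{-1/2}\|\Pi_{\bm e}\bm v\|$. Since $\{x,y\}\not\subset\mathcal I$, we may assume without loss of generality $x\notin\mathcal I$, so $x\notin \mathcal I_{\bm v}$. Then \eqref{eq:Idef} gives $|M_{\bm v x}|\lesssim (\log N)^{-5/16}\vertiii{\bm v}_{\bm e}$, while $|M_{y\bm w}|\lesssim\vertiii{\bm w}_{\bm e}$. (If $x\in\mathcal I$ but $y\notin\mathcal I$, the same argument applies to $M_{y\bm w}$ via $\mathcal I_{\bm w}$; the swapped term is handled identically.) Hence
\begin{equation*}
\theta|M_{\bm v x}M_{y\bm w}|\lesssim \frac{(\log N)^{-5/16}}{\sqrt{Np}}\,\vertiii{\bm v}_{\bm e}\vertiii{\bm w}_{\bm e}.
\end{equation*}
Compared with the target $\Lambda_{\rm iso}\ge\sqrt{\log N/(Np)}$, this is a factor $(\log N)^{-13/16}$ smaller, which is already of the required form. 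This explains the second term $(\log N)^{-9/8}$ in the lemma: the authors presumably use a cruder or slightly different bookkeeping, but any bound of the form $(\log N)^{-1/2-\epsilon}\sqrt{\log N/(Np)}$ is absorbed into the stated right-hand side.

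The main obstacle is the case where $\bm v$ or $\bm w$ equals $\bm e$ or a standard basis vector $\bm e_a\in\mathcal V$; for these the sets $\mathcal I_{\bm v}$ must still be used. If $\bm v=\bm e_a$, then $\mathcal I_{\bm e_a}\ni a$ for large $N$, so $x\ne a$ and $|M_{ax}|\lesssim (Np)^{-1/2}N^{-1}$, which is harmless. For $\bm v=\bm e$, we have $\Pi_{\bm e}^\perp\bm e=0$, so $\mathcal I_{\bm e}=\emptyset$, but then $|M_{\bm e x}|\lesssim (Np)^{-1/2}N^{-1/2}\lesssim (Np)^{-1/2}\vertiii{\bm e}_{\bm e}$, which again gives enough smallness. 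I would check these cases explicitly and otherwise reuse the bookkeeping of \eqref{eq:Mtrickbound}.
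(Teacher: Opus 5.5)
Your reduction to unfrozen edges is fine, and so are your bounds on the second-order term and on the first-order terms containing a factor $\tG-M$; they follow the paper's route via Section~\ref{subsec:jumpproof}. The gap is in your last step.

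For the deterministic product you obtain $\theta|M_{\bm v x}M_{y\bm w}|\lesssim(\log N)^{-5/16}(Np)^{-1/2}\vertiii{\bm v}_{\bm e}\vertiii{\bm w}_{\bm e}$. This is at best $(\log N)^{-13/16}\vertiii{\bm v}_{\bm e}\vertiii{\bm w}_{\bm e}\Lambda_{\rm iso}$, and you assert that it is absorbed by the right-hand side. It is not. Since $13/16<9/8$ and $(Np)^{-1/2}\leq(\log N)^{-1}$, neither term of $\bigl(C_\kappa(Np)^{-1/2}+C_\kappa(\log N)^{-9/8}\bigr)\Cf_{\rm iso}\Lambda_{\rm iso}$ dominates your bound.

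Concretely, whenever $N\eta_t\geq Np$ (for instance at early times, where $\eta_t\asymp 1$), one has $\Lambda_{\rm iso}\asymp\sqrt{\log N/(Np)}$. The right-hand side is then at most $C\Cf_{\rm iso}(\log N)^{-1/2}(Np)^{-1/2}\vertiii{\bm v}_{\bm e}\vertiii{\bm w}_{\bm e}$. That is smaller than your bound by a power of $\log N$, which no constant can absorb. Your remark that any $(\log N)^{-1/2-\epsilon}$ bound is absorbed holds only for $\epsilon\geq 5/8$.

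This matters downstream. Proposition~\ref{prop:DBG} yields $N^{-D}$ only if $a/K\gtrsim\log N$, with $a\asymp\Cf_{\rm iso}\vertiii{\bm v}_{\bm e}\vertiii{\bm w}_{\bm e}\Lambda_{\rm iso}$. With your jump bound, $a/K$ is only of order $(\log N)^{13/16}$, giving tails $\exp(-c(\log N)^{13/16})$, so Proposition~\ref{prop:Groniso} would not follow.

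The paper gets the exponent as $9/8=1/2+2\cdot 5/16$. It bounds $\max_{x\neq y,\;x,y\notin\mathcal I}|(M_s)_{\bm v x}||(M_s)_{y\bm w}|\lesssim(\log N)^{-5/8}\vertiii{\bm v}_{\bm e}\vertiii{\bm w}_{\bm e}$, taking a factor $(\log N)^{-5/16}$ from each of $M_{\bm v x}$ and $M_{y\bm w}$ via \eqref{eq:Idef}. Your ``without loss of generality $x\notin\mathcal I$'' throws one of these factors away.

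That said, the observation behind your choice is correct. By \eqref{eq:jumpsto0}, only edges with both endpoints in $\mathcal I$ are frozen, so edges with exactly one endpoint in $\mathcal I$ do jump. For those edges the deterministic term can genuinely be of order $\theta(\log N)^{-5/16}\vertiii{\bm v}_{\bm e}\vertiii{\bm w}_{\bm e}$: take $x$ carrying an order-one entry of $\Pi_{\bm e}^\perp\bm v$, and $y$ carrying an entry of $\Pi_{\bm e}^\perp\bm w$ just below the threshold. The paper's displayed maximum, taken over $x,y\notin\mathcal I$, does not cover these edges either.

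So proving the stated bound needs an additional ingredient for edges incident to $\mathcal I$, not a different bookkeeping of exponents. One option is to lower the threshold in \eqref{eq:Idef} to $(\log N)^{-\alpha}$ with $\alpha>1/2$. A single small factor then already gives $(\log N)^{-1/2-\alpha}$ with $1/2+\alpha>1$; with $\alpha=5/8$ this is exactly $(\log N)^{-9/8}$. One would then use \eqref{eq:IBennett} to see that only $O(1)$ of the more numerous frozen edges are ever present, which keeps the static switching affordable.
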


\begin{lemma}[Generator terms] \label{lem:generateiso}
	Using the assumptions and notations from above, there exists a constant $C_\kappa > 0$ such that, uniformly for $t \in [0,T]$ and $\bm v, \bm w \in \mathcal{V}$, 
	\begin{equation}
		\left|\int_{0}^{t\wedge \tau} \dd s \ \big( \Lcal \phi^{{\rm iso}}_{\bm v \bm w, s} + \partial_s \phi^{{\rm iso}}_{\bm v \bm w, s}  \big) \right| \leq C_\kappa \int_{0}^{t \wedge \tau} \dd s \,  \big| \phi^{{\rm iso}}_{\bm v \bm w, s} \big| + \vertiii{\bm v}_{\bm e} \, \vertiii{\bm w}_{\bm e} \, \Lambda_{{\rm iso}}(t \wedge \tau)
	\end{equation}
with probability at least $1 - N^{-\Cf_{\rm iso}/C_\kappa}$
\end{lemma}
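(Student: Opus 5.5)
The statement to prove is Lemma~\ref{lem:generateiso}, the generator bound for the isotropic observable. The plan is to mimic the treatment of $\phi^{{\rm ent},1}_{\bm a\bm b,t}$ in Section~\ref{subsubsec:genent1}, with $\bm a,\bm b$ replaced by $\bm v,\bm w\in\mathcal V$ and $G^{ab}$ replaced by $\tG$, the resolvent of $\widetilde X$ in which all edges within $\mathcal I$ are frozen.

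Since frozen edges never jump, the generator only sums over pairs $\{x,y\}\not\subset\mathcal I$. The integrand is therefore
\[
\sum_{x<y,\,\{x,y\}\not\subset\mathcal I}\big((\tG_s^{xy,1})_{\bm v\bm w}-(\tG_s)_{\bm v\bm w}\big)+(\tG_s\{\partial_sz_s\}\tG_s)_{\bm v\bm w}-(M_s)_{\bm v\bm w}.
\]
We expand it to order $m=2$ using Lemma~\ref{lem:resolventexpand}. The first-order terms combine with $\theta S-z_s$ from \eqref{eq:Bernflow} to give $(\tG_s-M_s)_{\bm v\bm w}$. The exception is the pairs inside $\mathcal I$, which must be subtracted back:
\[
\theta\sum_{x,y\in\mathcal I}(\tG_s)_{\bm v x}(\tG_s)_{y\bm w}.
\]
This is bounded using $|\mathcal I|^2\le4(\log N)^{5/4}$, the stopping-time control on $(\tG_s)_{\bm v x}$ (entries with $\bm e_x\in\mathcal V$), and $|M_{\bm v x}|\lesssim\vertiii{\bm v}_{\bm e}$. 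The resulting bound is $(\log N)^{5/4}(Np)^{-1/2}\vertiii{\bm v}_{\bm e}\vertiii{\bm w}_{\bm e}$, which integrates over $[0,T]$ to $O(p(\log N)^{5/4}/\sqrt{Np})\ll\Lambda_{\rm iso}$.

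The second-order diagonal term cancels against the $m_s$-part of $\partial_sz_s$, up to errors of the form $\Ecal_1,\dots,\Ecal_4$ in \eqref{eq:E1ent1}--\eqref{eq:E4ent1}. These are controlled exactly as there:
\begin{itemize}
\item Cauchy--Schwarz and Ward give $\sum_x|(\tG_s)_{\bm v x}(\tG_s)_{x\bm w}|\lesssim\eta_s^{-1}(\im\tG_s)_{\bm v\bm v}^{1/2}(\im\tG_s)_{\bm w\bm w}^{1/2}$.
\item The stopping time together with \eqref{eq:ImMprop} gives $(\im\tG_s)_{\bm v\bm v}\lesssim\vertiii{\bm v}_{\bm e}^2$.
\item Bennett's inequality \eqref{eq:Bennettpractice} handles the row sums $\sum_y(X_{xy}-p_s)$.
\item Lemma~\ref{lem:entforG} controls the diagonal $(\tG_s-M_s)_{yy}$ by the entrywise law already proven.
\end{itemize}
Each error carries a factor $\vertiii{\bm v}_{\bm e}\vertiii{\bm w}_{\bm e}/\eta_s$ times $\Lambda_{{\rm ent}}^{(1)}$ or $\Lambda_{{\rm av}}^{(1)}$. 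Integrating with \eqref{eq:intrulepractice} and using $p\le(\log N)^{-2}$ makes them $\le\vertiii{\bm v}_{\bm e}\vertiii{\bm w}_{\bm e}\Lambda_{\rm iso}/2$. The off-diagonal second-order term \eqref{eq:2ndOD} and the third-order remainder are handled similarly, with the auxiliary expansion \eqref{eq:xyupexpand} for $\tG^{xy,1}$.

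The main obstacle is the $M\times M$ part of the third-order term, analogous to \eqref{eq:lambda0MM}. There we use isotropic resummation $\sum_x(\tG_s)_{\bm v x}=N^{1/2}(\tG_s)_{\bm v\bm e}$, and $\bm e\in\mathcal V$ is included precisely so that $(\tG_s)_{\bm v\bm e}$ is controlled by the stopping time: it equals $(M_s)_{\bm v\bm e}$ plus an error of order $\vertiii{\bm v}_{\bm e}(Np)^{-1/2}\Lambda_{\rm iso}$. By \eqref{eq:Mprop}, $|(M_s)_{\bm v\bm e}|\lesssim\vertiii{\bm v}_{\bm e}$, so this term contributes $N^{-1/2}p^{-3/2}\vertiii{\bm v}_{\bm e}\vertiii{\bm w}_{\bm e}$. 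After time integration this is $O((Np)^{-1/2})$, which suffices.

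The other delicate point is the $M\times(G-M)$ term \eqref{eq:Lambda0MG-M}. It uses $|(\tG-M)_{yy}|\lesssim\Lambda_{\rm ent}^{(1)}$ from the entrywise law and the bound $\int_0^t\eta_s^{-1/2}p^{-1}\,\dd s\lesssim1$.

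The $X_{xy}$-contribution \eqref{eq:Xxy} is bounded with $X_{xy}$ summed by Bennett's inequality. Collecting all terms gives the claim. The probability $1-N^{-\Cf_{\rm iso}/C_\kappa}$ comes from the Bennett events and the union bound over $\bm v,\bm w\in\mathcal V$ (polynomially many).
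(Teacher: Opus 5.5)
Your route is the paper's: rerun Section~\ref{subsubsec:genent1} with $(\bm a,\bm b)\to(\bm v,\bm w)$ and $G^{ab}\to\tG$. Your explicit handling of the first-order correction from the frozen pairs in $\mathcal I$ is correct, and the paper passes over it. However, you skip the one step the paper names as the only genuine difference, and your argument as written fails there.

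The term $\Ecal_1$ of \eqref{eq:E1ent1} factorizes as $\frac1p\,(\tG_s^2)_{\bm v\bm w}\,\langle\tG_s-M_s\rangle$. It therefore carries a factor $1/(p\eta_s)$, not $1/\eta_s$, and it needs an \emph{average} law for $\tG$. Nothing you invoke supplies one:
\begin{itemize}
\item the isotropic stopping time only controls $(\tG-M)_{\bm v\bm w}$ for $\bm v,\bm w\in\mathcal V$; this does cover the diagonal entries $(\tG-M)_{yy}$, since $\bm e_y\in\mathcal V$;
\item Lemma~\ref{lem:entforG} concerns a single frozen pair, not up to $|\mathcal I|^2\le 4(\log N)^{5/4}$ of them.
\end{itemize}
Suppose you fall back on the entrywise bound $|\sum_y(\tG_s-M_s)_{yy}|\lesssim N\Lambda_{\rm iso}(s)$. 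This gives $\Ecal_1(s)\lesssim \vertiii{\bm v}_{\bm e}\vertiii{\bm w}_{\bm e}\Lambda_{\rm iso}(s)/(p\eta_s)$. Near the terminal time, $\int_0^t \dd s/(p\eta_s)\asymp\log N$. So the $\sqrt{\log N/(Np)}$ part of $\Lambda_{\rm iso}$ integrates to $\log N\sqrt{\log N/(Np)}$, which is a factor $\log N$ too large to close the Grönwall argument.

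The missing ingredient is the average law $|\langle\tG_s-M_s\rangle|\lesssim\Lambda^{(1)}_{{\rm av}}(s)$. It is obtained from the average law for $G$ already proven in Section~\ref{sec:Bernoulli} along the same characteristic $z_s$.
\begin{itemize}
\item The paper switches the frozen entries on one at a time. Each switch costs $\lesssim (Np)^{-1/2}(N\eta_s)^{-1}$, as in \eqref{eq:av1jump}. Summing $|\mathcal I|^2$ such errors gives $\lesssim(\log N)^{1/4}(N\eta_s)^{-1}$.
\item Equivalently, in one step: $\langle\tG_s-G_s\rangle=\frac{\theta}{N}\sum_{x,y\in\mathcal I}X_{xy}(s)\,(G_s\tG_s)_{yx}$. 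Here $|(G_s\tG_s)_{yx}|\lesssim\eta_s^{-1}$ by the Ward identity, and $\sum_{x,y\in\mathcal I}X_{xy}(s)\preceq 1$ by \eqref{eq:IBennett}.
\end{itemize}
With this law, \eqref{eq:intrulepractice} (with $\alpha=2$ and $\alpha=3/2$) gives
$\int_0^{t}\Ecal_1\,\dd s\lesssim \vertiii{\bm v}_{\bm e}\vertiii{\bm w}_{\bm e}\,\Lambda^{(1)}_{{\rm av}}(t)=\vertiii{\bm v}_{\bm e}\vertiii{\bm w}_{\bm e}\,\Lambda_{\rm iso}(t)/\sqrt{N\eta_t}$.
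This extra factor $(N\eta_t)^{-1/2}$ is exactly the gain needed to close the estimate. The rest of your proposal is fine.
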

Armed with the above three lemmas and the jump process version of the BDG inequality from Proposition \ref{prop:DBG}, the proof of Proposition \ref{prop:Groniso} is completely analogous to that of Proposition \ref{prop:Gron}, choosing $\Cf_{\rm iso}$ similarly to \eqref{eq:Cchoice}. 
\qed
\\[1mm]

It thus remains to give the proofs of Lemmas \ref{lem:martestiso}, \ref{lem:jumpiso}, and \ref{lem:generateiso}.
\begin{proof}[Proof of Lemma \ref{lem:martestiso}]
We follow the argument starting from \eqref{eq:Martent1}, based on resolvent expansions (see Lemma \ref{lem:resolventexpand}). The proof provided there holds (almost) line by line also for general isotropic vectors $\bm v, \bm w$, employing straightforward changes accounting for the non-isotropic norms $\vertiii{\bm v}_{\bm e}$ and $\vertiii{\bm w}_{\bm e}$, in particular using Lemma \ref{lem:Mprop} and the fact that $|M_{\bm v x} |\lesssim \vertiii{\bm v}_{\bm e}$ and $|M_{y \bm w} |\lesssim \vertiii{\bm w}_{\bm e}$, both uniformly in $x, y \in [N]$.
This concludes the proof of Lemma~\ref{lem:martestiso}. 
\end{proof}

\begin{proof}[Proof of Lemma \ref{lem:jumpiso}]
The proof closely follows the arguments presented in Section \ref{subsec:jumpproof} employing obvious adjustments involving the non-isotropic norms of $\bm v$ and $\bm w$, again, in particular using the bounds $|M_{\bm v x} |\lesssim \vertiii{\bm v}_{\bm e}$ and $|M_{y \bm w} |\lesssim \vertiii{\bm w}_{\bm e}$ uniformly in $x,y \in [N]$. 

Besides these straightforward changes, the only nontrivial adjustment is that $\max_{x \neq y}^{ab}$ from \eqref{eq:Mtrickbound} is replaced by a maximum over all $x \neq y$ such that $x,y \notin \mathcal{I}$ (recall the definition of $\mathcal{I}$ from \eqref{eq:Idef}). By construction of the set $\mathcal{I}$ and involving Lemma \ref{lem:Mprop} together with the Cauchy-Schwarz inequality and using $\Vert \Pi_{\bm e} \bm e_x \Vert = N^{-1/2}$, we thus infer
\begin{equation*}
	\begin{split}
	\max_{\substack{x \neq y: \\
		x ,y\notin \mathcal{I}}}\Big[|(M_s)_{\bm v x}| |(M_s)_{y \bm w}| \Big] &\lesssim \left( \max_{x \notin \mathcal{I}} |\langle \Pi_{\bm e}^\perp \bm v,  \bm e_x \rangle|  + \frac{\Vert \Pi_{\bm e} \bm v \Vert}{Np^{1/2}}\right) \, \left( \max_{y \notin \mathcal{I}} |\langle \Pi_{\bm e}^\perp \bm w, \bm e_y \rangle|  + \frac{\Vert \Pi_{\bm e} \bm w \Vert}{Np^{1/2}}\right) \\
	&\lesssim \left(\frac{\Vert \Pi_{\bm e}^\perp \bm v \Vert}{(\log N)^{5/16}} + \frac{\Vert \Pi_{\bm e} \bm v \Vert}{Np^{1/2}}\right) \left(\frac{\Vert \Pi_{\bm e}^\perp \bm w \Vert}{(\log N)^{5/16}} + \frac{\Vert \Pi_{\bm e} \bm w \Vert}{Np^{1/2}}\right) \lesssim \frac{\vertiii{\bm v}_{\bm e} \, \vertiii{\bm w}_{\bm e}}{(\log N)^{5/8}}
	\end{split}
\end{equation*}
from which, following the argument in Section \ref{subsec:jumpproof}, we conclude Lemma \ref{lem:jumpiso}. 
\end{proof}

\begin{proof}[Proof of Lemma \ref{lem:generateiso}]
We follow the argument given in Section \ref{subsubsec:genent1}. Again, the proof carries over immediately for general vectors $\bm v, \bm w$.
The only difference is that in the analog of \eqref{eq:E1ent1} (estimating \eqref{eq:E1estent}) we employ an \emph{average} local law for $\tG$, which follows by a simple resolvent expansion replacing (up to) $|\mathcal{I}|^2 \leq 4 (\log N)^{5/4}$ matrix entries having size $(Np)^{-1/2} \lesssim (\log N)^{-1}$ each. More precisely, replacing one matrix entry $(x,y) \in \mathcal{I}^2$ amounts to an error 
\begin{equation*}
|\langle \tG_s \rangle - \langle \tG_s^{xy,1} \rangle| \lesssim \frac{1}{\sqrt{Np}} \frac{1}{N \eta_s} \lesssim \frac{1}{(\log N)^{3/2}} \frac{(\log N)^{1/2}}{N \eta_s}
\end{equation*}
as shown in \eqref{eq:av1jump}. Summing this up $|\mathcal{I}|^2$ many times provides the desired average law for $\tG$. 

In total, we find that 
\begin{equation*}
\left|  \int_{0}^{t\wedge \tau} \dd s \, \Big( \big(\Lcal \phi_{\bm v \bm w, s}^{{\rm iso}} \big)\big(X(s)\big) + \partial_s \phi_{\bm v \bm w, s}^{{\rm iso}}\big(X(s)\big) \Big) \right| \leq \int_{0}^{t \wedge \tau} \dd s \, \big|\phi_{\bm v \bm w, s}^{\rm iso}(s)\big| + \vertiii{\bm v}_{\bm e} \, \vertiii{\bm w}_{\bm e} \,\Lambda_{{\rm iso}}(t \wedge \tau) 
\end{equation*}
with probability at least $1 - N^{-\Cf_{\rm iso}/C_\kappa}$. This concludes the proof of Lemma \ref{lem:generateiso}. 
\end{proof}

\subsection{Off-diagonal law} \label{subsec:offdiag}
The improvement for the off-diagonal entrywise law can be obtained following the same arguments as before: One defines a stopping time $\tau$ capturing the additional smallness due to off-diagonality, and then deduces a stochastic Grönwall estimate proving that $\tau = T$, for which we can import the already proven average law. As before, the argument rests on three technical ingredients: an estimate on the quadratic variation of the martingale term, a bound on the jump sizes, and controlling the generator terms. Following the proofs of these ingredients presented in Section \ref{sec:lemproofs} for $\phi_{ \bm a \bm b, t}^{{\rm ent}, 1}$ (and the associated martingale), we see that the only difference occurs in the diagonal contribution of the second order resolvent expansion term in the martingale estimate \eqref{eq:R2Dest}. In fact, instead of the bound \eqref{eq:R2Dest}, we estimate (ignoring the tilde for notational simplicity) for $a \neq b$: 
\begin{multline*}
\frac{1}{(Np)^2} \int_{0}^{t \wedge \tau} \dd s \, \sum_{x,y} \big| (G_s)_{ax} (G_s)_{yy} (G_s)_{xb} \big|^2 \\  \lesssim \frac{1}{Np^2} \int_{0}^{t \wedge \tau} \dd s \, \left(\frac{\log N}{N \eta_s} + \frac{1}{Np}\right)\left[\sum_{x \neq a } |(G_s)_{xb}|^2 + 1\right] \lesssim \frac{\log N}{Np} \left(\frac{\log N}{N \eta_{t \wedge \tau}} + \frac{1}{Np}\right)\,,
\end{multline*}
where we used that $|M_{xy}| \lesssim \delta_{xy} + N^{-1}$ by Lemma \ref{lem:Mprop} and employed \eqref{eq:intrulepractice}. This improves upon \eqref{eq:R2Dest} in the $1/(Np)$-term in the desired way and we conclude our discussion of the off-diagonal entrywise law.

\section{Local spectral statistics: Proof of Theorem \ref{thm:GOE}} \label{sec:proof_GOE}

In this section we prove Theorem \ref{thm:GOE}. The main inputs are our local law, Theorem \ref{thm:lolaw}, as well as the recent characterization of the $\mathrm{Sine}_1$ point process via loop equations by Bourgade and Huang \cite{BourHua2026}. Our presentation largely follows \cite[Section 2.3]{BourHua2026} with the key differences being that in our sparse setup the error terms have to be handled much more carefully in order to reach the scale $Np \geq (\log N)^{2 + \kappa}$.

\subsection{Overview of the proof} \label{sec:GOE_overview}
We begin with an overview of the argument, explaining the new ideas needed to reach the scale $Np \geq (\log N)^{2 + \kappa}$. The criterion of \cite{BourHua2026} requires the verficiation of loop equations at the microscopic scale $\im z = \eta \asymp \frac{1}{N}$. As in \cite[Section 2.3]{BourHua2026}, we do this using a third-order cumulant expansion. For the lowest order loop equation ($k = 1$ in Proposition \ref{prop:loop} below), the main error term from a third-order cumulant expansion applied to $\E[\ang{G}]$ is
\begin{equation} \label{goe_sketch_error}
\frac{1}{N^2 (Np)^{1/2}} \sum_{x,y} \E[G_{xy} G_{xx} G_{yy}]\,,
\end{equation}
where all resolvents are evaluated at $z$ with imaginary part $\eta \asymp \frac{1}{N}$. We may estimate the Green function entries on the right-hand side using the fact that the map $\eta \mapsto \eta \im G(E + \ii \eta)$ is operator increasing, from which we can deduce that
\begin{equation} \label{G_est_large_eta}
\abs{G_{xy}(z)} \lesssim \frac{\eta + t}{\eta} \max_{u \in \{x,y\}} \im G_{uu}(z + \ii t)\,.
\end{equation}
By Theorem \ref{thm:lolaw}, $\im G_{uu}(z + \ii t) \lesssim 1$ for $t \gtrsim \frac{\log N}{N}$, and therefore we can estimate the right-hand side of \eqref{goe_sketch_error} by
\begin{equation*}
\frac{(\log N)^3}{(Np)^{1/2}}\,.
\end{equation*}
This simple-minded approach therefore requires that $Np \gg (\log N)^6$.

What we failed to exploit above is that in the main result of \cite{BourHua2026}, the error term is allowed to contain a factor $\E [1 + \abs{\ang{G}}^2]$. We therefore have to estimate as many factors of $G$ in terms of $\ang{G}$ as possible, in order to avoid the wasteful estimate \eqref{G_est_large_eta} that yields factors of $\log N$. Although \eqref{goe_sketch_error} does not a priori contain any factor $\ang{G}$, we can easily generate one by estimating the sum using Cauchy-Schwarz and the applying the Ward identity. This yields the bound
\begin{multline} \label{error_cumul_outline}
\frac{1}{N^2 (Np)^{1/2}} \E \qBB{\pbb{\sum_{x,y} \abs{G_{xy}}^2}^{1/2} \sum_x \abs{G_{xx}}^2}
\\
\lesssim \frac{1}{(Np)^{1/2}} \E \qBB{(\im \ang{G})^{1/2} \frac{1}{N} \sum_x \abs{G_{xx}}^2} \lesssim \frac{(\log N)^2}{(Np)^{1/2}} \E[(\im \ang{G})^{1/2}]\,,
\end{multline}
where in the last step we used \eqref{G_est_large_eta} to estimate $\frac{1}{N} \sum_x \abs{G_{xx}}^2 \lesssim (\log N)^2$. This yields to an improved condition $Np \gg (\log N)^4$.

To reach the condition $Np \geq (\log N)^{2 + \kappa}$, we cannot afford to estimate as many factors of $G$ using \eqref{G_est_large_eta}, and instead we need to recreate as many factors of $\ang{G}$ as possible. The off-diagonal factor $G_{xy}$ can be easily dealt with by using the Ward identity, as in the above estimate. The diagonal terms $G_{xx}$ and $G_{yy}$ cannot be handled in this way. Formulas of the kind
\begin{equation*}
G_{xx} = \ang{G} + G_{xx} \ang{HG} - \ang{G} (HG)_{xx}
\end{equation*}
allow one to replace $G_{xx}$ with $\ang{G}$, but on the microscopic scale $\eta \asymp \frac{1}{N}$ the error in this formula is not smaller than the main term, and hence such formulas are not useful in our setting.

Our solution to handle the diagonal terms $G_{xx}$ is to choose $\eta_0 \asymp \frac{\log N}{N}$ and write
\begin{equation*}
\abs{G_{xx}(z)} = \absbb{G_{xx}(z + \ii \eta_0) - \int_0^{\eta_0} \partial_\eta G(z + \ii t) \, \dd t} \lesssim 1 + \int_{0}^{\eta_0} \frac{\im G_{xx} (z + \ii t)}{\eta + t} \, \dd t\,.
\end{equation*}
Since $\im G_{xx}$ is positive, summing over $x$ will yield factors of $\ang{G}$. Thus we estimate
\begin{align*}
\frac{1}{N} \sum_x \abs{G_{xx}}^2 &\lesssim 1 + \frac{1}{N} \sum_x \int_0^{\eta_0} \frac{\dd t_1}{\eta + t_1} \int_0^{\eta_0} \frac{\dd t_2}{\eta + t_2} \, \im G_{xx} (z + \ii t_1) \, \im G_{xx} (z + \ii t_2)
\\
&\leq 1 + \pBB{\int_0^{\eta_0} \frac{\dd t}{\eta + t} \pbb{\frac{1}{N} \sum_x (\im G_{xx} (z + \ii t))^2}^{1/2}}^2\,.
\end{align*}
In order to obtain a factor $\ang{G(z)}$ inside the integral, we estimate one factor of $\im G_{xx}$ using that the map $\eta \mapsto \eta \im G(E + \ii \eta)$ is operator increasing and the other by using that the map $\eta \mapsto \eta^{-1} \im G(E + \ii \eta)$ is operator decreasing, which yields
\begin{equation*}
\im G_{xx} (z + \ii t)^2 \leq \frac{\eta + \eta_0}{\eta} \im G_{xx}(z) \im G_{xx}(z + \ii \eta_0) \lesssim \log N \im G_{xx}(z)\,.
\end{equation*}
We conclude that
\begin{equation*}
\frac{1}{N} \sum_x \abs{G_{xx}}^2 \lesssim 1 + \log N \im \ang{G(z)} \pbb{\int_0^{\eta_0} \frac{\dd t}{\eta + t}}^2 \lesssim 1 + \log N (\log \log N)^2 \im \ang{G(z)}\,.
\end{equation*}
Plugging this into the second step of \eqref{error_cumul_outline} yields that \eqref{goe_sketch_error} is bounded by
\begin{equation*}
\frac{\log N (\log \log N)^2}{(Np)^{1/2}} \E[1 + (\im \ang{G})^{3/2}]\,,
\end{equation*}
which allows us to conclude for $Np \geq (\log N)^{2 + \kappa}$.

\subsection{Preliminaries} \label{subsec:prelim}
Throughout the entire section, we  adjust our notation slightly and denote the rescaled adjacency matrix $\theta A$ of the Erd\H{o}s-Rényi graph, previously written as $H$ in \eqref{eq:ERmodel}, by
\begin{equation} \label{eq:rescaledredefined}
\theta A = \theta p \Smat + H \quad \text{with} \quad \E H = 0 \quad \text{and} \quad \E h_{xy}^2 = \frac{1}{N} \quad \text{for} \quad x \neq y \,. 
\end{equation}
Fix a small $\kappa > 0$ and assume henceforth that
\begin{equation} \label{eq:Npgekappa}
Np \geq (\log N)^{2+\kappa} \,. 
\end{equation} 
Recalling the definition of the domain from \eqref{eq:domain}, we define the event
\begin{equation} \label{eq:goodevent}
\Omega_N = \Omega_N(C) \deq \left\{ \max_{x,y} |G_{xy}(z)| + |G_{\bm e \bm e}(z)|\leq C \quad \text{for all} \quad z \in \mathbb{D}(\kappa, C) \cup \overline{\mathbb{D}(\kappa, C)} \right\} \,. 
\end{equation}
As a consequence of Lemma \ref{lem:Mprop} and Theorem \ref{thm:lolaw},\footnote{The result on $\overline{\mathbb{D}(\kappa, C)}$ simply follows by complex conjugation.} it holds that, for any $D > 0$ there exists $C> 0$ such that 
\begin{equation}
\P[\Omega_N(C)^{\mathsf{c}}] \leq N^{-D} \,. 
\end{equation}

The first key input for our proof is the following control on Green function entries. As a consequence of our local law in Theorem \ref{thm:lolaw}, we find them to be bounded by $\log N$ down to microscopic scales with $|\im z| \asymp 1/N$. 
\begin{lemma}[Monotonicity estimate] \label{lem:monotone}
On the event $\Omega_N$, for every $z \in \C \setminus \R$ with $E \deq \Re z$ satisfying $|E| \leq 2 - \kappa$ and $\eta \deq |\im z|$, we have 
\begin{equation}
\max_{x,y} |G_{xy}(z)| + |G_{\bm e \bm e}(z)| \leq \Lambda(z) \quad \text{with} \quad \Lambda(z) \deq C \max\left\{ 1, \frac{C \log N}{N \eta} \right\} \,. 
\end{equation}
In particular, if $z = E + w/(N \pi \rho_E)$ with $w$ in a compact set $K \subset \C \setminus \R$, then $\Lambda(z) \leq C_K \log N$. 
\end{lemma}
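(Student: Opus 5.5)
The plan is to reduce everything to the event $\Omega_N$ through the monotonicity of $\eta \mapsto \eta \im G(E+\ii\eta)$ in the sense of positive semidefinite operators, as sketched in \eqref{G_est_large_eta}. By complex conjugation we may assume $\im z = \eta > 0$. Set $\eta_* \deq C N^{-1}\log N$, the lower edge of the domain $\mathbb{D}(\kappa,C)$ from \eqref{eq:domain}.

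The case $\eta \geq \eta_*$ splits in two. If $\eta \leq 1$, then $z \in \mathbb{D}(\kappa, C)$ because $|E| \leq 2-\kappa$, and the bound holds on $\Omega_N$ by definition (taking $C \geq 1$ in the definition of $\Lambda$). If $\eta > 1$, we use the trivial norm bound $\norm{G} \leq \eta^{-1} \leq 1$. This gives $|G_{xy}| + |G_{\bm e\bm e}| \leq 2 \leq \Lambda(z)$, again after enlarging $C$.

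For $0 < \eta < \eta_*$, write $G(E+\ii\eta) = \sum_j \bm u_j \bm u_j^* / (\lambda_j - E - \ii\eta)$ in spectral form. The function
\begin{equation*}
\eta \mapsto \frac{\eta^2}{(\lambda_j-E)^2+\eta^2}
\end{equation*}
is increasing. Hence, for every vector $\bm v$, we get $\eta\, (\im G(E+\ii\eta))_{\bm v\bm v} \leq \eta_* (\im G(E+\ii\eta_*))_{\bm v\bm v}$. Now apply Cauchy--Schwarz in the spectral decomposition, with weights $1/|\lambda_j - z|$:
\begin{equation*}
|G_{\bm v\bm w}(z)| \leq \Big(\sum_j \frac{|\langle \bm v,\bm u_j\rangle|^2}{|\lambda_j-z|}\Big)^{1/2}\Big(\sum_j \frac{|\langle \bm w,\bm u_j\rangle|^2}{|\lambda_j-z|}\Big)^{1/2}.
\end{equation*}
This bound cannot be controlled by $\im G$ directly, because $1/|\lambda_j - z|$ is not comparable to $\eta/|\lambda_j - z|^2$ when $|\lambda_j - E| \gg \eta$. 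Instead, use the cruder Cauchy--Schwarz
\begin{equation*}
|G_{\bm v \bm w}| \leq \big(\im G_{\bm v\bm v}\big)^{1/2}\big(\im G_{\bm w\bm w}\big)^{1/2}/\eta ,
\end{equation*}
which follows from the Ward identity \eqref{Ward}: $\sum_j |\langle \bm v,\bm u_j\rangle|^2/|\lambda_j-z|^2 = \im G_{\bm v\bm v}/\eta$. Combining this with the monotonicity gives
\begin{equation*}
|G_{\bm v\bm w}(z)| \leq \frac{\eta_*}{\eta}\, \big(\im G_{\bm v\bm v}(E+\ii\eta_*)\big)^{1/2}\big(\im G_{\bm w\bm w}(E+\ii\eta_*)\big)^{1/2}.
\end{equation*}
Take $\bm v,\bm w \in \{\bm e_x,\bm e_y,\bm e\}$. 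The point $E+\ii\eta_*$ lies in $\mathbb{D}(\kappa,C)$, so on $\Omega_N$ each factor $\im G_{\bm v\bm v}$ is at most $C$. This yields $|G_{xy}(z)| + |G_{\bm e\bm e}(z)| \leq 2C\eta_*/\eta = 2C^2 \log N/(N\eta)$, which is $\Lambda(z)$ after adjusting constants.

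For the final claim, if $z = E + w/(N\pi\rho_E)$ with $w$ in a compact $K \subset \C\setminus\R$, then $\eta = |\im w|/(N\pi\rho_E) \geq c_K/N$. Here we use $\rho_E \asymp_\kappa 1$, and also $|\im w| \geq c_K > 0$ since $K$ avoids $\R$. It follows that $\Lambda(z) \leq C_K \log N$. Note that $\Re z = E$ is unchanged only when $w$ is purely imaginary. In general $\Re z = E + \Re w/(N\pi\rho_E)$ stays within $2-\kappa/2$ for $N$ large, so we apply the first part with $\kappa/2$.

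The only step needing any care is the small-$\eta$ regime: one must use the Ward/Cauchy--Schwarz bound together with operator monotonicity, rather than trying to bound the entries of $G$ by monotonicity directly, since $G$ itself is not monotone in $\eta$. The remaining steps are routine.
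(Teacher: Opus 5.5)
There is a genuine gap in your small-$\eta$ step. The inequality $|G_{\bm v\bm w}| \leq (\im G_{\bm v\bm v})^{1/2}(\im G_{\bm w\bm w})^{1/2}/\eta$ does not follow from the Ward identity, and it is false. It is not even scale-invariant: under $H \mapsto sH$, $z \mapsto sz$ the left side scales like $s^{-1}$ and the right side like $s^{-2}$. For $\bm v = \bm w = \bm u_j$ it reads $|\lambda_j - z|^{-1} \leq |\lambda_j - z|^{-2}$, which fails whenever $|\lambda_j - z| > 1$.

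What Cauchy--Schwarz plus Ward actually gives is one of two things. Either you get exactly this bound, but for $(G^2)_{\bm v\bm w}$ rather than $G_{\bm v\bm w}$. Or you get $|G_{\bm v\bm w}| \leq \norm{\bm w}\,(\im G_{\bm v\bm v}/\eta)^{1/2}$, which combined with monotonicity only yields $|G_{xy}(z)| \lesssim \sqrt{\eta_*}/\eta \approx \sqrt{N\log N}$ at $\eta \asymp 1/N$.

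So your displayed bound $|G_{\bm v\bm w}(z)| \leq \frac{\eta_*}{\eta}\big(\im G_{\bm v\bm v}(E+\ii\eta_*)\,\im G_{\bm w\bm w}(E+\ii\eta_*)\big)^{1/2}$ is unproven. It is in fact false in this very model. For $\bm v=\bm w=\bm e$, the Perron--Frobenius eigenvalue $\lambda_1\approx\sqrt{Np}$ makes $|G_{\bm e\bm e}(z)| \asymp (Np)^{-1/2}$. The right-hand side, by the $\bm e$-improved local law, is only $O(\log N/(Np))$. What you lose is the contribution of eigenvalues far from $E$: it sits in $\re G$ and is invisible in $\im G$.

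The fix is to compare $G(E+\ii\eta)$ with $G(E+\ii\eta_*)$ rather than bound it outright. Since $\partial_s G(E+\ii s) = \ii\, G(E+\ii s)^2$, and the Ward/Cauchy--Schwarz bound legitimately applies to $(G^2)_{\bm v\bm w}$, you get
\begin{equation*}
|G_{\bm v\bm w}(E+\ii\eta)| \leq |G_{\bm v\bm w}(E+\ii\eta_*)| + \int_\eta^{\eta_*} \frac{\big(\im G_{\bm v\bm v}(E+\ii s)\,\im G_{\bm w\bm w}(E+\ii s)\big)^{1/2}}{s}\,\dd s\,.
\end{equation*}
Your monotonicity step gives $s\,\im G_{\bm v\bm v}(E+\ii s) \leq \eta_*\,\im G_{\bm v\bm v}(E+\ii\eta_*) \leq C\eta_*$ on $\Omega_N$, and likewise for $\bm w$. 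Hence the integral is at most $C\eta_*\int_\eta^{\eta_*} s^{-2}\,\dd s \leq C\eta_*/\eta$. The boundary term is at most $C$ on $\Omega_N$; this is precisely the term your argument drops. Applying this with $\bm v,\bm w\in\{\bm e_x,\bm e_y\}$ and $\bm v=\bm w=\bm e$ yields $|G_{xy}(z)|+|G_{\bm e\bm e}(z)| \leq 2C(1+\eta_*/\eta)$, which is $\Lambda(z)$ up to constants.

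This comparison with the larger scale $\eta_*$, fed by the local law there, is what the paper's one-line proof (via \cite[Lemma 2.5]{BourHua2026}) amounts to. Your treatment of $\eta\geq\eta_*$, the reduction to $\im z>0$, and the final claim for $z=E+w/(N\pi\rho_E)$ are fine.
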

\begin{proof}
Using Theorem \ref{thm:lolaw}, the proof is identical to that of \cite[Lemma 2.5]{BourHua2026} and hence omitted. 
\end{proof}

The second key input is the following first order cumulant expansion formula with remainder, which we recall from \cite{HKR}.

\begin{lemma}[Cumulant expansion; see Lemma 2.4 in \cite{HKR}] \label{lem:cumexp}
Let $h$ be a centred real random variable satisfying
\begin{equation}
\E h = 0 \,, \quad \E h^2 = \frac{1}{N}\,,
\end{equation}
with finite moments of all orders. Let $f : \R\to \C$ be smooth. Then 
\begin{equation}
\E [h f(h)] = \frac{1}{N} \E[f'(h)] + \mathcal{R}
\end{equation}
where the remainder $\mathcal{R}$ satisfies
\begin{equation}
|\mathcal{R}| \leq \E \left[\big(N^{-1} |h| + |h|^3\big) \sup_{|\nu| \leq |h|} |f''(\nu)| \right]
\end{equation}
\end{lemma}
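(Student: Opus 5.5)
The plan is to derive the cumulant expansion by a Taylor expansion of $f$ around $0$, applied once to $f$ itself and once to $f'$, and then to use $\E h = 0$ and $\E h^2 = \frac1N$ to cancel all the leading terms. The remainder bound will follow from Taylor's theorem with the Lagrange (or integral) form of the error.

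First I write
\begin{equation*}
f(h) = f(0) + h f'(0) + \frac{h^2}{2} f''(\xi_1)
\end{equation*}
for some $\xi_1$ between $0$ and $h$, so that $\abs{\xi_1} \leq \abs{h}$. Multiplying by $h$ and taking expectations gives
\begin{equation*}
\E[h f(h)] = f(0)\,\E h + f'(0)\,\E h^2 + \frac12 \E[h^3 f''(\xi_1)] = \frac1N f'(0) + \frac12 \E[h^3 f''(\xi_1)]\,.
\end{equation*}
Second, I expand $f'(h) = f'(0) + h f''(\xi_2)$ with $\abs{\xi_2} \leq \abs{h}$. This gives
\begin{equation*}
\frac1N \E f'(h) = \frac1N f'(0) + \frac1N \E[h f''(\xi_2)]\,.
\end{equation*}

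Subtracting the two identities, I obtain
\begin{equation*}
\cal R = \frac12 \E[h^3 f''(\xi_1)] - \frac1N \E[h f''(\xi_2)]\,.
\end{equation*}
Hence
\begin{equation*}
\abs{\cal R} \leq \E\qb{(\tfrac12\abs{h}^3 + N^{-1}\abs{h}) \sup_{\abs{\nu}\leq\abs{h}} \abs{f''(\nu)}}\,,
\end{equation*}
which is stronger than the claim.

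Since $f$ is complex-valued, the Lagrange form of the remainder is not directly available. I will instead use the integral form of Taylor's theorem, or apply the argument to the real and imaginary parts separately. With the integral form, $\frac{h^2}{2}f''(\xi_1)$ is replaced by $h^2\int_0^1(1-s)f''(sh)\,\dd s$ and $hf''(\xi_2)$ by $h\int_0^1 f''(sh)\,\dd s$. The integrals are bounded by $\frac12\sup_{\abs{\nu}\leq\abs{h}}\abs{f''(\nu)}$ and $\sup_{\abs{\nu}\leq\abs{h}}\abs{f''(\nu)}$ respectively, so the same estimate holds.

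The only step needing care is integrability, that is, making sure all the expectations are finite. This is not a real obstacle. If the right-hand side of the claimed bound is infinite, there is nothing to prove. Otherwise, every expectation above is dominated by it, or by the finite moments of $h$, since $\abs{f(h)} \leq \abs{f(0)} + \abs{h}\abs{f'(0)} + \frac{h^2}{2}\sup_{\abs{\nu}\leq\abs{h}}\abs{f''(\nu)}$.
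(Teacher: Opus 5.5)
Your proof is correct and is the standard argument. The paper gives no proof of its own and simply cites \cite{HKR}, where the cumulant expansion likewise rests on Taylor expanding $f$ (here to second order) and $f'$ (to first order) and using $\E h = 0$, $\E h^2 = N^{-1}$. Your direct version with the integral form of the remainder yields exactly the stated bound, with the supremum inside the expectation and no boundedness assumption on the derivatives of $f$. Your integrability remark correctly takes care of the finiteness of all the expectations involved.
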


For $x \leq y$ and a real parameter $\nu$ define the matrix $\Delta^{xy}$ by
\begin{equation}
\big(\Delta^{xy}\big)_{ij} \deq   \delta_{ix} \delta_{jy} + (1 - \delta_{ij}) \delta_{iy} \delta_{jx} 
\end{equation}
and 
\begin{equation}
H^{(xy,\nu)} \deq H + (\nu - h_{xy}) \Delta_{xy} \,, \quad G^{(xy, \nu)}(z) \deq \big( \theta p E + H^{(xy,\nu)} - z\big)^{-1}
 \,. 
\end{equation}
For a differentiable function $F$ of the matrix entries, set
\begin{equation}
\partial_{xy} F(H) \deq \frac{\dd}{\dd \nu} F(H^{(xy, \nu)}) \bigg\vert_{\nu = h_{xy}} \,, \quad x \leq y
\end{equation}
and we use the convention that $\partial_{yx} \deq \partial_{xy}$ for $x \leq y$. 

To control the remainder term in the cumulant expansion, we need the following stability lemma for the resolvent. It allows to control the perturbed resolvent $G^{(xy, \nu)}$ in terms of the unperturbed resolvent $G$ as easily follows using resolvent expansions as in Lemma \ref{lem:resolventexpand}. 

\begin{lemma}[Resolvent stability] \label{lem:stability}Fix a spectral parameter $z \in \C \setminus \R$ and assume that, for  $\Lambda \in [0, (Np)^{1/2 - \epsilon}]$ for some $\epsilon >0$, we have
\begin{equation}
\max_{a,b} |G_{ab}(z)| \leq \Lambda \,. 
\end{equation}
Then, uniformly in all indices $a,b, x,y \in [N]$, it holds that
\begin{equation} \label{eq:singlediff}
\sup_{|\nu| \leq 10(Np)^{-1/2}} \left| G_{ab}^{(xy,\nu)}(z) - G_{ab}(z) \right| \leq \frac{C}{\sqrt{Np}} \Lambda^2
\end{equation}
for some constant $C> 0$. 

Moreover, we have the following: 
\begin{align}
\sup_{|\nu| \leq 10(Np)^{-1/2}} \left| G_{xy}^{(xy,\nu)}(z) \right| &\leq C \left(  \left| G_{xy}(z) \right| + \frac{1}{\sqrt{Np}} \left| G_{xx}(z) G_{yy}(z) \right|  \right) \label{eq:offdiag}\\
\sup_{|\nu| \leq 10(Np)^{-1/2}} \left| G_{xx}^{(xy,\nu)}(z) \right| &\leq C   \left| G_{xx}(z) \right|  \label{eq:diag}
\end{align}
uniformly in indices $x,y \in [N]$, as well as
\begin{equation}
\sup_{|\nu| \leq 10(Np)^{-1/2}} \left| \big\langle G^{(xy,\nu)}(z)\big\rangle \right| \leq C\left( \left| \big\langle G(z)\big\rangle \right| + \frac{1}{N |\im z|}\right) \label{eq:trace}
\end{equation}
\end{lemma}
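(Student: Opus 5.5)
The argument rests on the resolvent identity for a rank-two perturbation. Write $\delta \deq \nu - h_{xy}$, so that $H^{(xy,\nu)} = H + \delta \Delta^{xy}$. For $|\nu| \leq 10(Np)^{-1/2}$ we have $|\delta| \leq |\nu| + |h_{xy}| \lesssim (Np)^{-1/2}$. This holds because $h_{xy} = \theta(A_{xy} - p)$ takes values in $\{-\theta p, \theta(1-p)\}$, and $\theta \asymp (Np)^{-1/2}$. We then set $G^\nu \deq G^{(xy,\nu)}$ and use
\begin{equation*}
G^\nu = G - \delta\, G \Delta^{xy} G^\nu\,.
\end{equation*}
Since $\Delta^{xy}$ is supported on the index pairs $\{x,y\}^2$, the whole problem reduces to the $2\times 2$ block of $G^\nu$ on $\{x,y\}$. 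Restricting the identity to that block gives $(1 + \delta\, G|_{\{x,y\}} \sigma)\, G^\nu|_{\{x,y\}} = G|_{\{x,y\}}$. Here $\sigma$ is the $2\times 2$ swap matrix when $x \neq y$; when $x = y$ it is the scalar $1$.

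\textbf{Step 1: invert the block.} The entries of $\delta\, G|_{\{x,y\}} \sigma$ are bounded by $C\Lambda (Np)^{-1/2} \leq C (Np)^{-\epsilon}$, which is small. A Neumann series therefore inverts the block, and gives $\|G^\nu|_{\{x,y\}}\|_{\max} \leq 2\Lambda$ once $N$ is large. This step needs $\Lambda \ll (Np)^{1/2}$, and it is the only delicate point of the proof.

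\textbf{Step 2: the bound \eqref{eq:singlediff}.} Expanding the identity gives
\begin{equation*}
G^\nu_{ab} - G_{ab} = -\delta\,\big(G_{ax} G^\nu_{yb} + G_{ay} G^\nu_{xb}\big)\,.
\end{equation*}
We cannot yet use $|G^\nu_{yb}| \lesssim \Lambda$ here, because $b$ is arbitrary. To handle this, take the $(y,b)$ and $(x,b)$ entries of the same identity. This yields a $2\times 2$ linear system for the vector $(G^\nu_{xb}, G^\nu_{yb})$, with right-hand side $(G_{xb}, G_{yb})$ and with the invertible matrix from Step 1. Solving it gives $|G^\nu_{xb}| + |G^\nu_{yb}| \leq 2\Lambda$. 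Substituting back, $|G^\nu_{ab} - G_{ab}| \leq C (Np)^{-1/2} \Lambda^2$, which is \eqref{eq:singlediff}.

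\textbf{Step 3: the bounds \eqref{eq:offdiag} and \eqref{eq:diag}.} These need the more precise Schur/Cramer form of the $2\times 2$ inverse, rather than the crude bound by $\Lambda$. Write $D = 1 + \delta\, G|_{\{x,y\}} \sigma$. Its determinant is $1 + O((Np)^{-\epsilon})$. Cramer's rule then shows that $G^\nu_{xx}$ equals $G_{xx}$ times $(1 + O(\delta \Lambda))$, plus a correction of order $\delta^2$ times products of entries; all of this is bounded by $C|G_{xx}|$. Similarly, $G^\nu_{xy} = G_{xy} + O(\delta\, G_{xx} G_{yy}) + O(\delta\, G_{xy}^2)$. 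Since $\delta \Lambda \ll 1$, the last term is absorbed into $C|G_{xy}|$, and this gives \eqref{eq:offdiag}.

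\textbf{Step 4: the trace bound \eqref{eq:trace}.} Summing the identity over the diagonal gives
\begin{equation*}
\langle G^\nu\rangle - \langle G\rangle = -\frac{\delta}{N}\,\big((G^\nu G)_{yx} + (G^\nu G)_{xy}\big)\,.
\end{equation*}
By Cauchy–Schwarz and the Ward identity \eqref{Ward}, $|(G^\nu G)_{yx}| \leq (\im G^\nu_{yy} \, \im G_{xx})^{1/2} / |\im z|$. Using Step 1, this is at most $C\Lambda / |\im z|$. Multiplying by $\delta / N$ and using $\delta \Lambda \leq (Np)^{-\epsilon}$ gives \eqref{eq:trace}, with room to spare.
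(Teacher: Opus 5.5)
Your proposal is correct and is essentially the argument the paper has in mind. The first-order resolvent identity $G^{(xy,\nu)} = G - \delta\, G\Delta^{xy}G^{(xy,\nu)}$, restricted to the indices $\{x,y\}$ (and to the column $b$ for \eqref{eq:singlediff}), closes into a $2\times 2$ self-consistent system. You solve that system exactly by Cramer's rule, where the paper iterates the corresponding self-consistent inequalities as around \eqref{eq:resexp} and \eqref{eq:xyupexpand}; for instance $G^{(xy,\nu)}_{xx} = G_{xx}/\det D$, so the proportionality to $G_{xx}$ needed in \eqref{eq:diag} is exact. Your Step 4 is the same Cauchy--Schwarz/Ward bound as in \eqref{eq:av1jump}, although \eqref{eq:trace} in fact needs no hypothesis: $G^{(xy,\nu)} - G$ has rank at most two and operator norm at most $2/|\im z|$, so $|\langle G^{(xy,\nu)} - G\rangle| \leq 4/(N|\im z|)$.
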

\begin{proof}
The proof of \eqref{eq:singlediff} is almost identical to that of \cite[Lemma 2.7]{BourHua2026} and hence omitted. The remaining claims follow by simple resolvent expansion (see Lemma \ref{lem:resolventexpand}), allowing to establish self-consistent inequalities as in the arguments around \eqref{eq:resexp} or \eqref{eq:xyupexpand} above. We omit the details for brevity.
\end{proof}

\subsection{Cumulant expansion error estimate and proof of Theorem \ref{thm:GOE}} \label{subsec:cumulants}
The goal of this section is to give the proof of Theorem \ref{thm:GOE}. As proven in \cite[Theorem 1.8]{BourHua2026}, the key to establishing $\op{Sine}_1$ microscopic spectral statistics is to verify a microscopic version of the \emph{loop equations} with parameter $\beta = 1$, given by \eqref{eq:loopeqmicro} below.

\begin{proposition}[Microscopic loop equations] \label{prop:loop}
Fix $E \in (-2,2)$ and $\kappa > 0$. Suppose that $(\log N)^{2+\kappa} \leq Np \leq N^{1-\kappa}$.
For $w \in \C \setminus \R$ define the rescaled and shifted Stieltjes transform
	\begin{equation} \label{eq:sdef}
s_N(w) \deq \int \frac{1}{s - z} \, \mu^E(\dd s) + \frac{E}{2 \pi \rho_E}= \sum_{\lambda \in \spec(H)} \frac{1}{N\pi \rho_E (\lambda - E) - w} + \frac{E}{2 \pi \rho_E} \,. 
	\end{equation}
	Then, for every $k \in \N$ and every compact set $K \subset \C \setminus \R$, uniformly for $w, w_2, ... , w_k \in K$, we have
\begin{multline} \label{eq:loopeqmicro}
		\E \Bigg[ \left(1 +   s_N(w)^2 + \partial_{w} s_N(w)\right) \Bigg(\prod_{j=2}^{k} s_N(w_j) \Bigg) \\
+ 2 \sum_{j=2}^k \partial_{w_j} \frac{s_N(w) - s_N(w_j)}{w - w_j} \Bigg(\prod_{\substack{i=2 \\ i \neq j}}^{k}s_N(w_i)\Bigg) \Bigg] = o(1)\left( 1 + \E \left[\mathcal{R}_N^{k+1}\right] \right)
\end{multline}
	where $o(1)$ vanishes as $N \to \infty$ and can be replaced by, e.g., $O((\log N)^{-\kappa/10}) $. The error term $\mathcal{R}_N$ is given by 
	\begin{equation}
\mathcal{R}_N \deq \max\{ |s_N(w)|, |s_N(w_2)|, ... , |s_N(w_k)|\} \,. 
	\end{equation}
\end{proposition}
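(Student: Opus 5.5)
We follow the scheme of \cite[Section 2.3]{BourHua2026}. We write $s_N$ in terms of the normalized trace of the resolvent $G(z)$ of $\theta A = \theta p\Smat + H$ at $z = E + w/(N\pi\rho_E)$, where $N\eta\asymp 1$. For $k=1$ the loop equation $1+s_N^2+\partial_w s_N\approx 0$ is the microscopic rescaling of the self-consistent equation $1 + z\ang{G} + \ang{G}^2 + \frac1N\ang{G^2}\approx 0$. The $\frac1N\ang{G^2}$ term produces the $\beta=1$ derivative term. The constant shift $E/(2\pi\rho_E)$ and the $z$-dependence absorb the $E$-dependence of $m_{\rm sc}$; the $\theta p\Smat$ part is handled by observing that it only affects a rank-one direction, with $G_{\bm e\bm e}$ bounded via Lemma \ref{lem:monotone}.

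\textbf{Cumulant expansion.} Start from $z\ang{G} = -1 + \ang{(\theta p\Smat + H)G}$ and expand $\E[\ang{HG}\prod_{j} \ang{G(z_j)}]$, summing over $x\le y$ with Lemma \ref{lem:cumexp} applied to $h=h_{xy}$.
\begin{itemize}
\item The second-cumulant term gives $-\E[\ang{G}^2 + \frac1N\ang{G^2}]$ times the product, plus the derivative terms hitting $\ang{G(z_j)}$. After rescaling, the latter become exactly $2\partial_{w_j}\frac{s_N(w)-s_N(w_j)}{w-w_j}$, using $\frac1N\sum_{xy}G_{xy}(z)(G(z_j)^2)_{yx}$ and the resolvent identity.
\item Diagonal ($x=y$) corrections and the deterministic rank-one contribution are $O(1/N)$ relative, hence negligible; the latter uses the $\bm e$-bound in $\Omega_N$.
\end{itemize}
Multiplying by the microscopic normalization and working on $\Omega_N$ (the complement contributes $N^{-D}$ times polynomial bounds, using $|G|\le\eta^{-1}$) leaves only the remainder $\mathcal R$ to control.

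\textbf{Remainder.} Since $|h_{xy}|\lesssim (Np)^{-1/2}$, Lemma \ref{lem:stability} with $\Lambda\lesssim\log N$ from Lemma \ref{lem:monotone} lets us replace $\sup_\nu|f''|$ by unperturbed quantities up to harmless factors. The second derivative of $G_{yx}\prod_j\ang{G(z_j)}$ produces the principal term \eqref{goe_sketch_error}, namely $\frac{1}{N^2(Np)^{1/2}}\sum_{x,y}|G_{xy}G_{xx}G_{yy}|$ times $\prod_j|\ang{G(z_j)}|$, together with terms where derivatives hit the traces. The latter carry extra $1/N$ factors and are bounded using \eqref{eq:trace}.

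\textbf{Main obstacle.} The key step, as outlined in Section \ref{sec:GOE_overview}, is bounding this principal term by $\frac{\log N(\log\log N)^2}{(Np)^{1/2}}(1+|\im\ang{G}|^{3/2})$.
\begin{enumerate}
\item Apply Cauchy--Schwarz and the Ward identity to get $(\im\ang{G}/(N\eta))^{1/2}\cdot\frac1N\sum_x|G_{xx}|^2$.
\item Write $G_{xx}(z)=G_{xx}(z+\ii\eta_0)-\int_0^{\eta_0}\partial_t G_{xx}(z+\ii t)\,\dd t$ with $\eta_0 = C\log N/N$. Bound the first term by $O(1)$ via $\Omega_N$, and bound $|\partial_t G_{xx}|\le \im G_{xx}(z+\ii t)/(\eta+t)$.
\item Estimate $(\im G_{xx}(z+\ii t))^2\le \frac{\eta+\eta_0}{\eta}\im G_{xx}(z)\im G_{xx}(z+\ii\eta_0)\lesssim \log N\,\im G_{xx}(z)$. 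This uses that $t\mapsto t\im G$ is operator increasing and $t\mapsto t^{-1}\im G$ is operator decreasing, plus the local law at scale $\eta_0$.
\item Conclude $\frac1N\sum_x|G_{xx}|^2\lesssim 1+\log N(\log\log N)^2\im\ang{G}$.
\end{enumerate}
Since $\im\ang{G}$ and the $\ang{G(z_j)}$ are comparable to $1+\mathcal R_N$ after rescaling, the total error is $O\bigl(\log N(\log\log N)^2 (Np)^{-1/2}\bigr)(1+\E\mathcal R_N^{k+1})$. Under $Np\ge(\log N)^{2+\kappa}$ this prefactor is $O((\log N)^{-\kappa/10})$. Some care is needed to keep the power of $\mathcal R_N$ at $k+1$: use Hölder/Young, absorbing $|\ang{G}|^{1/2}$ factors into $1+\mathcal R_N^{k+1}$.
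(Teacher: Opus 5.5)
Your overall route is the paper's: the cumulant expansion of \cite[Section 2.3]{BourHua2026}, with $\theta p\,G_{\bm e\bm e}$ and the missing diagonal entries $h_{xx}$ treated as separate small corrections. The third-order remainder is controlled through the $\eta$-integration and operator-monotonicity bound $\frac1N\sum_x\abs{G_{xx}}^2\lesssim 1+\log N(\log\log N)^2\im\ang{G}$, and your handling of the principal term $\frac{1}{N^2(Np)^{1/2}}\sum_{x,y}\abs{G_{xy}G_{xx}G_{yy}}$ is correct. The gap is your claim that everything else in the remainder is lower order. At $\eta\asymp 1/N$ this is false, and it is exactly where naive bounds break down when $Np=(\log N)^{2+\kappa}$.

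First, the terms where derivatives hit the traces gain nothing from their prefactor $1/N$. Indeed $\partial_{xy}\ang{G(z_j)}=-\frac2N(G(z_j)^2)_{xy}$, and $\frac1N\abs{(G^2)_{xy}}\le\frac{1}{N\eta}(\im G_{xx}\im G_{yy})^{1/2}$ is typically of order one. Moreover, \eqref{eq:trace} only controls undifferentiated traces, so it does not bound these terms at all. For instance, $\frac{1}{N^2(Np)^{1/2}}\sum_{x,y}\abs{G_{xx}G_{yy}}\,N^{-1}\abs{(G(z_j)^2)_{xy}}$ has no off-diagonal factor $G_{xy}$ to feed a Ward identity. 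Bounding its entries by $\log N$ via Lemma \ref{lem:monotone} only gives $(\log N)^3/\sqrt{Np}$. You must instead use $N^{-1}\abs{(G^2)_{xy}}\lesssim(\im G_{xx}\im G_{yy})^{1/2}$, then Cauchy--Schwarz and your averaged diagonal bound, exactly as for the principal term.

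Second, passing from $G^{(xy,\nu)}$ to $G$ is not ``up to harmless factors''. The crude bound \eqref{eq:singlediff} gives an additive error $(\log N)^2/\sqrt{Np}$, which is not small in your regime, so you need \eqref{eq:offdiag}--\eqref{eq:diag}. These generate additional terms such as $\frac{1}{N^2Np}\sum_{x,y}\abs{G_{xx}}^2\abs{G_{yy}}^2$, and $\frac{1}{N^2(Np)^2}\sum_{x,y}\abs{G_{xx}G_{yy}}^3$ from the $G_{xy}^3$ case. Their naive sizes, $(\log N)^4/(Np)$ and $(\log N)^6/(Np)^2$, are not small. They become $(\log N)^2(\log\log N)^4/(Np)$ and $(\log N)^4(\log\log N)^4/(Np)^2$ only after applying your step-4 bound to each factor $\frac1N\sum_x\abs{G_{xx}}^2$, spending at most one extra pair $\abs{G_{xx}G_{yy}}\le(\log N)^2$.

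With these repairs, which use only your own key estimate, the argument closes with the stated $O((\log N)^{-\kappa/10})$. Also, the rank-one correction is $O(p\theta\log N)=O(N^{-1/2}\log N)$, not $O(1/N)$ relative, though it is still negligible.
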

\begin{proof}[Proof of Theorem \ref{thm:GOE}]
This is an immediate consequence of Proposition \ref{prop:loop}, using \cite[Theorem~1.8]{BourHua2026}. 
\end{proof}
In order to show Proposition \ref{prop:loop}, as in \cite[Section 2.3]{BourHua2026}, we use the cumulant expansion from Lemma \ref{lem:cumexp}. The following lemma controls the various error terms occurring in this expansion. 
\begin{lemma}[Errors in the cumulant expansion] \label{lem:cumexperror} Fix $k \in \N$ and a compact set $K \subset \C \setminus \R$. Fix $E \in (-2,2)$, let $w, w_2, ... , w_k \in K$ and set 
	\begin{equation} \label{eq:zscale}
z = E + \frac{w}{N \pi \rho_E}\,, \quad z_i = E + \frac{w_i}{N \pi \rho_E} \,, \quad i =2,... , k \,. 
	\end{equation}
	Let 
	\begin{equation}
\mathcal{G}_k \deq \prod_{i=2}^k \langle G(z_i) \rangle \,, \quad R_N \deq \max\{ |\langle G(z) \rangle|, |\langle G(z_2) \rangle|, ... , |\langle G(z_k) \rangle|\} \,. 
	\end{equation}
	For $x,y \in [N]$ define
	\begin{equation}
f_{xy}(\nu) \deq G_{xy}^{(xy, \nu)}(z)  \prod_{i=2}^k \langle G^{(xy, \nu)}(z_i) \rangle  \,. 
	\end{equation}
	Then 
	\begin{align}
\frac{1}{N} \sum_{x,y} \E \left[ \left( N^{-1}|h_{xy}| + |h_{xy}|^3 \right) \sup_{|\nu| \leq \frac{10}{ \sqrt{Np}}} \left| \partial_\nu^2 f_{xy}(\nu)\right| \right] &= O\left((\log N)^{-\kappa/10}\right) \left( 1 + \E \left[R_N^{k+1}\right] \right)\label{eq:thirdorder} \\
	p \theta	\E\left[ \big| G_{\bm e \bm e}(z) \mathcal{G}_k \big| \right] &= O\left(\frac{(\log N)^{5}}{N^{1/2}}\right) \left( 1 + \E \left[R_N^{k+1}\right] \right)\label{eq:eecontrol} \\
\frac{1}{N^2} \sum_{x} \big| \E \big[ \partial_{xx} \big(G_{xx}(z) \mathcal{G}_k\big) \big] \big| &= O\left(\frac{(\log N)^{5}}{N}\right) \left( 1 + \E \left[R_N^{k+1}\right] \right) \label{eq:diagonal}
	\end{align}
uniformly in $w, w_2, ... , w_k \in K$. 
\end{lemma}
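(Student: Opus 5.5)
We treat the three estimates of Lemma \ref{lem:cumexperror} separately, working throughout on the event $\Omega_N$ (its complement costs $N^{-D}$ times the trivial bound $\eta^{-(k+1)}\lesssim N^{k+1}$). Lemma \ref{lem:monotone} then gives $|G_{ab}(z)|+|G_{\bm e\bm e}(z)|\lesssim \log N$ at the microscopic scale \eqref{eq:zscale}. Lemma \ref{lem:stability}, applicable with $\Lambda\asymp\log N\le (Np)^{1/2-\epsilon}$, transfers this bound to all perturbed resolvents $G^{(xy,\nu)}$ with $|\nu|\le 10(Np)^{-1/2}$; note $|h_{xy}|\le \theta\lesssim (Np)^{-1/2}$ deterministically. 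For \eqref{eq:trace} we use $|\langle G^{(xy,\nu)}(z_i)\rangle|\lesssim R_N+1$.

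\textbf{Estimates \eqref{eq:eecontrol} and \eqref{eq:diagonal}.} These are the easy ones. For \eqref{eq:eecontrol}, we have $p\theta=\sqrt{p/(N(1-p))}\le N^{-1/2}$. We bound $|G_{\bm e\bm e}|\lesssim\log N$ and $|\mathcal G_k|\le R_N^{k-1}\le 1+R_N^{k+1}$. For \eqref{eq:diagonal}, we differentiate: $\partial_{xx}$ produces $-G_{xx}^2\mathcal G_k$ plus terms $G_{xx}\,\partial_{xx}\langle G(z_i)\rangle\prod_{j\ne i}\cdots$, where $\partial_{xx}\langle G(z_i)\rangle=-N^{-1}(G(z_i)^2)_{xx}$. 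Bounding $|(G^2)_{xx}|\le \im G_{xx}/\eta\lesssim N\log N$, each term is at most $(\log N)^2(1+R_N^{k-1})$. After the prefactor $N^{-2}\sum_x$, this gives $O((\log N)^5/N)$ with room to spare.

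\textbf{Estimate \eqref{eq:thirdorder}.} This is the main obstacle. The weight satisfies $\E(N^{-1}|h|+|h|^3)\lesssim N^{-1}(Np)^{-1/2}$, so we need
\[
\frac{1}{N^2\sqrt{Np}}\sum_{x,y}\sup_\nu|f_{xy}''|=o(1)\,(1+R_N^{k+1}).
\]
We expand $f''_{xy}$ by the Leibniz rule using $\partial G_{ab}=-(G_{ax}G_{yb}+G_{ay}G_{xb})$.

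\emph{Terms with a derivative on a trace factor.} Each such derivative produces $N^{-1}(G^2(z_i))_{xy}$-type factors. Bounding $N^{-1}|(G^2)_{xy}|\le (N\eta)^{-1}(\im G_{xx}\im G_{yy})^{1/2}\lesssim\log N$, we get at most $(\log N)^{3}(Np)^{-1/2}(1+R_N^{k-1})$, which is acceptable once $Np\ge(\log N)^{6}$; so this crude bound is not enough below that threshold. Refining, the sum over $x,y$ of $|(G^2)_{xy}|$ is handled by Cauchy–Schwarz and Ward, exactly as the leading term below, gaining factors of $\im\langle G\rangle$.

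\emph{Leading term.} Both derivatives hit $G_{xy}$, giving terms like $G_{xy}G_{xx}G_{yy}$ and $G_{xy}^3$ (evaluated at $\nu$ and reduced to $G$ via \eqref{eq:offdiag}, \eqref{eq:diag}). Here we follow the overview in Section \ref{sec:GOE_overview}. First, Cauchy–Schwarz with Ward gives $\frac1{N^2}\sum|G_{xy}|^2=\im\langle G\rangle/(N\eta)\lesssim R_N$, so
\[
\frac{1}{N^2}\sum_{x,y}|G_{xy}G_{xx}G_{yy}|\le (\im\langle G\rangle)^{1/2}\,\frac1N\sum_x|G_{xx}|^2 .
\]
Second, we bound $\frac1N\sum_x|G_{xx}|^2$ via the integration in $\eta$ up to $\eta_0\asymp\log N/N$. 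Using the operator monotonicity of $\eta\mapsto\eta\im G$ and $\eta\mapsto\eta^{-1}\im G$, together with the local law at $\eta_0$ (Theorem \ref{thm:lolaw}), we obtain
\[
\frac1N\sum_x|G_{xx}|^2\lesssim 1+\log N(\log\log N)^2\,\im\langle G(z)\rangle .
\]
Together this gives $\frac{\log N(\log\log N)^2}{\sqrt{Np}}(1+R_N^{3/2})$ times $R_N^{k-1}$, which is $O((\log N)^{-\kappa/10})(1+R_N^{k+1})$ by Young's inequality and \eqref{eq:Npgekappa}.

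The care needed is in the reduction from $G^{(xy,\nu)}$ back to $G$ at this stage without losing a factor of $\log N$. The stability bounds \eqref{eq:offdiag}, \eqref{eq:diag} are linear in $G$, but the correction $(Np)^{-1/2}|G_{xx}G_{yy}|$ from \eqref{eq:offdiag} must be absorbed separately; it is summable by the same diagonal estimate. Terms such as $G_{xy}^3$ are easier, with at least two off-diagonal factors handled by Ward.
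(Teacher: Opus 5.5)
Your proposal is correct and follows essentially the same route as the paper. You restrict to $\Omega_N$, transfer the $O(\log N)$ entry bounds to $G^{(xy,\nu)}$ via Lemma \ref{lem:stability}, and dispose of \eqref{eq:eecontrol} and \eqref{eq:diagonal} with the crude monotonicity bound. For \eqref{eq:thirdorder} you combine Cauchy--Schwarz and Ward on the off-diagonal factors with the $\eta$-integration representation and the two operator-monotonicity properties, which gives $\frac1N\sum_x|G_{xx}|^2\lesssim 1+\log N(\log\log N)^2\,\im\langle G\rangle$ and hence the $\log N(\log\log N)^2/\sqrt{Np}$ error. The configurations where a derivative hits a trace factor are only sketched, but at the same level of detail as in the paper's own proof.
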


We point out that, recalling \eqref{eq:rhodef}--\eqref{eq:sdef} and \eqref{eq:zscale}, the relation between the ``standard'' variables $z$ and $\langle G(z) \rangle$ used in Lemma \ref{lem:cumexperror} and the microscopic variables $w$ and $s_N(w)$ used in Proposition \ref{prop:loop} is given by
\begin{equation*}
	z = E + \frac{w}{N \pi \rho_E} \quad \langle G(z) \rangle = \pi \rho_E s_N(w) - \frac{E}{2} \,. 
\end{equation*}

\begin{proof}[Proof of Proposition \ref{prop:loop}]
The argument is very similar to that used to deduce \cite[Proposition 2.3]{BourHua2026} based on \cite[Lemma 2.8]{BourHua2026}, the analog of Lemma \ref{lem:cumexperror} here. In our case, we additionally use \eqref{eq:eecontrol} to control the non-zero expectation of the adjacency matrix and \eqref{eq:diagonal} to remedy that $h_{xx} = 0$ in our case. We omit the details. 
\end{proof}
It thus remains to give the proof of Lemma \ref{lem:cumexperror}. 
\begin{proof}[Proof of Lemma \ref{lem:cumexperror}]
Since $K \subset \C \setminus R$ is compact, there is a  constant $C_K > 0$ such that 
\begin{equation} \label{eq:CK}
 C_K^{-1}  N^{-1} \leq |\im z| \,, |\im z_i| \leq C_K N^{-1} \,, \quad i = 2, ... , k \,. 
\end{equation}
Therefore, using the trivial resolvent bound $\Vert G^{(xy,\nu)}(\tilde{z}) \Vert + \Vert G(\tilde{z})\Vert \leq 2|\im z|^{-1} \leq 2 C_K N$ for all $\tilde{z} \in \{z, z_2, ... , z_k\}$, we have that the contribution of $\Omega_N^{\mathsf{c}}$ to \emph{all} the estimates \eqref{eq:thirdorder}--\eqref{eq:diagonal} is negligible, simply because $\P[\Omega_N^{\mathsf{c}}] \leq N^{-D}$ for some $D$ chosen sufficiently large. We shall henceforth assume that all resolvents are controlled on the very-high-probability event $\Omega_N$. 

In fact, on the event $\Omega_N$, we have, by Lemma \ref{lem:monotone} and Lemma \ref{lem:stability} that, uniformly in $a,b,x,y \in [N]$,
\begin{equation} \label{eq:resolventbound}
\sup_{|\nu| \leq 10(Np)^{-1/2}} \left| G_{ab}^{(xy,\nu)}(z)  \right| \leq C_K \log N
\end{equation}
for all $z$ as in \eqref{eq:zscale}. To simplify notation, in the following we will omit the constants $C_K$ from \eqref{eq:CK} and  $C$ from \eqref{eq:goodevent} and simply write $\lesssim$, which also absorbs other $N$-independent constants. 

We proceed and record the standard resolvent derivative identities 
\begin{equation}
\begin{split}
\partial_\nu G^{(xy,\nu)}(z) = - G^{(xy,\nu)} (z)\Delta_{xy} G^{(xy,\nu)}(z) \\
\partial_{\nu}^2 G^{(xy,\nu)}(z) = 2 G^{(xy,\nu)}(z) \Delta_{xy} G^{(xy,\nu)}(z) \Delta_{xy} G^{(xy,\nu)}(z)
\end{split}
\end{equation}
Hence, in order to prove \eqref{eq:thirdorder}, we need to consider in total seven types of terms with different index structure, that can be created by action of $\partial_\nu^2$ on $f_{xy}(\nu)$. For simplicity of the presentation, in the items below, we drop the superscript $(xy,\nu)$, the spectral parameters, as well as non-differentiated factors of $\langle G \rangle$ occurring in $f_{xy}$: 
\begin{itemize}
\item[(i.a)] both derivatives act on $G_{xy}$, one gets $G_{xy} G_{xx} G_{yy}$; 
\item[(i.b)] both derivatives act on $G_{xy}$, one gets $G_{xy} G_{xy} G_{xy}$; 
\item[(ii.a)] only one derivative acts on $G_{xy}$, the other on a copy of $\langle G \rangle$, one gets $N^{-1}G_{xy} G_{xy} (G^2)_{xy}$; 
\item[(ii.b)] only one derivative acts on $G_{xy}$, the other on a copy of $\langle G \rangle$, one gets $N^{-1}G_{xx} G_{yy} (G^2)_{xy}$; 
\item[(iii.a)] no derivative acts on $G_{xy}$, but both on one copy of $\langle G \rangle$, one gets $N^{-1}G_{xy} G_{yy} (G^2)_{xx}$; 
\item[(iii.b)] no derivative acts on $G_{xy}$, but both on one copy of $\langle G \rangle$, one gets $N^{-1}G_{xy} G_{xy} (G^2)_{xy}$;
\item[(iii.c)] no derivative acts on $G_{xy}$, but both on a fresh copy of $\langle G \rangle$, one gets $N^{-2}G_{xy} (G^2)_{xy} (G^2)_{xy}$. 
\end{itemize} 

In the following, we will only analyse Cases (i.a) and (i.b) in detail; the other ones can be handled similarly and are hence just discussed briefly at the end of the proof. We shall additionally use $\E |h_{xy}|^3 \lesssim N^{-1} (Np)^{-1/2}$ and independence of $h_{xy}$ with $ \sup_{|\nu| \leq 10/ \sqrt{Np}} \left| \partial_\nu^2 f_{xy}(\nu)\right|$ to bound the expectation of the product by the expectation of $ \sup_{|\nu| \leq 10/ \sqrt{Np}} \left| \partial_\nu^2 f_{xy}(\nu)\right|$. To ease the notation we shall henceforth suppress the dependence on spectral parameters, whenever it does not lead to confusion.  From now on, we also assume without loss of generality that $\eta \deq \im z > 0$.

In Case (i.a), we thus have the estimate
\begin{align} \label{eq:caseia}
&\frac{1}{N^2 (Np)^{1/2}} \sum_{x,y} \E \left[ \mathbf{1}_{\Omega_N} \sup_{|\nu| \leq \frac{10}{ \sqrt{Np}}} \left| G_{xy}^{(xy, \nu)} G_{xx}^{(xy, \nu)} G_{yy}^{(xy, \nu)} \prod_{i=2}^k \langle G^{(xy,\nu)}(z_i) \rangle \right|  \right] \\ \notag
\lesssim \; &\frac{1}{N^2 (Np)^{1/2}} \sum_{x,y} \left(\E \left[ \mathbf{1}_{\Omega_N}  | G_{xy} G_{xx} G_{yy}| \big(1 + |\mathcal{G}_k| \big)  \right] + \frac{1}{\sqrt{Np}} \E \left[ \mathbf{1}_{\Omega_N}  | G_{xx} G_{xx} G_{yy} G_{yy}| \big( 1 + | \mathcal{G}_k|\big)  \right]\right)
 \end{align}
 by application of Lemma \ref{lem:stability}, in particular \eqref{eq:offdiag}--\eqref{eq:trace}. 
 
 We use the representation 
  \begin{equation}
 	G (z)  = G(z + \ii \eta_0)  -  \int_{0}^{\eta_0} \partial_\eta G (z + \ii t) \, \dd t
 \end{equation}
to bound the diagonal Green function entries as
 \begin{equation} \label{eq:Gxxest}
 	|G_{xx} (z) | \lesssim 1 +   \int_{0}^{\eta_0} \frac{\im G_{xx} (z + \ii t)}{\eta + t} \, \dd t \,. 
 \end{equation}	 
 where we choose $\eta_0 = C_K (\log N)/N$ with $C_K$ from \eqref{eq:CK}. Armed with \eqref{eq:Gxxest}, abbreviating $G^{(t)} \equiv G(z + \ii t)$ we find the first term in \eqref{eq:caseia} to be bounded as
 \begin{equation*}
 	\begin{split}
&\frac{1}{N^2 (Np)^{1/2}} \sum_{x,y} \E \left[ \mathbf{1}_{\Omega_N}  | G_{xy} G_{xx} G_{yy}| \big(1 + |\mathcal{G}_k| \big)  \right] \\
\lesssim \; &\frac{1}{N^2 (Np)^{1/2}} \sum_{x,y} \E \left[\mathbf{1}_{\Omega_N}  | G_{xy}| \big(1 + |\mathcal{G}_k| \big)  \right] \\
&+ \frac{1}{N^2 (Np)^{1/2}} \int_{0}^{\eta_0} \hspace{-1mm} \dd t_1  \int_{0}^{\eta_0} \hspace{-1mm}\dd t_2 \sum_{x,y} \E \left[ \mathbf{1}_{\Omega_N}  | G_{xy}| \frac{\im G_{xx}(z + \ii t_1)}{\eta + t_1} \frac{\im G_{xx}(z + \ii t_2)}{\eta + t_2}\big(1 + |\mathcal{G}_k| \big)  \right] \\
\lesssim \; &\frac{1}{N^{1/2} (Np)^{1/2}} \E \left[ \mathbf{1}_{\Omega_N} \left(\frac{1}{N}\sum_{x,y}|G_{xy}|^2\right)^{1/2} \big(1 + |\mathcal{G}_k| \big) \right] \\
& + \frac{1}{N^{1/2} (Np)^{1/2}}  \E \left[ \mathbf{1}_{\Omega_N} \big(1 + |\mathcal{G}_k| \big) \left(\frac{1}{N}\sum_{x,y}|G_{xy}|^2 \right)^{1/2} \prod_{i=1}^2\int_{0}^{\eta_0} \hspace{-1mm}\frac{\dd t_i}{\eta + t_i} \left(\frac{1}{N} \sum_{x} \big( \im G_{xx}^{(t_i)} \big)^2\right)^{1/2}  \right] \,.
 	\end{split}
 \end{equation*}
 By application of a Ward identity, the first term on the right-hand side admits the bound 
 \begin{equation}
\frac{1}{N^{1/2} (Np)^{1/2}} \E \left[ \mathbf{1}_{\Omega_N} \left(\frac{1}{N}\sum_{x,y}|G_{xy}|^2\right)^{1/2} \big(1 + |\mathcal{G}_k| \big) \right] \lesssim \frac{1}{\sqrt{Np}} \left( 1 + \E \left[R_N^{k+1}\right] \right) \,. 
 \end{equation}
 For the second term, we also employ  a Ward identity and additionally use Lemma \ref{lem:monotone} to estimate, on the event $\Omega_N$, one power of $\im G_{xx}^{(t_i)}$ inside the summation by  $\im G_{xx}^{(t_i)}\lesssim \frac{\log N}{N (\eta + t_i)}$ in order to get
\begin{align}
&\frac{1}{N^{1/2} (Np)^{1/2}}  \E \left[ \mathbf{1}_{\Omega_N} \big(1 + |\mathcal{G}_k| \big) \left(\frac{1}{N}\sum_{x,y}|G_{xy}|^2 \right)^{1/2} \prod_{i=1}^2\int_{0}^{\eta_0} \hspace{-1mm}\frac{\dd t_i}{\eta + t_i} \left(\frac{1}{N} \sum_{x} \big( \im G_{xx}^{(t_i)} \big)^2\right)^{1/2}  \right]
\notag \\
\lesssim \; &\frac{\log N}{N (Np)^{1/2}}  \E \left[ \mathbf{1}_{\Omega_N} \big(1 + |\mathcal{G}_k| \big) \langle \im G \rangle^{1/2} \prod_{i=1}^2\int_{0}^{\eta_0} \hspace{-1mm}\frac{\dd t_i}{(\eta + t_i)^{3/2}} \langle  \im G^{(t_i)} \rangle^{1/2}  \right]
\notag \\ \label{eq:imim}
\lesssim \; &\frac{\log N}{(Np)^{1/2}}  \E \left[ \mathbf{1}_{\Omega_N} \big(1 + |\mathcal{G}_k| \big) \langle \im G \rangle^{3/2} \left(\int_{0}^{\eta_0} \hspace{-1mm}\frac{\dd t}{\eta + t}\right)^{2}  \right] \lesssim \frac{\log N (\log \log N)^2}{(Np)^{1/2}}  \left( 1 + \E \left[R_N^{k+1}\right] \right) \,.
\end{align}
 Here, to go to the last line, we additionally used that $\eta \mapsto \eta^{-1}\im G(E + \ii \eta)$ is monotonically decreasing as an operator in order to bound $\im G^{(t_i)}$ in terms of $\im G$. In the ultimate step, we used that 
 \begin{equation} \label{eq:loglog}
\int_{0}^{\eta_0} \hspace{-1mm}\frac{\dd t}{\eta + t} \lesssim \log\left(\frac{\eta + \eta_0}{\eta}\right) \lesssim \log \log N 
 \end{equation}
 since $\eta_0 \asymp (\log N)/N$ and $\eta \asymp 1/N$. 
 We are thus left with controlling the second term in \eqref{eq:caseia}.  For each factor of $G_{xx}$ and $G_{yy}$ we employ \eqref{eq:Gxxest} and find, similarly to above, the result to be bounded by
\begin{align}
&\frac{1}{Np}  \left( 1 + \E \left[R_N^{k+1}\right] \right) + \frac{1}{Np} \E \left[  \mathbf{1}_{\Omega_N}\big(1 + |\mathcal{G}_k| \big) \prod_{i=1}^2\int_{0}^{\eta_0} \hspace{-1mm}\frac{\dd t_i}{\eta + t_i} \int_{0}^{\eta_0} \hspace{-1mm}\frac{\dd \tilde{t}_i}{\eta + \tilde{t}_i} \left(\frac{1}{N} \sum_{x} \big( \im G_{xx}^{(t_i)}   \im G_{xx}^{(\tilde{t}_i)}\big)\right)   \right] 
\notag \\
\lesssim \; &\frac{1}{Np}  \left( 1 + \E \left[R_N^{k+1}\right] \right) + \frac{(\log N)^2}{N^2 p} \E \left[  \mathbf{1}_{\Omega_N}\big(1 + |\mathcal{G}_k| \big) \prod_{i=1}^2\int_{0}^{\eta_0} \hspace{-1mm}\frac{\dd t_i \, \langle \im G^{(t_i)} \rangle^{1/2}}{(\eta + t_i)^{3/2}}  \, \int_{0}^{\eta_0} \hspace{-1mm}\frac{\dd \tilde{t}_i \, \langle \im G^{(\tilde{t}_i)} \rangle^{1/2}}{(\eta + \tilde{t}_i)^{3/2}}   \right] 
\notag \\ \label{eq:imimimim}
\lesssim \; &\frac{(\log N)^2 \, (\log \log N)^4 }{N p} \left( 1 + \E \left[R_N^{k+1}\right] \right) \,. 
\end{align}
 Note that now there are in total four integrals producing the $\log \log N$-divergence. 
 Collecting the above estimates, we have thus shown that 
 \begin{equation}
\eqref{eq:caseia} \lesssim (\log N)^{-\kappa/10}  \left( 1 + \E \left[R_N^{k+1}\right] \right) \,. 
 \end{equation}
 
  In Case (i.b), we have the estimate  
\begin{align}
 		&\frac{1}{N^2 (Np)^{1/2}} \sum_{x,y} \E \left[ \mathbf{1}_{\Omega_N} \sup_{|\nu| \leq \frac{10}{ \sqrt{Np}}} \left| G_{xy}^{(xy, \nu)} G_{xy}^{(xy, \nu)} G_{xy}^{(xy, \nu)} \prod_{i=2}^k \langle G^{(xy,\nu)}(z_i) \rangle \right|  \right] \notag \\
 		\lesssim  \; &\frac{1}{N^2 (Np)^{1/2}} \sum_{x,y} \Big(\E \left[ \mathbf{1}_{\Omega_N}  | G_{xy} G_{xy} G_{xy}| \big(1 + |\mathcal{G}_k| \big)  \right]
\notag \\ \label{eq:caseib}
		&+ \frac{1}{(Np)^{3/2}} \E \Big[ \mathbf{1}_{\Omega_N}  | (G_{xx})^3 (G_{yy})^3 | \big( 1 + | \mathcal{G}_k|\big)  \Big]\Big)
\end{align}
 by application of Lemma \ref{lem:stability}, in particular \eqref{eq:offdiag}--\eqref{eq:trace}. 
 
 For the first term on the right-hand side of \eqref{eq:caseib} we bound one factor of $G_{xy}$ by $\log N$ with the aid of Lemma \ref{lem:monotone}. The other two factors are handled by a Schwarz inequality together with a Ward identity, yielding that this contribution is bounded by (a constant times)
 \begin{equation}
\frac{\log N}{\sqrt{Np}} \left( 1 + \E \left[R_N^{k+1}\right] \right) \,. 
 \end{equation}
 The second term on the right-hand side of \eqref{eq:caseib} can be handled similarly to \eqref{eq:imim} and \eqref{eq:imimimim}. First, we estimate one factor of $G_{xx}$ and $G_{yy}$ trivially by Lemma \ref{lem:monotone}, yielding a $(\log N)^2$-factor. For the remaining two factors of $G_{xx}$ and $G_{yy}$ we employ \eqref{eq:Gxxest} and find this contribution to admit the bound
\begin{equation}
\frac{(\log N)^4 (\log \log N)^4}{(Np)^2} \left( 1 + \E \left[R_N^{k+1}\right] \right) \,, 
\end{equation}
exactly as in \eqref{eq:imimimim}. To summarize, collecting the estimates above, we have shown that 
\begin{equation}
\eqref{eq:caseib} \lesssim (\log N)^{-\kappa/10}  \left( 1 + \E \left[R_N^{k+1}\right] \right) \,. 
\end{equation}

For the remaining cases, we argue as follows. Case (ii.a) can be reduced to the treatment of Case (i.b) since $N^{-1} (G^2)_{xy} \lesssim \log N$ by a Schwarz inequality together with a Ward identity and Lemma \ref{lem:monotone}. Case (ii.b) can be handled similarly to Case (i.a) estimating $|N^{-1} (G^2)_{xy}| \lesssim \big(\im G_{xx}\big)^{1/2} \big(\im G_{yy}\big)^{1/2}$. The same applies to Case (iii.a). Case (iii.b) is similar to Case (i.b), while the last remaining Case (iii.c) is again treatable similarly to Case (i.a). The details are left to the reader. Therefore, we have proven \eqref{eq:thirdorder}. 

The proofs for \eqref{eq:eecontrol} and \eqref{eq:diagonal} are straightforward, noting that, in case of \eqref{eq:eecontrol} the prefactor $p \theta$ is bounded by $1/\sqrt{N}$, while in case of \eqref{eq:diagonal} there is only a single summation. Both make the simple monotonicity bound from Lemma \ref{lem:monotone} affordable. This concludes the proof of Lemma \ref{lem:cumexperror}. 
\end{proof}

\appendix

\section{Additional technical results and proofs} \label{app:technical}
In this appendix, we collect some additional technical results and proofs of statements in the~main~text. 
\subsection{Concentration for sums Bernoulli random variables: Bennett's inequality}
In Section \ref{subsec:generateproof}, we frequently use Bennett's inequality, stated below (see, e.g., \cite[Theorem 2.9]{BLM13}).
\begin{lemma}[Bennett's inequality] \label{lem:Bennett}
	Let $N \in \N$ and $p \in (0,1)$. If $X_1, \dots, X_n$ are i.i.d.\ $\op{Bernoulli}(p)$ random variables, then
	\begin{equation}
		\P \left( \frac{1}{Np(1-p)} \left| \sum_{n=1}^N \big(X_n - p\big)\right| \geq t \right) \leq 2 \ee^{- Np (1 - p) h(t)}\,,
	\end{equation}
	where $h(u) \deq (1 + u) \log(1 + u) - u$. 
\end{lemma}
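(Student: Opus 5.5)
This is the classical Bennett inequality, so I plan the standard Chernoff (exponential moment) argument. I will write $\sigma^2 \deq p(1-p)$ and $Y_n \deq X_n - p$. These are i.i.d., centred, satisfy $|Y_n| \leq 1$, and have variance $\sigma^2$. By symmetry of the two tails, together with a union bound (the source of the factor $2$), it suffices to bound $\P\pb{\sum_n Y_n \geq N\sigma^2 t}$. The lower tail is then handled by applying the same argument to $-Y_n$, which has the same bound $|-Y_n| \leq 1$ and the same variance.

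\emph{Step 1: one-variable moment generating function bound.} For $\lambda > 0$, I expand
\[
\E \ee^{\lambda Y} = 1 + \sum_{k \geq 2} \frac{\lambda^k \E Y^k}{k!}\,.
\]
Since $|Y| \leq 1$, we have $\E Y^k \leq \E Y^2 = \sigma^2$ for $k \geq 2$. This gives
\[
\E \ee^{\lambda Y} \leq 1 + \sigma^2(\ee^\lambda - 1 - \lambda) \leq \exp\pb{\sigma^2(\ee^\lambda - 1 - \lambda)}\,,
\]
where the last step uses $1 + u \leq \ee^u$.

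\emph{Step 2: Chernoff bound and optimization.} By independence and Markov's inequality,
\[
\P\pB{\sum_n Y_n \geq N\sigma^2 t} \leq \exp\pB{N\sigma^2\pb{\ee^\lambda - 1 - \lambda - \lambda t}}\,.
\]
I then choose $\lambda = \log(1+t)$, which minimizes the exponent. Substituting, the exponent becomes
\[
N\sigma^2\pb{t - (1+t)\log(1+t)} = -N\sigma^2 h(t)\,,
\]
which is the claimed bound.

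There is no real obstacle. The only point needing care is the moment bound $\E Y^k \leq \sigma^2$ for all $k \geq 2$, which follows from $|Y| \leq 1$ via $|Y|^k \leq Y^2$. Beyond that, I only need to note that the case $t$ large, where the lower-tail event is empty, is trivially covered by the same argument.
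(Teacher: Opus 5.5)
Your proof is correct. It is the standard Cram\'er--Chernoff proof of Bennett's inequality: bound the moment generating function using $|Y_n| \leq 1$ (so that $\E Y_n^k \leq \sigma^2$), then optimize at $\lambda = \log(1+t)$. The paper gives no proof and simply cites \cite[Theorem 2.9]{BLM13}, whose proof is this same argument. The only implicit point is that $\lambda > 0$ requires $t > 0$. The case $t = 0$ is trivial, and the statement only makes sense for $t \geq 0$ anyway, since for negative $t$ the right-hand side can fall below $1$.
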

In the main text, we frequently apply Lemma \ref{lem:Bennett} in the following ways: First, for any $D > 0$, there exists a $C > 0$ such that, for all $ p \leq 1/2 $ and denoting $\nu \deq \sqrt{\log N /(Np)}$, we have
\begin{equation} \label{eq:Bennettpractice}
	\begin{split}
		\P \left( \frac{1}{Np} \left| \sum_{n=1}^N \big(X_n - p\big)\right| \geq C (\nu + \nu^2)\right) \leq C N^{-D} \quad &\text{and} \quad \P \left( \frac{1}{Np}  \sum_{n=1}^N X_n  \geq C (1 + \nu^2 ) \right) \leq C N^{-D} \\
		\text{i.e.} \qquad \frac{1}{Np} \left| \sum_{n=1}^N \big(X_n - p\big)\right|  \preceq \nu + \nu^2 \qquad \qquad  &\text{and} \qquad \qquad  \frac{1}{Np}  \sum_{n=1}^N X_n \preceq 1 + \nu^2 \,. 
	\end{split}
\end{equation}

Moreover, we have the following estimate (used in Section \ref{sec:isolaw}). Let $N \in \N$, $m \leq (\log N)^C$, and $p \leq N^{-1/C}$ for some constant $C > 0$. If $X_1, \dots, X_m$ are i.i.d.\ $\op{Bernoulli}(p)$ random variables, then
\begin{equation} \label{eq:IBennett}
\sum_{n =1}^m X_n \preceq 1\,,
\end{equation}
which follows by simple application of Lemma \ref{lem:Bennett} in the regime of large argument of $h$, where $h(u) \asymp u \log u$. 

\subsection{Bernoulli characteristic flow: Proofs of Lemmas \ref{lem:BerProp} and \ref{lem:Mt'}}
\begin{proof}[Proof of Lemma \ref{lem:BerProp}]
The first property (a) is obvious from the form of the ODE \eqref{eq:Bernflow}. For property (b), we first abbreviate (recall $p_t \deq 1 - \ee^{-t}$ from \eqref{eq:pt})
\begin{equation} \label{eq:abbreviate}
\mu_t \deq \theta p_t \,, \qquad \sigma_t \deq \frac{p_t(1 - p_t)}{p(1-p)} \,. 
\end{equation}
Then taking the derivative  of \eqref{eq:timeMDE} in $t$ and plugging in \eqref{eq:Bernflow} we infer 
\begin{equation} \label{eq:Mderiv}
	\begin{split}
\partial_t M_t&= - M_t \pB{(\partial_t \mu_t) \Smat + z_t - \theta \Smat + \frac{1-p_t}{p(1-p)} m_t - (\partial_t \sigma_t) m_t - \sigma_t (\partial_t m_t)} M_t \\
&= M_t + \sigma_t M_t^2 \left(\partial_t m_t - m_t\right)\,,
	\end{split}
\end{equation}
where we used the shorthand notations $M_t \equiv M_t(z_t)$ and $m_t \equiv m_t(z_t)$ and in the second step used that 
\begin{equation} \label{eq:partialmusigma}
\partial_t \mu_t = \theta - \mu_t \quad \text{and} \quad \partial_t \sigma_t = \frac{1-p_t}{p (1-p)} - 2 \sigma_t \,. 
\end{equation}
Next, taking the trace of \eqref{eq:Mderiv}, we obtain
\begin{equation}
\big(1 - \sigma_t \langle M_t^2 \rangle\big) \big(\partial_t m_t - m_t\big) = 0 \,. 
\end{equation}
Assuming that $\big(1 - \sigma_t \langle M_t^2 \rangle\big) \neq 0$, we conclude that $\partial_t m_t = m_t$ and hence $\partial_t M_t = M_t$ using \eqref{eq:Mderiv} again. This simple ODE has the solution given in Lemma \ref{lem:BerProp}~(b). 

It thus remains to prove that $\big(1 - \sigma_t \langle M_t^2 \rangle\big) \neq 0$. In order to see this, we take the imaginary part of the MDE \eqref{eq:timeMDE}, reading\footnote{Here, we additionally used that $z_t \in \mathbb{H} \mathbf{1}+ \R \Smat$.}
\begin{equation} \label{eq:ImpartMDE}
\frac{\im M_t}{M_tM_t^*} = \eta_t + \sigma_t \im m_t \,. 
\end{equation}
From this we directly infer $\big(1 - \sigma_t \langle |M_t|^2 \rangle\big) \im m_t = \eta_t \langle |M_t|^2 \rangle$ and thus 
\begin{equation} \label{eq:trivbound}
1 > \sigma_t \langle |M_t|^2 \rangle \,. 
\end{equation}
By the Cauchy-Schwarz inequality, this immediately shows $\big(1 - \sigma_t \langle M_t^2 \rangle\big) \neq 0$. 

Property (c) follows by taking the imaginary part of \eqref{eq:Bernflow}, realizing that it reduces to a scalar ODE, and plugging in that $\im m_t(z_t) = \ee^t \im m_0(z_0)$ (as a consequence of part (b)). 

Finally, for part (d), we again take the imaginary part of \eqref{eq:Bernflow}. Then, dividing by $\eta_r^\alpha$ we obtain, dropping the negative term $-\eta_r^{1 - \alpha}$, and applying a change of variables, 
\begin{equation}
\int_{s}^{t} \frac{1 - p_t}{1 - p} \frac{\im m_r(z_r)}{p \eta_r^{\alpha}} \dd r \leq - \int_{s}^{t} \frac{\partial_r \eta_r}{\eta_r^{\alpha}} \dd r = \int_{\eta_t}^{\eta_s} \frac{1}{\eta^\alpha} \dd \eta  = \begin{cases}
\log\left(\frac{\eta_s}{\eta_t}\right) &\text{for} \quad \alpha = 1 \\
\frac{1}{\alpha - 1} \left(\frac{1}{\eta_t^{\alpha - 1}} - \frac{1}{\eta_s^{\alpha - 1}}\right) &\text{for} \quad \alpha > 1 \,. 
\end{cases}
\end{equation}
This concludes the proof of Lemma \ref{lem:BerProp}. 
\end{proof}

\begin{proof}[Proof of Lemma \ref{lem:Mt'}]
We adopt the abbreviations \eqref{eq:abbreviate} from the proof of Lemma \ref{lem:BerProp}. Then, first, differentiating the MDE \eqref{eq:timeMDE} in the spectral parameter, we find
\begin{equation} \label{eq:M'}
 M_t'  = \big(1 + \sigma_t m_t'\big) M_t^2 \,. 
\end{equation}
Differentiating in time and employing \eqref{eq:partialmusigma} together with $\partial_t M_t = M_t$ yields
\begin{equation} \label{eq:partialMt'}
	\begin{split}
\partial_t M_t' & = \big( 2 + (\partial_t \sigma_t + 2 \sigma_t ) m_t' + \sigma_t (\partial_t m_t')\big) M_t^2 + 2\big(1 + \sigma_t m_t'\big) M_t (\partial_t M_t) \\
& = \left( 2 + \frac{1 - p_t}{p(1-p)} m_t' + \sigma_t (\partial_t m_t') \right)M_t^2 \,. 
	\end{split}
\end{equation}
As in the proof of Lemma \ref{lem:BerProp}, we take the trace to get
\begin{equation}
\partial_t m_t' =  \left(2 + \frac{1 - p_t}{p(1-p)} m_t' \right) \frac{\langle M_t^2 \rangle}{1 - \sigma_t \langle M_t^2 \rangle} = \left(2 + \frac{1 - p_t}{p(1-p)} m_t' \right) m_t'\,,
\end{equation}
where in the second step we used \eqref{eq:M'}. Plugging this into \eqref{eq:partialMt'} and involving \eqref{eq:M'} again, we deduce the claim. 
\end{proof}

\subsection{$M$-computations: Proofs of Lemmas \ref{lem:Mprop} and \ref{lem:M2bound}}
\begin{proof}[Proof of Lemma \ref{lem:Mprop}]
Using the notation $\theta = (Np(1-p))^{-1/2}$, we can rewrite the MDE \eqref{eq:MDE} as
\begin{equation} \label{M_MDE}
- \frac{1}{M(z)} = (z + p \theta ) - N p \theta \Pi_{\bm e} + \langle M(z) \rangle \,,
\end{equation}
which is a rank-one perturbation of the semicircular MDE $- m_{\rm sc}(z)^{-1} = z + m_{\rm sc}(z) $ at the shifted spectral parameter $z + p \theta$. Inversion of \eqref{M_MDE} yields
\begin{equation} \label{eq:Mspecdec}
M(z) = \frac{\mathbf{1} - \Pi_{\bm e}}{- (z + p \theta) - \langle M(z) \rangle} + \frac{\Pi_{\bm e}}{Np \theta - (z + p \theta) - \langle M(z) \rangle}\,,
\end{equation}
and taking the trace yields
\begin{equation*}
\ang{M(z)} = \frac{-1}{z + p \theta + \ang{M(z)}} + \frac{1}{N}  \pbb{\frac{1}{z + p \theta + \ang{M(z)}} + \frac{1}{Np \theta - (z + p \theta) - \langle M(z) \rangle}}\,.
\end{equation*}
This equation is a $1/N$-perturbation of the scalar equation $- m_{\rm sc}(z)^{-1} = z + m_{\rm sc}(z)$ at the shifted spectral parameter $z + p \theta$, which characterizes $m_{\rm sc}$. By a standard stability analysis (see e.g.\ \cite[Lemma 5.5]{BenyachKnowles2017}) using a continuity argument in $\eta$, and using that $Np \theta \asymp \sqrt{N p}$ tends to infinity with $N$, we deduce that, for $|z| \lesssim 1$,
\begin{equation} \label{eq:Mmsc}
\big| \langle M(z) \rangle - m_{\rm sc}(z + p \theta)\big| \lesssim \frac{1}{N} \,. 
\end{equation}
Now \eqref{eq:Mprop} easily follow from \eqref{eq:Mspecdec} and \eqref{eq:Mmsc}, using that $|m_{\rm sc}(z)| \asymp 1$ for $|z| \lesssim 1$. 

For \eqref{eq:ImMprop}, we take the imaginary part of \eqref{eq:Mspecdec}, yielding that
\begin{equation}
\big(\im M(z)\big)_{\bm e \bm e} = \frac{\im z + \im \langle M(z) \rangle}{|Np \theta - (z + p \theta) - \langle M(z) \rangle|^2} \asymp \frac{1}{Np} \quad \text{for} \quad |z| \lesssim 1 \,. 
\end{equation}
This concludes the proof of Lemma \ref{lem:Mprop}. 
\end{proof}

\begin{proof}[Proof of Lemma \ref{lem:M2bound}]
By differentiating the MDE \eqref{eq:timeMDE} w.r.t.~the spectral parameter $z_t$, we obtain (recall the notation \eqref{eq:abbreviate})
\begin{equation}
M_t' = \frac{M_t^2}{1 - \sigma_t \langle M_t^2 \rangle} \,. 
\end{equation}
Hence, using Lemma \ref{lem:BerProp}~(b) and \eqref{eq:Mspecdec}, we obtain
\begin{equation}
\big|(M_t')_{\bm v \bm w}\big| \lesssim \frac{1}{|1 - \sigma_t \langle M_t^2 \rangle|} \left(|\langle \bm v, (\mathbf{1} - \Pi_{\bm e}) \bm w \rangle| + \frac{|\langle \bm v, \Pi_{\bm e} \bm w \rangle|}{Np}\right) \quad \text{for any} \quad |z| \lesssim 1 \,. 
\end{equation} 
It thus remains to show that $|1 - \sigma_t \langle M_t^2 \rangle| \geq 1/C_\kappa$ for $z_T \in \mathbb{D}$. Using the explicit form of $\sigma_t$ and Lemma \ref{lem:BerProp}~(b), we have 
\begin{equation}
1 - \frac{\ee^t - 1}{\ee^T - 1} \langle M_T^2 \rangle = 1 - \frac{\ee^t - 1}{\ee^T - 1} \langle |M_T|^2 \rangle - 2 \ii \frac{\ee^t - 1}{\ee^T - 1} \langle M_T \im M_T \rangle \,. 
\end{equation}
The real part is now lower bounded by
\begin{equation}
	\begin{split}
1 - \langle|M_T|^2 \rangle &+ \frac{T - t}{\ee^T - 1}\langle|M_T|^2 \rangle + \frac{2 t}{\ee^T - 1} \langle (\im M_T)^2 \rangle \\
&\geq \frac{1}{2} \left( \left(1 - \frac{t}{T}\right)\frac{\im m_T}{\im z_T + \im m_T} + \frac{t}{T} (\im m_T)^2 \right) \geq 1/C_\kappa
	\end{split}
\end{equation}
for some $C_\kappa > 0$ depending only on $\kappa$. Here, in the first step, we used \eqref{eq:trivbound} and \eqref{eq:ImpartMDE} for $t = T$, and the inequality $\langle R \rangle \leq \langle R^2 \rangle$ for any $R \geq 0$. In the last step we then used \eqref{eq:Mmsc} and elementary properties of the Stieltjes transform of the semicircular density. This concludes the proof. 
\end{proof}

\subsection{Resolvent expansion: Proof of Lemma \ref{lem:entforG}}
We only prove the statement concerning a single resolvent in Lemma~\ref{lem:entforG}~(i); all the other statements in Lemma \ref{lem:entforG} can be shown similarly and we hence omit their proofs for brevity. Dropping the time and argument, we have
\begin{equation}
\big|\big(G-M\big)_{\bm a \bm b}\big| \leq \big|\big(G^{ab} - M\big)_{\bm a \bm b}\big| + \big|\big(G^{ab} - G\big)_{\bm a \bm b}\big| \leq \Cf \Lambda_{{\rm ent}}^{(1)} + \big|\big(G^{ab} - G\big)_{\bm a \bm b}\big| \,. 
\end{equation}
To control $\big|\big(G^{ab} - G\big)_{\bm a \bm b}\big|$, we employ a resolvent expansion, similarly to Lemma \ref{lem:resolventexpand} and find that 
\begin{equation}
\big|\big(G^{ab} - G\big)_{\bm a \bm b}\big| \lesssim \frac{1}{\sqrt{Np}} \left[ |G^{ab}_{\bm a a} G_{b \bm b}| + |G^{ab}_{\bm a b} G_{a \bm b}| \right] \,,
\end{equation}
from which by iteration, we easily obtain $\big|\big(G^{ab} - G\big)_{\bm a \bm b}\big| \lesssim (Np)^{-1/2}$. We conclude the desired. \qed

\section{Concentration for Markov jump processes} \label{app:CDC}

In this appendix, for the reader's convenience, we give a self-contained review of some classical facts about Markov jump processes. See e.g.\ \cite[Appendix B]{shorack2009empirical} for a comprehensive account.

Let $\Gamma$ be a finite set. Let $(X_t)_{t \geq 0}$ be a right-continuous Markov jump process on $\Gamma$ with generator
\begin{equation*}
\cal L \phi(x) = \sum_{y} \cal L(x,y) (\phi(y) - \phi(x))\,.
\end{equation*}
That is, for any $\phi : \Gamma \to \C$ we have
\begin{equation} \label{generator}
\frac{\dd}{\dd t} \E[\phi(X_t)] = \E[\cal L \phi(X_t)]\,.
\end{equation}
Throughout this appendix, we use the canonical filtration $(\cal F_t)_{t \geq 0}$ generated by the process $(X_t)_{t \geq 0}$.
\begin{lemma}\label{lem:dynkin_mart}
For any $\phi : \Gamma \to \C$, the process
\begin{equation} \label{dynkin}
\cal M_t \deq \phi(X_t) - \phi(X_0) - \int_0^t \dd s \, \cal L \phi(X_s)
\end{equation}
is a càdlàg martingale.
\end{lemma}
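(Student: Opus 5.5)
The plan is to verify the martingale property directly from the Markov property and the generator identity \eqref{generator}. Integrability and the càdlàg property come first and are immediate. Since $\Gamma$ is finite, $\phi$ is bounded, and so is $\cal L\phi$, because $\sum_y \abs{\cal L(x,y)}<\infty$ for each of the finitely many $x$. Hence $\abs{\cal M_t} \leq 2\max\abs{\phi} + t \max\abs{\cal L\phi}$, which is bounded on compact time intervals. Moreover $t\mapsto \phi(X_t)$ is càdlàg because $X$ is a right-continuous jump process with left limits; this holds since on a finite state space with bounded rates there are almost surely finitely many jumps in bounded time. Finally, $t\mapsto\int_0^t \cal L\phi(X_s)\,\dd s$ is continuous. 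Adaptedness to $(\cal F_t)$ is clear.

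For the martingale property, I fix $0\le s\le t$ and write
\[
\cal M_t-\cal M_s=\phi(X_t)-\phi(X_s)-\int_s^t \cal L\phi(X_r)\,\dd r .
\]
It then suffices to show that $\E[\cal M_t-\cal M_s\mid\cal F_s]=0$. By the Markov property, conditionally on $\cal F_s$ the process $(X_{s+u})_{u\ge0}$ is the same Markov process started at $X_s$. So, with $\E_x$ denoting the law started from $x$, I need the identity
\[
\E_x[\phi(X_u)]-\phi(x)=\int_0^u \E_x[\cal L\phi(X_r)]\,\dd r .
\]
This follows by integrating \eqref{generator} from $0$ to $u$, applied to the process started at $x$. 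Interchanging the conditional expectation with the time integral is justified by Fubini, since $\cal L\phi$ is bounded. Evaluating at $x=X_s$ and $u=t-s$ gives the claim.

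The only point needing any care is the use of \eqref{generator} for an arbitrary deterministic initial condition together with the Markov property. On a finite state space, \eqref{generator} is equivalent to the Kolmogorov forward/backward equations for the transition semigroup $P_u=\ee^{u\cal L}$. From this, $\frac{\dd}{\dd u}P_u\phi=P_u\cal L\phi$ holds pointwise, and this gives the displayed identity directly. I do not expect any genuine obstacle; the argument is the standard Dynkin formula specialized to a finite state space.
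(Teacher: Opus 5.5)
Your proposal is correct and follows the paper's argument: condition on $\cal F_s$, use the Markov property to restart the process from $X_s$, and conclude by integrating the generator identity \eqref{generator} over $[0,t-s]$. You also check integrability, adaptedness, the càdlàg property and the Fubini interchange, which the paper leaves implicit.
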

\begin{proof}
By the Markov property of $(X_t)_{t \geq 0}$ we have, for $0 \leq s < t$,
\begin{align*}
\E[\cal M_t \,|\, \cal F_s] &= \cal M_s + \E \qbb{\phi(X_t) - \phi(X_s) - \int_s^t \dd r \, \cal L \phi(X_r) \,\bigg|\, \cal F_s}
\\
&= \cal M_s + \E \qbb{\phi(\tilde X_{t-s}) - \phi(\tilde X_0) - \int_0^{t-s} \dd r \, \cal L \phi(\tilde X_r) \,\bigg|\, \tilde X_0 = X_s} = 0\,,
\end{align*}
where $(\tilde X_t)_{t \geq 0}$ denotes an independent copy of $(X_t)_{t \geq 0}$, and in the last step we used \eqref{generator}.
\end{proof}

As a consequence, $\abs{\cal M_t}^2$ is a submartingale, and by the Doob-Meyer decomposition there is a unique increasing predictable process starting at zero, $\ang{\cal M}_t$, such that $\abs{\cal M_t}^2 - \ang{\cal M}_t$ is a martingale. The process $\ang{\cal M}_t$ is the \emph{predictable quadratic variation} of $\cal M_t$.

\begin{lemma} \label{lem:quadr_var}
The predictable quadratic variation of $\cal M_t$ is
\begin{equation*}
\ang{\cal M}_t = \int_0^t \dd s \, \sum_y \cal L(X_s, y) \, \abs{\phi(y) - \phi(X_s)}^2\,.
\end{equation*}
\end{lemma}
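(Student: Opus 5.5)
The plan is to identify the compensator of $\abs{\cal M_t}^2$ explicitly: show that
\begin{equation*}
\cal N_t \deq \abs{\cal M_t}^2 - \int_0^t \dd s \, \sum_y \cal L(X_s,y)\,\abs{\phi(y)-\phi(X_s)}^2
\end{equation*}
is a martingale. The integral is continuous and adapted, hence predictable, increasing, and zero at $t=0$. Uniqueness in the Doob--Meyer decomposition then identifies it with $\ang{\cal M}_t$. Integrability is not an issue, since $\Gamma$ is finite and so $\phi$ and the jump rates are bounded.

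First I reduce to a statement about functions of the process. Apply Lemma \ref{lem:dynkin_mart} to $\abs{\phi}^2$ as well as to $\phi$ and $\bar\phi$. This shows that $\abs{\phi(X_t)}^2 - \int_0^t \cal L\abs{\phi}^2(X_s)\,\dd s$ is a martingale. The algebraic identity
\begin{equation*}
\cal L\abs{\phi}^2(x) - \bar\phi(x)\,\cal L\phi(x) - \phi(x)\,\cal L\bar\phi(x) = \sum_y \cal L(x,y)\abs{\phi(y)-\phi(x)}^2
\end{equation*}
is checked by expanding $\abs{\phi(y)}^2 - \abs{\phi(x)}^2 - \bar\phi(x)(\phi(y)-\phi(x)) - \phi(x)(\bar\phi(y)-\bar\phi(x))$. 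This also gives the first form in \eqref{eq:QVformula}.

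Next, write $\cal M_t = \phi(X_t) - \phi(X_0) - A_t$ with $A_t = \int_0^t \cal L\phi(X_s)\,\dd s$, and expand $\abs{\cal M_t}^2$. The terms without $A$ are handled by the Dynkin martingales above. For the cross terms $\bar\phi(X_t)A_t$ and $\abs{A_t}^2$, I use the integration-by-parts formula for a continuous finite-variation process $A$ against a càdlàg semimartingale $Y$:
\begin{equation*}
Y_tA_t = \int_0^t Y_{s-}\,\dd A_s + \int_0^t A_s\,\dd Y_s .
\end{equation*}
There is no bracket term, because $A$ is continuous with finite variation. Apply it with $Y = \bar\phi(X)$ and $Y=\bar A$, and replace $\dd \bar\phi(X_s)$ by $\dd\bar{\cal M}_s + \cal L\bar\phi(X_s)\,\dd s$. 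The stochastic integrals $\int A_s\,\dd\bar{\cal M}_s$ of bounded predictable integrands against martingales are martingales.

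Collecting the remaining $\dd s$ terms, all contributions involving $A$ cancel. What is left is exactly $\cal L\abs{\phi}^2 - \bar\phi\,\cal L\phi - \phi\,\cal L\bar\phi$ evaluated at $X_s$, with the $\phi(X_0)$ terms contributing only martingale pieces since $X_0$ is $\cal F_0$-measurable. By the identity above, this is the claimed integrand.

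The main obstacle is this bookkeeping of the cross terms. One could avoid the stochastic calculus by computing $\E[\abs{\cal M_t}^2-\abs{\cal M_s}^2\mid\cal F_s]$ directly via the Markov property, as in the proof of Lemma \ref{lem:dynkin_mart}. That route uses $\E[\abs{\cal M_t-\cal M_s}^2\mid \cal F_s]$ from orthogonality of increments, differentiated in $t$ through \eqref{generator} applied to $\abs{\phi}^2$, $\phi$ and $\bar\phi$.

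Finally, I check that the compensator is predictable. The integrand is evaluated at $X_s$ rather than $X_{s-}$, but it differs from the $X_{s-}$ version only on the countable set of jump times, which has Lebesgue measure zero. So the integral is continuous and therefore predictable.
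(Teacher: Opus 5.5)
Your proof is correct, but it establishes the key fact, that $\abs{\cal M_t}^2 - A_t$ is a martingale, by a different mechanism from the paper's.

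\textbf{The paper's route.} It stays within the elementary Markov-chain toolkit. Assuming $\phi(X_0)=0$, it differentiates $\E[\abs{\cal M_t}^2]$ in $t$ using only the forward equation \eqref{generator}, and identifies the rate as the expected carr\'e du champ at $X_t$. It then asserts that the same computation, redone after restarting the chain at time $s$ via the Markov property (as in Lemma \ref{lem:dynkin_mart}), upgrades this identity of expectations to the martingale property. Finally, rewriting the integrand as $\sum_y \cal L(X_s,y)\abs{\phi(y)-\phi(X_s)}^2$ shows the compensator is increasing and continuous. This is essentially the alternative you sketch near the end.

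\textbf{Your route.} You use integration by parts for the continuous finite-variation process $A$ against the c\`adl\`ag semimartingale $\phi(X)$. All drift terms involving $A$ then cancel pathwise, and only $\cal L\abs{\phi}^2 - \bar\phi\,\cal L\phi - \phi\,\cal L\bar\phi$ survives.

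\textbf{Trade-off.} Your approach avoids differentiating expectations of products such as $\phi(X_t)\overline{A_t}$, and also avoids the conditional version of that computation; the paper leaves both implicit. The price is invoking semimartingale calculus: the product rule, and the martingale property of $\int A\,\dd\bar{\cal M}$. The paper needs nothing beyond \eqref{generator} and the Markov property.

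\textbf{Side benefit.} Your explicit identity keeps track of complex conjugation. The first form of \eqref{eq:QVformula} should read $\cal L\abs{\phi}^2 - 2\re(\bar\phi\,\cal L\phi)$. The paper writes $2\re\phi\,\cal L\phi$, which is correct only for real $\phi$.
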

\begin{proof}
Without loss of generality, we suppose that $\phi(X_0) = 0$. The claimed expression for $\ang{\cal M}_t$ can be obtained by computing, using \eqref{generator},
\begin{align*}
\frac{\dd}{\dd t} \E[\abs{\cal M_t}^2] &= \frac{\dd}{\dd t} \E \qbb{\abs{\phi(X_t)}^2 + \absbb{\int_0^t \dd s \, \cal L \phi(X_s)}^2 - 2 \re \phi(X_t) \int_0^t \dd s \, \cal L \phi(X_s)}
\\
&= \E \qb{\cal L \abs{\phi}^2(X_t) - 2 \re \phi(X_t) \cal L \phi(X_t)}\,.
\end{align*}
Defining 
\begin{equation*}
A_t \deq \int_0^t \dd s \, \pb{\cal L \abs{\phi}^2(X_s) - 2 \re \phi(X_s) \cal L \phi(X_s)}\,,
\end{equation*}
we have therefore shown that $\frac{\dd}{\dd t} \E[\abs{\cal M_t}^2] = \frac{\dd}{\dd t} \E[A_t]$.
The same calculation combined with the Markov property of $(X_t)_{t \geq 0}$, like in the proof of the previous lemma, shows that $\abs{\cal M_t}^2 - A_t$ is a martingale.

Next, we write
\begin{align*}
A_t &= \int_0^t \dd s \, \sum_y L(X_s, y) \qB{\abs{\phi(y)}^2 - \abs{\phi(X_s)}^2 - 2 \re \phi(X_s) (\phi(y) - \phi(X_s))}
\\
&= \int_0^t \dd s \, \sum_y L(X_s, y) \, \abs{\phi(y) - \phi(X_s)}^2\,.
\end{align*}
In particular, $A_t$ is increasing and predictable (even continuous), and hence must be equal to $\ang{\cal M}_t$.
\end{proof}

The following result is a well known instance (see e.g.\ \cite{vandegeer1995} or \cite[Eq.~(18) in Appendix B.6]{shorack2009empirical}) of the Burkholder-Davis-Gundy inequality. 

\begin{proposition}[Martingale concentration inequality] \label{prop:DBG}
Suppose that the jumps of the martingale \eqref{dynkin} satisfy $|\Mart_t - \Mart_{t-}| \leq K$ almost surely for some constant $K > 0$. Then for any $a,b >0$ we have
\begin{equation} \label{BDG}
\proba{\exists t \geq 0 :  |\Mart_t| \geq a, \langle \Mart \rangle_t \leq b^2} \leq 4 \exp{\left[-\frac{a^2}{2(aK + b^2)}\right]}\,.
\end{equation}
\end{proposition}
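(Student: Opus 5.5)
The plan is the standard exponential-supermartingale argument behind Freedman's inequality, adapted to pure-jump Dynkin martingales. I first treat a real-valued martingale of the form \eqref{dynkin} and then pass to complex values. Set $\psi(x) \deq \ee^x - 1 - x$.

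\emph{Real case.} For $\lambda > 0$, consider
\begin{equation*}
Z_t \deq \exp\Bigl( \lambda \Mart_t - \int_0^t \dd s \, \sum_y \cal L(X_s,y)\, \psi\bigl(\lambda (\phi(y) - \phi(X_s))\bigr) \Bigr)\,.
\end{equation*}
The steps are as follows.
\begin{enumerate}[label=(\roman*)]
\item Show that $Z_t$ is a local martingale. Its jumps are $\ee^{\lambda \Delta \Mart}$, and the drift compensating them is exactly the compensator of the exponential of the jumps. Concretely, I would apply Lemma \ref{lem:dynkin_mart} to the space–time process $(X_t, \Mart_t, \text{integral})$, or equivalently compute $\frac{\dd}{\dd t}\E[Z_t \mid \cal F_s]$ via \eqref{generator} as in the proof of Lemma \ref{lem:quadr_var}. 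Since $\Gamma$ is finite, all quantities are bounded on finite time intervals, so $Z_t$ is a genuine martingale with $\E Z_t = 1$.
\item Bound the compensator. The map $x \mapsto \psi(x)/x^2$ is increasing, so $|\phi(y)-\phi(X_s)| \leq K$ gives $\psi(\lambda(\phi(y)-\phi(X_s))) \leq \frac{\psi(\lambda K)}{K^2}\,|\phi(y) - \phi(X_s)|^2$. By Lemma \ref{lem:quadr_var}, the compensator is therefore at most $\frac{\psi(\lambda K)}{K^2}\ang{\Mart}_t$.
\item Apply optional stopping at $\sigma \deq \inf\{t : \Mart_t \geq a, \ang{\Mart}_t \leq b^2\}$, truncated at a large deterministic time. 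On $\{\sigma < \infty\}$ one has $Z_\sigma \geq \exp(\lambda a - \psi(\lambda K) b^2/K^2)$, so Markov's inequality yields
\begin{equation*}
\P(\exists t :  \Mart_t \geq a, \ang{\Mart}_t \leq b^2) \leq \exp\bigl(-\lambda a + \psi(\lambda K) b^2/K^2\bigr)\,.
\end{equation*}
\item Optimize over $\lambda$. The choice $\lambda = K^{-1}\log(1 + aK/b^2)$ gives Bennett's bound $\exp(-\frac{b^2}{K^2} h(aK/b^2))$, with $h$ as in Lemma \ref{lem:Bennett}. The elementary inequality $h(u) \geq \frac{u^2}{2(1+u/3)}$ then gives $\exp(-\frac{a^2}{2(b^2 + aK/3)})$, which is stronger than \eqref{BDG}. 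Applying the same argument to $-\Mart$ gives a factor $2$.
\end{enumerate}

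\emph{Complex case.} This is where I expect the main obstacle. The naive route splits into real and imaginary parts, each with jumps at most $K$ and predictable quadratic variation at most $\ang{\Mart}_t$. Using $|\Mart_t| \geq a \Rightarrow \max(|\re \Mart_t|, |\im \Mart_t|) \geq a/\sqrt 2$, this produces the prefactor $4$ but halves the Gaussian part of the exponent, giving $4\exp(-\frac{a^2}{4(b^2 + aK/(3\sqrt2))})$. That loses an absolute constant in the exponent. Such a loss is harmless for every application in the paper, which only needs the exponent to be $\gtrsim a^2/(aK + b^2)$, but it does not reproduce the stated constant.

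To obtain \eqref{BDG} exactly, I would instead run Pinelis' argument for martingales in the Hilbert space $\C \cong \R^2$. Replace $\ee^{\lambda \Mart_t}$ by $\cosh(\lambda |\Mart_t|)$, and show via the generator that
\begin{equation*}
\cosh(\lambda |\Mart_t|)\exp\Bigl(-\frac{\psi(\lambda K)}{K^2}\ang{\Mart}_t\Bigr)
\end{equation*}
is a supermartingale. The key pointwise input is that, for $u, v \in \R^2$ with $|v| \leq K$,
\begin{equation*}
\cosh(\lambda|u + v|) - \cosh(\lambda |u|) - \lambda \sinh(\lambda|u|)\,\frac{\scalar{u}{v}}{|u|} \leq \cosh(\lambda |u|)\,\psi(\lambda K)\frac{|v|^2}{K^2}\,.
\end{equation*}
After summing against $\cal L(X_s,y)$, the first-order term vanishes because $\sum_y \cal L(X_s,y)(\phi(y)-\phi(X_s))$ is exactly what the compensator removes. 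The rest of the argument (optional stopping, $\cosh \geq \frac12 \ee^{|\cdot|}$, and the same optimization over $\lambda$) then yields $2 \cdot 2\exp(-\frac{a^2}{2(aK + b^2)})$, since $aK \geq aK/3$.

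Verifying the displayed pointwise inequality (Pinelis' 2-smoothness estimate with constant $1$ in Hilbert space) is the one genuinely delicate step. I would prove it by restricting to the segment $r \mapsto u + rv$ and bounding the second derivative of $\cosh(\lambda|\cdot|)$ along it by $\lambda^2 \cosh(\lambda|u| + \lambda r K)|v|^2$.
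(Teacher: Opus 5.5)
Your proposal is correct. In the real case it is the paper's argument: the exponential martingale $\ee^{\lambda\Mart_t-\int_0^t\rho(X_s)\,\dd s}$, monotonicity of $\psi(u)/u^2$ to dominate the compensator by $\frac{\psi(\lambda K)}{K^2}\ang{\Mart}_t$, and optional stopping at the first time with $\Mart_t\geq a$ and $\ang{\Mart}_t\leq b^2$. The only difference there is the choice of $\lambda$. The paper uses $\psi(\lambda K)/K^2\leq\lambda^2/(2(1-\lambda K))$ and $\lambda=a/(b^2+aK)$, which lands exactly on $\exp(-a^2/(2(aK+b^2)))$. Your Bennett-optimal $\lambda$ instead gives the sharper Bernstein form with $aK/3$.

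The genuine divergence is the complex case. The paper applies the real estimate to $\re\Mart$ and $\im\Mart$ separately, using $\ang{\Mart}=\ang{\re\Mart}+\ang{\im\Mart}$. This is exactly your ``naive route'', and your objection to it is justified. Since $|\Mart_t|\geq a$ only forces one of the two parts above $a/\sqrt2$, the splitting yields $4\exp(-a^2/(4(aK/\sqrt2+b^2)))$ rather than \eqref{BDG}. The loss is an absolute constant in the exponent, harmless in all applications because it is absorbed into the constants $C_\kappa$ of Proposition~\ref{prop:Gron}. But the stated constant is not literally obtained by the paper's argument.

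Your supermartingale $\cosh(\lambda|\Mart_t|)\exp(-\frac{\psi(\lambda K)}{K^2}\ang{\Mart}_t)$ does prove \eqref{BDG} as written, and in fact with prefactor $2$ instead of your $2\cdot 2$. The reason is that $\cosh(\lambda|\cdot|)$ controls all directions at once, so no further union bound over signs or components is needed.

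The step you flag as delicate is short. The Hessian of $x\mapsto\cosh(\lambda|x|)$ on $\R^2$ has eigenvalues $\lambda^2\cosh(\lambda r)$ radially and $\lambda\sinh(\lambda r)/r\leq\lambda^2\cosh(\lambda r)$ tangentially. Apply Taylor's formula with integral remainder along $r\mapsto u+rv$, together with $\cosh(A+B)\leq\cosh(A)\,\ee^{B}$ for $A,B\geq0$ and $\int_0^1(1-r)\ee^{\lambda Kr}\,\dd r=\psi(\lambda K)/(\lambda K)^2$. This gives your inequality directly.

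The price of your route is that it needs Dynkin's formula for the space--time process $(X_t,\Mart_t,\ang{\Mart}_t)$, not just Lemma~\ref{lem:dynkin_mart}. This is routine, since $|\Mart_t|\leq 2\max|\phi|+t\max|\Lcal\phi|$ is bounded on bounded time intervals, and it is the same level of rigor as the paper's variation-of-constants step.
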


\begin{proof}
Suppose first that $\phi$ is real.
Fix $\lambda > 0$.
Applying Lemma \ref{lem:dynkin_mart} to $\psi \deq \ee^{\lambda \phi}$, we find that
\begin{equation*}
\cal N_t \deq \psi(X_t) - \psi(X_0) - \int_0^t \dd s \, \cal L \psi(X_s) =
\psi(X_t) - \psi(X_0) - \int_0^t \dd s \, \kappa(X_s) \psi(X_s)
\end{equation*}
is a martingale, where we defined
\begin{equation*}
\kappa(x) \deq \sum_y \cal L(x,y) \pb{\ee^{\lambda(\phi(y) - \phi(x))} - 1}\,.
\end{equation*}
We can solve the equation $\dd \psi(X_t) = \kappa(X_t) \psi(X_t) + \dd \cal N_t$ by variation of constants, which implies that
\begin{equation*}
\cal V_t \deq \ee^{-\int_0^t \dd s \, \kappa(X_s)} \psi(X_t)
\end{equation*}
is a martingale, since
\begin{equation*}
\dd \cal V_t = \ee^{-\int_0^t \dd s \, \kappa(X_s)} \dd \cal N_t\,.
\end{equation*}
Defining
\begin{equation*}
\rho(x) \deq \sum_y \cal L(x,y) \pb{\ee^{\lambda (\phi(y) - \phi(x))} - 1 - \lambda(\phi(y) - \phi(x))}\,,
\end{equation*}
we conclude that
\begin{equation*}
\cal Z_t \deq \cal V_t / \psi(X_0) = \ee^{\lambda \cal M_t - \int_0^t \dd s \, \rho(X_s)}
\end{equation*}
is a martingale.
By power series expansion, we deduce that
\begin{equation*}
\frac{\ee^u - 1 - u}{u^2} \leq \frac{\ee^v - 1 - v}{v^2}
\end{equation*}
whenever $\abs{u} \leq v$. By assumption on the jump sizes, $\abs{\phi(y) - \phi(x)} \leq K$ whenever $\cal L(x,y) \neq 0$. We conclude that
\begin{equation*}
\rho(x) \leq h(\lambda) \sum_y \cal L(x,y) (\phi(y) - \phi(x))^2 \,, \qquad h(\lambda) \deq \frac{\ee^{\lambda K} -1 -\lambda K}{K^2}\,.
\end{equation*}
By Lemma \ref{lem:quadr_var}, we deduce that
\begin{equation} \label{Z_t_rho}
\int_0^t \dd s \, \rho(X_s) \leq h(\lambda) \ang{\cal M}_t\,.
\end{equation}

Next, we define the stopping time (recall that $\cal M_t$ is right-continuous and $\ang{\cal M}_t$ is continuous)
\begin{equation*}
\tau \deq \inf \h{t \geq 0 : \cal M_t \geq a,  \ang{\cal M}_t \leq b^2}\,.
\end{equation*}
Using \eqref{Z_t_rho}, we estimate 
\begin{equation} \label{tau_infty_estimate}
\P(\tau < \infty) \leq \E \qB{\ind{\tau < \infty} \, \ee^{\lambda (\cal M_\tau - a)} \ee^{-h(\lambda) (\ang{\cal M}_\tau - b^2)}} \leq \ee^{- \lambda a + h(\lambda) b^2} \E[\ind{\tau \leq \infty} \cal Z_\tau] \leq \ee^{- \lambda a + h(\lambda) b^2}\,,
\end{equation}
where in the last step we used Fatou's lemma and optional stopping for the martingale $\cal Z_t$ to conclude that
\begin{equation*}
\E[\ind{\tau \leq \infty} \cal Z_\tau] = \E \qB{\lim_{n \to \infty} \ind{\tau \leq n} \cal Z_{\tau \wedge n}} \leq \lim_{n \to \infty} \E[\ind{\tau \leq n} \cal Z_{\tau \wedge n}] \leq \lim_{n \to \infty} \E[\cal Z_{\tau \wedge n}] = 1\,.
\end{equation*}
To conclude the proof, by convexity of $\exp$ we obtain for $\lambda K < 1$ that $h(\lambda) \leq \frac{\lambda^2}{2 (1 - \lambda K)}$.
Using \eqref{tau_infty_estimate} with the choice $\lambda \deq \frac{a}{b^2 + Ka}$, and repeating the same argument for $-\cal M_t$, we conclude from a union bound that
\begin{equation*}
\proba{\exists t \geq 0 :  |\Mart_t| \geq a, \langle \Mart \rangle_t \leq b^2} \leq 2 \exp{\left[-\frac{a^2}{2(aK + b^2)}\right]}\,.
\end{equation*}

Finally, the claim for complex $\phi$ follows by applying the above estimate to the real and imaginary parts of $\phi = \phi^{(1)} + \ii \phi^{(2)}$ separately, using that the resulting splitting $\cal M_t = \cal M_t^{(1)} + \ii \cal M_t^{(2)}$ satisfies $\ang{\cal M}_t = \ang{\cal M^{(1)}}_t + \ang{\cal M^{(2)}}_t$.
\end{proof}

\begin{remark}
The estimate \eqref{BDG} is in general sharp up to constants. In the limit where $K \to 0$, it corresponds to subgaussian tails for continuous martingales. For martingales with jumps, $K > 0$, there is a transition from subgaussian to subexponential tails for large enough $a$, depending on $K$. The same phenomenon is manifested in Bennett's concentration inequality (see Lemma \ref{lem:Bennett}).
\end{remark}

\section{A simple proof of average local law for GOE / GUE with factor $\sqrt{\log N}$} \label{sec:GOE_ll}
For the Gaussian Orthogonal/Unitary Ensembles (GOE/GUE), the optimal (in terms of $N$-dependence) average local law is known to be as follows: For $z \in \C\setminus \R$, let $G(z) \deq (W - z)^{-1}$ be the resolvent of a GOE/GUE matrix $W$. Then, for any $D > 0$ there exists a $C > 0$ such that \cite[Theorem~1.1]{bourgade2022optimal}
\begin{equation} \label{eq:GUEoptimal}
\proba{\big|\langle G(z) - m_{\rm sc}(z) \rangle\big| \leq C \frac{(\log N)^{1/2}}{N \eta}} \geq 1 - N^{-D} 
\end{equation}
for spectral parameters $z =  e+ \ii \eta$ satisfying $e \in [-2+\eta, 2 - \eta]$ and $\eta \leq 1$. The proof of this result in \cite{bourgade2022optimal} is based on a delicate analysis of loop equations, that is designed to handle the more general class of $\beta$-ensembles as well. The key point here is the optimal power of $\log N$ in the error term of the local law; see \cite[Proposition 3.2 and the following Remark]{cipolloni2025maximum} for a related result, in particular discussing the possibility of proving \eqref{eq:GUEoptimal} for non-Hermitian matrices with generally distributed i.i.d.~entries with light tails on the analog of the domain \eqref{eq:largedomain}. 

The goal of this appendix is to give a short proof of \eqref{eq:GUEoptimal} for spectral parameters (notice that $\log N$ is replaced by $(\log N)^{1/2}$ compared to \eqref{eq:domain})
  \begin{equation}  \label{eq:largedomain}
z \in 	\widetilde{\mathbb{D}}(\kappa, C) = [-2 + \kappa, 2 - \kappa] \times \ii [C N^{-1} (\log N)^{1/2}, 1]  \subset \HH \,, \quad \text{for} \quad  \kappa, C > 0\,. 
\end{equation}
More precisely, we prove the following. 
\begin{proposition}[$\sqrt{\log N}$ in the average law for GOE/GUE] \label{prop:Gauss}
Fix $\kappa > 0$. Then, for any $D > 0$, there exists a constant $C = C(D, \kappa) >0 $ such that
\begin{equation}
\proba{\bigcap_{z \in \widetilde{\mathbb{D}} (\kappa, C)}\left\{\big|\langle G(z) - m_{\rm sc}(z) \rangle\big| \leq C \frac{(\log N)^{1/2}}{N \eta}\right\}} \geq 1  -  N^{-D} \,. 
\end{equation}
\end{proposition}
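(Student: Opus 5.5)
We propose to run the Brownian characteristic flow \eqref{Brownflow} with $M_t$ replaced by the semicircle, tracking only the average observable $\phi_t \deq \ang{G_t(z_t) - m_{\rm sc,t}(z_t)}$. We use the Ornstein--Uhlenbeck normalization $\dd H_t = -\frac12 H_t\,\dd t + \frac{\dd B_t}{\sqrt N}$ started from $H_0 \eqdist W$, so that $H_t \eqdist W$ for all $t$. The characteristic flow is $\partial_t z_t = -m_{\rm sc}(z_t) - \frac{z_t}{2}$, which keeps $m_{\rm sc}(z_t)$ constant up to the trivial rescaling. Given a target $z = z_T \in \widetilde{\mathbb D}$, we pick $T \asymp 1$ such that $\eta_0 \asymp 1$. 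At the initial point, the global law gives $|\phi_0| \preceq \frac{1}{N}$, for instance from the macroscopic-scale local law with Gaussian concentration (Lipschitz in $W$).

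By Itô's formula,
\begin{equation*}
\dd \phi_t = \phi_t \ang{G_t^2}\, \dd t + \frac{\beta'}{N}\ang{G_t^3}\,\dd t - \frac{1}{\sqrt N}\ang{G_t^2\, \dd B_t}\,.
\end{equation*}
Here $\beta' = 1$ for GOE and $\beta' = 0$ for GUE. As in \eqref{eq:solving}, we write $\phi_T = \mathcal P_{0,T}\phi_0 + \int_0^T \mathcal P_{s,T}\,(\dots)$ with propagator $\exp\int_s^T \ang{G_r^2}\,\dd r$. In the bulk, the deterministic part $\ang{m_{\rm sc}'(z_r)}$ is bounded, so the propagator is $O(1)$ as long as $\ang{G_r^2 - m'_{\rm sc}} $ is small. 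Since the trivial bound $|\ang{G^2}| \le \ang{\im G}/\eta$ only gives $\int \frac{\dd r}{\eta_r} \lesssim \log N$, which would cost a factor $N^{C}$, we close a coupled system exactly as in Section~\ref{sec:Bernoulli}. We also track $\phi^{(2)}_t = \ang{G_t^2 - m'_{\rm sc}(z_t)}$ with threshold $\Lambda^{(2)}(t) = \frac{\sqrt{\log N}}{N\eta_t^2}$, and introduce the stopping time $\tau$ as in \eqref{eq:stoptime} with $\Lambda^{(1)}(t) = \frac{\sqrt{\log N}}{N\eta_t}$. The $G^3$-term is handled by bounding it naively, $|\ang{G^3}| \lesssim \eta^{-2}$. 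It contributes $\int \frac{\dd s}{N \eta_s^2} \lesssim \frac{1}{N\eta_t}$, which is smaller than $\Lambda^{(1)}$ by $\sqrt{\log N}$. This is the point where we use $\eta \ge C\sqrt{\log N}/N$: that restriction makes the analogous drift term for $\phi^{(2)}$ (of size $\frac{1}{N\eta^3}$ integrated) still dominated by $\Lambda^{(2)}$.

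The key step, and the reason we obtain $\sqrt{\log N}$ rather than $\log N$, is the martingale term. Its quadratic variation is
\begin{equation*}
\ang{\Mart}_t \lesssim \frac{1}{N^2}\int_0^t \ang{|G_s|^4}\,\dd s \lesssim \frac{1}{N^2}\int_0^t \frac{\ang{\im G_s}}{\eta_s^3}\,\dd s \lesssim \frac{1}{(N\eta_t)^2}
\end{equation*}
up to $\tau$. This bound has no $\log N$ at all, because $\int \eta_s^{-3}\,\dd s \lesssim \eta_t^{-2}$ by \eqref{eq:intrulepractice}-type rules for the Brownian flow. Since the martingale is continuous, the BDG inequality (Proposition~\ref{prop:DBG} with $K=0$) gives Gaussian tails $\exp(-a^2/2b^2)$. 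Choosing $a = C\sqrt{\log N}\, b$ then yields probability $N^{-D}$ and error $\frac{\sqrt{\log N}}{N\eta_t}$. For $\phi^{(2)}$, the same computation yields the quadratic variation $\int N^{-2}\eta^{-5}\,\dd s \lesssim (N\eta_t^2)^{-2}$. The time rescaling to homogenize the BDG bound is done as in the proof of Proposition~\ref{prop:Gron}.

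Combining these bounds, a Grönwall argument in the stochastic form of Proposition~\ref{prop:Gron} gives $|\phi_{t\wedge\tau}| \le \frac{\Cf}{2}\Lambda^{(1)}$ with probability $1 - N^{-D}$, hence $\tau = T$. A grid argument plus Lipschitz continuity in $z$ then gives uniformity over $\widetilde{\mathbb D}$. The main obstacle I expect is the propagator: I must verify that the coupled drift $\phi_t\,\phi^{(2)}_t$ (the term $\phi_t\ang{G_t^2 - m'}$) integrates to $O(1)$. The relevant quantity is $\int \Lambda^{(2)}\,\dd s \lesssim \frac{\sqrt{\log N}}{N\eta_t} \ll 1$, which holds precisely on $\widetilde{\mathbb D}$ for $C$ large. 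With this in place, no logarithmic loss enters through $\mathcal P_{s,t}$.
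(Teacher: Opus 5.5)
Your overall scheme matches the paper's. You run a Brownian characteristic flow that tracks $\phi^{(1)}=\ang{G-m}$ and $\phi^{(2)}=\ang{G^2-m'}$ up to a stopping time, you get log-free quadratic variations $\lesssim (N\eta_t)^{-2}$ and $(N\eta_t^2)^{-2}$, and the continuous BDG bound is the source of the $\sqrt{\log N}$. The one difference of route is that you use a stationary Ornstein--Uhlenbeck flow from $H_0\eqdist W$ at $\eta_0\asymp 1$, which costs an external input at $t=0$. Gaussian concentration gives $\abs{\phi^{(i)}_0}\lesssim \sqrt{\log N}/N$, not $1/N$; this is harmless, since it is of order $\Lambda^{(i)}(0)$. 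You also need the bias bound $\abs{\E\ang{G(z_0)}-m_{\rm sc}(z_0)}\lesssim 1/N$. The paper avoids this by starting from $W_0=0$ with $\partial_t z_t=-m_t(z_t)$, so that $\phi^{(i)}_0=0$ exactly. Your SDE also drops the drift $\tfrac12\phi_t$ coming from the OU term, which only changes the propagator by $O(1)$.

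The step that does not close as you set it up is the $\phi^{(2)}$ equation, which you never write down. Besides $2m'\phi^{(2)}+(\phi^{(2)})^2$ and the GOE term $\frac1N O(\ang{\im G}/\eta^3)$, it contains the cross term $2\phi^{(1)}_t\ang{G_t^3}\,\dd t$. This comes from the $z$-derivative of the It\^o term $\ang{G}\ang{G^2}$ combined with the drift of $z_t$, and it appears in your OU flow as well. Since you do not track $G^3$, you can only use $\abs{\ang{G^3}}\le \ang{\im G}/\eta^2$. Combined with the a priori bound $\abs{\phi^{(1)}}\le \Cf\sqrt{\log N}/(N\eta)$ and $\partial_s\eta_s\le -\im m_{\rm sc}(z_s)$, this term integrates to about $\Cf\sqrt{\log N}/(N\eta_t^2)=\Cf\Lambda^{(2)}(t)$. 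That is the threshold itself, with an order-one constant. After Gr\"onwall with the propagator $\exp\int 2\abs{m'}$ you cannot reach $\abs{\phi^{(2)}}\le \Cf\Lambda^{(2)}/2$. So with a single constant $\Cf$ for both observables, as in \eqref{eq:stoptime}, you cannot conclude $\tau=T$.

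This is exactly why the paper's stopping time \eqref{eq:stoptime2} uses $\Cf$ for $\phi^{(1)}$ but $\Cf^2$ for $\phi^{(2)}$. The cross term is then $O(\Cf)\ll\Cf^2$. Meanwhile the condition $N\eta_T\ge(1+\Cf)^4\sqrt{\log N}$ still gives $\int \Cf^2\Lambda^{(2)}\,\dd s\lesssim \Cf^2\sqrt{\log N}/(N\eta_t)\ll 1$ for the quadratic terms $\phi^{(1)}\phi^{(2)}$ and $(\phi^{(2)})^2$. Alternatively, you can keep equal thresholds. In that case first prove $\abs{\phi^{(1)}_{t\wedge\tau}}\le C_1\Lambda^{(1)}$ with $C_1=C_1(D,\kappa)$ independent of $\Cf$, and only then insert this improved bound into the cross term.

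Relatedly, the restriction $\eta\ge C\sqrt{\log N}/N$ is not what controls the $\frac1N\ang{G^4}$-type drift of $\phi^{(2)}$. That drift integrates to $1/(N\eta_t^2)$, already a factor $\sqrt{\log N}$ below $\Lambda^{(2)}$ on any domain. The restriction is needed for the quadratic terms, as you correctly say at the end, and to keep $\ang{\im G}\lesssim 1$ below the stopping time.
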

\begin{proof}
The overall structure of the argument is similar to those applied for the proof of Theorem~\ref{thm:lolaw}: That is, we consider a dynamics and argue that a suitably designed stopping time (see \eqref{eq:stoptime2} below) actually equals the maximal time of the evolution. Instead of the Bernoulli flow, we now consider the usual Brownian characteristic flow, governed by the equations
	\begin{equation} \label{eq:flow}
		\dd W_t = \frac{\dd B_t}{\sqrt{N}} \,, \qquad \partial_t z_t = - m_t(z_t)\,, 
	\end{equation}
	where we fix the time interval to be $t \in [0,1]$. Moreover, $B_t$ is a real symmetric or complex Hermitian matrix valued standard Brownian motion (depending on the symmetry class of $W$). We start with initial condition $W_0 = 0$ and end with terminal condition $z_{T=1} \in \mathbb{D}$. Finally, $m_t(z_t)$ solves the equation
	\begin{equation}
- \frac{1}{m_t(z_t)} = z_t + t m_t(z_t) \,,
	\end{equation}
	from which we infer that $|m_t(z_t)| \asymp 1$ and $|m_t'(z_t)| \leq C_\kappa$ where $m_t'(z_t) = \tfrac{\dd }{\dd w} m_t(w) \vert_{w = z_t}$
	
For fixed $z_T \in \mathbb{D}$, we define two observables 
\begin{equation}
\phi_t^{(1)} \deq \langle G_t(z_t) - m_t(z_t) \rangle \,, \qquad \phi_t^{(2)} \deq \langle G^2_t(z_t) - m_t'(z_t) \rangle
\end{equation}
and note that $\phi_0^{(i)} = 0$ for $i \in [2]$. Now, abbreviating $\eta_t \deq \im z_t$, for a constant $\Cf > 0$, we define the stopping time
\begin{equation} \label{eq:stoptime2}
	\tau = \tau (\Cf) \deq \inf \left\{ t \in [0,1] : N \eta_t \big| \phi_t^{(1)} \big| \geq \Cf (\log N)^{1/2}\quad \text{or} \quad  N \eta_t^2 \big| \phi_t^{(2)} \big|\geq \Cf^2 (\log N)^{1/2} \right\} 
\end{equation}
and suppose that $N \eta_T \geq (1 + \Cf)^4 (\log N)^{1/2}$. By continuity and since the condition is trivial (identically zero) at time zero, we have that $\tau > 0$ with very high probability and shall henceforth assume that all times are in the interval $t \in [0,\tau]$. 

Next, dropping the $z$-arguments for brevity, we compute the differential of $\phi_t^{(i)}$ using Ito's formula together with \eqref{eq:flow} and obtain
\begin{equation} \label{eq:flows}
	\begin{split}
\dd \phi_t^{(1)} &= m_t' \phi_t^{(1)} \dd t+ \phi_t^{(1)} \phi_t^{(2)} \dd t + \frac{\mathbf{1}_{\beta = 1}}{N} \mathcal{O}\left( \frac{\langle \im G_t \rangle}{\eta_t^2} \right) \dd t + \dd \Mart_t^{(1)}\,,  \\
\dd \phi_t^{(2)} &= 2m_t' \phi_t^{(2)} \dd t+ \big( \phi_t^{(2)}\big)^2 \dd t + 2 \phi_t^{(1)} \langle G_t^3\rangle + \frac{\mathbf{1}_{\beta = 1}}{N} \mathcal{O}\left( \frac{\langle \im G_t \rangle}{\eta_t^3} \right) \dd t +   \dd \Mart_t^{(2)}\,,
	\end{split}
\end{equation}
where we abbreviated 
\begin{equation}
\dd \Mart_t^{(1)} \deq - \frac{1}{\sqrt{N}} \langle G_t^2 \dd B_t \rangle\,, \qquad \dd \Mart_t^{(2)} \deq - \frac{1}{\sqrt{N}} \langle G_t^3 \dd B_t \rangle\,,
\end{equation}
and the shorthand notation $\mathbf{1}_{\beta = 1}$ indicates that these terms are only present in the GOE case. The implicit constants in the $\mathcal{O}$-notation are bounded by, say, $10$, independent of any particular constant from above. 

Completely analogously to our proofs in Sections \ref{sec:Gronproof}--\ref{sec:lemproofs}, we now control the quadratic variation of the martingale terms in \eqref{eq:flows} and the size of the generator terms. The quadratic variations admit the bounds
\begin{equation} \label{eq:martest}
	\begin{split}
	[\mathcal{M}_t^{(1)}]^2 &\leq \int_{0}^{t} \dd s \frac{\langle |G_s|^4 \rangle}{N^2} \leq \int_{0}^{t} \dd s \frac{1}{N^2 \eta_s^3} \left(1 + \frac{\Cf (\log N)^{1/2}}{N \eta_s}\right)\lesssim \left(\frac{1}{N \eta_t}\right)^2 \,, \\
	[\mathcal{M}_t^{(2)}]^2 &\leq \int_{0}^{t} \dd s \frac{\langle |G_s|^6 \rangle}{N^2} \leq \int_{0}^{t} \dd s \frac{1}{N^2 \eta_s^5} \left(1 + \frac{\Cf (\log N)^{1/2}}{N \eta_s}\right)\lesssim \left(\frac{1}{N \eta_t^2}\right)^2\,. 
	\end{split}
\end{equation}
The remaining terms in \eqref{eq:flows} can be bounded as
\begin{equation} \label{eq:gen1}
	\begin{split}
& \int_{0}^{t} \dd s \, \left|m_s' \phi_s^{(1)} + \phi_s^{(1)} \phi_s^{(2)}  + \frac{\mathbf{1}_{\beta = 1}}{N} \mathcal{O}\left( \frac{\langle \im G_s \rangle}{\eta_s^2} \right)\right| \\
& \qquad \qquad \leq C_\kappa \left(\int_{0}^{t} \dd s \left(1 + \Cf^2 \frac{(\log N)^{1/2}}{N \eta_s^2}\right)\big|\phi_s^{(1)}\big| + \frac{(\log N)^{1/2}}{N \eta_t}\right)
	\end{split}
\end{equation}
and
\begin{equation} \label{eq:gen2}
	\begin{split}
		& \int_{0}^{t} \dd s \, \left|2m_s' \phi_s^{(2)} + \big( \phi_s^{(2)}\big)^2  + 2 \phi_s^{(1)} \langle G_s^3\rangle + \frac{\mathbf{1}_{\beta = 1}}{N} \mathcal{O}\left( \frac{\langle \im G_s \rangle}{\eta_s^3} \right) \dd t \right| \\
		& \hspace{6cm} \leq C_\kappa \left(\int_{0}^{t} \dd s \left(1 + \Cf^2 \frac{(\log N)^{1/2}}{N \eta_s^2}\right)\big|\phi_s^{(2)}\big| + \frac{(\log N)^{1/2}}{N \eta_t^2}\right)
	\end{split}
\end{equation}
where we used the integration rules
\begin{equation}
\int_{0}^{t} \dd s\frac{1}{\eta_s} \lesssim \log\left(\frac{\eta_0}{\eta_t}\right) \quad \text{and} \quad \int_{0}^{t} \dd s \frac{1}{\eta_s^\alpha} \lesssim \frac{1}{\eta_t^{\alpha - 1}} \quad \text{for} \quad \alpha > 1 
\end{equation}
and the definition of the stopping time \eqref{eq:stoptime2}. 

We can then obtain a similar Grönwall estimate as in Proposition \ref{prop:Gron} (see in particular \eqref{eq:avGron1}--\eqref{eq:avGron2}) by involving the Burholder-Davis-Gundy inequality for continuous martingales (corresponding to Proposition \ref{prop:DBG} with $K=0$); see e.g.\ \cite{vandegeer1995} or \cite[Eq.~(18) in Appendix B.6]{shorack2009empirical}. Deducing the validity of the local law in Proposition \ref{prop:Gauss} is then deduced by appropriately choosing the constant $\Cf$ as done in \eqref{eq:Cchoice}, eventually showing that $\tau(\Cf) = 1$ with probability at least $1 - N^{-\Cf/C_\kappa}$ for some constant $C_\kappa > 0$ depending only on $\kappa$. This concludes the proof of Proposition \ref{prop:Gauss}. 
\end{proof}

{\small
\bibliographystyle{amsplain-nodash} 
\bibliography{ref}
}

\bigskip

\noindent
Joscha Henheik, University of Oxford (\href{mailto:joscha.henheik@maths.ox.ac.uk}{joscha.henheik@maths.ox.ac.uk})
\\
Antti Knowles, University of Geneva (\href{mailto:antti.knowles@unige.ch}{antti.knowles@unige.ch})

\end{document}